\documentclass[12pt]{amsart}

\usepackage[utf8]{inputenc}
\usepackage[T1]{fontenc}
\usepackage[english]{babel}

\usepackage{amsfonts}
\usepackage{amsmath}
\usepackage{amssymb}
\usepackage{amstext}
\usepackage{amsthm}

\usepackage{fullpage}

\usepackage[shortlabels]{enumitem}
\usepackage{graphicx}
\usepackage{hyperref}
\usepackage{mathrsfs}
\usepackage{mathtools}
\usepackage{tikz}
\usepackage{capt-of}
\usepackage{array}

\allowdisplaybreaks

\newcommand{\wtH}{\widetilde{H}}
\newcommand{\weyl}{\boldsymbol{\sigma}}
\newtheorem{theorem}{Theorem}[section]
\newtheorem{lemma}[theorem]{Lemma}
\newtheorem{proposition}[theorem]{Proposition}
\newtheorem{corollary}[theorem]{Corollary}
\newtheorem{conjecture}[theorem]{Conjecture}

\theoremstyle{definition}
\newtheorem{definition}[theorem]{Definition}
\newtheorem{example}[theorem]{Example}

\theoremstyle{remark}
\newtheorem{remark}[theorem]{Remark}

\theoremstyle{plain}

\newcommand{\area}{\operatorname{area}}
\newcommand{\dinv}{\operatorname{dinv}}
\newcommand{\inv}{\operatorname{inv}}
\newcommand{\doff}{\operatorname{doff}}
\newcommand{\pol}{\operatorname{pol}}
\newcommand{\ides}{\operatorname{ides}}
\newcommand{\comp}{\operatorname{comp}}

\DeclareMathOperator{\tdinv}{tdinv}
\DeclareMathOperator{\pdinv}{pdinv}
\DeclareMathOperator{\maxtdinv}{maxtdinv}
\DeclareMathOperator{\rank}{rank}

\newcommand{\D}{\operatorname{D}} 
\newcommand{\GL}{\operatorname{GL}} 
\newcommand{\DP}{\operatorname{DP}} 
\newcommand{\LD}{\operatorname{LD}} 
\newcommand{\lighthline}{\noalign{{\color{black!20}\hrule height\arrayrulewidth}}} 

\title{The nabla operator, Hall--Littlewood polynomials and LLT polynomials}

\author{Feng Gu}
\address{Center for Combinatorics, LPMC, Nankai University, Tianjin 300071, P. R. China}
\email{fgu@mail.nankai.edu.cn}
\thanks{}

\author{Mingyang Kang}
\address{Center for Combinatorics, LPMC, Nankai University, Tianjin 300071, P. R. China}
\email{1120260002@mail.nankai.edu.cn}
\thanks{}

\author{Dun Qiu}
\address{Center for Combinatorics, LPMC, Nankai University, Tianjin 300071, P. R. China}
\email{qiudun@nankai.edu.cn}
\thanks{Dun Qiu is supported in part by the National Natural Science Foundation of China (No. 12671390),
the Natural Science Foundation of Tianjin (No. 24JCZDJC01390), and the Fundamental Research Funds for the Central Universities (No. 105-63263093).}

\author{Minhao Zhang}
\address{Center for Combinatorics, LPMC, Nankai University, Tianjin 300071, P. R. China}
\email{carpentercheungmath@gmail.com}
\thanks{}

\date{\today}

\subjclass[2020]{Primary 05E05; Secondary 16T30.}
\keywords{nabla operator, modified Hall--Littlewood polynomials, LLT polynomials, Catalanimals, Schur positivity, parking functions}

\begin{document}

\begin{abstract}
Bergeron, Garsia, Haiman, and Tesler conjectured that $(-1)^{n-\ell(\mu)}\nabla\wtH_\mu[X;0,t]$ and $\nabla\omega\wtH_\mu[X;0,t^{-1}]$ are Schur positive, 
and that $\nabla\omega\wtH_\mu[X;0,t^{-1}]-\nabla\omega\wtH_\nu[X;0,t^{-1}]$ is Schur positive when $\mu$ dominates $\nu$.  The first statement follows from the compositional shuffle conjecture of Haglund, Morse, and Zabrocki, proved by Carlsson and Mellit.  We prove the other two statements.  
All three statements also hold for $\nabla^r$ with $r\geq1$.  The main new step is an identity: 
when $\mu$ covers $\nu$, the difference $\omega\wtH_\mu[X;0,t^{-1}]-\omega\wtH_\nu[X;0,t^{-1}]$ is minus a power of $t$ times one LLT polynomial.  
Using the Catalanimals and the Cauchy identity of Blasiak, Haiman, Morse, Pun, and Seelinger, we show that $\nabla^r$ of this LLT polynomial is, 
up to sign, a positive sum of LLT polynomials.  We also extend the main results to the Schiffmann algebra setting.
\end{abstract}

\maketitle

\section{Introduction}
The nabla operator, introduced by Bergeron and Garsia~\cite{BG99}, played a significant role in their study of
modified Macdonald polynomials.  The diagonal-harmonics
conjecture of Garsia and Haiman~\cite{GH96} asserts that the bigraded Frobenius
characteristic of the space of diagonal harmonics is $\nabla e_n[X]$.
Haiman~\cite{Hai02} proved this statement.  Consequently,
$\left\langle\nabla e_n[X],s_\lambda[X]\right\rangle\in\mathbb N[q,t]$.

Haglund, Haiman, Loehr, Remmel, and Ulyanov~\cite{HHLRU05}
conjectured a parking-function formula for $\nabla e_n[X]$ in 2005,
now called the shuffle conjecture.
Moreover, the shuffle conjecture implies a positive expansion of $\nabla e_n[X]$ in LLT polynomials. 
In 2012, Haglund, Morse, and Zabrocki~\cite{HMZ12} introduced the
functions $C_\alpha[X;t]$ and formulated the compositional shuffle
conjecture; Carlsson and Mellit~\cite{CM18} proved it.
Gorsky and Negu\c{t}~\cite{GN15} formulated the rational shuffle
conjecture, and Bergeron, Garsia, Leven, and Xin~\cite{BGLX16}
proposed the compositional $(km,kn)$-shuffle conjecture.
Mellit~\cite{Mel21} proved the latter conjecture.
Together with the Schur positivity of LLT polynomials
established by Grojnowski and Haiman \cite{GrojnowskiHaiman07}, these results
imply that
$\left\langle\nabla^r C_\alpha[X;t],s_\lambda[X]\right\rangle
\in\mathbb N[q,t]$.  Further developments in the shuffle theorem
appear in the work of Blasiak, Haiman, Morse, Pun, and Seelinger~\cite{shuffleline,LLTcat,dens}.

In this paper, we are interested in the action of $\nabla$ on certain
special symmetric functions.  Bergeron et al.~\cite{BGHT99} formulated several conjectures concerning this
action.  Their Conjecture~I~\cite{BGHT99} asserts that, 
for every positive integer $m$ and every partition $\lambda$, $\nabla^m s_\lambda[X]$ 
is Schur positive up to a sign that depends only on $\lambda$.  
The LLT polynomial formula for $\nabla^m s_\lambda[X]$ conjectured by Loehr and Warrington \cite{LW08} 
was proved by Blasiak et al.~\cite{dens}, and it proves Conjecture I.  
Conjecture~IV~\cite{BGHT99},
asserting the signed Schur positivity of $\nabla m_\mu[X]$, was recently
proved by Qiu and Zhang~\cite{QZ26}.  We focus on Conjectures II and III in this paper.
Conjecture II follows from the compositional shuffle conjecture of Haglund, Morse, and Zabrocki~\cite{HMZ12}, proved by Carlsson and Mellit~\cite{CM18}; see Theorem~\ref{thmHMZ} below.
Very recently, Qu~\cite{Qu26} proved these two conjectures when the
indexing partition has at most two columns, and extended them to all
powers of $\nabla$; we prove Conjecture III here for arbitrary partitions, and we prove both conjectures for all
powers of $\nabla$.  Unless another section is indicated, the notation
used in this introduction is defined in Sections 2--4.

\begin{conjecture}[{\cite[Conjecture~II]{BGHT99}}]\label{conjecture2}
For any partitions \(\mu,\lambda\vdash n\), we have
\begin{equation}
\left\langle
(-1)^{n-\ell(\mu)}
\nabla\wtH_\mu[X;0,t],
s_\lambda[X]
\right\rangle
\in
\mathbb N[q,t].
\end{equation}
\end{conjecture}

\begin{conjecture}[{\cite[Conjecture~III]{BGHT99}}]\label{conjecture3}
For every partition \(\mu\),
\(\nabla\omega\wtH_\mu[X;0,t^{-1}]\) is integral in $q$ and $t$ and Schur positive.  Moreover, if
\(\mu,\nu\vdash n\) satisfy \(\mu\unrhd\nu\), then
\[
\nabla\omega\wtH_\mu[X;0,t^{-1}]
-
\nabla\omega\wtH_\nu[X;0,t^{-1}]
\]
is Schur positive.
\end{conjecture}

We first recall a parking-function formula for \(\nabla\) on the modified Hall--Littlewood polynomials.  It is due to Haglund, Morse, and Zabrocki~\cite{HMZ12}.
It follows from their expression of \((-1)^{n-\ell(\mu)}\wtH_\mu[X;0,t]\) in terms of \(C_\mu[X;t]\) and from their compositional shuffle conjecture, which was proved by Carlsson and Mellit~\cite{CM18}.  In the conventions of this paper (see Section~\ref{subProofMain1}), it reads as follows.

\begin{theorem}[{Haglund--Morse--Zabrocki~\cite{HMZ12}, Carlsson--Mellit~\cite{CM18}}]\label{thmHMZ}
For any partition \(\mu\vdash n\), we have
\[
(-1)^{n-\ell(\mu)}
\nabla\wtH_\mu[X;0,t]
=
t^{n(\mu)+n-\ell(\mu)}
\sum_{\substack{\pi\in\LD_{n,n}\\
\comp(\pi)=\mu}}
q^{\area(\pi)}
t^{\dinv(\pi)}
F_{\ides(\pi)}[X].
\]
\end{theorem}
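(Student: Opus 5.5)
The plan is to derive Theorem~\ref{thmHMZ} from two inputs: the expansion of the modified Hall--Littlewood polynomial in the Haglund--Morse--Zabrocki functions $C_\alpha$, and the compositional shuffle theorem of Carlsson--Mellit. First I will recall the identity from \cite{HMZ12}: for $\mu\vdash n$,
\[
(-1)^{n-\ell(\mu)}\wtH_\mu[X;0,t]=t^{c(\mu)}\,C_{\mu}[X;t]
\]
for an explicit exponent $c(\mu)$, possibly after reversing $\mu$ into a composition. The $C_\alpha$ are built from the creation operators $C_a$ applied to $1$. The identity is proved by comparing these creation operators with Jing's vertex operators for Hall--Littlewood functions. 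Concretely, $C_a$ is a plethystically twisted, sign-changed version of the Jing operator. Iterating it on $1$ therefore produces $(-1)^{n-\ell}$ times a power of $t$ times $\wtH_\mu[X;0,t]$, where $\wtH_\mu[X;0,t]$ is $t^{n(\mu)}H_\mu[X;1/t]$ in the paper's normalization.

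Next I will apply $\nabla$ and invoke the compositional shuffle theorem:
\[
\nabla C_\alpha[X;t]=\sum_{\substack{\pi \text{ parking function}\\ \text{touch}(\pi)=\alpha}} t^{\area(\pi)}q^{\dinv(\pi)}F_{\ides(\pi)}[X].
\]
The statement of Theorem~\ref{thmHMZ} has $q^{\area}t^{\dinv}$, so I must translate this using $q\leftrightarrow t$ symmetry together with the paper's conventions for $\LD_{n,n}$ and $\comp$ from Section~\ref{subProofMain1}. Most likely $\LD_{n,n}$ denotes labeled Dyck paths drawn so that the roles of area and dinv, and of touch points and the composition, match $C_\alpha$ with $q,t$ interchanged, or else $\nabla$ is taken in a convention that swaps $q$ and $t$. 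I will fix these conventions and check that $\comp(\pi)=\mu$ corresponds exactly to the touch composition being $\mu$ (or its reverse). I will also check that $\ides$ matches the reading-word descent set used by Carlsson--Mellit.

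Finally I will collect the powers of $t$. The prefactor $t^{c(\mu)}$ from the Hall--Littlewood identity, together with any normalization shift from the paper's $\nabla$ or from how $\dinv$ is defined (for example, a dinv that omits a constant or counts a complementary pair), must combine to $t^{n(\mu)+n-\ell(\mu)}$. The term $n-\ell(\mu)$ plausibly arises from the $C_a$ normalization, which contributes a factor $t^{a-1}$ per part of size $a$; summing over the parts gives $t^{n-\ell(\mu)}$. The term $n(\mu)$ comes from passing between $H_\mu[X;t]$ and the modified $\wtH_\mu$.

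I expect the main obstacle to be this bookkeeping: nailing down the exact HMZ identity with its power of $t$ and the ordering of the composition, and reconciling the statistic conventions. I would first verify the formula for $\mu=(1^n)$, where the left side is $\nabla e_n$ up to normalization, and for $\mu=(n)$, and use these cases to pin down the exponent. Sign issues are absent because of the factor $(-1)^{n-\ell(\mu)}$ in the statement.
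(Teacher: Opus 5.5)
Your plan matches the paper's argument: cite the Haglund--Morse--Zabrocki identity $(-1)^{n-\ell(\mu)}\wtH_\mu[X;0,t]=t^{n(\mu)+n-\ell(\mu)}C_\mu[X;t]$, apply $\nabla$, and use Carlsson--Mellit after interchanging $q$ and $t$. Here $\mu$ keeps its usual decreasing order, with no reversal; $\overleftarrow{\mu}$ only enters the $\omega\wtH_\mu[X;0,t^{-1}]$ expansion. The interchange is legitimate because $\wtH_\mu[X;q,t]=\wtH_{\mu'}[X;t,q]$ and $T_\mu\mapsto T_{\mu'}$, so $\nabla$ commutes with it.

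Two small corrections. First, Carlsson--Mellit reads $\nabla C_\alpha[X;q]=\sum t^{\area(\pi)}q^{\dinv(\pi)}F_{\ides(\pi)}$, so the interchange must also act on the parameter of $C_\alpha$, giving $\nabla C_\alpha[X;t]=\sum q^{\area(\pi)}t^{\dinv(\pi)}F_{\ides(\pi)}$; your displayed version mixes the two conventions. Second, your sanity check at $\mu=(1^n)$ is miscalibrated: $\wtH_{(1^n)}[X;0,t]=t^{\binom n2}C_{(1^n)}[X;t]$ is not a multiple of $e_n$. The case $\mu=(n)$ does pin down the exponent cleanly, via $\wtH_{(n)}[X;0,t]=h_n=(-t)^{n-1}C_{(n)}[X;t]$.
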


Since the Schur coefficients of \(\nabla C_\mu[X;t]\) lie in \(\mathbb N[q,t]\), as recalled at the beginning of this section (see also Proposition~\ref{QZ26schur}), Theorem~\ref{thmHMZ} implies Conjecture~\ref{conjecture2}.  The formula extends to arbitrary powers of \(\nabla\).

\begin{corollary}\label{thmmain1}
For any partition \(\mu\vdash n\) and every positive integer \(r\), we have
\begin{equation}
(-1)^{n-\ell(\mu)}
\nabla^r\wtH_\mu[X;0,t]
=
t^{n(\mu)+n-\ell(\mu)}
\sum_{\substack{\pi\in\LD_{nr,n}\\
\comp(\pi)=\mu}}
q^{\area(\pi)}
t^{\dinv(\pi)}
F_{\ides(\pi)}[X].
\end{equation}
\end{corollary}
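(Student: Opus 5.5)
The plan is to follow the route that yields Theorem~\ref{thmHMZ}, replacing the compositional shuffle theorem by its $(kn,n)$ version. Write $(-1)^{n-\ell(\mu)}\wtH_\mu[X;0,t]$ in terms of the Haglund--Morse--Zabrocki functions $C_\alpha[X;t]$. The expression I have in mind is the one underlying Theorem~\ref{thmHMZ}: up to the factor $t^{n(\mu)+n-\ell(\mu)}$ (and the normalizations fixed in Section~\ref{subProofMain1}), it is $C_\mu[X;t]$ with $\mu$ read as a composition in the paper's conventions. This identity is a statement about symmetric functions only and does not involve $\nabla$. Applying the linear operator $\nabla^r$ then reduces the corollary to a formula for $\nabla^r C_\mu[X;t]$.

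For that formula I would invoke the compositional $(km,kn)$-shuffle theorem of Bergeron--Garsia--Leven--Xin, proved by Mellit~\cite{Mel21}, in the special case of slope $(nr,n)$. It expresses $\nabla^r C_\alpha[X;t]$ as a sum over parking functions on Dyck paths in the $nr\times n$ rectangle whose touch composition is $\alpha$, weighted by $q^{\area}t^{\dinv}$ times the fundamental quasisymmetric function of the inverse descent set. Equivalently, it can be written as a positive sum over LLT data, which converts to the $F_{\ides}$ form in the standard way. For $r=1$ this is exactly the Carlsson--Mellit input used for Theorem~\ref{thmHMZ}. Combining the two steps, the prefactor $t^{n(\mu)+n-\ell(\mu)}$ passes through $\nabla^r$ unchanged, because it is a scalar in $t$.

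The main obstacle is matching conventions. I would need to check that the objects $\LD_{nr,n}$, $\comp$, $\area$, $\dinv$ and $\ides$ defined in Section~\ref{subProofMain1} agree with Mellit's statistics for the $(nr,n)$ case: which side the path touches, how the composition is read off (including any reversal of $\mu$), and whether $q,t$ or $t^{-1}$ normalizations appear. In particular, the rectangular dinv carries an extra contribution depending on $r$, and I must confirm that the paper's $\dinv$ on $\LD_{nr,n}$ already absorbs it. If it does not, a power $t^{c(r,n)}$ must be reconciled, presumably by the same bookkeeping that produced the exponent in Theorem~\ref{thmHMZ}.

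The first check I would run is the case $r=1$, where the formula must reduce verbatim to Theorem~\ref{thmHMZ}. That consistency pins down the conventions for general $r$, since the $(km,kn)$ statistics specialize to the $(n,n)$ ones.
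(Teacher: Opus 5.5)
Your proposal matches the paper's proof: apply $\nabla^r$ to the Haglund--Morse--Zabrocki identity \eqref{eqhallC}, $(-1)^{n-\ell(\mu)}\wtH_\mu[X;0,t]=t^{n(\mu)+n-\ell(\mu)}C_\mu[X;t]$, and substitute Mellit's compositional $(km,kn)$-shuffle theorem in the form of Theorem~\ref{kmknshuffle}. The convention questions you raise are settled there, since Mellit's theorem is stated with $q$ and $t$ interchanged and with $\dinv=\tdinv+\pdinv-\maxtdinv$ already being the rectangular statistic, so no extra power of $t$ and no reversal of $\mu$ appears.
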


Corollary~\ref{cor:hall-positivity} gives the
corresponding Schur positivity statement for every positive power of
\(\nabla\).  The proof begins with the identity of Haglund, Morse, and Zabrocki~\cite{HMZ12}, which expresses
\((-1)^{n-\ell(\mu)}\wtH_\mu[X;0,t]\) as a power of \(t\) times
\(C_\mu[X;t]\).  Applying \(\nabla^r\) and then Mellit's compositional
\((km,kn)\)-shuffle theorem gives the displayed parking-function formula.

We next record an explicit fundamental-quasisymmetric expansion for the first
symmetric function in Conjecture \ref{conjecture3}.  Haglund, Morse, and Zabrocki~\cite{HMZ12} conjectured this expansion in their conventions and normalization, and they showed that their conjecture is equivalent to their compositional shuffle conjecture.

\begin{theorem}[{Haglund--Morse--Zabrocki~\cite{HMZ12}, Carlsson--Mellit~\cite{CM18}}]\label{thmmain1.5}
For any partition \(\mu\vdash n\), we have
\begin{equation}
\begin{aligned}
\nabla\omega\wtH_\mu[X;0,t^{-1}]
&=
\sum_{\beta\leq\overleftarrow{\mu}}
\sum_{\substack{\pi\in\LD_{n,n}\\
\comp(\pi)=\beta}}
q^{\area(\pi)}
t^{
\doff_{\overleftarrow{\mu}}(\DP(\beta))
-
n(\mu)
+
\dinv(\pi)
}
F_{\ides(\pi)}[X].
\end{aligned}
\end{equation}
\end{theorem}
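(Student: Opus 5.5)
The plan is to reduce the statement to Theorem~\ref{thmHMZ} (equivalently, the compositional shuffle theorem for the $C_\beta[X;t]$) by expanding $\omega\wtH_\mu[X;0,t^{-1}]$ in the $C_\beta$ basis. Haglund, Morse, and Zabrocki express $(-1)^{n-\ell(\mu)}\wtH_\mu[X;0,t]$ as $t^{n(\mu)+n-\ell(\mu)}C_\mu[X;t]$, up to the normalization fixed in Section~\ref{subProofMain1}. Applying $\omega$ together with $t\mapsto t^{-1}$ therefore relates $\omega\wtH_\mu[X;0,t^{-1}]$ to $\omega C_\mu[X;t^{-1}]$. The first step is to write $\omega C_\mu[X;t^{-1}]$ as a linear combination $\sum_\beta c_{\mu\beta}(t)\,C_\beta[X;t]$. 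Since $\nabla$ is linear, Theorem~\ref{thmHMZ} applied term by term (in its compositional form, where for a composition $\beta$ we have $\nabla C_\beta=\sum_{\comp(\pi)=\beta} q^{\area}t^{\dinv}F_{\ides}$ up to the stated power of $t$) then gives the displayed formula, provided we can show two things: $c_{\mu\beta}(t)$ vanishes unless $\beta\le\overleftarrow{\mu}$, and for such $\beta$ it equals $t^{\doff_{\overleftarrow{\mu}}(\DP(\beta))-n(\mu)}$ times the normalizing factor.

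To obtain this expansion I would use the creation-operator description $C_\beta=C_{\beta_1}\cdots C_{\beta_\ell}\cdot 1$, and the known behaviour of $\omega$ with $t\mapsto t^{-1}$ on the Jing-type operators. Conjugating $C_a$ by $\omega$ and $t\mapsto t^{-1}$ yields an operator of the form $B_a$ with plethystic exponentials in $(1-t)$ reversed. This reverses the order of the parts, which is where $\overleftarrow{\mu}$ comes from. The resulting operator can be rewritten via the commutation relations of the $C_a$, namely the relation expressing $C_aC_b$ in terms of $C_{b}C_{a}$ and $C_{a+1}C_{b-1}$. Iterating these relations produces a sum over compositions $\beta$ obtained from $\overleftarrow{\mu}$ by moving cells leftward, which is exactly the order $\beta\le\overleftarrow{\mu}$. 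The coefficient of each $\beta$ should factor as a product of powers of $t$ recorded by the Dyck-path statistic $\doff_{\overleftarrow{\mu}}(\DP(\beta))$.

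The main obstacle is this combinatorial bookkeeping. We must show that all coefficients are single monomials, with no cancellation and no $(1-t)$ factors, and that their exponents match $\doff$. Rather than a direct operator computation, I would first try an induction on $\ell(\mu)$. Removing the first part of $\mu$ (the last part of $\overleftarrow{\mu}$) and using the Hall--Littlewood recursion $\wtH_\mu=\text{(creation operator)}\,\wtH_{\bar\mu}$ should let me check that $\doff$ changes by exactly the exponent contributed by one commutation step. The $-n(\mu)$ shift should come from the normalization $t^{n(\mu)}$ under $t\mapsto t^{-1}$. As a sanity check I would verify the base cases $\mu=(n)$ and $\mu=(1^n)$, which give $\nabla e_n$ and $\nabla$ of $\omega\wtH_{(1^n)}$, against the classical shuffle theorem.
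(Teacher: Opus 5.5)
Your reduction is the same as the paper's: write $\omega\wtH_\mu[X;0,t^{-1}]$ as a combination of the $C_\beta[X;t]$ and apply $\nabla$ term by term. Two points need correcting.

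First, the term-by-term input is the compositional shuffle theorem for arbitrary compositions, $\nabla C_\beta[X;t]=\sum_{\comp(\pi)=\beta}q^{\area(\pi)}t^{\dinv(\pi)}F_{\ides(\pi)}$ (Theorem~\ref{kmknshuffle} with $r=1$), with no extra power of $t$. Theorem~\ref{thmHMZ} covers only partitions and is a consequence of that theorem, not an equivalent form. The $\beta\le\overleftarrow{\mu}$ are generally not partitions; already $\overleftarrow{\mu}$ is weakly increasing.

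Second, and this is the real gap, all the content lies in the expansion with coefficients $t^{\doff_{\overleftarrow{\mu}}(\DP(\beta))-n(\mu)}$, i.e.\ identity~\eqref{eqhallomegaC}. The paper does not derive it. It quotes it from Haglund--Morse--Zabrocki: their identification of the Hall--Littlewood functions with operator products, together with their Theorem~5.6. Your proposal leaves exactly this step as the unresolved ``main obstacle''.

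Moreover, the sketch would not work as described.
\begin{itemize}
\item Conjugation by $\omega$ and $t\mapsto t^{-1}$ preserves the order of an operator product: $\omega\bigl(C_\mu[X;t^{-1}]\bigr)=\widetilde C_{\mu_1}\cdots\widetilde C_{\mu_\ell}(1)$, where $\widetilde C_a$ is the conjugated operator. So the reversal to $\overleftarrow{\mu}$ must be produced by the expansion itself.
\item The commutation relations among the $C_a$ only reorder $C$'s. Converting products of the $\widetilde C_a$ into $C$'s requires a mixed relation describing how $\widetilde C_a$ acts on $C_\gamma(1)$, and no such relation appears in your sketch.
\item In your induction, the creation operator for $\mu_1$ acts outermost, whereas the block $\beta^{(m)}\models\mu_1$ is innermost in $C_\beta$. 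Passing from $\bar\mu=(\mu_2,\ldots)$ to $\mu$ raises $\doff$ by $\ell(\beta^{(1)})+\cdots+\ell(\beta^{(m-1)})$. So the new operator must be moved past all of $C_{\beta^{(1)}}\cdots C_{\beta^{(m-1)}}$, not through a single commutation step.
\item The $C_\beta$ are linearly dependent. For example, $C_{(1,2)}[X;t]=t^{-1}C_{(2,1)}[X;t]$, so
\[
\omega\wtH_{(2,1)}[X;0,t^{-1}]=C_{(1,2)}[X;t]+C_{(1,1,1)}[X;t]=t^{-1}C_{(2,1)}[X;t]+C_{(1,1,1)}[X;t].
\]
Hence a coefficient $c_{\mu\beta}$ is not well defined. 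You cannot prove that it ``vanishes'' for $\beta\not\le\overleftarrow{\mu}$; you can only exhibit the specific expansion.
\end{itemize}

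Citing \eqref{eqhallomegaC} closes the gap, after which the theorem follows at once from Theorem~\ref{kmknshuffle}, as in the paper.
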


The \(C\)-expansion of
\(\omega\wtH_\mu[X;0,t^{-1}]\) due to Haglund, Morse, and Zabrocki~\cite{HMZ12}, together with the compositional shuffle theorem proved by Carlsson and Mellit \cite{CM18}, proves Theorem
\ref{thmmain1.5}.  A lower bound for \(\dinv(\pi)\) shows that every
displayed power of \(t\) is nonnegative.  Proposition~\ref{QZ26schur}
places the relevant Schur coefficients in
\(\mathbb N[q,t,t^{-1}]\), whereas the fundamental expansion together
with the integrality of the inverse Kostka matrix places them in
\(\mathbb Z[q,t]\).  Combining these two facts yields the following
result.

\begin{theorem}\label{thmmain2}
For any partitions \(\mu,\lambda\vdash n\), we have
\begin{equation}
\begin{aligned}
\left\langle
\nabla\omega\wtH_\mu[X;0,t^{-1}],
s_\lambda[X]
\right\rangle
&\in
\mathbb N[q,t].
\end{aligned}
\end{equation}
\end{theorem}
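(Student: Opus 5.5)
The plan is to combine the two facts stated just before the theorem: Schur positivity, which gives coefficients in $\mathbb N[q,t,t^{-1}]$, and polynomiality, which gives coefficients in $\mathbb Z[q,t]$. A Laurent polynomial lying in both rings has nonnegative integer coefficients and no negative powers of $t$, so it lies in $\mathbb N[q,t]$.

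\emph{Step 1 (integrality).} Start from Theorem~\ref{thmmain1.5}, which writes $\nabla\omega\wtH_\mu[X;0,t^{-1}]$ as a finite sum $\sum_\gamma c_\gamma(q,t)F_\gamma[X]$. I first check that every exponent
\[
\doff_{\overleftarrow{\mu}}(\DP(\beta))-n(\mu)+\dinv(\pi)
\]
is nonnegative. For this I would use a lower bound on $\dinv(\pi)$ over paths with $\comp(\pi)=\beta$, together with the fact that $\doff_{\overleftarrow{\mu}}(\DP(\beta))$ compensates for the shift $n(\mu)$ when $\beta\le\overleftarrow{\mu}$. Since $\area(\pi)\ge0$ as well, each $c_\gamma$ then lies in $\mathbb N[q,t]$.

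Next, the left side is symmetric, so I expand it in Schur functions, $\sum_\lambda a_\lambda s_\lambda$. Writing $s_\lambda=\sum_T F_{\mathrm{Des}(T)}$ over standard tableaux, and using that the $F_\gamma$ are linearly independent, I get $c_\gamma=\sum_\lambda a_\lambda\, d_{\lambda\gamma}$ with $d_{\lambda\gamma}\in\mathbb N$. To recover the $a_\lambda$ integrally, I restrict to $\gamma$ ranging over partition-shaped compositions. There the matrix $d_{\lambda\gamma}$ is unitriangular in dominance order; equivalently, pairing with $h_\gamma$ gives the Kostka matrix. Its inverse is integral, so each $a_\lambda$ is a $\mathbb Z$-linear combination of the $c_\gamma$, and hence $a_\lambda\in\mathbb Z[q,t]$.

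\emph{Step 2 (positivity).} Proposition~\ref{QZ26schur} gives $a_\lambda\in\mathbb N[q,t,t^{-1}]$. Comparing coefficients monomial by monomial in $\mathbb Z[q,t,t^{-1}]$ then yields $a_\lambda\in\mathbb N[q,t]$.

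I expect the main obstacle to be the nonnegativity of the $t$-exponents in Step 1, that is, establishing the $\dinv$ lower bound against $n(\mu)-\doff_{\overleftarrow{\mu}}(\DP(\beta))$. My first attempt would be to exhibit, for a path of composition $\beta$, enough forced dinv pairs between the consecutive blocks recorded by $\DP(\beta)$. If that bound fails in some case, Step 1 still gives $a_\lambda\in\mathbb Z[q,t,t^{-1}]$, which is useless on its own. As a fallback I would instead prove polynomiality in $t$ directly: I would check that the quantity $\nabla\omega\wtH_\mu[X;0,t^{-1}]$ produced by Proposition~\ref{QZ26schur} has no negative powers of $t$, for example via the expression through $C$-functions, whose $\nabla$-images are in $\mathbb N[q,t]$.
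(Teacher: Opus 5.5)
\textbf{Gap: the $t$-exponent bound is announced but never proved.} Your architecture is the paper's proof of Corollary~\ref{corohallschur}:
\begin{itemize}
\item Laurent Schur positivity comes from \eqref{eqhallomegaC} and Proposition~\ref{QZ26schur}.
\item Integrality comes from the fundamental expansion of Theorem~\ref{thmmain1.5}, once every $t$-exponent is known to be nonnegative.
\item Then $\mathbb N[q,t,t^{-1}]\cap\mathbb Z[q,t]=\mathbb N[q,t]$.
\end{itemize}
Inverting the exact-descent matrix on partition-shaped compositions is a valid substitute for the paper's pairing with $h_\lambda$ and the inverse Kostka matrix.

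The problem is that the one nontrivial ingredient is only announced. That ingredient is the bound
\[
\dinv(\pi)\ \geq\ \delta_\mu(\beta):=n(\mu)-\doff_{\overleftarrow\mu}(\DP(\beta))
\]
of Lemma~\ref{lemmadoff}. Also, $\doff$ does not compensate for $n(\mu)$. Write $\alpha=\overleftarrow\mu=(\alpha_1\le\cdots\le\alpha_m)$ and let $\ell_k$ be the number of parts of $\beta$ inside $\alpha_k$. Then $\delta_\mu(\beta)=\sum_k(m-k)(\alpha_k-\ell_k)$. This is positive unless every $\alpha_k$ with $k<m$ is split into parts of size $1$, so $\dinv$ must absorb it.

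Your fallback is circular. The terms $t^{-\delta_\mu(\beta)}\nabla C_\beta$ in \eqref{eqhallomegaC} all have Schur coefficients in $\mathbb N[q,t,t^{-1}]$, so they cannot cancel. Hence the theorem is equivalent to $t^{\delta_\mu(\beta)}$ dividing every Schur coefficient of $\nabla C_\beta$ for each $\beta$. Knowing those coefficients lie in $\mathbb N[q,t]$ gives no such divisibility.

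\textbf{What the paper does to prove the bound.}
\begin{itemize}
\item For square paths $\dinv=\tdinv$, so it suffices to count diagonal inversions.
\item Count them between the blocks of $\alpha$, not of $\beta$. Each $\alpha$-block of the area sequence starts at $0$ because $\comp(\pi)=\beta$ refines $\alpha$. Block $k$ has $z_k=\ell_k$ zeros.
\item The weight $m-k$ forces you to pair block $k$ with \emph{every} later block, not only the next or consecutive ones. Each such pair must supply at least $\alpha_k-z_k$ inversions.
\end{itemize}
Take blocks $k<k'$ with $s=\alpha_k\le S=\alpha_{k'}$; this is where the weak increase of $\overleftarrow\mu$ is used. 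There are two cases.
\begin{itemize}
\item Suppose the later block reaches the maximum height of the earlier one. For each entry $a_i=h>0$ of the earlier block, take the first position $j_h$ of height $h$ in the later block. The labels at $j_h-1$ and $j_h$ increase, since those north steps share a column. So either $(i,j_h)$ (equal heights) or $(j_h-1,i)$ (heights differing by one) is an inversion. This gives $s-z_k$ inversions.
\item Otherwise, for each entry $b_j=h$ of the later block, use the first occurrence of height $h+1$ in the earlier block in the same way. This gives $S\ge s$ inversions.
\end{itemize}
Summing over all pairs $k<k'$ gives $\dinv(\pi)\ge\sum_k(m-k)(\alpha_k-\ell_k)=\delta_\mu(\beta)$. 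That is exactly what your Step~1 needs.
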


Thus Theorem \ref{thmmain2} proves the first assertion of Conjecture
\ref{conjecture3}.  To prove its dominance assertion, it is enough to
consider a dominance cover \(\mu\gtrdot\nu\).  We first identify
$t^{n(\nu)}(\wtH_\nu[X;0,t^{-1}]-\wtH_\mu[X;0,t^{-1}])$ with a Catalan function
$H_{i,\nu}[X;t]$ in the sense of Chen \cite{Chen10} and Blasiak, Morse, Pun, and Summers \cite{BMPS19}, where $i$ is the smallest index with $\mu_i\neq\nu_i$, and then use the representation-theoretic
interpretation of LLT polynomials introduced by Grojnowski and Haiman~\cite{GrojnowskiHaiman07} and
reformulated by Blasiak et al.~\cite{shuffleline,dens}.  This produces an LLT polynomial expression for
$\wtH_\mu[X;0,t^{-1}]-\wtH_\nu[X;0,t^{-1}]$.  The Catalanimal formula of LLT polynomials by Blasiak et al.~\cite[Corollary~8.4.2]{LLTcat}, together with their Cauchy identity~\cite[Theorem~6.1.3]{dens}, then gives a positive LLT expansion of $\nabla^r
\left(
\omega\wtH_\mu[X;0,t^{-1}]
-
\omega\wtH_\nu[X;0,t^{-1}]
\right)$.  Here a Catalanimal, in the sense of Blasiak et al.~\cite{LLTcat}, is the Weyl symmetrization of a monomial times a product of factors $1-qtz^\alpha$, divided by factors $1-qz^\alpha$ and $1-tz^\alpha$, where $\alpha$ runs over given sets of positive roots; see Section \ref{secCat}.

\begin{theorem}\label{thmintroNablaHallLLT}
Let $\mu,\nu\vdash n$ satisfy $\mu\gtrdot\nu$, and let
$\boldsymbol\nu^{\mu,\nu}$ be the tuple of skew diagrams defined in
Section~\ref{subLLTcat}.  For a tuple $\boldsymbol\tau$ of skew diagrams, let
$G_{\boldsymbol\tau}[X;t]$ denote its LLT polynomial, defined in Section~\ref{subLLT}.
\begin{enumerate}
\item The difference
\[
\omega\wtH_\mu[X;0,t^{-1}]
-\omega\wtH_\nu[X;0,t^{-1}]
\]
is the negative of an integral power of $t$ times
$G_{\boldsymbol\nu^{\mu,\nu}}[X;t]$.
\item For every positive integer $r$, the symmetric function
\[
\nabla^r\!\Bigl(
  \omega\wtH_\mu[X;0,t^{-1}]
  -\omega\wtH_\nu[X;0,t^{-1}]
\Bigr)
\]
is a finite sum of terms $q^at^bG_{\boldsymbol\tau}[X;t]$, where
$a\geq1$ and $b$ are integers and $\boldsymbol\tau$ is a tuple of
skew diagrams.  Theorem~\ref{thmNablaHallLLT} gives these terms
explicitly.
\end{enumerate}
\end{theorem}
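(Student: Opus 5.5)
The plan has two stages, matching the two parts of the statement. For part (1), start from the raising-operator (Catalan function) description of modified Hall--Littlewood polynomials. In the Blasiak--Morse--Pun--Summers conventions,
\[
t^{n(\nu)}\wtH_\nu[X;0,t^{-1}]=H(\Delta^+;\nu)=\prod_{i<j}(1-tR_{ij})^{-1}s_\nu ,
\]
where $\nu$ is viewed as a weight. A dominance cover $\mu\gtrdot\nu$ means $\mu=\nu+\epsilon_i-\epsilon_j$ for some $i<j$, with $i$ the first index where the two partitions differ. The cover condition also forces $\nu_i=\nu_{i+1}=\dots$ or $\nu_{j-1}=\nu_j$ type constraints, i.e.\ either $j=i+1$ or $\nu_i=\nu_{j}+2$ with equal parts in between.

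The key identity is obtained by splitting off one raising factor. Write $\Psi=\Delta^+$ and use
\[
(1-tR_{ij})^{-1}=1+tR_{ij}(1-tR_{ij})^{-1}.
\]
This expresses $H(\Psi;\nu)$ as $H(\Psi\setminus\{(i,j)\};\nu)$ plus $t$ times a Catalan function of weight $\nu+\epsilon_i-\epsilon_j=\mu$. Straightening the intermediate equal parts should identify this with $t^{\,\cdot}H(\Psi;\mu)$, leaving the difference equal to a single Catalan function $H_{i,\nu}$ with root ideal $\Delta^+$ minus one removable root.

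Next, apply $\omega$. A Catalan function whose root ideal has this shape is, by the Grojnowski--Haiman/Blasiak et al.\ interpretation (Euler characteristic of a line bundle on a flag-type variety, or the LLT Catalanimal $\mathcal{H}$ with $R_q=\emptyset$), exactly $\omega$ of an LLT polynomial $G_{\boldsymbol\nu^{\mu,\nu}}[X;t]$ of a tuple of single rows/columns. The power of $t$ and the sign come out of the bookkeeping between $t^{n(\mu)}$ and $t^{n(\nu)}$, since $n(\mu)=n(\nu)-(j-i)$ up to the correction.

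For part (2), write the LLT polynomial from (1) as a Catalanimal $H(R_q,R_t,R_{qt},\lambda)$ via \cite[Corollary~8.4.2]{LLTcat}, with $R_q=R_{qt}=\emptyset$ in the $q$-free case. Then use that $\nabla^r$ acts on such Catalanimals by enlarging $R_q$ and $R_{qt}$ and shifting the weight: concretely, $\nabla^r$ of an LLT Catalanimal equals a Catalanimal with $R_q=R_{qt}=\Delta^+$ and weight $\lambda+r\delta$-type twist, up to sign and powers of $q,t$, as in the proof of the Loehr--Warrington formula in \cite{dens}. Then invoke the Cauchy identity \cite[Theorem~6.1.3]{dens} to expand this Catalanimal as a sum over tuples $\boldsymbol\tau$ of $q^at^bG_{\boldsymbol\tau}$. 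Positivity of $a$ comes from the removed root $(i,j)$: the factor $qR$ associated to it must be used at least once, because the term with no $q$ reproduces the constant piece, which cancels between $\mu$ and $\nu$.

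The main obstacle will be this last cancellation/sign step. A naive application of the Cauchy identity gives a signed sum, so I must show that the negative sign from (1) combines with the sign $(-1)^{\cdot}$ produced by $\nabla^r$ of a non-dominant weight to yield uniformly positive terms with $a\ge 1$. I would first try to choose the weight so that the Catalanimal is already in the ``tame'' form required by \cite[Theorem~6.1.3]{dens}. Failing that, I would straighten using the Weyl symmetrization relation $\weyl(z^\lambda\cdot)=-\weyl(z^{s\cdot\lambda}\cdot)$.
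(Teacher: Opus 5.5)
Part (1) of your proposal has a genuine gap in the nonadjacent case $j>i+1$. For $j=i+1$ your splitting agrees with the paper's.

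Splitting off $(1-tR_{ij})^{-1}$ for the cover root $\epsilon_i-\epsilon_j$ is a valid manipulation. Write $Q_\gamma=t^{n(\gamma)}\wtH_\gamma[X;0,t^{-1}]=\sum_\lambda K_{\lambda\gamma}(t)s_\lambda$. The splitting gives exactly $Q_\nu-tQ_\mu=H(\Delta^+\setminus\{\epsilon_i-\epsilon_j\};\nu)$. The second term is already $tQ_\mu$, so there is nothing left to straighten.

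Since $n(\nu)-n(\mu)=j-i$ exactly, the difference in the statement is $-t^{-n(\nu)}\omega\bigl(Q_\nu-t^{j-i}Q_\mu\bigr)$. So you have the wrong power of $t$, and $\Delta^+\setminus\{\epsilon_i-\epsilon_j\}$ is not a root ideal. Your function is not even signed Schur positive. For $\mu=(3,2,1)$, $\nu=(2,2,2)$, the function $Q_\nu-tQ_\mu$ has $s_{222}$-coefficient $1$, but its $s_{33}$-coefficient is $K_{33,222}(t)-tK_{33,321}(t)=t^3-t^2$.

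The missing idea is to split off a \emph{simple} root $\alpha_p=\epsilon_p-\epsilon_{p+1}$ with $i\le p<j$. This gives $H_{p,\nu}=Q_\nu-tQ_{\nu+\alpha_p}$, and one then proves $Q_{\nu+\alpha_p}=t^{j-i-1}Q_\mu$. That needs two ingredients:
\begin{enumerate}
\item the exchange relation $Q_{(\ldots,a,a+1,\ldots)}=tQ_{(\ldots,a+1,a,\ldots)}$, equivalently $H_{p,\gamma}=0$ when $\gamma_{p+1}=\gamma_p+1$, which follows from the $s_p$-invariance of $\Delta^+\setminus\{\alpha_p\}$ and the straightening sign;
\item the correct shape of a nonadjacent cover, namely $\nu_i=\cdots=\nu_j$, i.e.\ $(\mu_i,\ldots,\mu_j)=(c+2,c+1,\ldots,c+1,c)$, not $\nu_i=\nu_j+2$ as you wrote.
\end{enumerate}

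Your identification of $H_{i,\nu}$ with an LLT polynomial is also only asserted. The paper realizes $H_{i,\nu}$ as the polynomial part of $\mathbb H_t^{\mathbf r}(\chi_\nu(\operatorname{GL}_{\mathbf r}))$ for a Levi with a single $\operatorname{GL}_2$ block at $(i,i+1)$, and factors $\chi_\nu=\chi_B\chi_C^*$. It must then check that this equals $w_0(F\overline E)$ for semi-symmetric Hall--Littlewood polynomials. That check again uses the platform structure, because $B+\rho_{\mathbf r}$ is not dominant when $j>i+1$. Only then do \cite[Proposition~5.4.3, Theorem~5.5.4]{dens} apply. The resulting tuple is also not made of single rows or columns: its central component is $E=(2^{\nu_{i+1}},1^{\nu_i-\nu_{i+1}})$, and this component matters below.

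For part (2), your outline matches the paper's: Catalanimal formula \cite[Corollary~8.4.2]{LLTcat}, then the Cauchy identity \cite[Theorem~6.1.3]{dens}, then polynomial parts. However, the Catalanimal is not the one you describe. Its root sets are defined by adjusted contents of the $r$-stretched tuple, with $R_q$ the complement of the diagonal Levi roots rather than $\Delta^+$.

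Your reason for $a\ge1$, a forced $q$-factor attached to $\epsilon_i-\epsilon_j$ after cancellation between $\mu$ and $\nu$, does not correspond to any step of the argument. Both the sign and the $q$-power come from the magic number. The component $E$ has exactly one nonempty diagonal without an initial box of a row, so $\operatorname{mag}(\boldsymbol\nu^{\mu,\nu})=1$ and $n_0(\gamma)=r(\nu_{i+1}-1)$. This gives the prefactor $-(qt)^{1+r(\nu_{i+1}-1)}$, whose sign cancels the minus from part (1). After interchanging $q$ and $t$, the Cauchy sum only adds factors $q^{|\boldsymbol\lambda|}$.

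The Cauchy identity carries no signs, so there is nothing to straighten away by Weyl symmetrization. The real work, which your proposal does not address, is verifying the identity's hypotheses for the boundary blocks of $\boldsymbol\nu_r$:
\begin{enumerate}
\item choosing a permutation (the $r$-order) for which the staircase block minima are almost decreasing, respectively almost increasing;
\item checking the conditions of \cite[Lemma~6.2.2]{dens}, so that $z^{\widehat\eta}$ can be absorbed into $F$ while the twist changes from $\tau_r$ to $\theta_r$.
\end{enumerate}
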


Part (1) is the main new identity of this paper; see Theorem \ref{thmHallLLT}.  Since LLT polynomials are Schur positive by the theorem of Grojnowski and Haiman \cite{GrojnowskiHaiman07}, part (2) gives
\[
\left\langle
\nabla^r\omega\wtH_\mu[X;0,t^{-1}]
-
\nabla^r\omega\wtH_\nu[X;0,t^{-1}],
s_\lambda[X]
\right\rangle
\in
\mathbb N[q,t,t^{-1}].
\]
On the other hand, Theorem \ref{thmmain2}, together with the integrality of
\(\nabla\) proved by Bergeron et al.~\cite{BGHT99}, gives integrality over
\(\mathbb Z[q,t]\).  Combining these two facts eliminates the negative powers of
\(t\) and yields the following stronger form of the dominance assertion.

\begin{theorem}\label{thmmain3}
For any partitions \(\mu,\nu,\lambda\vdash n\) with
\(\mu\unrhd\nu\), and every positive integer \(r\), we have
\begin{equation}\label{eqmain3}
\left\langle
\nabla^r\omega\wtH_\mu[X;0,t^{-1}]
-
\nabla^r\omega\wtH_\nu[X;0,t^{-1}],
s_\lambda[X]
\right\rangle
\in
\mathbb N[q,t].
\end{equation}
\end{theorem}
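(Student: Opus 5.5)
We reduce to dominance covers and then combine positivity from Theorem~\ref{thmintroNablaHallLLT} with integrality. If $\mu\unrhd\nu$, there is a chain $\mu=\mu^{(0)}\gtrdot\mu^{(1)}\gtrdot\cdots\gtrdot\mu^{(k)}=\nu$ of dominance covers in the poset of partitions of $n$. The difference in \eqref{eqmain3} telescopes:
\[
\nabla^r\omega\wtH_\mu[X;0,t^{-1}]-\nabla^r\omega\wtH_\nu[X;0,t^{-1}]
=\sum_{j=1}^{k}\Bigl(\nabla^r\omega\wtH_{\mu^{(j-1)}}[X;0,t^{-1}]-\nabla^r\omega\wtH_{\mu^{(j)}}[X;0,t^{-1}]\Bigr).
\]
Since $\mathbb N[q,t]$ is closed under addition, it suffices to treat a single cover $\mu\gtrdot\nu$. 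The case $\mu=\nu$ is trivial.

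For a cover, Theorem~\ref{thmintroNablaHallLLT}(2) writes the difference as a finite sum of terms $q^at^bG_{\boldsymbol\tau}[X;t]$ with positive coefficients. By Grojnowski--Haiman, each LLT polynomial $G_{\boldsymbol\tau}[X;t]$ has Schur coefficients in $\mathbb N[t]$. Hence each Schur coefficient of the difference lies in $\mathbb N[q,t,t^{-1}]$. This is the only nontrivial input, and it is supplied by the earlier theorem.

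It remains to show that these coefficients also lie in $\mathbb Z[q,t]$. Then, because a Laurent polynomial with nonnegative coefficients that is also a polynomial in $t$ lies in $\mathbb N[q,t]$, we are done. This integrality is the step I expect to need care. For $r=1$ it follows immediately from Theorem~\ref{thmmain2} applied to $\mu$ and to $\nu$ separately.

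For $r\ge 2$, I would argue by induction on $r$. Theorem~\ref{thmmain2} shows that $\nabla\omega\wtH_\mu[X;0,t^{-1}]$ is a symmetric function of degree $n$ with Schur coefficients in $\mathbb Z[q,t]$, so it is a $\mathbb Z[q,t]$-combination of Schur functions. By the integrality theorem of Bergeron et al., $\nabla$ maps $\mathbb Z[q,t]$-integral symmetric functions to $\mathbb Z[q,t]$-integral ones (indeed $\nabla s_\lambda$ has coefficients in $\mathbb Z[q,t]$). Applying this $r-1$ more times shows that $\nabla^r\omega\wtH_\mu[X;0,t^{-1}]$ has Schur coefficients in $\mathbb Z[q,t]$, and the same holds for $\nu$. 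Hence so does their difference.

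Intersecting $\mathbb N[q,t,t^{-1}]$ with $\mathbb Z[q,t]$ yields $\mathbb N[q,t]$, which proves \eqref{eqmain3}.
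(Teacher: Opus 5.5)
Your proposal is correct and follows essentially the same route as the paper's proof. You reduce to a single dominance cover via a saturated chain, and get Schur coefficients in $\mathbb N[q,t,t^{-1}]$ from the LLT expansion in Theorem~\ref{thmintroNablaHallLLT} together with Grojnowski--Haiman positivity. You then get integrality over $\mathbb Z[q,t]$ from Theorem~\ref{thmmain2} combined with the Bergeron et al.\ integrality of $\nabla$ on Schur functions (for $r\geq 2$), and finish with $\mathbb N[q,t,t^{-1}]\cap\mathbb Z[q,t]=\mathbb N[q,t]$.
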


For \(r=1\), Theorem \ref{thmmain3} proves the second assertion of
Conjecture \ref{conjecture3}.  Taking $\nu=(1^n)$ in Theorem \ref{thmmain3}, and using $\omega\wtH_{(1^n)}[X;0,t^{-1}]=C_{(1^n)}[X;t]$ together with Proposition \ref{QZ26schur}, we also extend Theorem \ref{thmmain2} to all powers of $\nabla$.

\begin{corollary}\label{coromain4}
For any partitions \(\mu,\lambda\vdash n\) and every positive integer \(r\), we have
\begin{equation}
\left\langle
\nabla^r\omega\wtH_\mu[X;0,t^{-1}],
s_\lambda[X]
\right\rangle
\in
\mathbb N[q,t].
\end{equation}
\end{corollary}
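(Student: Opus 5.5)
The plan is to decompose \(\nabla^r\omega\wtH_\mu[X;0,t^{-1}]\) as a telescoping sum ending at the bottom element \(\nu=(1^n)\) of the dominance order. The hint in the text says to take \(\nu=(1^n)\) in Theorem~\ref{thmmain3}. Every \(\mu\vdash n\) satisfies \(\mu\unrhd(1^n)\), so Theorem~\ref{thmmain3} applied with this \(\nu\) gives
\[
\left\langle
\nabla^r\omega\wtH_\mu[X;0,t^{-1}]
-
\nabla^r\omega\wtH_{(1^n)}[X;0,t^{-1}],
s_\lambda[X]
\right\rangle
\in\mathbb N[q,t]
\]
for every \(\lambda\vdash n\) and every \(r\geq1\). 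By linearity of \(\langle\cdot,s_\lambda\rangle\), it then remains to show
\[
\left\langle\nabla^r\omega\wtH_{(1^n)}[X;0,t^{-1}],s_\lambda[X]\right\rangle\in\mathbb N[q,t],
\]
because \(\mathbb N[q,t]\) is closed under addition.

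For the base term I will use the identity \(\omega\wtH_{(1^n)}[X;0,t^{-1}]=C_{(1^n)}[X;t]\) quoted just before the statement. If needed, I would verify it directly. The one-column modified Hall--Littlewood polynomial \(\wtH_{(1^n)}[X;0,t]\) is \(\sum_\lambda \tilde K_{\lambda,(1^n)}(t)s_\lambda\), which is the graded Frobenius characteristic of the coinvariant ring. After the substitution \(t\mapsto t^{-1}\) and applying \(\omega\), this should match the Haglund--Morse--Zabrocki definition of \(C_{1^n}=C_1\cdots C_1(1)\) in the paper's normalization; Section~\ref{subProofMain1} fixes these conventions. Any power of \(t\) discrepancy here would have to be tracked.

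With that identity in hand, the base term becomes \(\nabla^rC_{(1^n)}[X;t]\). Proposition~\ref{QZ26schur}, which I take to encode the compositional \((km,kn)\)-shuffle theorem together with Grojnowski--Haiman LLT positivity, states that the Schur coefficients of \(\nabla^rC_\alpha[X;t]\) lie in \(\mathbb N[q,t]\) for every \(r\geq1\) and composition \(\alpha\). Applying it with \(\alpha=(1^n)\) finishes the proof.

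The only potential obstacle is making sure that the normalization of \(C_{(1^n)}\) in Proposition~\ref{QZ26schur} agrees exactly with \(\omega\wtH_{(1^n)}[X;0,t^{-1}]\), with no stray factor \(t^{-k}\). If such a factor appeared, the coefficients would a priori lie only in \(\mathbb N[q,t,t^{-1}]\). In that case I would repeat the integrality argument used for Theorem~\ref{thmmain3}. Integrality of \(\nabla\) (Bergeron et al.) applied to the Schur-integral function \(\omega\wtH_{(1^n)}[X;0,t^{-1}]\), once that function is shown to be integral as in Theorem~\ref{thmmain2}, puts the coefficients in \(\mathbb Z[q,t]\). Intersecting \(\mathbb Z[q,t]\) with \(\mathbb N[q,t,t^{-1}]\) gives \(\mathbb N[q,t]\).
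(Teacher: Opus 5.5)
Your proposal is correct and is essentially the paper's proof: take $\nu=(1^n)$ in Theorem~\ref{thmmain3}, and apply Proposition~\ref{QZ26schur} to the base term $C_{(1^n)}[X;t]$. Your normalization worry is settled directly by \eqref{eqhallomegaC}, since the only $\beta\leq(1^n)$ is $(1^n)$ and $\doff_{(1^n)}(\DP(1^n))=\binom n2=n(1^n)$; so no stray power of $t$ appears, and neither the coinvariant-ring check nor your fallback integrality argument is needed.
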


For coprime positive $(r,s)$, Section~\ref{sec:schiffmann-coprime}
extends Corollary~\ref{thmmain1}, Theorem~\ref{thmmain1.5},
Theorem~\ref{thmNablaHallLLT}, Theorem~\ref{thmmain3}, and Corollary~\ref{coromain4}
to the Schiffmann algebra setting.

The paper is organized as follows.  Section 2 reviews the necessary background,
including symmetric functions, LLT polynomials, and the combinatorial objects
used throughout the paper.  Section 3 recalls the \(C\) operators of
Haglund, Morse, and Zabrocki~\cite{HMZ12} and establishes their connection with Hall--Littlewood
polynomials.  Section 4 first relates Hall--Littlewood polynomials to
Catalan functions, and then relates the latter to LLT
polynomials.  Section 5 uses the Catalanimal construction and the
Cauchy identity developed by Blasiak et al.~\cite{LLTcat,dens} to complete the proofs of
Theorems~\ref{thmintroNablaHallLLT} and~\ref{thmmain3}, as well as Corollary~\ref{coromain4}.
Section 6 gives these generalizations to the Schiffmann algebra setting.

\section{Background}
In Section~\ref{sub1}, we review the relevant material on symmetric functions.
For further details, we refer the reader to the books of Haglund \cite{haglund-book} and Macdonald \cite{Mac95}.
In Section~\ref{subLLT}, we adopt the notation of
Blasiak et al.~\cite{shuffleline,LLTcat,dens} to define LLT polynomials and recall the
properties that will be used later.
In Section~\ref{sub3}, we introduce the combinatorial objects used in this
paper.  For further details, see Bergeron, Garsia, Leven, and Xin~\cite{BGLX16}.

\subsection{Symmetric functions}\label{sub1}
A partition is a weakly decreasing sequence of positive integers $\lambda=(\lambda_1,\lambda_2,\cdots,\lambda_m)$ 
satisfying $\lambda_1\geq\lambda_2\geq\cdots\geq\lambda_m>0$.
If $|\lambda|=\lambda_1+\cdots+\lambda_m=n$, we write $\lambda\vdash n$.
We denote by $\ell(\lambda)$ the length of $\lambda$, and by $\lambda'$ its conjugate partition.
For a partition $\lambda=(\lambda_1,\cdots,\lambda_m)$, we write $\overleftarrow\lambda=(\lambda_m,\cdots,\lambda_1)$.
For partitions $\mu,\nu\vdash n$, write $\mu\unrhd\nu$ if $$\sum_{i=1}^{k}\mu_i\geq\sum_{i=1}^{k}\nu_i$$ for every $k\geq1$, and write $\mu\gtrdot\nu$ if $\mu$ covers $\nu$ in this order.

A composition of $n$ is a sequence $\alpha=(\alpha_1,\cdots,\alpha_m)$ of positive integers with $|\alpha|=\alpha_1+\cdots+\alpha_m=n$; we write $\alpha\models n$ and $\ell(\alpha)=m$. 
Its descent set is $\operatorname{Des}(\alpha)=\{\alpha_1,\alpha_1+\alpha_2,\cdots,\alpha_1+\cdots+\alpha_{m-1}\}$. 
For $\alpha,\beta\models n$, we write $\beta\leq\alpha$ if $\operatorname{Des}(\alpha)\subseteq\operatorname{Des}(\beta)$. 
Thus, if $\beta\leq\alpha$, then $\beta$ is uniquely of the form $\beta=\beta^{(1)}\mid\cdots\mid\beta^{(m)}$, where $\beta^{(k)}\models\alpha_k$.  

Let $\Lambda=\bigoplus_{n\geq 0}\Lambda^n$ be the ring of symmetric functions with coefficients in $\mathbb{Q}(q,t)$.
In this paper, we use the standard bases $m_\lambda, e_\lambda, h_\lambda, p_\lambda, s_\lambda$, 
which denote the monomial, elementary, complete homogeneous, power-sum, and Schur symmetric functions, respectively.
We also write $F_S(X)$ for the fundamental quasisymmetric function, where $S\subseteq \{1,2,\cdots, n-1\}$ and 
$$F_S(X) = \sum_{\substack{i_1\leq \cdots \leq i_n \\ j\in S \Rightarrow i_j < i_{j+1}}} x_{i_1}\cdots x_{i_n}.$$

The Hall inner product is defined by $\langle p_\lambda,p_\mu\rangle=z_\lambda\delta_{\lambda,\mu}$, where $z_\lambda=\prod_{i\geq1}i^{m_i}m_i!$ and $m_i$ is the number of parts of $\lambda$ equal to $i$. 
We denote by $\omega$ the standard involution on $\Lambda$, which acts by $\omega(s_\lambda)=s_{\lambda'}$.
For any $f\in\Lambda$, we denote by $f^\perp$ the Hall-adjoint operator to multiplication by $f$.
We write $\mathbb N=\{0,1,2,\cdots\}$.  A symmetric function $f$ is Schur positive if every coefficient $\langle f,s_\lambda\rangle$ lies in $\mathbb{N}[q,t]$.

We also adopt the standard plethystic notation.  Throughout the paper, we set $M=(1-q)(1-t)$.  For a formal sum of variables $E$, 
the expression $f[E]$ is obtained by replacing each $p_r$ in $f$ with $p_r[E]$, where $p_r[E]$ is obtained from $E$ by replacing every variable, including $q$ and $t$, by its $r$th power.
We also define $$\Omega[E]=\exp\left(\sum_{r\geq 1}\frac{p_r[E]}{r}\right)=\sum_{n\geq 0}h_n[E].$$

We use the modified Macdonald polynomials $\widetilde H_\mu[X;q,t]=t^{n(\mu)}J_\mu[X/(1-t^{-1});q,t^{-1}]$, as normalized by Bergeron et al.~\cite{BGHT99} and Haglund \cite{haglund-book}, where $J_\mu$ is Macdonald's integral form \cite[Chapter VI, (8.3)]{Mac95} and $n(\mu)=\sum_i(i-1)\mu_i$.  Their specializations $\widetilde H_\mu[X;0,t]$ and $\widetilde H_\mu[X;0,t^{-1}]$ are the modified Hall--Littlewood polynomials occurring below.  With this normalization, $\widetilde H_\mu[X;q,t]=\widetilde H_{\mu'}[X;t,q]$ (see Haglund \cite{haglund-book}), 
and the nabla operator is the $\mathbb Q(q,t)$-linear operator determined by 
$$\nabla\widetilde H_\mu[X;q,t]=T_\mu \widetilde H_\mu[X;q,t], \text{ where } T_\mu = t^{n(\mu)}q^{n(\mu')}.$$

\subsection{LLT polynomials}\label{subLLT}
For a box $a=(i,j)$ in column $i$ and row $j$ of a Young diagram in French notation, its ordinary content is $c(a)=i-j$. 
Let $\boldsymbol\mu=(\mu^{(1)},\cdots,\mu^{(k)})$ be a tuple of skew diagrams, and fix $\epsilon>0$ with $k\epsilon<1$.  If $a\in\mu^{(h)}$, its adjusted content is $c_e(a)=c(a)+h\epsilon$.  An adjusted diagonal is the set of boxes with a fixed adjusted content.  The LLT reading order lists boxes by increasing adjusted content, breaking ties from southwest to northeast.  An attacking pair is a pair $(a,b)$ with $a<b$ in this order and $0<c_e(b)-c_e(a)<1$.  Write $A(\boldsymbol\mu)$ for the total number of attacking pairs.

A semistandard tableau on $\boldsymbol\mu$ is a filling by positive integers which weakly increases along rows and strictly increases from bottom to top in every column of each component.  Write $\operatorname{SSYT}(\boldsymbol\mu)$ for the set of such tableaux.  For $T\in\operatorname{SSYT}(\boldsymbol\mu)$, let $\inv(T)$ be the number of attacking pairs $(a,b)$ satisfying $T(a)>T(b)$, and put $x^T=\prod_{a\in\boldsymbol\mu}x_{T(a)}$.  The associated LLT polynomial is
$$
G_{\boldsymbol\mu}[X;t]
=
\sum_{T\in\operatorname{SSYT}(\boldsymbol\mu)}
t^{\inv(T)}x^T.
$$
Lascoux, Leclerc, and Thibon \cite{LLT97} showed that it is a symmetric function.  Grojnowski and Haiman \cite{GrojnowskiHaiman07} proved it is Schur positive with coefficients in $\mathbb N[t]$.

For every component $\mu^{(h)}$, let $(\mu^{(h)})^R$ be the $180^\circ$-rotation of its transpose, so that a box and its corresponding box have the same ordinary content.  Set $\boldsymbol\mu^R=((\mu^{(1)})^R,\cdots,(\mu^{(k)})^R)$; clearly $A(\boldsymbol\mu^R)=A(\boldsymbol\mu)$.
We also define the transpose--reversal of $\boldsymbol\mu$ by
\[
\boldsymbol\mu^\dagger
=
\bigl((\mu^{(k)})',\cdots,(\mu^{(1)})'\bigr).
\]

\begin{example}\label{ex:llt-rotation}
Take the three-component tuple
\[
\boldsymbol\mu=
\bigl((4,4,2,1)/(2,1),\ (3,3,2)/(1,1),\
(4,4,3,1)/(2,2,1)\bigr).
\]
The three diagrams in Figure~\ref{fig:llt-rotation} show
$\boldsymbol\mu$, $\boldsymbol\mu^R$, and
$\boldsymbol\mu^\dagger$.  Within each diagram, the components
ascend from left to right.
Boxes are drawn in French notation.
\begin{center}
\begin{tikzpicture}[x=.31cm,y=.31cm]
\begin{scope}
\begin{scope}
\clip (-.5,-.5) rectangle (12.5,12.5);
\draw[blue,densely dashed,semithick] (-.5,-.5)--(12.5,12.5);
\end{scope}
\foreach \x/\y in {0/0,1/0,0/1}{\draw[black,densely dashed] (\x,\y) rectangle ++(1,1);}
\foreach \x/\y in {2/0,3/0,1/1,2/1,3/1,0/2,1/2,0/3}{\draw (\x,\y) rectangle ++(1,1);}
\begin{scope}[shift={(4,4)}]
\foreach \x/\y in {0/0,0/1}{\draw[black,densely dashed] (\x,\y) rectangle ++(1,1);}
\foreach \x/\y in {1/0,2/0,1/1,2/1,0/2,1/2}{\draw (\x,\y) rectangle ++(1,1);}
\end{scope}
\begin{scope}[shift={(8,8)}]
\foreach \x/\y in {0/0,1/0,0/1,1/1,0/2}{\draw[black,densely dashed] (\x,\y) rectangle ++(1,1);}
\foreach \x/\y in {2/0,3/0,2/1,3/1,1/2,2/2,0/3}{\draw (\x,\y) rectangle ++(1,1);}
\end{scope}
\node at (6,-2) {$(a)\ \boldsymbol\mu$};
\end{scope}
\begin{scope}[shift={(15,0)}]
\begin{scope}
\clip (-.5,-.5) rectangle (12.5,12.5);
\draw[blue,densely dashed,semithick] (-.5,-.5)--(12.5,12.5);
\end{scope}
\foreach \x/\y in {0/0,1/0,0/1,1/1,0/2}{\draw[black,densely dashed] (\x,\y) rectangle ++(1,1);}
\foreach \x/\y in {2/0,3/0,2/1,3/1,1/2,2/2,0/3,1/3}{\draw (\x,\y) rectangle ++(1,1);}
\begin{scope}[shift={(4,4)}]
\draw[black,densely dashed] (0,0) rectangle ++(1,1);
\foreach \x/\y in {1/0,2/0,0/1,1/1,2/1,0/2}{\draw (\x,\y) rectangle ++(1,1);}
\end{scope}
\begin{scope}[shift={(8,8)}]
\foreach \x/\y in {0/0,1/0,0/1,0/2}{\draw[black,densely dashed] (\x,\y) rectangle ++(1,1);}
\foreach \x/\y in {2/0,3/0,1/1,2/1,3/1,1/2,0/3}{\draw (\x,\y) rectangle ++(1,1);}
\end{scope}
\node at (6,-2) {$(b)\ \boldsymbol\mu^R$};
\end{scope}
\begin{scope}[shift={(30,0)}]
\begin{scope}
\clip (-.5,-.5) rectangle (12.5,12.5);
\draw[blue,densely dashed,semithick] (-.5,-.5)--(12.5,12.5);
\end{scope}
\foreach \x/\y in {0/0,1/0,2/0,0/1,1/1}{\draw[black,densely dashed] (\x,\y) rectangle ++(1,1);}
\foreach \x/\y in {3/0,2/1,0/2,1/2,2/2,0/3,1/3}{\draw (\x,\y) rectangle ++(1,1);}
\begin{scope}[shift={(4,4)}]
\foreach \x/\y in {0/0,1/0}{\draw[black,densely dashed] (\x,\y) rectangle ++(1,1);}
\foreach \x/\y in {2/0,0/1,1/1,2/1,0/2,1/2}{\draw (\x,\y) rectangle ++(1,1);}
\end{scope}
\begin{scope}[shift={(8,8)}]
\foreach \x/\y in {0/0,1/0,0/1}{\draw[black,densely dashed] (\x,\y) rectangle ++(1,1);}
\foreach \x/\y in {2/0,3/0,1/1,2/1,0/2,1/2,0/3,1/3}{\draw (\x,\y) rectangle ++(1,1);}
\end{scope}
\node at (6,-2) {$(c)\ \boldsymbol\mu^\dagger$};
\end{scope}
\end{tikzpicture}
\captionof{figure}{A skew tuple and its transforms.}\label{fig:llt-rotation}
\end{center}
The second and third diagrams have component shapes
\[
\begin{aligned}
\boldsymbol\mu^R
&=\bigl((4,4,3,2)/(2,2,1),\ (3,3,1)/(1),\
(4,4,2,1)/(2,1,1)\bigr),\\
\boldsymbol\mu^\dagger
&=\bigl((4,3,3,2)/(3,2),\ (3,3,2)/(2),\
(4,3,2,2)/(2,1)\bigr).
\end{aligned}
\]
For the first and third components, $R$ sends a box $(i,j)$ to
$(5-j,5-i)$; for the second it sends $(i,j)$ to $(4-j,4-i)$.
\end{example}

The involution $\omega$ relates an LLT polynomial to the rotated
skew tuple as follows.
\begin{proposition}[{\cite[Proposition~4.1.6]{shuffleline}}]\label{propomegaLLT}
For every tuple $\boldsymbol\mu$ of skew diagrams, we have
$$
\omega G_{\boldsymbol\mu}[X;t]
=
t^{A(\boldsymbol\mu)}
G_{\boldsymbol\mu^R}[X;t^{-1}].
$$
\end{proposition}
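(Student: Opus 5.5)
We prove $\omega G_{\boldsymbol\mu}[X;t]=t^{A(\boldsymbol\mu)}G_{\boldsymbol\mu^R}[X;t^{-1}]$ through the fundamental quasisymmetric expansion of LLT polynomials. The first step is to standardize. For $T\in\operatorname{SSYT}(\boldsymbol\mu)$, break ties among equal entries using the LLT reading order, choosing the direction so that the number of inversions is preserved. This is possible because equal entries in a row occur in reading order, equal entries cannot share a column, and equal entries on an attacking pair never count as inversions. The result is
\[
G_{\boldsymbol\mu}[X;t]=\sum_{S}t^{\inv(S)}F_{\operatorname{Des}(S)}(X).
\]
Here $S$ runs over standard fillings of $\boldsymbol\mu$, that is, bijections to $\{1,\dots,N\}$ that increase along rows and up columns. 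The set $\operatorname{Des}(S)$ consists of those $j$ for which $j+1$ sits strictly earlier than $j$ in reading order.

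The second step applies $\omega$, which on fundamental quasisymmetric functions sends $F_D(X)$ to $F_{D^c}(X)$, where $D^c$ is the complement of $D$ in $\{1,\dots,N-1\}$. It is therefore enough to find a bijection $S\mapsto S'$ from standard fillings of $\boldsymbol\mu$ to standard fillings of $\boldsymbol\mu^R$ satisfying
\[
\operatorname{Des}(S')=\operatorname{Des}(S)^c
\qquad\text{and}\qquad
\inv(S')=A(\boldsymbol\mu)-\inv(S).
\]
The natural candidate is to transport $S$ along the box bijection $R$ of each component and then replace each entry $j$ by $N+1-j$. The map $R$ is the transpose followed by a $180^\circ$ rotation, and it preserves ordinary content. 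Rows map to columns with their order reversed, so after complementing the entries, a row-increasing filling becomes column-increasing and vice versa. Hence $S'$ is standard.

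The third step checks the two statistics. Since $R$ preserves adjusted content, it induces a bijection between attacking pairs of $\boldsymbol\mu$ and attacking pairs of $\boldsymbol\mu^R$. Complementing the entries reverses every comparison, so an attacking pair is an inversion in $S'$ exactly when it is a non-inversion in $S$. This requires $R$ to preserve the orientation of each attacking pair relative to the reading order. For pairs with distinct adjusted contents this is automatic. For boxes on the same adjusted diagonal, within one component, $R$ reverses the southwest-to-northeast order; those pairs are not attacking, so nothing needs checking. The descent condition compares the positions of $j$ and $j+1$ in reading order, which can be reversed on a diagonal. Complementing entries swaps the roles of $j$ and $j+1$ and also turns $j$ into $N-j$.

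The main obstacle is the descent condition. To obtain exactly $\operatorname{Des}(S')=\operatorname{Des}(S)^c$, I would either use the characterization of descents via "content of $j+1$ less than content of $j$, or equal with the appropriate tie rule", or replace the descent characterization by the equivalent inverse-descent formulation of Haglund--Haiman--Loehr for LLT polynomials. In either version, content preservation together with the reversal of order within a diagonal should give the complementation. Summing over $S$ then gives
\[
\omega G_{\boldsymbol\mu}[X;t]=\sum_{S'}t^{A(\boldsymbol\mu)-\inv(S')}F_{\operatorname{Des}(S')}(X)=t^{A(\boldsymbol\mu)}G_{\boldsymbol\mu^R}[X;t^{-1}],
\]
which is the statement, and it uses $A(\boldsymbol\mu^R)=A(\boldsymbol\mu)$.
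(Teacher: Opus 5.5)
Your overall strategy is sound: expand in fundamental quasisymmetric functions, then use the bijection $S\mapsto S'=N+1-S\circ R^{-1}$. Your verification that $S'$ is standard and that $\inv(S')=A(\boldsymbol\mu)-\inv(S)$ is correct. The gap is the descent step. You leave it unresolved, and the identity you aim for there is false for your map. Take the single shape $(2,1)$ with $S(1,1)=1$, $S(2,1)=2$, $S(1,2)=3$. Its reading order is $(1,2),(1,1),(2,1)$, so $\operatorname{Des}(S)=\{2\}$. Then $\boldsymbol\mu^R=((2,2)/(1))$, with $S'(2,2)=3$, $S'(2,1)=2$, $S'(1,2)=1$. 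Its reading order is $(1,2),(2,2),(2,1)$, so $\operatorname{Des}(S')=\{2\}\neq\{1\}=\operatorname{Des}(S)^c$. No tie rule or inverse-descent reformulation changes this, since no two boxes here share a diagonal. Complementing the entries sends the pair $(j,j+1)$ to $(N-j,N+1-j)$, so the correct target is
\[
\operatorname{Des}(S')=\{N-j:\ j\in[N-1]\setminus\operatorname{Des}(S)\}.
\]

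Two ingredients are missing to prove this and to finish.

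First, the reversal on a diagonal that you worry about never affects consecutive labels. Suppose $(i,j)$ and $(i+m,j+m)$ with $m\geq1$ lie in one skew component. Then $(i+1,j)$ and $(i,j+1)$ also lie in it, strictly between them in the row/column order. So two boxes on a common diagonal of one component carry labels differing by at least $3$. Hence $S^{-1}(k)$ and $S^{-1}(k+1)$ always have different adjusted contents, and $R$ preserves their relative reading order. The displayed identity then follows from $(S')^{-1}(j+1)=R(S^{-1}(N-j))$ and $(S')^{-1}(j)=R(S^{-1}(N+1-j))$.

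Second, you need that $F_D\mapsto F_{\{N-d:\,d\in D\}}$ fixes every symmetric function; it is induced by reversing the order of the variables. This gives
\[
\omega G_{\boldsymbol\mu}[X;t]=\sum_S t^{\inv(S)}F_{\operatorname{Des}(S)^c}=\sum_S t^{\inv(S)}F_{\{N-d:\,d\in[N-1]\setminus\operatorname{Des}(S)\}}.
\]
Equivalently, use the involution $F_\alpha\mapsto F_{\alpha^t}$ on quasisymmetric functions, which also restricts to $\omega$.

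With these two points your argument becomes a complete, self-contained proof. The paper gives no proof of its own; it quotes the result from \cite[Proposition~4.1.6]{shuffleline}.
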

The transpose--reversal gives another form of this identity.
\begin{proposition}[{\cite{HHL05}}]\label{propomegaLLTdagger}
For every tuple $\boldsymbol\mu$ of skew diagrams, we have
$$
\omega G_{\boldsymbol\mu}[X;t]
=
t^{A(\boldsymbol\mu)}
G_{\boldsymbol\mu^\dagger}[X;t^{-1}].
$$
Consequently,
$$
A(\boldsymbol\mu^\dagger)=A(\boldsymbol\mu)
\qquad\text{and}\qquad
G_{\boldsymbol\mu^\dagger}[X;t]=G_{\boldsymbol\mu^R}[X;t].
$$
\end{proposition}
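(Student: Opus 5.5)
The plan is to deduce everything from Proposition~\ref{propomegaLLT}, so that only the second identity $G_{\boldsymbol\mu^\dagger}[X;t]=G_{\boldsymbol\mu^R}[X;t]$, together with $A(\boldsymbol\mu^\dagger)=A(\boldsymbol\mu)$, needs a direct argument. Once these hold, substituting $t\mapsto t^{-1}$ in the second identity turns $\omega G_{\boldsymbol\mu}[X;t]=t^{A(\boldsymbol\mu)}G_{\boldsymbol\mu^R}[X;t^{-1}]$ into the displayed formula with $\boldsymbol\mu^\dagger$.

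First I would compare $\boldsymbol\mu^R$ and $\boldsymbol\mu^\dagger$ componentwise. Position $h$ of $\boldsymbol\mu^R$ holds the $180^\circ$-rotation of $(\mu^{(h)})'$. Position $k+1-h$ of $\boldsymbol\mu^\dagger$ holds $(\mu^{(h)})'$ itself. Hence $\boldsymbol\mu^R$ is obtained from $\boldsymbol\mu^\dagger$ by two operations: rotate every component by $180^\circ$ (with a fixed rotation center), and reverse the order of the components. Call this operation $\rho$.

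Next I would record how $\rho$ acts on adjusted contents. A $180^\circ$ rotation sends $c\mapsto -c+\text{const}$. Reversing the order sends $h\mapsto k+1-h$. So for a box $a$ in component $h$ with image $a'$, we get $c_e(a')=-c_e(a)+C$ for a constant $C$ independent of $a$. Consequently differences of adjusted contents are negated. Thus $(a,b)$ is an attacking pair of $\boldsymbol\tau$ if and only if $(b',a')$ is an attacking pair of $\rho(\boldsymbol\tau)$: the condition $0<c_e(b)-c_e(a)<1$ is symmetric under this swap, and boxes on the same adjusted diagonal never attack, so the tie-breaking rule is irrelevant. In particular $A(\rho\boldsymbol\tau)=A(\boldsymbol\tau)$.

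The same content computation applied to the transpose-and-reverse operation (transpose sends $c\mapsto -c$) shows that attacking pairs of $\boldsymbol\mu$ and $\boldsymbol\mu^\dagger$ correspond. This gives $A(\boldsymbol\mu^\dagger)=A(\boldsymbol\mu)$ directly.

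The main step is $G_{\rho\boldsymbol\tau}=G_{\boldsymbol\tau}$, which I would prove by a complementation bijection. I would work in finitely many variables $x_1,\dots,x_N$; this suffices because the LLT polynomials are symmetric, by \cite{LLT97}, and stable under restriction. Given $T\in\operatorname{SSYT}(\boldsymbol\tau)$ with entries at most $N$, define $T'(a')=N+1-T(a)$. Rotation by $180^\circ$ reverses rows and columns and complementation reverses the order of entries, so $T'$ is again semistandard. Using the correspondence of attacking pairs $(a,b)\leftrightarrow(b',a')$, the condition $T(a)>T(b)$ is equivalent to $T'(b')>T'(a')$, so $\inv(T')=\inv(T)$. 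The weight changes by the substitution $x_i\mapsto x_{N+1-i}$. Since $G_{\boldsymbol\tau}$ is symmetric, this substitution leaves it unchanged, giving $G_{\rho\boldsymbol\tau}[x_1,\dots,x_N;t]=G_{\boldsymbol\tau}[x_1,\dots,x_N;t]$. Letting $N\to\infty$ yields $G_{\boldsymbol\mu^R}=G_{\boldsymbol\mu^\dagger}$.

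The only delicate point is the bookkeeping that the constant $C$ is genuinely uniform, which needs the same rotation center for all components, and that the inequalities stay strict under the swap. I expect these checks to be routine.
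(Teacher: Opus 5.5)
Your proof is correct, and it reverses the paper's logic. The paper gives no argument of its own. It takes the $\dagger$-form of the $\omega$-identity from Haglund--Haiman--Loehr~\cite{HHL05} as a second, independently cited input. It then presents $A(\boldsymbol\mu^\dagger)=A(\boldsymbol\mu)$ and $G_{\boldsymbol\mu^\dagger}=G_{\boldsymbol\mu^R}$ as formal consequences of comparing that identity with Proposition~\ref{propomegaLLT}.

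You instead prove those two ``consequences'' directly:
\begin{enumerate}
\item $A(\boldsymbol\mu^\dagger)=A(\boldsymbol\mu)$, because $c_e\mapsto -c_e+(k+1)\epsilon$ reverses attacking pairs;
\item $G_{\boldsymbol\mu^\dagger}=G_{\boldsymbol\mu^R}$, via the involution that rotates each component by $180^\circ$, reverses the order of the components, and complements entries $i\mapsto N+1-i$. This preserves $\inv$ and only reverses the alphabet.
\end{enumerate}
You then read off the main identity from Proposition~\ref{propomegaLLT}.

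Your route makes the proposition a self-contained corollary of Proposition~\ref{propomegaLLT} and the symmetry of LLT polynomials, both already asserted in the paper. It avoids having to reconcile the conventions of a second source. The paper's route needs no combinatorics, but it depends on the two cited $\omega$-formulas being stated in compatible conventions.

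On the point you flag as delicate: a uniform constant does not require a common rotation center. It only requires that each rotation negate ordinary content, i.e.\ that its center lie on the line $y=x$. This is forced by the content-preserving definition of $R$; in Example~\ref{ex:llt-rotation} the components in fact use different centers. Any remaining discrepancy is a translation of a component along the diagonal, which changes neither contents nor $G$.
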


For $r\geq1$, let $\boldsymbol\nu_r$ be the $r$-stretching $\boldsymbol\nu(r,\mathbf o)$ of $\boldsymbol\nu$ with constant offsets $\mathbf o=(0,\cdots,0)$, defined by Blasiak et al.~\cite[Section~8.1]{LLTcat}:
each box of ordinary content $c$ is replaced by $r$ boxes in the same column, of contents $rc,rc-1,\cdots,rc-r+1$.
The result $\boldsymbol\nu_r$ is again a tuple of skew diagrams; we call it the constant-offset $r$-stretch of $\boldsymbol\nu$.
Define $A_r(\boldsymbol\nu)$ by assigning weight $r$ to each attacking pair of $\boldsymbol\nu$ of equal ordinary content and weight $1$ to every other attacking pair of $\boldsymbol\nu$.  Note that $A_r(\boldsymbol\nu)$ is computed on $\boldsymbol\nu$, not on $\boldsymbol\nu_r$.

\subsection{Combinatorial objects}\label{sub3}
For positive integers $A$ and $B$, an $(A,B)$-Dyck path is a lattice path from $(0,0)$ to $(A,B)$, with north and east steps, which remains weakly above the line $y=(B/A)x$.  We write $\D_{A,B}$ for the set of such paths.

We write $m'$ and $n'$ for the path parameters to avoid confusion with
the length $m$ and size $n$ of a composition in Section~\ref{sub1}.  Throughout, $m'$ and $n'$ are coprime positive integers.

A $(km',kn')$-parking function $\pi$ consists of a path $D\in\D_{km',kn'}$
together with an assignment of the labels $1,\cdots,kn'$ to its north
steps, such that the labels in each column are strictly increasing from
bottom to top.  We write $\LD_{km',kn'}$ for the set of these labeled paths.
The composition $\comp(\pi)$ records, starting from $(0,0)$, the numbers of north steps
between consecutive contacts of $D$ with the diagonal; every such contact
is of the form $(dm',dn')$ for some $0\leq d\leq k$.

Let $u_i$ be the $x$-coordinate of the lower endpoint of the $i$th north step of $D$, counted from bottom to top, and let $w_i$ be its label.  Then $0=u_1\leq\cdots\leq u_{kn'}$, $u_i\leq\frac{m'}{n'}(i-1)$, and $u_i=u_{i+1}$ implies $w_i<w_{i+1}$.  Set $\rank_i=\rank(w_i)=km'(i-1)-kn'u_i+u_i/(km'+1)$.  We define $$\tdinv(\pi)=\#\{(i,j):w_i<w_j,\ \rank_i<\rank_j<\rank_i+km'\}.$$  The pairs $(i,j)$ counted here are called the temporary diagonal inversions of $\pi$.

Let $\lambda(D)$ denote the Ferrers diagram in the $km'\times kn'$ rectangle above $D$.  We define
\[
  \pdinv(D)=
  \#\left\{
  c\in\lambda(D):
  \frac{a(c)}{\ell(c)+1}\leq\frac{m'}{n'}
  <
  \frac{a(c)+1}{\ell(c)}
  \right\},
\]
where $a(c)$ and $\ell(c)$ are the arm and leg of $c$ in $\lambda(D)$, that is, the numbers of cells of $\lambda(D)$ in the same row strictly to the right of $c$ and in the same column strictly below $c$,
and $(a(c)+1)/\ell(c)$ is interpreted as $+\infty$ when $\ell(c)=0$.
Let $\maxtdinv(D)$ denote the maximum possible value of $\tdinv$ over all
standard positive labelings of the fixed path $D$.  The $\dinv$ statistic is
$$\dinv(\pi)=\tdinv(\pi)+\pdinv(D)-\maxtdinv(D).$$

The area is the number of full cells between $D$ and the diagonal, that is, $$\area(\pi)=\area(D)=\frac{km'kn'-km'-kn'+k}{2}-\sum_{i=1}^{kn'}u_i.$$
Reading the labels in decreasing rank gives the diagonal word, and $\ides(\pi)=\{a\in[kn'-1]:a+1\text{ precedes }a\}$, where $[m]=\{1,\cdots,m\}$.

In the square case $(n,n)$, for $\alpha=(\alpha_1,\cdots,\alpha_m)\models n$, let $\DP(\alpha)$ be the Dyck path $N^{\alpha_1}E^{\alpha_1}\cdots N^{\alpha_m}E^{\alpha_m}$, where $N$ and $E$ denote north and east steps.  The area sequence of a square Dyck path $D$ is $(a_1,\cdots,a_n)$, where $a_i=i-1-u_i$ is the number of full cells in the $i$th row between $D$ and the diagonal; for square Dyck paths $D$ and $D'$, we write $D\leq D'$ if $a_i(D)\leq a_i(D')$ for every $i$.  The area sequence of $\DP(\alpha)$ is $(0,1,\cdots,\alpha_1-1\mid\cdots\mid0,1,\cdots,\alpha_m-1)$.  For every square Dyck path $D$, we have $D\leq\DP(\alpha)$ if and only if $\comp(D)\leq\alpha$. 

If $D\leq\DP(\alpha)$ in componentwise area-sequence order, write the area sequence of $D$ as $A_1(D)\mid\cdots\mid A_m(D)$, where $A_k(D)$ has length $\alpha_k$; we call $A_k(D)$ the $k$th $\alpha$-block of $D$.  Let $z_k(D)$ be the number of zero entries of $A_k(D)$.  Following Haglund, Morse, and Zabrocki~\cite[Section~2.1]{HMZ12}, define $$\doff_\alpha(D)=\sum_{k=1}^{m}(m-k)z_k(D).$$

\begin{example}\label{ex:parking-statistics-doff}
Figure~\ref{fig:parking-statistics-doff} shows a labeled $(6,8)$-path
and a labeled $(6,6)$-path.  The shaded cells are those counted by
$\pdinv(D)$, and each number beside a north step is its label.
\begin{center}
\begin{minipage}[t]{.48\textwidth}\centering
\begin{tikzpicture}[x=.43cm,y=.43cm]
\foreach \x/\y in {2/4,1/5,2/5,0/6,1/6,0/7}
  {\fill[blue!18] (\x,\y) rectangle ++(1,1);}
\foreach \x in {0,...,6}{\draw[gray!55,thin] (\x,0)--(\x,8);}
\foreach \y in {0,...,8}{\draw[gray!55,thin] (0,\y)--(6,\y);}
\draw[densely dashed,blue!45] (0,0)--(6,8);
\draw[very thick,blue!85!black]
  (0,0)--(0,4)--(3,4)--(3,8)--(6,8);
\foreach \y/\lab in {0/2,1/4,2/6,3/8}
  {\node[right,font=\scriptsize] at (0,\y+0.5) {\lab};}
\foreach \y/\lab in {4/1,5/3,6/5,7/7}
  {\node[right,font=\scriptsize] at (3,\y+0.5) {\lab};}
\end{tikzpicture}\\[-.2em]
$(a)$ Rational $(6,8)$-path: $D=N^4E^3N^4E^3$
\end{minipage}\hfill
\begin{minipage}[t]{.48\textwidth}\centering
\begin{tikzpicture}[x=.43cm,y=.43cm]
\foreach \x/\y in {1/3,2/3,0/4,1/4,0/5}
  {\fill[blue!18] (\x,\y) rectangle ++(1,1);}
\foreach \x in {0,...,6}{\draw[gray!55,thin] (\x,0)--(\x,6);}
\foreach \y in {0,...,6}{\draw[gray!55,thin] (0,\y)--(6,\y);}
\draw[densely dashed,blue!45] (0,0)--(6,6);
\draw[very thick,blue!85!black]
  (0,0)--(0,3)--(3,3)--(3,6)--(6,6);
\foreach \y/\lab in {0/2,1/4,2/6}
  {\node[right,font=\scriptsize] at (0,\y+0.5) {\lab};}
\foreach \y/\lab in {3/1,4/3,5/5}
  {\node[right,font=\scriptsize] at (3,\y+0.5) {\lab};}
\end{tikzpicture}\\[-.2em]
$(b)$ Square $(6,6)$-path: $D=\DP(3,3)$
\end{minipage}
\captionof{figure}{Examples of parking functions and their statistics.}
\label{fig:parking-statistics-doff}
\end{center}

For the $(6,8)$-path in Figure~\ref{fig:parking-statistics-doff}(a),
the six shaded cells give $\pdinv(D)=6$.  The rank condition permits
the seven ordered pairs
\(
\{(1,5),(2,6),(3,7),(4,8),(5,2),(6,3),(7,4)\}.
\)
The displayed labeling counts the last three, so $\tdinv(\pi)=3$.
A legal labeling counts all seven, giving $\maxtdinv(D)=7$ and
$\dinv(\pi)=3+6-7=2$.  Here $\area(\pi)=6$,
$\comp(\pi)=(4,4)$, and $\ides(\pi)=\{2,4,6\}$.

For the $(6,6)$-path in Figure~\ref{fig:parking-statistics-doff}(b),
the five shaded cells give $\pdinv(D)=5$.  The five pairs permitted
by the rank condition are
\(
\{(1,4),(2,5),(3,6),(4,2),(5,3)\}.
\)
The displayed labeling counts the last two, while a legal labeling
counts all five.  Thus $\tdinv(\pi)=2$,
$\maxtdinv(D)=\pdinv(D)=5$, and $\dinv(\pi)=2$, in agreement with
Blasiak et al.~\cite[Proposition~8.4.10]{nonsymshuffle}.
For $\mu=\alpha=(3,3)$, the area sequence
$(0,1,2\mid0,1,2)$ has one zero in each $\alpha$-block.  Hence
$\doff_{(3,3)}(D)=1$ and
$\doff_{(3,3)}(D)-n(\mu)+\dinv(\pi)=1-3+2=0$.
Here $\area(\pi)=6$, $\comp(\pi)=(3,3)$, and
$\ides(\pi)=\{2,4\}$.
\end{example}

\section{The C operators and Hall--Littlewood polynomials}
In this section, we recall the \(C\) operators of Haglund, Morse, and Zabrocki~\cite{HMZ12} and relate them to Hall--Littlewood polynomials.
Combining the compositional shuffle theorem proved by Carlsson and
Mellit~\cite{CM18} and Mellit~\cite{Mel21} with the Schur positivity
of LLT polynomials established by Grojnowski and
Haiman~\cite{GrojnowskiHaiman07}, we complete the proofs of
Corollary~\ref{thmmain1}, Theorem~\ref{thmmain1.5}, and Theorem~\ref{thmmain2}.

\subsection{The C operators}
Haglund, Morse, and Zabrocki~\cite[Equation~(3.1)]{HMZ12} defined the $C$ operators as follows,
\begin{equation}\label{eq:Ca}
    C_a(q)f[X]
    = \left(-\frac{1}{q}\right)^{a-1}
      \langle z^a \rangle
      f\left[X-\frac{1-1/q}{z}\right]
      \sum_{r\geq 0} h_r[X] z^r,
\end{equation}
where $\langle z^a\rangle G(z)$ denotes the coefficient of $z^a$ in the formal Laurent series $G(z)$.

For any composition $\alpha = (\alpha_1, \cdots, \alpha_m)$, we use the notation
$$
  C_\alpha[X;q] = C_{\alpha_1}(q) C_{\alpha_{2}}(q) \cdots C_{\alpha_m}(q)(1).
$$
For the empty composition $\emptyset$, we adopt the convention that $C_\emptyset(q)f = f$.  Note that $C_a(q)$ maps $\Lambda^n$ to $\Lambda^{n+a}$, so $C_\alpha[X;q] \in \Lambda^{|\alpha|}$.

Following Haglund, Morse, and Zabrocki~\cite{HMZ12}, we can use the functions $C_\alpha$ to express the Hall--Littlewood polynomials as follows.

\begin{proposition}[{\cite{HMZ12}}]
Let \(\mu \) be a partition of \(n\).  Then the following two identities hold:
\begin{enumerate}
\item
\begin{equation}\label{eqhallC}
(-1)^{n-\ell(\mu)} \widetilde{H}_\mu[X;0,t] = t^{n(\mu)+n-\ell(\mu)} C_{\mu}[X;t],
\end{equation}
and moreover \(n(\mu)+n-\ell(\mu) \geq 0\);
\item
\begin{equation}\label{eqhallomegaC}
\omega \widetilde{H}_\mu[X;0,t^{-1}] = \sum_{\beta \leq \overleftarrow{\mu}} t^{\doff_{\overleftarrow{\mu}}(\DP(\beta)) - n(\mu)} C_\beta[X;t],
\end{equation}
where the sum is over all compositions \(\beta\) satisfying \(\beta \leq \overleftarrow{\mu}\).
\end{enumerate}
\end{proposition}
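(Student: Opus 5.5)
We prove both identities by translating the HMZ results into the conventions of this paper: the normalization of $\wtH_\mu$ fixed in Section~\ref{sub1}, together with the operators $C_a(q)$ of \eqref{eq:Ca}. The main tool is the Jing-type vertex operator description of the Hall--Littlewood functions.

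\textbf{Step 1: the base recursion.} Recall the operators $B_a$ satisfying $Q'_\mu[X;t]=B_{\mu_1}\cdots B_{\mu_\ell}(1)$, where
\[
B_a f=\langle z^a\rangle f\bigl[X-(1-t)/z\bigr]\,\Omega[zX].
\]
Comparing with \eqref{eq:Ca} at $q=t$, we have $X-(1-1/t)/z$ in place of $X-(1-t)/z$. Substituting $z\mapsto z/t$ (or a plethystic sign change) should show that $C_a(t)$ is a monomial in $t$, times $(-1)^{a-1}$, times a conjugate of $B_a$ by the substitution $t\mapsto t^{-1}$ in the plethystic shift. On the other hand, $\wtH_\mu[X;0,t]=t^{n(\mu)}H_\mu[X;t^{-1}]$ with $H_\mu=Q'_\mu$ the transformed Hall--Littlewood function.

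\textbf{Step 2: iterating to get part (1).} Iterating the rewriting of Step~1 over $\mu=(\mu_1,\dots,\mu_\ell)$ collects the sign
\[
\prod_i(-1)^{\mu_i-1}=(-1)^{n-\ell(\mu)}
\]
and a power of $t$. This power is obtained by bookkeeping the degree shifts: each $B$-operator applied to a function of degree $d$ contributes a factor depending on $a$ and $d$, and summing these gives $n(\mu)+n-\ell(\mu)$. Since $n\ge\ell(\mu)$ and $n(\mu)\ge0$, this exponent is visibly nonnegative, which gives \eqref{eqhallC}. This is exactly HMZ's Theorem~3.? restated, so alternatively one may simply cite it and verify that their normalization of $\wtH_\mu$ agrees with ours. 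The latter check amounts to comparing both sides at $\mu=(n)$, where $C_n[X;t]$ and $\wtH_{(n)}[X;0,t]=h_n$-type expressions are explicit, and then invoking the recursion.

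\textbf{Step 3: part (2).} We start from the known expansion $\omega\wtH_\mu[X;0,t^{-1}]\propto Q'_{\mu}$-type functions in $t$ together with the identity
\[
\omega\wtH_\mu[X;0,t^{-1}]=t^{-n(\mu)}\,\omega H_\mu[X;t].
\]
We then apply $\omega$ to the vertex operators, which converts $B_a$ into operators involving $e$'s. Each such operator is expanded as
\[
\omega B_a\,\omega=\sum_{\gamma\models a} t^{(\cdot)}\,C_{\gamma_1}(t)\cdots C_{\gamma_k}(t),
\]
which is a refinement expansion of one part $a$ into compositions. This is HMZ's Lemma relating $\omega$ of the Hall--Littlewood vertex operator to sums of $C$ operators. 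Reversing the order of $\mu$, because $\omega$ reverses the operator product in their convention, then produces the sum over $\beta\le\overleftarrow{\mu}$.

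\textbf{Main obstacle.} The hard step is matching the accumulated $t$-exponent with $\doff_{\overleftarrow\mu}(\DP(\beta))$. In $\DP(\beta)$, each subpart of block $k$ contributes exactly one zero, so $z_k$ equals the number of parts of $\beta^{(k)}$. The weight $(m-k)$ should arise from commuting the $t$-powers created in block $k$ past the $m-k$ later operators. We verify this by induction on $\ell(\mu)$, peeling off the first operator.
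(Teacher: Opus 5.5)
Your part (1) is right in substance but simpler than you describe. Comparing \eqref{eq:Ca} with your $B_a$ gives $C_a(t)=(-1/t)^{a-1}B_a|_{t\mapsto t^{-1}}$ exactly, with no substitution in $z$ and no conjugation. Hence $C_\mu[X;t]=(-1/t)^{n-\ell(\mu)}Q'_\mu[X;t^{-1}]$. The factor $t^{n(\mu)}$ comes entirely from $\wtH_\mu[X;0,t]=t^{n(\mu)}Q'_\mu[X;t^{-1}]$, not from degree-dependent bookkeeping; a check at $\mu=(n)$ could not detect it, since $n((n))=0$.

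Part (2), however, has a genuine gap. Your operator expansion $\omega B_a\omega=\sum_{\gamma\models a}t^{(\cdot)}C_{\gamma_1}\cdots C_{\gamma_k}$ is false. For $a=1$ the right side is $t^cC_1$, and evaluating both sides at $1$ (both give $e_1$) forces $c=0$. Yet
\[
\omega B_1\omega(e_1)=\omega Q'_{(1,1)}[X;t]=h_2+te_2,\qquad C_1(e_1)=C_{(1,1)}[X;t]=e_2+t^{-1}h_2,
\]
and these differ by a factor $t$. Two further steps fail. Conjugation by the ring automorphism $\omega$ preserves the order of a composite, $\omega B_{\mu_1}\cdots B_{\mu_m}\omega=(\omega B_{\mu_1}\omega)\cdots(\omega B_{\mu_m}\omega)$, so it cannot produce $\overleftarrow\mu$. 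And scalar powers of $t$ commute with everything, so ``commuting $t$-powers past later operators'' cannot by itself generate the weight $m-k$.

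The example points to what is missing: a relation that produces one factor of $t$ each time an operator is moved past a $C$. Put $\mathcal B_b:=\omega B_b\omega$. Then $\mathcal B_bC_a=t\,C_a\mathcal B_b$ as operators for all $a,b\geq1$. To see this, let $\widetilde C(w)$ be the generating series of the $C_a$ with the scalars $(-1/t)^{a-1}$ removed. In normal-ordered form, $\mathcal B(z)\widetilde C(w)$ and $t\,\widetilde C(w)\mathcal B(z)$ are the same normally ordered operator times $\frac{z+tw}{z+w}$, expanded in $w/z$ and in $z/w$ respectively. Their difference is $(1-t)\delta(-w/z)$ times that operator. At $w=-z$ this operator is a pure translation involving only nonpositive powers of $z$, so its coefficient of $z^bw^a$ vanishes.

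Together with $\mathcal B_b(1)=e_b=\sum_{\gamma\models b}C_\gamma[X;t]$ (from \cite{HMZ12}), your induction then goes through. Let $\mu'=(\mu_2,\dots,\mu_m)$ and assume $\omega Q'_{\mu'}[X;t]=\sum_{\beta'\leq\overleftarrow{\mu'}}t^{\doff_{\overleftarrow{\mu'}}(\DP(\beta'))}C_{\beta'}[X;t]$. Push $\mathcal B_{\mu_1}$ to the right through the $\ell(\beta')$ factors of $C_{\beta'}$. This gives $t^{\ell(\beta')}\sum_{\gamma\models\mu_1}C_{\beta'\mid\gamma}[X;t]$, which appends the block for $\mu_1$ at the end, whence $\overleftarrow\mu$. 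It also raises the exponent by $\ell(\beta')$, which is exactly $\doff_{\overleftarrow\mu}(\DP(\beta'\mid\gamma))-\doff_{\overleftarrow{\mu'}}(\DP(\beta'))$. Dividing by $t^{n(\mu)}$ finishes.

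The paper does none of this: it cites \cite{HMZ12} (pp.~828, 830 and Theorem~5.6) for both identities.
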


\begin{proof}
(1) By Haglund, Morse, and Zabrocki~\cite[pp.~828, 830]{HMZ12},
\[
\widetilde{H}_\mu[X; 0,t] = (-1)^{n-\ell(\mu)} t^{n(\mu)+n-\ell(\mu)} C_\mu[X;t],
\]
which rearranges immediately to \eqref{eqhallC}.  The exponent \(n(\mu)+n-\ell(\mu)\) is nonnegative, with equality if and only if \(\mu=(1)\).

(2) By Haglund, Morse, and Zabrocki~\cite[pp.~828, 830]{HMZ12} and
\cite[Theorem~5.6]{HMZ12}, we have
\[
\omega \widetilde{H}_\mu[X;0,t^{-1}] = t^{-n(\mu)} \sum_{\beta \leq \overleftarrow{\mu}} t^{\doff_{\overleftarrow{\mu}}(\DP(\beta))} C_\beta[X;t] = \sum_{\beta \leq \overleftarrow{\mu}} t^{\doff_{\overleftarrow{\mu}}(\DP(\beta)) - n(\mu)} C_\beta[X;t],
\]
which is exactly identity \eqref{eqhallomegaC}.
\end{proof}

\subsection{Proof of Corollary \ref{thmmain1}}\label{subProofMain1}
In Haglund, Morse, and Zabrocki~\cite{HMZ12} and Carlsson and Mellit~\cite{CM18}, the area is weighted by $t$, the statistic $\dinv$ by $q$, and the operators $C_a$ carry the parameter $q$.  Since $\wtH_\mu[X;q,t]=\wtH_{\mu'}[X;t,q]$ and $T_\mu$ becomes $T_{\mu'}$ when $q$ and $t$ are interchanged, the operator $\nabla$ commutes with this interchange, so identities in their convention remain valid after $q$ and $t$ are interchanged.  We use the interchanged form, in which $C_\alpha$ carries the parameter $t$, because Conjectures~\ref{conjecture2} and~\ref{conjecture3} concern $\wtH_\mu[X;0,t]$ and $\wtH_\mu[X;0,t^{-1}]$.
The compositional $(km,kn)$-shuffle theorem proved by Mellit \cite{Mel21} takes the following form in our conventions.
\begin{theorem}[{\cite{Mel21}}]\label{kmknshuffle}
    For any composition $\alpha \models n$ and positive integer $r$, we have
    \begin{equation}
        \nabla^r C_\alpha[X;t] = \sum_{\substack{\pi \in \LD_{nr,n} \\ \comp(\pi) = \alpha}} q^{\area(\pi)} t^{\dinv(\pi)} F_{\ides(\pi)}.
    \end{equation}
\end{theorem}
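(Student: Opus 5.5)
The plan is to deduce the statement from Mellit's proof of the compositional $(km,kn)$-shuffle conjecture of Bergeron, Garsia, Leven, and Xin~\cite{BGLX16,Mel21}. The work is a specialization of parameters followed by a translation of conventions. We take the coprime pair $(m',n')=(r,1)$ and $k=n$, so the paths in question are exactly those of $\D_{nr,n}$. A diagonal contact $(dm',dn')$ is then the point $(dr,d)$, so $\comp(\pi)$ records the numbers of north steps between successive returns to the line $y=x/r$.

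The first step is to identify the symmetric-function side of \cite{BGLX16} in this case. There the compositional $(km,kn)$ statement is phrased with the operators $Q_{km',kn'}$ together with the compositional refinement built from $C_\alpha$. I will check that, when $n'=1$, this side reduces to $\nabla^{m'}C_\alpha$ up to the sign and normalization conventions fixed in \cite{BGLX16}. This is the observation used there to show that their conjecture contains the classical compositional shuffle conjecture of \cite{HMZ12} (the case $m'=1$) and its powers. Concretely, I will use that $\nabla$ conjugates the relevant operators, so that for $n'=1$ each application of the $(1,0)$-type shift contributes one factor of $\nabla$. Mellit's theorem then gives $\nabla^r C_\alpha$, in the HMZ/Carlsson--Mellit convention, as a sum over $(nr,n)$-parking functions with $\comp(\pi)=\alpha$. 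In that sum $t$ weights the area, $q$ weights the dinv, and the $C_\alpha$ carry the parameter $q$.

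Next I interchange $q$ and $t$, exactly as in the discussion preceding the statement. Because $\wtH_\mu[X;q,t]=\wtH_{\mu'}[X;t,q]$ and $T_\mu\mapsto T_{\mu'}$, the operator $\nabla$ commutes with $q\leftrightarrow t$. Interchanging therefore turns the identity into one with $C_\alpha[X;t]$ on the left and $q^{\area}t^{\dinv}$ on the right.

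It remains to match the combinatorial statistics, which I expect to be the main obstacle. The area formula and the reading of $\ides$ from the diagonal word in decreasing rank are the ones in \cite{BGLX16,Mel21}, so they only need a direct check. The delicate point is dinv. Mellit and BGLX define it as $\tdinv+\pdinv-\maxtdinv$ with exactly the rank function $\rank_i=km'(i-1)-kn'u_i+u_i/(km'+1)$ used above. Here I must verify that the tie-breaking term and the strict/weak inequalities in the attack window $\rank_i<\rank_j<\rank_i+km'$ agree with the source, and likewise the arm-leg inequality defining $\pdinv$. As a sanity check, the square case $r=1$ should reproduce the classical HHLRU dinv, which is consistent with the example computed earlier where $\maxtdinv=\pdinv$. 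Once these identifications are confirmed, the statement is Mellit's theorem verbatim.
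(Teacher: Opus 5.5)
Your proposal is correct and takes the same approach as the paper. The paper also treats the statement as Mellit's proof of the BGLX compositional $(km,kn)$-shuffle conjecture at $(m',n')=(r,1)$, $k=n$, where the symmetric-function side is $\nabla^r C_\alpha$, and reaches its conventions by the $q\leftrightarrow t$ interchange justified by $\wtH_\mu[X;q,t]=\wtH_{\mu'}[X;t,q]$ and $T_\mu\mapsto T_{\mu'}$; your checks on the operator side and on the rank, $\pdinv$ and $\ides$ conventions are details the paper leaves implicit.
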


For \(r=1\), Theorem~\ref{kmknshuffle} is the compositional shuffle theorem of Carlsson and Mellit~\cite{CM18}, conjectured by Haglund, Morse, and Zabrocki~\cite{HMZ12}, in our conventions.
The formula also yields Schur positivity.
\begin{proposition}[{\cite[Proposition~2.3]{QZ26}}]\label{QZ26schur}
    For any integer $r\geq 1$, every composition $\alpha \models n$, and any partition $\lambda \vdash n$, we have
    \[
    \langle \nabla^r C_\alpha[X;t], s_\lambda \rangle \in \mathbb{N}[q,t].
    \]
\end{proposition}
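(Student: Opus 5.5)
The plan is to deduce Proposition~\ref{QZ26schur} from Theorem~\ref{kmknshuffle} by reorganizing the parking-function sum into LLT polynomials. Then we invoke the Schur positivity of LLT polynomials due to Grojnowski and Haiman~\cite{GrojnowskiHaiman07}.

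First, fix a path $D\in\D_{nr,n}$ with $\comp(D)=\alpha$, and group the terms of Theorem~\ref{kmknshuffle} according to the underlying path. The factor $q^{\area(D)}t^{\pdinv(D)-\maxtdinv(D)}$ depends only on $D$. What remains is the sum over standard labelings $w$ of $D$ of $t^{\tdinv(\pi)}F_{\ides(\pi)}[X]$.

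Second, identify this inner sum with an LLT polynomial. Form a tuple $\boldsymbol\tau(D)$ of skew diagrams as follows. Each maximal vertical run of north steps of $D$, that is, each column of $D$, becomes a single column component. The boxes of this component receive adjusted contents determined by the ranks $\rank_i$. Two boxes then attack exactly when $\rank_i<\rank_j<\rank_i+km'$. The column-strictness of the labels in $\pi$ matches the column-strictness of standard tableaux on $\boldsymbol\tau(D)$, and the LLT reading order matches reading by decreasing rank.

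With this dictionary, $\tdinv$ becomes $\inv$ on standard fillings. By the standard passage from standard fillings to semistandard fillings via fundamental quasisymmetric expansion, used for instance in~\cite{HHLRU05}, the inner sum equals $G_{\boldsymbol\tau(D)}[X;t]$, up to a reversal or $\omega$ convention that is harmless by Propositions~\ref{propomegaLLT} and~\ref{propomegaLLTdagger}. Thus
\[
\nabla^rC_\alpha[X;t]=\sum_{D}q^{\area(D)}t^{\pdinv(D)-\maxtdinv(D)}G_{\boldsymbol\tau(D)}[X;t].
\]

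Third, handle the sign of the exponent. Each $G_{\boldsymbol\tau(D)}$ has Schur coefficients in $\mathbb N[t]$. However, $\pdinv(D)-\maxtdinv(D)$ may be negative, so at first this only gives coefficients in $\mathbb N[q,t,t^{-1}]$. To remove the negative powers, I would use the fact that $\dinv(\pi)\ge0$ for every labeled path. This forces every monomial in $t^{\pdinv-\maxtdinv}G_{\boldsymbol\tau(D)}$ to have a nonnegative $t$-exponent, because $\maxtdinv$ is attained by some labeling whose tdinv can be transported to the filling realizing the top degree. Alternatively, and more simply, I would argue that the Schur coefficients are integral polynomials. Theorem~\ref{kmknshuffle} gives a fundamental expansion with coefficients in $\mathbb N[q,t]$, since $\dinv\ge0$. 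Inverting the Kostka-type triangularity between Schur functions and fundamentals then places the Schur coefficients in $\mathbb Z[q,t]$. Intersecting with $\mathbb N[q,t,t^{-1}]$ yields $\mathbb N[q,t]$.

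The main obstacle is the second step: checking precisely that the rank-window attack condition and the ides/reading conventions reproduce an honest LLT polynomial, with the correct orientation. In particular, labels need not be in reading order within ties, and the rational slope shifts the contents. I would first verify the construction on the square case $r=1$, where it reduces to the known LLT form of the shuffle theorem. I would then check that replacing contents by $\rank_i$ keeps the attack relation within one unit after rescaling by $1/(km')$.
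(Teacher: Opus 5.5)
Your proposal is correct and is essentially the argument the paper relies on. The paper cites \cite[Proposition~2.3]{QZ26} and spells out the same reasoning after Proposition~\ref{prop:coprime-compositional-shuffle}: the per-path LLT expansion plus Grojnowski--Haiman gives Schur coefficients in $\mathbb N[q,t,t^{-1}]$, nonnegativity of $\dinv$ plus the integral inverse Kostka matrix gives $\mathbb Z[q,t]$, and one intersects.

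In your third step, keep the second route and drop the vague ``transport'' justification. The fact $\dinv(\pi)\geq 0$ that the second route uses is supplied in the paper by Lemma~\ref{lem:coprime-pointwise-bound} with $s=1$ and $\mu=(n)$.
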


Applying the operator $\nabla^r$ to both sides of identity \eqref{eqhallC} and substituting the combinatorial expansion from Theorem \ref{kmknshuffle}, we derive the following analogous statement for modified Hall--Littlewood polynomials; for \(r=1\), it is Theorem~\ref{thmHMZ}.

\begin{corollary}[$=$ Corollary~\ref{thmmain1}]\label{kmknhall}
    For any partition $\mu \vdash n$ and positive integer $r$, we have
    \begin{equation}
        (-1)^{n-\ell(\mu)} \nabla^r \widetilde{H}_\mu[X;0,t] = t^{n(\mu)+n-\ell(\mu)} \sum_{\substack{\pi \in \LD_{nr,n} \\ \comp(\pi) = \mu}} q^{\area(\pi)} t^{\dinv(\pi)} F_{\ides(\pi)}.
    \end{equation}
\end{corollary}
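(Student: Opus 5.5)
The argument is a direct substitution: apply $\nabla^r$ to identity \eqref{eqhallC} and evaluate the resulting $\nabla^r C_\mu[X;t]$ with Theorem~\ref{kmknshuffle}. The only point needing care is that everything involved is linear over $\mathbb Q(q,t)$, so the scalar factors in \eqref{eqhallC} pass through $\nabla^r$.

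First I will use that $\nabla$ is, by definition, the $\mathbb Q(q,t)$-linear operator diagonal in the basis $\wtH_\lambda[X;q,t]$. Hence $\nabla^r$ is $\mathbb Q(q,t)$-linear, and it commutes with multiplication by the sign $(-1)^{n-\ell(\mu)}$ and by the monomial $t^{n(\mu)+n-\ell(\mu)}$. The specialization $\wtH_\mu[X;0,t]$ is an element of $\Lambda^n$ over $\mathbb Q(q,t)$ whose coefficients happen not to involve $q$. Applying $\nabla^r$ to it is therefore meaningful, and $q$ is not specialized inside $\nabla$. Applying $\nabla^r$ to both sides of \eqref{eqhallC} gives
\[
(-1)^{n-\ell(\mu)}\nabla^r\wtH_\mu[X;0,t]=t^{n(\mu)+n-\ell(\mu)}\,\nabla^r C_\mu[X;t].
\]

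Next I will regard the partition $\mu$ as a composition of $n$ and apply Theorem~\ref{kmknshuffle} with $\alpha=\mu$. This replaces $\nabla^r C_\mu[X;t]$ by
\[
\sum_{\pi\in\LD_{nr,n},\ \comp(\pi)=\mu} q^{\area(\pi)}t^{\dinv(\pi)}F_{\ides(\pi)},
\]
which is exactly the claimed formula. The case $r=1$ recovers Theorem~\ref{thmHMZ}.

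The only real obstacle is the convention bookkeeping that makes Theorem~\ref{kmknshuffle} valid in the form stated. Mellit's theorem is written with $q$ and $t$ interchanged, and I rely on the remark preceding Theorem~\ref{kmknshuffle}: the identity $\wtH_\mu[X;q,t]=\wtH_{\mu'}[X;t,q]$ together with $T_\mu\leftrightarrow T_{\mu'}$ shows that $\nabla$ commutes with $q\leftrightarrow t$. I also need to check that the $C$ operators in \eqref{eqhallC} carry the parameter $t$, matching the parameter in Theorem~\ref{kmknshuffle}, which holds by construction. With these checks the proof is a two-line substitution. As a remark, the factor $t^{n(\mu)+n-\ell(\mu)}$ has nonnegative exponent, as noted after \eqref{eqhallC}, so combining with Proposition~\ref{QZ26schur} also gives Schur positivity of $(-1)^{n-\ell(\mu)}\nabla^r\wtH_\mu[X;0,t]$ with coefficients in $\mathbb N[q,t]$.
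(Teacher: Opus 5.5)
Your proposal is correct and is the paper's argument: apply the $\mathbb Q(q,t)$-linear operator $\nabla^r$ to \eqref{eqhallC}, then substitute Theorem~\ref{kmknshuffle} with $\alpha=\mu$. Your closing remark on Schur positivity is exactly the paper's Corollary~\ref{cor:hall-positivity}.
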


\begin{corollary}\label{cor:hall-positivity}
    For any integer $r\geq 1$, any partition $\mu \vdash n$, and any partition $\lambda \vdash n$, we have
    \begin{equation*}
        (-1)^{n-\ell(\mu)} \langle \nabla^r \widetilde{H}_\mu[X;0,t], s_\lambda \rangle \in \mathbb{N}[q,t].
    \end{equation*}
\end{corollary}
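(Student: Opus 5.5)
The plan is to combine identity \eqref{eqhallC} with Proposition~\ref{QZ26schur}, using linearity of $\nabla^r$ and of the Hall inner product. First I apply $\nabla^r$ to both sides of \eqref{eqhallC}. The operator $\nabla$ is $\mathbb Q(q,t)$-linear, and so is $\nabla^r$. The factor $t^{n(\mu)+n-\ell(\mu)}$ is a scalar in $\mathbb Q(q,t)$. Hence
\[
(-1)^{n-\ell(\mu)}\nabla^r\widetilde H_\mu[X;0,t]=t^{n(\mu)+n-\ell(\mu)}\,\nabla^r C_\mu[X;t].
\]
Here the partition $\mu$ is regarded as a composition of $n$, so $C_\mu[X;t]$ is one of the functions $C_\alpha[X;t]$ with $\alpha=\mu\models n$.

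Next I pair both sides with $s_\lambda$. The Hall inner product is $\mathbb Q(q,t)$-bilinear, so this gives
\[
(-1)^{n-\ell(\mu)}\langle\nabla^r\widetilde H_\mu[X;0,t],s_\lambda\rangle=t^{n(\mu)+n-\ell(\mu)}\langle\nabla^r C_\mu[X;t],s_\lambda\rangle.
\]
By Proposition~\ref{QZ26schur} with $\alpha=\mu$, the inner product on the right lies in $\mathbb N[q,t]$.

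It remains to control the prefactor. Part~(1) of the proposition containing \eqref{eqhallC} states that $n(\mu)+n-\ell(\mu)\ge 0$. Therefore $t^{n(\mu)+n-\ell(\mu)}$ is a monomial with nonnegative exponent, and multiplying an element of $\mathbb N[q,t]$ by it stays in $\mathbb N[q,t]$. This proves the corollary.

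There is no real obstacle: all the substantive input is already in Proposition~\ref{QZ26schur}, which rests on Theorem~\ref{kmknshuffle} and Grojnowski--Haiman. The only step needing care is the nonnegativity of the exponent, which was recorded with \eqref{eqhallC}.
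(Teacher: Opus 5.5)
Your proof is correct and follows the paper's argument exactly. You apply $\nabla^r$ to \eqref{eqhallC}, pair with $s_\lambda$, invoke Proposition~\ref{QZ26schur} for $C_\mu[X;t]$, and use $n(\mu)+n-\ell(\mu)\geq 0$ so that the prefactor is a nonnegative power of $t$.
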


\begin{proof}
By \eqref{eqhallC},
\[
(-1)^{n-\ell(\mu)}\nabla^r\widetilde{H}_\mu[X;0,t]
=
t^{n(\mu)+n-\ell(\mu)}\nabla^r C_\mu[X;t].
\]
Taking the Hall inner product with $s_\lambda$ and applying
Proposition~\ref{QZ26schur} proves the claim, since
$n(\mu)+n-\ell(\mu)\geq0$.
\end{proof}

\subsection{Proof of Theorem \ref{thmmain2}}
Using \eqref{eqhallomegaC} and Theorem \ref{kmknshuffle}, we obtain the following theorem.

\begin{theorem}[$=$ Theorem~\ref{thmmain1.5}]\label{thmnablawhall}
For any partition $\mu\vdash n$, we have
\begin{equation}\label{eqnablawhall}
\begin{aligned}
\nabla\omega\wtH_\mu[X;0,t^{-1}]=
\sum_{\beta\leq\overleftarrow{\mu}}
\sum_{\substack{\pi\in\LD_{n,n}\\
\comp(\pi)=\beta}}
q^{\area(\pi)}
t^{\doff_{\overleftarrow{\mu}}(\DP(\beta))
-n(\mu)+\dinv(\pi)}
F_{\ides(\pi)}.
\end{aligned}
\end{equation}
\end{theorem}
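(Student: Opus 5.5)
The identity will follow by applying $\nabla$ termwise to the $C$-expansion \eqref{eqhallomegaC} and then substituting the compositional shuffle theorem. Concretely, $\nabla$ is $\mathbb Q(q,t)$-linear, and the coefficients $t^{\doff_{\overleftarrow{\mu}}(\DP(\beta))-n(\mu)}$ in \eqref{eqhallomegaC} are scalars in $\mathbb Q(q,t)$. Applying $\nabla$ to both sides of \eqref{eqhallomegaC} therefore gives
\[
\nabla\omega\wtH_\mu[X;0,t^{-1}]
=\sum_{\beta\leq\overleftarrow{\mu}} t^{\doff_{\overleftarrow{\mu}}(\DP(\beta))-n(\mu)}\,\nabla C_\beta[X;t].
\]

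Next, for each composition $\beta\models n$ with $\beta\leq\overleftarrow{\mu}$, I would invoke Theorem~\ref{kmknshuffle} with $r=1$. This is the Carlsson--Mellit compositional shuffle theorem in the $q\leftrightarrow t$ interchanged conventions of Section~\ref{subProofMain1}. It replaces $\nabla C_\beta[X;t]$ by
\[
\sum_{\pi\in\LD_{n,n},\ \comp(\pi)=\beta} q^{\area(\pi)}t^{\dinv(\pi)}F_{\ides(\pi)}.
\]
The scalar prefactor depends only on $\beta$, not on $\pi$, so it can be moved inside the inner sum. The two powers of $t$ then combine into $t^{\doff_{\overleftarrow{\mu}}(\DP(\beta))-n(\mu)+\dinv(\pi)}$, and this is exactly \eqref{eqnablawhall}.

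The only delicate point is bookkeeping of conventions, not mathematics. I would check two things:
\begin{enumerate}
\item The $C$-expansion \eqref{eqhallomegaC}, quoted from \cite{HMZ12}, must be stated with $C_\beta$ carrying the parameter $t$. This is consistent with the parameter in Theorem~\ref{kmknshuffle}, because both were transported by the same $q\leftrightarrow t$ swap, and $\nabla$ commutes with that swap.
\item The $\doff$ exponent must be evaluated on $\DP(\beta)$ relative to the reversed partition $\overleftarrow{\mu}$. This is what the condition $\beta\leq\overleftarrow{\mu}$ requires, and \eqref{eqhallomegaC} already records it in that form.
\end{enumerate}
Once these are confirmed, the theorem is immediate.
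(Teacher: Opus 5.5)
Your proposal is correct and is essentially the paper's own argument: apply the $\mathbb Q(q,t)$-linear operator $\nabla$ to the $C$-expansion \eqref{eqhallomegaC}, then replace each $\nabla C_\beta[X;t]$ using Theorem~\ref{kmknshuffle} with $r=1$. The $\beta$-dependent power of $t$ is then absorbed into the inner sum over parking functions, exactly as you describe.
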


\begin{lemma}\label{lemmadoff}
Let $\mu\vdash n$, let $\beta\leq\overleftarrow{\mu}$, and let $\pi\in\LD_{n,n}$ satisfy $\comp(\pi)=\beta$.  Then
$\doff_{\overleftarrow{\mu}}(\DP(\beta))-n(\mu)+\dinv(\pi)\geq 0$.
\end{lemma}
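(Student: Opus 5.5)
The plan is to reduce the inequality to a lower bound on the classical diagonal inversion count in the square case and then exhibit enough inversions. The reduction is exact; the lower bound relies on a pairing argument that I have checked only partially.

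\textbf{Setup.} Write $m=\ell(\mu)$ and $\overleftarrow{\mu}=(c_1,\ldots,c_m)$, so that $c_k=\mu_{m+1-k}$ and $c_1\le\cdots\le c_m$. Then
\[
n(\mu)=\sum_i(i-1)\mu_i=\sum_{k=1}^m (m-k)\,c_k .
\]
Since $\beta\le\overleftarrow{\mu}$, we may write $\beta=\beta^{(1)}\mid\cdots\mid\beta^{(m)}$ with $\beta^{(k)}\models c_k$. The $k$th $\overleftarrow{\mu}$-block of the area sequence of $\DP(\beta)$ has exactly $\ell(\beta^{(k)})$ zeros, namely the starting rows of the parts of $\beta^{(k)}$. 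Hence
\[
\doff_{\overleftarrow{\mu}}(\DP(\beta))-n(\mu)=-\sum_{k=1}^m (m-k)\bigl(c_k-\ell(\beta^{(k)})\bigr),
\]
and the lemma is equivalent to
\[
\dinv(\pi)\ \ge\ \sum_{k=1}^m (m-k)\bigl(c_k-\ell(\beta^{(k)})\bigr).
\]

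\textbf{Switching to classical dinv.} In the square case I would use the identification already noted in Example~\ref{ex:parking-statistics-doff}, via Blasiak et al.~\cite[Proposition~8.4.10]{nonsymshuffle}, of $\dinv$ with the classical Haglund--Loehr statistic. With area sequence $(a_1,\ldots,a_n)$ of the path $D$ of $\pi$ and labels $w_i$, this statistic counts pairs $i<j$ with either $a_i=a_j$ and $w_i<w_j$, or $a_i=a_j+1$ and $w_i>w_j$. The condition $\comp(\pi)=\beta$ means that the zeros of the area sequence of $D$ occur exactly at the starting rows of the parts of $\beta$. In particular, the $k$th $\overleftarrow{\mu}$-block of $D$ has $c_k-\ell(\beta^{(k)})$ rows of positive area.

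\textbf{Ascent lemma.} Call $i$ an \emph{ascent at level $a$} if $a_i=a$ and $a_{i+1}=a+1$. In that case rows $i$ and $i+1$ lie in the same column, so $w_i<w_{i+1}$. Now let $j>i+1$ be a row with $a_j=a$. If $w_i<w_j$, the pair $(i,j)$ counts; otherwise $w_{i+1}>w_i>w_j$ and the pair $(i+1,j)$ counts. Distinct pairs $(i,j)$ yield distinct inversions, because each inversion determines $j$ and determines $i$ as either its first entry or its first entry minus one. Since area rises by at most one per row, every primitive part reaching height $h$ contains an ascent at each level $0,\ldots,h-1$.

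\textbf{Main step (the expected obstacle).} I would try to assign to each positive-area row $r$ in block $k$ and each later block $k'>k$ a distinct inversion. The first attempt is to pair $r$ with the row of area $a_r-1$ in block $k'$ that is reached along the ascending run from a zero. This uses $c_{k'}\ge c_k$ and the ascent lemma, applied with the roles of the earlier ascent and the later row adapted to the two inversion types.

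The difficult case is when the part containing $r$ is flat, for example with area sequence $0,1,1,1$, while block $k'$ consists of parts of size $1$. Then the ascent lemma alone yields only one inversion per later zero. There I would instead examine the pairs $(r,j)$ with $a_r=1$ and $j$ a later zero row. Such a pair counts when $w_r>w_j$. When it fails, I would look for a compensating inversion among the equal-area rows of $r$'s own part, using the column-strictness constraints. Making this compensation injective is the step I expect to be hardest. I would first test it on small cases such as $\mu=(2,2)$ and $\mu=(3,3)$ before attempting a general injection.
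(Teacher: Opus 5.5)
Your reduction to $\dinv(\pi)\ge\sum_{k}(m-k)\bigl(c_k-\ell(\beta^{(k)})\bigr)$, your switch to the classical square-case dinv, and your ascent lemma are all correct and agree with the paper. The gap is the main step, which is the whole content of the lemma; it is left unproved, and the route you sketch for it does not work.

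First, the remark that the first attempt ``uses $c_{k'}\ge c_k$'' is mistaken. To pair a positive row $r$ of block $k$ with an ascent of block $k'$ at level $a_r-1$, block $k'$ must reach height $a_r$, and $c_{k'}\ge c_k$ does not guarantee this (block $k'$ may be all zeros).

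Second, compensating with inversions inside $r$'s own part fails. Take block $k$ with area sequence $0,1,1,1$ and labels $w_1<w_2$, $w_2>w_3>w_4$, with $w_2,w_3,w_4$ all smaller than every label of a later all-zero block. Then no pair $(r,j)$ with $a_r=1$ counts, and that part has no internal inversions at all. Moreover, an internal inversion of block $k$ would have to be charged to each of the $m-k$ later blocks, so this pool is too small in general anyway.

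The missing idea is to let the domain of the injection depend on the pair of blocks. Let $H$ be the maximal area in block $k$, and split into two cases.

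\emph{Block $k'$ reaches height $H$.} Since area rises by at most one per row, block $k'$ has an ascent at every level $0,\dots,H-1$. Here your first attempt is exactly right. For a positive row $r$ at height $h$, let $j$ be the first row of block $k'$ at height $h$; then $w_{j-1}<w_j$. Your ascent argument with the roles reversed shows that $(r,j)$ counts if $w_r<w_j$, and $(r,j-1)$ counts otherwise, because $a_r=a_{j-1}+1$ and $w_r>w_j>w_{j-1}$. Distinct $r$ give distinct inversions, so this case yields $c_k-\ell(\beta^{(k)})$ of them.

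\emph{Block $k'$ does not reach height $H$.} Then every row $j$ of block $k'$ has $a_j\le H-1$, so block $k$ has an ascent at level $a_j$. Your ascent lemma, with one fixed ascent per level, gives an inversion containing $j$. This produces $c_{k'}\ge c_k\ge c_k-\ell(\beta^{(k)})$ distinct inversions. This is where $c_{k'}\ge c_k$ is really used. In your ``difficult'' example $0,1,1,1$ against a block of singletons, the one inversion per later zero that you found is already enough.

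Inversions obtained for different pairs $k<k'$ are distinct, since their two entries lie in blocks $k$ and $k'$. Summing over $k<k'$ then gives the bound. This two-case argument is exactly the paper's proof.
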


\begin{proof}
For convenience, define
$\delta_\mu(\beta):=n(\mu)-\doff_\alpha(\DP(\beta))$, where
$\alpha:=\overleftarrow{\mu}$ and $\beta$ is a refinement of
$\overleftarrow{\mu}$.
Write
$\overleftarrow{\mu}=\alpha=(\alpha_1,\cdots,\alpha_m)$; then
$\alpha_1\leq\cdots\leq\alpha_m$.  Since $\beta$ is a refinement of
$\alpha$, we may write
\begin{equation*}
\begin{aligned}
\beta
&=
\left(\beta^{(1)}\mid\beta^{(2)}
\mid\cdots\mid\beta^{(m)}\right),
\qquad
\beta^{(k)}\models\alpha_k.
\end{aligned}
\end{equation*}
Let $\ell_k:=\ell\bigl(\beta^{(k)}\bigr)$.  Clearly,
$1\leq \ell_k\leq\alpha_k$.

For the path $\DP(\beta)$, the $k$th $\alpha$-block contains exactly
$\ell_k$ zeros.  Moreover, by the definition of $n(\mu)$ and the equality
$\alpha=\overleftarrow{\mu}$, we have
$n(\mu)=\sum_{k=1}^{m}(m-k)\alpha_k$.  It follows that
\begin{align*}
\delta_\mu(\beta)
&=
n(\mu)-\doff_\alpha(\DP(\beta))
\\
&=
\sum_{k=1}^{m}(m-k)\alpha_k
-
\sum_{k=1}^{m}(m-k)\ell_k
\\
&=
\sum_{k=1}^{m}
(m-k)(\alpha_k-\ell_k)
\geq 0.
\end{align*}
Consider two \(\alpha\)-blocks of the labeled path, say the
\(k\)th and \(k'\)th blocks with \(k<k'\).  Let the earlier block be
$A=(a_1,\cdots,a_s)$, with corresponding labels
$x=(x_1,\cdots,x_s)$, and let the later block be
$B=(b_1,\cdots,b_S)$, with corresponding labels
$y=(y_1,\cdots,y_S)$.  These blocks satisfy
\begin{equation}
\begin{aligned}
a_1=b_1=0,
\qquad
a_{i+1}\leq a_i+1,
\qquad
b_{j+1}\leq b_j+1,
\qquad
s=\alpha_k\leq\alpha_{k'}=S,
\end{aligned}
\end{equation}
where \(s\leq S\) because \(\alpha_1\leq\cdots\leq\alpha_m\).
Since \(\beta\) refines \(\alpha\), the path $D$ of $\pi$ meets the diagonal
at every boundary between \(\alpha\)-blocks.  If $u_i$ and $v_j$ are the
lower-endpoint $x$-coordinates of the north steps corresponding to
$a_i$ and $b_j$, respectively, then $0<v_j-u_i\leq n$.
For a square path, $a_{i+1}=a_i+1$ precisely when the corresponding
north steps lie in the same column; their labels therefore increase.
We use this observation at each first occurrence of a height below.

In the square case their ranks are
\[
n a_i+\frac{u_i}{n+1},
\qquad
n b_j+\frac{v_j}{n+1}.
\]
Thus equal heights give the rank direction from $A$ to $B$, whereas
heights differing by one give the rank direction from the lower entry
in $B$ to the higher entry in $A$:
\[
a_i=b_j\Longrightarrow
0<
\left(n b_j+\frac{v_j}{n+1}\right)
-
\left(n a_i+\frac{u_i}{n+1}\right)
<n,
\]
and
\[
a_i=b_j+1\Longrightarrow
0<
\left(n a_i+\frac{u_i}{n+1}\right)
-
\left(n b_j+\frac{v_j}{n+1}\right)
<n.
\]
For an $(n,n)$-parking function, we have
$\pdinv(D)=\maxtdinv(D)$ by Blasiak et al.~\cite[Proposition~8.4.10]{nonsymshuffle},
hence $\dinv(\pi)=\tdinv(\pi)$.  The rank computations above
show that every ordered pair constructed below is a temporary diagonal
inversion.

We claim that the two blocks contribute at least $s-z(A)$ terms to the
$\dinv$ statistic, where $z(A):=\#\{i:a_i=0\}$.

\medskip

\noindent
\textbf{Case 1.}
Suppose that the later block $B$ reaches the maximum height attained by
$A$.  For every positive-height position $a_i=h>0$ in $A$, let $j_h$
be the first position at which $B$ reaches height $h$.  Then
$b_{j_h}=h$, $b_{j_h-1}=h-1$, and
$y_{j_h-1}<y_{j_h}$.  We now compare $x_i$ with $y_{j_h}$.

If $x_i<y_{j_h}$, then $a_i=b_{j_h}=h$, so $(i,j_h)$ forms an
inversion.  If $x_i\geq y_{j_h}$, then, since the labels are distinct,
$x_i>y_{j_h-1}$.  Together with $a_i=b_{j_h-1}+1$, this shows that
$(j_h-1,i)$ forms an inversion.

Therefore, for every positive-height position in $A$, we can find an
inversion between the blocks $A$ and $B$.  Hence, the two blocks
contribute at least $s-z(A)$ terms to the $\dinv$ statistic.

\medskip

\noindent
\textbf{Case 2.}
Suppose that the later block $B$ does not reach the maximum height
attained by $A$.  In this case, for every position $b_j=h$ in $B$, the
earlier block $A$ reaches height $h+1$.  Let $i_{h+1}$ be the first
position at which $A$ reaches height $h+1$.  Then
$a_{i_{h+1}-1}=h$, $a_{i_{h+1}}=h+1$, and
$x_{i_{h+1}-1}<x_{i_{h+1}}$.  We now compare
$x_{i_{h+1}-1}$ with $y_j$.

If $x_{i_{h+1}-1}<y_j$, then $(i_{h+1}-1,j)$ contributes an inversion.
If $x_{i_{h+1}-1}\geq y_j$, then
$x_{i_{h+1}}>x_{i_{h+1}-1}\geq y_j$, so $(j,i_{h+1})$ contributes an
inversion.

Therefore, the blocks $A$ and $B$ contribute at least $S$ inversions.
Since $S\geq s\geq s-z(A)$, the claim follows.

Let $D$ denote the underlying path of $\pi$.  Fix the $k$th block of
$D$.  There are $m-k$ blocks after it.  For every later block, say the $k'$th block with $k'>k$, the
claim shows that the pair consisting of the $k$th and $k'$th blocks
contributes at least $\alpha_k-z_k(D)$ terms to the $\dinv$ statistic,
where $z_k(D)$ denotes the number of zeros in the $k$th block.

Consequently, the $k$th block together with all the blocks following
it contributes at least $(m-k)(\alpha_k-z_k(D))$ terms to the $\dinv$
statistic.  Since \(\comp(\pi)=\beta\), we have \(z_k(D)=\ell_k\) for every \(k\).  Therefore,
\begin{align*}
\dinv(\pi)
&\geq
\sum_{1\leq k<k'\leq m}
\bigl(\alpha_k-z_k(D)\bigr)
\\
&=
\sum_{k=1}^{m}
(m-k)\bigl(\alpha_k-z_k(D)\bigr)
\\
&=
\sum_{k=1}^{m}
(m-k)(\alpha_k-\ell_k)
\\
&=
\delta_\mu(\beta).
\end{align*}
Thus,
$\doff_{\overleftarrow{\mu}}(\DP(\beta))-n(\mu)+\dinv(\pi)
=\dinv(\pi)-\delta_\mu(\beta)\geq 0$,
and the lemma follows.
\end{proof}

The $(6,6)$-parking function in
Example~\ref{ex:parking-statistics-doff} attains this bound:
\[
\dinv(\pi)=n(\mu)-\doff_{\overleftarrow\mu}(D)=2.
\]

We can now prove the following corollary.

\begin{corollary}[$=$ Theorem~\ref{thmmain2}]\label{corohallschur}
For any partitions $\mu,\lambda\vdash n$, we have
\begin{equation}
\begin{aligned}
\left\langle
\nabla\omega\wtH_\mu[X;0,t^{-1}],
s_\lambda
\right\rangle
&\in
\mathbb{N}[q,t].
\end{aligned}
\end{equation}
\end{corollary}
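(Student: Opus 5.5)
The plan is to combine two facts about the coefficient $c_\lambda:=\langle\nabla\omega\wtH_\mu[X;0,t^{-1}],s_\lambda\rangle$: it lies in $\mathbb N[q,t,t^{-1}]$, and it lies in $\mathbb Z[q,t]$. Together these force $c_\lambda\in\mathbb N[q,t]$, since a Laurent polynomial in $t$ with nonnegative coefficients that has no negative powers of $t$ is a polynomial with nonnegative coefficients.

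\emph{Step 1 (Laurent positivity).} Apply $\nabla$ to \eqref{eqhallomegaC}, which gives
\[
\nabla\omega\wtH_\mu[X;0,t^{-1}]=\sum_{\beta\leq\overleftarrow{\mu}}t^{\doff_{\overleftarrow{\mu}}(\DP(\beta))-n(\mu)}\,\nabla C_\beta[X;t].
\]
By Proposition~\ref{QZ26schur} with $r=1$, each $\langle\nabla C_\beta[X;t],s_\lambda\rangle$ lies in $\mathbb N[q,t]$. Each prefactor is an integral power of $t$, possibly negative, since $\doff_{\overleftarrow\mu}(\DP(\beta))\leq n(\mu)$. The sum is finite, so $c_\lambda\in\mathbb N[q,t,t^{-1}]$.

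\emph{Step 2 (polynomiality in $t$).} Use Theorem~\ref{thmnablawhall} together with Lemma~\ref{lemmadoff}. Every term in the fundamental expansion \eqref{eqnablawhall} has exponent of $t$ at least $0$, and every exponent of $q$ is $\area(\pi)\geq0$. Hence the fundamental coefficients lie in $\mathbb N[q,t]\subseteq\mathbb Z[q,t]$.

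The function is symmetric, so it has a Schur expansion. Its fundamental expansion is obtained by expanding each $s_\nu$ into fundamentals with nonnegative integer coefficients (standard Young tableaux by descent set). I would make the conversion back to Schur coefficients concrete as follows. Restrict to the fundamentals $F_{S_\nu}$, where $S_\nu$ is the descent set of the superstandard (row reading) tableau of shape $\nu$. The coefficient of $F_{S_\nu}$ in $s_\rho$ vanishes unless $\rho\unrhd\nu$ (equivalently, pass through the monomial expansion). This gives a unitriangular integer matrix expressing these selected fundamental coefficients in terms of the $c_\rho$, and inverting it (the inverse Kostka matrix $K^{-1}$, which has integer entries) writes each $c_\lambda$ as a $\mathbb Z$-linear combination of fundamental coefficients. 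Therefore $c_\lambda\in\mathbb Z[q,t]$.

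\emph{Step 3 (combine).} The element $c_\lambda$ lies in $\mathbb N[q,t,t^{-1}]\cap\mathbb Z[q,t]$. Comparing coefficients monomial by monomial in the Laurent ring, every monomial with a negative power of $t$ has coefficient zero, and the remaining coefficients are nonnegative integers. Thus $c_\lambda\in\mathbb N[q,t]$.

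The only real content beyond bookkeeping is Lemma~\ref{lemmadoff}, which is already proved. The main point needing care is Step 2's claim that integrality passes from fundamental to Schur coefficients. I would handle it with the triangularity argument via monomial coefficients: the coefficient of $m_\nu$ is a nonnegative integer sum of fundamental coefficients, and $K^{-1}$ is integral.
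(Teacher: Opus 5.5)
Your proposal is correct and is essentially the paper's argument: Laurent positivity from Proposition~\ref{QZ26schur}, nonnegative monomial coefficients from Theorem~\ref{thmnablawhall} and Lemma~\ref{lemmadoff}, integrality via the inverse Kostka matrix, and then intersecting $\mathbb N[q,t,t^{-1}]\cap\mathbb Z[q,t]$. The one difference is that the paper runs this argument separately on each summand $t^{-\delta_\mu(\beta)}\nabla C_\beta[X;t]$, which yields a termwise Schur positivity statement, whereas you apply it to the whole sum; this works equally well for the corollary.
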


\begin{proof}
We continue to use the notation from the proof of
Lemma \ref{lemmadoff}.  It suffices to prove that
\begin{equation}\label{eq19}
\begin{aligned}
\left\langle
t^{-\delta_\mu(\beta)}
\nabla C_\beta[X;t],
s_\lambda
\right\rangle
&\in
\mathbb{N}[q,t].
\end{aligned}
\end{equation}

First, by Proposition~\ref{QZ26schur}, we have
\begin{equation}\label{eq20}
\begin{aligned}
\left\langle
t^{-\delta_\mu(\beta)}
\nabla C_\beta[X;t],
s_\lambda
\right\rangle
&\in
\mathbb{N}[q,t, t^{-1}].
\end{aligned}
\end{equation}

Second, Theorem~\ref{kmknshuffle}, applied to this fixed composition
$\beta$, gives
\[
t^{-\delta_\mu(\beta)}\nabla C_\beta[X;t]
=
\sum_{\substack{\pi\in\LD_{n,n}\\
\comp(\pi)=\beta}}
q^{\area(\pi)}
t^{\dinv(\pi)-\delta_\mu(\beta)}
F_{\ides(\pi)}.
\]
Lemma~\ref{lemmadoff} makes every exponent of $t$ in this display
nonnegative.  Since every fundamental quasisymmetric function is
monomial-positive, we have
\begin{equation}
\left\langle
t^{-\delta_\mu(\beta)}
\nabla C_\beta[X;t],
h_\lambda
\right\rangle \in
\mathbb{N}[q,t].
\end{equation}

Since the Kostka matrix is unitriangular with integer entries, its inverse also has integer entries.  It follows that
\begin{equation}\label{eq22}
\begin{aligned}
\left\langle
t^{-\delta_\mu(\beta)}
\nabla C_\beta[X;t],
s_\lambda
\right\rangle
&\in
\mathbb{Z}[q,t].
\end{aligned}
\end{equation}
Combining \eqref{eq20} with \eqref{eq22}, we conclude \eqref{eq19}.
\end{proof}

\section{Hall--Littlewood polynomials and LLT polynomials}
In this section, we establish the connection between Hall--Littlewood
polynomials and LLT polynomials.  We first use results of Macdonald \cite{Mac95} and Jing \cite{Jin91} to express
the relevant modified Hall--Littlewood polynomials in terms of the
Catalan functions \(H_{p,\nu}[X;t]\) of Chen \cite{Chen10} and Blasiak, Morse, Pun, and Summers~\cite{BMPS19}.  We then use the
representation-theoretic interpretation of LLT polynomials introduced by
Grojnowski and Haiman \cite{GrojnowskiHaiman07}, in the form reformulated
by Blasiak et al.~\cite{dens}, to relate \(H_{p,\nu}[X;t]\) to LLT polynomials.

\subsection{Catalan functions}
Fix $R\geq |\mu|$, and append trailing zeros to every partition under
consideration so that it has length $R$.  Set
\begin{equation*}
    \Delta_R^+
    =
    \left\{
        \epsilon_a-\epsilon_b:1\leq a<b\leq R
    \right\},
    \qquad
    \alpha_p=\epsilon_p-\epsilon_{p+1}.
\end{equation*}
Here $\epsilon_i$ denotes the vector in $\mathbb{Z}^R$ whose $i$th
coordinate is $1$ and whose remaining coordinates are $0$.

For a subset $\Sigma\subseteq\Delta_R^+$, define the
$t$-Kostant partition function by
\begin{equation}\label{defntkostant-general}
    \prod_{\alpha\in\Sigma}
    \frac{1}{1-tz^\alpha}
    =
    \sum_{\eta\in\mathbb{Z}^R}
    P_\Sigma(\eta;t)z^\eta,
\end{equation}
where $z^\eta:=z_1^{\eta_1}\cdots z_R^{\eta_R}$.  
Equivalently, $P_\Sigma(\eta;t)=\sum t^{k}$, where the sum runs over all multisets of roots in $\Sigma$ whose sum is $\eta$, 
and $k$ is the size of the multiset; we call such a multiset a $\Sigma$-partition of $\eta$.
We use the convention that $P_\Sigma(\eta;t)=0$ unless $\eta$ is a nonnegative integral
combination of roots in $\Sigma$.  Thus we may let $\eta$
range over all of $\mathbb{Z}^R$ in the preceding sum.  

For $\gamma\in\mathbb{Z}^R$, define the generalized Schur function
$s_\gamma$ by
\begin{equation}\label{eq:generalized-schur}
    s_\gamma[X]
    :=
    \det
    \bigl(
        h_{\gamma_a-a+b}[X]
    \bigr)_{1\leq a,b\leq R},
\end{equation}
where $h_m[X]$ is the complete homogeneous symmetric function,
with $h_0[X]=1$ and $h_m[X]=0$ for $m<0$.

Generalized Schur functions satisfy the straightening relation
\begin{equation*}
    s_{w\cdot\gamma}[X]
    =
    (-1)^{\ell(w)}s_\gamma[X],
\end{equation*}
where $w\in\mathfrak{S}_R$, $w\cdot\gamma:=w(\gamma+\rho)-\rho$, and
$\rho=(R-1,R-2,\cdots,0)$.

\begin{definition}
    For $\gamma\in\mathbb{Z}^R$, define
    \begin{equation}\label{eqQgamma}
        Q_\gamma[X;t]
        :=
        \sum_{\eta\in\mathbb{Z}^R}
        P_{\Delta_R^+}(\eta;t)
        s_{\gamma+\eta}[X].
    \end{equation}
\end{definition}

For a partition $\gamma$, \eqref{eqQgamma} and the results of Macdonald
\cite[Chapter III, Section 2]{Mac95} and Jing \cite{Jin91} give
$Q_\gamma[X;t]=t^{n(\gamma)}\wtH_\gamma[X;0,1/t]$.  Here $Q_\gamma$ always denotes
the function defined in \eqref{eqQgamma}; it should not be confused
with Macdonald's Hall--Littlewood $Q$-function.

Similarly, for each simple root
$\alpha_p=\epsilon_p-\epsilon_{p+1}$, where $1\leq p<R$, set
$\Omega_p:=\Delta_R^+\setminus\{\alpha_p\}$ and define
\begin{equation*}
    H_{p,\gamma}[X;t]
    :=
    \sum_{\eta\in\mathbb{Z}^R}
    P_{\Omega_p}(\eta;t)
    s_{\gamma+\eta}[X].
\end{equation*}

\begin{remark}
    The set $\Omega_p$ is a root ideal in the sense of Blasiak, Morse, Pun, and Summers~\cite{BMPS19}, since $\alpha_p$ is a minimal positive root.
    Thus $H_{p,\gamma}[X;t]$ here is the Catalan function $H(\Omega_p;\gamma)=\prod_{\epsilon_a-\epsilon_b\in\Omega_p}(1-tR_{ab})^{-1}s_\gamma$ of Chen \cite{Chen10} and Blasiak, Morse, Pun, and Summers~\cite{BMPS19}, where the raising operator $R_{ab}$ here sends $s_\kappa$ to $s_{\kappa+\epsilon_a-\epsilon_b}$.
\end{remark}

The following theorem is the main theorem in this subsection.
\begin{theorem}\label{thmstep1}
    Let $\mu,\nu\vdash n$ and suppose that $\mu\gtrdot\nu$ in the
    dominance order; that is, $\mu$ covers $\nu$.  Write
    $\nu=\mu-\epsilon_i+\epsilon_j$ for some $i<j$.  Then, for every
    $p\in\{i,\cdots,j-1\}$, we have
    \begin{equation*}
        t^{n(\nu)}\wtH_\nu[X;0,1/t]
        -
        t^{j-i+n(\mu)}\wtH_\mu[X;0,1/t]
        =
        H_{p,\nu}[X;t].
    \end{equation*}
    Since $n(\nu)=n(\mu)+j-i$, we can write it as
    \begin{equation}\label{eqstep1}
        t^{n(\nu)}(\wtH_\nu[X;0,1/t] - \wtH_\mu[X;0,1/t])
        =
        H_{p,\nu}[X;t].
    \end{equation}
\end{theorem}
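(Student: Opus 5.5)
Using $Q_\gamma[X;t]=t^{n(\gamma)}\wtH_\gamma[X;0,1/t]$, the statement to prove is
\[
Q_\nu-t^{j-i}Q_\mu=H_{p,\nu}.
\]
The plan is to work at the level of generating functions in the raising-operator formalism. Write $Q_\gamma=\prod_{\alpha\in\Delta_R^+}(1-tR^\alpha)^{-1}s_\gamma$ and $H_{p,\gamma}=\prod_{\alpha\in\Omega_p}(1-tR^\alpha)^{-1}s_\gamma$. These expressions are to be read through the Kostant partition functions: expand the product as a formal power series $\sum_\eta P(\eta;t)z^\eta$ and replace $z^\eta$ by $s_{\gamma+\eta}$.

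The first step is the factorization $\prod_{\Delta_R^+}=(1-tz^{\alpha_p})^{-1}\prod_{\Omega_p}$. Multiplying by $1-tz^{\alpha_p}$ gives
\[
H_{p,\nu}=\sum_\eta P_{\Delta_R^+}(\eta;t)\bigl(s_{\nu+\eta}-t\,s_{\nu+\alpha_p+\eta}\bigr)=Q_\nu-t\,Q_{\nu+\alpha_p}.
\]
This is valid as formal identities among finitely supported sums, since each $s_{\gamma+\eta}$ vanishes unless $\gamma+\eta+\rho$ has distinct nonnegative entries, which forces $\eta$ bounded. So it remains to show $t\,Q_{\nu+\alpha_p}=t^{j-i}Q_\mu$.

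The second step uses the structure of a dominance cover. If $\mu\gtrdot\nu$ with $\nu=\mu-\epsilon_i+\epsilon_j$, then either $j=i+1$, or $\mu_i-1=\mu_{i+1}=\cdots=\mu_{j-1}=\mu_j+1$, in which case $\nu$ is constant, equal to $c:=\mu_i-1$, on positions $i,\dots,j$. If $j=i+1$, then $p=i$ and $\nu+\alpha_i=\mu$, so we are done. Otherwise $\nu+\alpha_p$ equals $\nu$ with entries $c+1,c-1$ at positions $p,p+1$.

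The third step is a straightening relation for $Q_\gamma$ under adjacent transpositions. The aim is a Hall--Littlewood/Jing-type commutation relation: for $\gamma_{k+1}=\gamma_k+1$, we have $Q_\gamma=t\,Q_{\gamma+\alpha_k}$, i.e. $Q_{\dots,a,a+1,\dots}=t\,Q_{\dots,a+1,a,\dots}$. This is the $t$-analogue of $s_{w\cdot\gamma}=-s_\gamma$ in the degenerate case. I would prove it from the vertex-operator relation $B_{a}B_{a+1}=t\,B_{a+1}B_a$, which holds for Jing's operators since $(z-tw)$ vanishes symmetrically in the exchange relation. Alternatively, it follows from the raising-operator formula: the factor $(1-tR_{k,k+1})$ applied to the simple transposition, combined with $s_{s_k\cdot\gamma}=-s_\gamma$.

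Using this relation, the final step moves the bump. Starting from $\nu+\alpha_p$, which has the pattern $c,\dots,c,c+1,c-1,c,\dots,c$, repeatedly swap an adjacent pair $(c,c+1)$ to $(c+1,c)$ to push the $c+1$ left to position $i$; this costs $p-i$ steps. Then swap adjacent pairs $(c-1,c)$ to $(c,c-1)$ to push the $c-1$ right to position $j$, which costs $j-p-1$ steps. Each swap of the form $(a,a+1)\to(a+1,a)$ multiplies by $t$, since $Q_{\dots a,a+1\dots}=t\,Q_{\dots a+1,a\dots}$. Hence
\[
Q_{\nu+\alpha_p}=t^{j-i-1}Q_\mu,
\]
and therefore $t\,Q_{\nu+\alpha_p}=t^{j-i}Q_\mu$, as required.

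The main obstacle is establishing the exchange relation $Q_{\dots a,a+1\dots}=t\,Q_{\dots a+1,a\dots}$ for the function defined by \eqref{eqQgamma} at non-dominant weights, with care about the finite-$R$ truncation. I would first try to derive it from Jing's vertex operators, identifying $Q_\gamma$ with $B_{\gamma_1}\cdots B_{\gamma_R}\cdot 1$ for all $\gamma\in\mathbb Z^R$.
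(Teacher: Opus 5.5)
Your proposal is correct and follows the paper's proof essentially step for step: the factorization $H_{p,\gamma}=Q_\gamma-tQ_{\gamma+\alpha_p}$ (Lemma~\ref{lemma1step1}), the structure of a dominance cover (Lemma~\ref{platformlemma}), the exchange relation $Q_{(\cdots,a,a+1,\cdots)}=t\,Q_{(\cdots,a+1,a,\cdots)}$ (Lemma~\ref{lemma1step2}), and the $(p-i)+(j-p-1)$ adjacent swaps. The one difference is that the paper proves the exchange relation by the route you list only as an alternative, not by Jing's vertex operators: by your first step the relation is equivalent to $H_{p,\gamma}=0$, and this holds because $\Omega_p$ is $s_p$-stable (so $P_{\Omega_p}(s_p\eta;t)=P_{\Omega_p}(\eta;t)$) while $\gamma_{p+1}=\gamma_p+1$ gives $s_p\cdot(\gamma+\eta)=\gamma+s_p\eta$, so the terms for $\eta$ and $s_p\eta$ cancel by straightening and each $s_p$-fixed $\eta$ gives a determinant with two equal rows; this stays inside definition \eqref{eqQgamma} and removes the ``main obstacle'' you flag.
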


To prove the theorem, we begin with the following observation.

\begin{lemma}\label{lemma1step1}
    For $\gamma\in\mathbb{Z}^R$, we have
    \begin{equation*}
        H_{p,\gamma}[X;t]
        =
        Q_\gamma[X;t]
        -
        t Q_{\gamma+\alpha_p}[X;t].
    \end{equation*}
\end{lemma}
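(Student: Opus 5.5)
The plan is to compare the two generating functions for the Kostant partition functions directly. Since $\Delta_R^+=\Omega_p\sqcup\{\alpha_p\}$, we have the formal identity
\[
\prod_{\alpha\in\Omega_p}\frac{1}{1-tz^\alpha}
=
(1-tz^{\alpha_p})\prod_{\alpha\in\Delta_R^+}\frac{1}{1-tz^\alpha}.
\]
Comparing coefficients of $z^\eta$ on both sides gives, for every $\eta\in\mathbb{Z}^R$,
\[
P_{\Omega_p}(\eta;t)=P_{\Delta_R^+}(\eta;t)-t\,P_{\Delta_R^+}(\eta-\alpha_p;t).
\]
One can also see this combinatorially: a $\Delta_R^+$-partition of $\eta$ either contains no copy of $\alpha_p$, in which case it is an $\Omega_p$-partition, or it contains at least one copy, and removing one copy gives a bijection onto $\Delta_R^+$-partitions of $\eta-\alpha_p$ that lowers the size by one. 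Both sides are supported on the cone of nonnegative root combinations, so the convention $P=0$ elsewhere is consistent.

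Next, multiply by $s_{\gamma+\eta}[X]$ and sum over $\eta\in\mathbb{Z}^R$:
\[
H_{p,\gamma}=\sum_\eta P_{\Delta_R^+}(\eta;t)s_{\gamma+\eta}-t\sum_\eta P_{\Delta_R^+}(\eta-\alpha_p;t)s_{\gamma+\eta}.
\]
In the second sum substitute $\eta=\eta'+\alpha_p$. Then $s_{\gamma+\eta}=s_{(\gamma+\alpha_p)+\eta'}$, so the second sum is $Q_{\gamma+\alpha_p}[X;t]$ and the first is $Q_\gamma[X;t]$. This gives the lemma.

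The only point that needs care, and it is the main one, is that these infinite sums make sense so the reindexing is legitimate. Only finitely many $\eta$ give a nonzero term. Indeed, $s_{\gamma+\eta}$ has degree $|\gamma|$ because roots have coordinate sum $0$. Moreover, $s_\kappa=0$ as soon as some partial sum $\kappa_1+\cdots+\kappa_a$ exceeds $|\gamma|+$ a bound depending on $R$, via the determinant with $h_m=0$ for $m<0$. Since $\eta$ is a nonnegative combination of positive roots, its partial sums are nonnegative and must stay bounded. Hence $\eta$ ranges over a finite set. Alternatively, one can simply treat all sums as formal sums indexed by $\mathbb{Z}^R$, where the shift is a bijection of the index set, so the identity holds termwise regardless.
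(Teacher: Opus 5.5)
Your proposal is correct and follows the paper's proof: factor out $(1-tz^{\alpha_p})$, compare coefficients of $z^\eta$ to get $P_{\Omega_p}(\eta;t)=P_{\Delta_R^+}(\eta;t)-tP_{\Delta_R^+}(\eta-\alpha_p;t)$, then reindex by $\xi=\eta-\alpha_p$. Your added finiteness argument is sound: a row of the determinant vanishes once some $\kappa_b<b-R$, and the partial sums of $\eta$ are nonnegative. The paper leaves that point implicit here.
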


\begin{proof}
    By the definition of $\Omega_p$, we have
    \begin{equation*}
        \prod_{\alpha\in\Delta_R^+}
        \frac{1}{1-tz^\alpha}
        =
        \frac{1}{1-tz^{\alpha_p}}
        \prod_{\alpha\in\Omega_p}
        \frac{1}{1-tz^\alpha}.
    \end{equation*}
    Equivalently,
    \begin{equation*}
        \sum_{\eta\in\mathbb{Z}^R}
        P_{\Omega_p}(\eta;t)z^\eta
        =
        (1-tz^{\alpha_p})
        \sum_{\eta\in\mathbb{Z}^R}
        P_{\Delta_R^+}(\eta;t)z^\eta.
    \end{equation*}
    Comparing the coefficients of $z^\eta$ on both sides gives
    \begin{equation*}
        P_{\Omega_p}(\eta;t)
        =
        P_{\Delta_R^+}(\eta;t)
        -
        tP_{\Delta_R^+}(\eta-\alpha_p;t).
    \end{equation*}
    Therefore,
    \begin{align*}
        H_{p,\gamma}[X;t]
        &=
        \sum_{\eta\in\mathbb{Z}^R}
        P_{\Delta_R^+}(\eta;t)
        s_{\gamma+\eta}[X]
        -
        t\sum_{\eta\in\mathbb{Z}^R}
        P_{\Delta_R^+}(\eta-\alpha_p;t)
        s_{\gamma+\eta}[X]\\
        &=
        Q_\gamma[X;t]
        -
        t\sum_{\xi\in\mathbb{Z}^R}
        P_{\Delta_R^+}(\xi;t)
        s_{\gamma+\alpha_p+\xi}[X]\\
        &=
        Q_\gamma[X;t]
        -
        tQ_{\gamma+\alpha_p}[X;t],
    \end{align*}
    where the second equality follows by setting
    $\xi=\eta-\alpha_p$.
\end{proof}

\begin{lemma}\label{lemma1step2}
    Let $\gamma\in\mathbb{Z}^R$.  If $\gamma_p=a$ and
    $\gamma_{p+1}=a+1$, then
    \begin{equation*}
        Q_{(\cdots,a,a+1,\cdots)}[X;t]
        =
        t Q_{(\cdots,a+1,a,\cdots)}[X;t].
    \end{equation*}
\end{lemma}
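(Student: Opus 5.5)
The plan is to pair the straightening relation $s_{s_p\cdot\kappa}=-s_\kappa$ for the simple transposition $s_p=(p\ p{+}1)$ with a symmetry of the generating function $\prod_{\alpha\in\Delta_R^+}(1-tz^\alpha)^{-1}$ under $s_p$. Write $\gamma'=s_p\cdot\gamma$. When $\gamma_p=a$ and $\gamma_{p+1}=a+1$, the definition $w\cdot\gamma=w(\gamma+\rho)-\rho$ gives $\gamma'=(\cdots,a,a+1,\cdots)$ again. Equivalently, $\gamma+\rho$ has equal $p$th and $(p+1)$st coordinates, so $s_\gamma=0$. We have $\gamma=\beta-\alpha_p$ with $\beta=(\cdots,a+1,a,\cdots)$. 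The goal is therefore $Q_{\beta-\alpha_p}=tQ_\beta$.

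First, factor out the root $\alpha_p$. Using Lemma~\ref{lemma1step1} with $\gamma$ replaced by $\beta-\alpha_p$ gives
\[
Q_{\beta-\alpha_p}-tQ_\beta=H_{p,\beta-\alpha_p}.
\]
It therefore suffices to show $H_{p,\beta-\alpha_p}=0$, that is, $H_{p,\gamma}=0$ whenever $\gamma_{p+1}=\gamma_p+1$.

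Next, use the $s_p$-invariance of $\Omega_p$. The simple reflection $s_p$ permutes $\Delta_R^+\setminus\{\alpha_p\}=\Omega_p$. Hence $P_{\Omega_p}(s_p\eta;t)=P_{\Omega_p}(\eta;t)$, because multisets of roots in $\Omega_p$ correspond bijectively under $s_p$ and sizes are preserved. Now compute
\[
s_p\cdot(\gamma+\eta)=s_p(\gamma+\rho)+s_p\eta-\rho .
\]
Since $\gamma+\rho$ is fixed by $s_p$ (its $p$th and $(p+1)$st entries are both $a+R-p$), this equals $\gamma+s_p\eta$. Straightening then gives $s_{\gamma+s_p\eta}=-s_{\gamma+\eta}$.

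Reindex the sum by $\eta\mapsto s_p\eta$:
\[
H_{p,\gamma}=\sum_\eta P_{\Omega_p}(s_p\eta;t)\,s_{\gamma+s_p\eta}=-\sum_\eta P_{\Omega_p}(\eta;t)\,s_{\gamma+\eta}=-H_{p,\gamma}.
\]
So $H_{p,\gamma}=0$ in characteristic zero, which proves the lemma.

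The main point to check is the first step: that $s_p$ maps $\Omega_p$ to itself. A root $\epsilon_a-\epsilon_b$ with $\{a,b\}\neq\{p,p+1\}$ stays positive under swapping $p$ and $p+1$, and it cannot become $\alpha_p$. The remaining concern is finiteness of the sums. Only finitely many $\eta$ give nonzero terms, since $s_{\gamma+\eta}$ vanishes unless the entries are suitably bounded (the degree $|\gamma|$ is fixed and $\eta$ is a nonnegative root combination). We may also treat the identity coefficientwise in $t$, which avoids this issue.
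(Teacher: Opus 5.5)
Your proposal is correct and follows essentially the same route as the paper: you reduce via Lemma~\ref{lemma1step1} to showing $H_{p,\gamma}=0$, then combine the $s_p$-invariance of $\Omega_p$ with $s_p\cdot(\gamma+\eta)=\gamma+s_p\eta$ and the straightening relation. The only cosmetic difference is that the paper pairs terms in $s_p$-orbits and notes that fixed points vanish because two rows of the determinant coincide, whereas you reindex the whole sum to obtain $H_{p,\gamma}=-H_{p,\gamma}$; this is equally valid coefficientwise in $t$, and your remark that only finitely many $\gamma+\eta$ give a nonzero $s_{\gamma+\eta}$ is also correct.
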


\begin{proof}
    By Lemma~\ref{lemma1step1}, it is equivalent to show that
    $H_{p,\gamma}[X;t]=0$.
    Let $s_p\in\mathfrak{S}_R$ denote the transposition of the
    $p$th and $(p+1)$st coordinates.  Clearly the set $\Omega_p$ is invariant
    under $s_p$.  Applying $s_p$ to every root in an
    $\Omega_p$-partition of $\eta$ therefore gives an
    $\Omega_p$-partition of $s_p\eta$, while preserving the number of
    roots used.  Hence
    \begin{equation*}
        P_{\Omega_p}(s_p\eta;t)
        =
        P_{\Omega_p}(\eta;t).
    \end{equation*}

    Moreover,
    $(\gamma+\rho)_p=a+R-p=(\gamma+\rho)_{p+1}$.  Therefore
    $s_p(\gamma+\rho)=\gamma+\rho$, or
    $s_p\cdot\gamma=\gamma$.  For every $\eta\in\mathbb{Z}^R$, we thus
    have
    \begin{align*}
        s_p\cdot(\gamma+\eta)
        =
        s_p(\gamma+\eta+\rho)-\rho =
        s_p(\gamma+\rho)+s_p\eta-\rho =
        \gamma+s_p\eta.
    \end{align*}
    It follows from the straightening relation that
    $s_{\gamma+s_p\eta}[X]
    =s_{s_p\cdot(\gamma+\eta)}[X]
    =-s_{\gamma+\eta}[X]$.

    We now group the defining sum for $H_{p,\gamma}$ into orbits of the
    involution $\eta\mapsto s_p\eta$.  If $s_p\eta\neq\eta$, the two
    terms corresponding to the orbit $\{\eta,s_p\eta\}$ satisfy
    \begin{align*}
        P_{\Omega_p}(\eta;t)s_{\gamma+\eta}[X]
        +
        P_{\Omega_p}(s_p\eta;t)s_{\gamma+s_p\eta}[X]
        =
        P_{\Omega_p}(\eta;t)
        \left(
            s_{\gamma+\eta}[X]
            -
            s_{\gamma+\eta}[X]
        \right)
        =
        0.
    \end{align*}

    If $s_p\eta=\eta$, then
    $(\gamma+\eta+\rho)_p
    =(\gamma+\eta+\rho)_{p+1}
    =a+\eta_p+R-p$.  Thus, the $p$th and $(p+1)$st rows in the
    determinant \eqref{eq:generalized-schur} coincide, and hence
    $s_{\gamma+\eta}[X]=0$.  Therefore, every $s_p$-orbit contributes
    zero.  The grouping is valid coefficientwise in $t$, since
    $\Omega_p$ is finite and hence each power of $t$ occurs in $P_{\Omega_p}(\eta;t)$ for only finitely many $\eta$; consequently $H_{p,\gamma}[X;t]=0$.

    Lemma~\ref{lemma1step1} now gives
    \begin{equation*}
        Q_\gamma[X;t]
        =
        tQ_{\gamma+\alpha_p}[X;t].
    \end{equation*}
    Since adding $\alpha_p$ replaces the adjacent pair $(a,a+1)$ by
    $(a+1,a)$, the desired identity follows.
\end{proof}

\begin{lemma}\label{platformlemma}
    Suppose that $\mu,\nu\vdash n$, that $\mu\gtrdot\nu$, and that
    $\nu=\mu-\epsilon_i+\epsilon_j$, where $i+1<j$.  Then there exists
    an integer $c\geq0$ such that
    \begin{equation*}
        (\mu_i,\mu_{i+1},\cdots,\mu_{j-1},\mu_j)
        =
        (c+2,c+1,\cdots,c+1,c).
    \end{equation*}
    Consequently, $\nu_i=\nu_{i+1}=\cdots=\nu_j=c+1$.
\end{lemma}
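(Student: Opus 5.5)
The plan is to use the standard description of covering relations in dominance order, which is Brylawski's characterization: $\mu\gtrdot\nu$ with $\nu=\mu-\epsilon_i+\epsilon_j$ holds exactly when either $j=i+1$, or $\mu_i-1=\mu_j+1$ (equivalently $\mu_i=\mu_j+2$). Rather than citing this, I will derive the needed half directly from the minimality of the cover. The argument has three steps.

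First, I record the basic constraints that come from $\nu$ being a partition. Since $\nu_i=\mu_i-1\geq\nu_{i+1}=\mu_{i+1}$, we get $\mu_i>\mu_{i+1}$. Similarly $\nu_j=\mu_j+1\leq\nu_{j-1}=\mu_{j-1}$, so $\mu_{j-1}>\mu_j$. These use $i+1<j$, so that the neighbouring entries are unchanged.

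Second, I use minimality to force the middle entries to be constant and to control the endpoint gaps. Suppose some $k$ with $i<k<j$ has $\mu_k\neq\mu_{k+1}$ or $\mu_{k-1}\neq\mu_k$. I will produce an intermediate partition strictly between $\mu$ and $\nu$.

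The key observation is this. If there is an index $k$ with $i<k<j$ such that $\lambda=\mu-\epsilon_i+\epsilon_k$ is a partition, then $\lambda$ is one. To see this, note that $\lambda$ is obtained from $\mu$ by moving one box from row $i$ down to row $k$. Then $\nu=\lambda-\epsilon_k+\epsilon_j$ moves that box further down to row $j$. Partial sums show $\mu\rhd\lambda\rhd\nu$ strictly, which contradicts the cover.

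The condition for $\mu-\epsilon_i+\epsilon_k$ to be a partition is $\mu_i>\mu_{i+1}$ (already known) together with $\mu_{k-1}>\mu_k$. Hence $\mu_{k-1}=\mu_k$ for all $i+1<k<j$, and so $\mu_{i+1}=\cdots=\mu_{j-1}=:c+1$.

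It remains to handle $k=i+1$, meaning that $\mu-\epsilon_i+\epsilon_{i+1}$ is a partition. This happens iff $\mu_i\geq\mu_{i+1}+2$ (so that row $i$ stays $\geq$ row $i+1$ after the move); the other condition $\mu_i>\mu_{i+1}$ is automatic. Minimality therefore forces $\mu_i\leq\mu_{i+1}+1$, and combined with step one this gives $\mu_i=c+2$. Symmetrically, moving a box from row $j-1$ to row $j$ gives $\mu-\epsilon_{j-1}+\epsilon_j$, which sits strictly between $\mu$ and $\nu$ and is a partition iff $\mu_{j-1}\geq\mu_j+2$. So $\mu_j=\mu_{j-1}-1=c$.

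Third, I conclude. Then $\nu_i=c+1$, $\nu_j=c+1$, and the middle entries equal $c+1$. Also $c\geq0$ since $\mu_j\geq0$ (we allow trailing zeros, so $\mu_j=0$ is possible, giving $c=0$).

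The main obstacle I expect is verifying the strict dominance $\mu\rhd\lambda\rhd\nu$ carefully, including with trailing zeros. This is routine, because the partial sums of $\mu-\lambda$ are $1$ on $[i,k)$ and $0$ elsewhere, and the partial sums of $\lambda-\nu$ are $1$ on $[k,j)$ and $0$ elsewhere. Both differences are nonzero vectors with nonnegative partial sums.
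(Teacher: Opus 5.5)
Your proof is correct, but it follows a different route from the paper's.

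\textbf{The paper's route.} The paper's proof is two lines. It cites Brylawski's classification of dominance covers (Proposition~2.3 of \cite{Brylawski1973}): a cover $\nu=\mu-\epsilon_i+\epsilon_j$ satisfies $j=i+1$ or $\nu_i=\nu_j$. Since $j>i+1$, this gives $\nu_i=\nu_j$. Monotonicity of $\nu$ then forces $\nu_i=\cdots=\nu_j=c+1$, with $c=\mu_j\geq0$, and the shape of $\mu$ is read off from $\mu=\nu+\epsilon_i-\epsilon_j$.

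\textbf{Your route.} You prove the needed half of Brylawski's lemma from scratch, using only the definition of a cover. You rule out three kinds of intermediate move:
\begin{enumerate}
\item $\mu-\epsilon_i+\epsilon_k$ for $i+1<k<j$, which forces the middle entries of $\mu$ to be constant;
\item $\mu-\epsilon_i+\epsilon_{i+1}$, which forces $\mu_i=\mu_{i+1}+1$;
\item $\mu-\epsilon_{j-1}+\epsilon_j$, which forces $\mu_j=\mu_{j-1}-1$.
\end{enumerate}
Each move yields a partition strictly between $\mu$ and $\nu$ by your partial-sum computation.

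\textbf{One point to make explicit.} Your ``key observation'' is stated only for moves out of row $i$. For the third move the two differences are $\epsilon_{j-1}-\epsilon_j$ and $\epsilon_i-\epsilon_{j-1}$. The latter is nonzero with nonnegative partial sums precisely because $i<j-1$; you assert this rather than check it, and it deserves a line.

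\textbf{Comparison.} The paper's argument is shorter and reduces everything to the single equality $\nu_i=\nu_j$. Yours is self-contained and shows directly why the platform shape is forced, at the cost of a small case analysis over the three available moves.
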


\begin{proof}
    By Brylawski~\cite[Proposition~2.3]{Brylawski1973}, a cover of the
    above form must satisfy
    $j=i+1$ or $\nu_i=\nu_j$.  Since we are in the nonadjacent case
    $j>i+1$, it follows that $\nu_i=\nu_j$.

    Since $\nu$ is weakly decreasing, we have
    $\nu_i\geq\nu_{i+1}\geq\cdots\geq\nu_j$.  Together with
    $\nu_i=\nu_j$, this shows that all these parts are equal.  Write
    their common value as $c+1$.  Since $\nu_j=\mu_j+1\geq1$, we have
    $c\geq0$.  Finally, the relation
    $\nu=\mu-\epsilon_i+\epsilon_j$ gives
    \begin{equation*}
        (\mu_i,\mu_{i+1},\cdots,\mu_j)
        =
        (c+2,c+1,\cdots,c+1,c),
    \end{equation*}
    as required.
\end{proof}

Now we are ready to prove Theorem~\ref{thmstep1}.

\begin{proof}[Proof of Theorem~\ref{thmstep1}]
    By Lemma~\ref{lemma1step1},
    $H_{p,\nu}[X;t]
    =Q_\nu[X;t]-tQ_{\nu+\alpha_p}[X;t]$.
    Moreover, the definition of $n(\lambda)$ and the equality
    $\nu=\mu-\epsilon_i+\epsilon_j$ give
    $n(\nu)-n(\mu)=-(i-1)+(j-1)=j-i$.

    If $j=i+1$, then $p=i$ and $\nu+\alpha_i=\mu$.  Therefore,
    \begin{align*}
        H_{i,\nu}[X;t]
        &=
        Q_\nu[X;t]
        -
        tQ_\mu[X;t]\\
        &=
        t^{n(\nu)}\wtH_\nu[X;0,1/t]
        -
        t^{1+n(\mu)}\wtH_\mu[X;0,1/t],
    \end{align*}
    which is precisely \eqref{eqstep1}, since $j-i=1$.

    Suppose now that $j>i+1$.  By Lemma~\ref{platformlemma}, the parts
    of $\nu$ from positions $i$ through $j$ are all equal to $c+1$.
    Hence
    \begin{equation*}
        \bigl(
            (\nu+\alpha_p)_i,
            \cdots,
            (\nu+\alpha_p)_j
        \bigr)
        =
        (c+1,\cdots,c+1,c+2,c,c+1,\cdots,c+1),
    \end{equation*}
    where $c+2$ occurs in position $p$ and $c$ occurs in position
    $p+1$.

    Using Lemma~\ref{lemma1step2}, first move $c+2$ to the left across
    the $p-i$ preceding copies of $c+1$, and then move $c$ to the
    right across the $j-p-1$ following copies of $c+1$.  The resulting
    block is
    $(\mu_i,\cdots,\mu_j)=(c+2,c+1,\cdots,c+1,c)$.

    Altogether, the number of adjacent exchanges is
    $(p-i)+(j-p-1)=j-i-1$.  Each exchange contributes one factor of
    $t$, and therefore
    \begin{equation*}
        Q_{\nu+\alpha_p}[X;t]
        =
        t^{j-i-1}Q_\mu[X;t].
    \end{equation*}
    Consequently,
    \begin{align*}
        H_{p,\nu}[X;t]
        &=
        Q_\nu[X;t]
        -
        tQ_{\nu+\alpha_p}[X;t]\\
        &=
        Q_\nu[X;t]
        -
        t^{j-i}Q_\mu[X;t]\\
        &=
        t^{n(\nu)}\wtH_\nu[X;0,1/t]
        -
        t^{j-i+n(\mu)}\wtH_\mu[X;0,1/t],
    \end{align*}
    which proves \eqref{eqstep1}.
\end{proof}

\subsection{Connection with LLT polynomials}\label{subLLTconn}
Fix a positive integer $R$.  For the rest of this subsection, let
$\mu,\nu\vdash n$ satisfy $\mu\gtrdot\nu$, write
$\nu=\mu-\epsilon_i+\epsilon_j$ with $i<j$ (with $R$ enlarged if necessary).  Set
$\mathbf r=(1^{i-1},2,1^{R-i-1})$.  This lists the block
sizes of the Levi subgroup
\[
\operatorname{GL}_{\mathbf r}
:=
\operatorname{GL}_1^{i-1}\times \operatorname{GL}_2\times
\operatorname{GL}_1^{R-i-1}.
\]

Recall the notation
$\Delta_R^+=\{\epsilon_a-\epsilon_b:1\leq a<b\leq R\},
\Omega_i=\Delta_R^+\setminus\{\epsilon_i-\epsilon_{i+1}\}.$
Write $R^+(\operatorname{GL}_{\mathbf r})$ for the set of positive roots of $\operatorname{GL}_{\mathbf r}$, that is, the roots $\epsilon_a-\epsilon_b\in\Delta_R^+$ such that $a$ and $b$ lie in the same block of $\mathbf r$.
Since the only positive root inside the central $\operatorname{GL}_2$-block of $\operatorname{GL}_{\mathbf r}$ is $\epsilon_i-\epsilon_{i+1}$, we have
$$\Omega_i=\Delta_R^+\setminus R^+(\operatorname{GL}_{\mathbf r}).$$

Following the notation of Blasiak et al.~\cite{shuffleline,LLTcat,dens}, we write $\weyl_R$ for the
Weyl symmetrization operator for $\operatorname{GL}_R$:
\begin{equation}\label{defsigmaR}
\weyl_R(f)
=
\sum_{w\in \mathfrak{S}_R}
w
\left(
\frac{f}
{\displaystyle\prod_{\gamma\in\Delta_R^+}(1-z^{-\gamma})}
\right).
\end{equation}
The Weyl character formula applied to \eqref{defsigmaR} gives
$\weyl_R(z^\lambda)=\chi_\lambda$ for every dominant
$\lambda\in\mathbb Z^R$, where $\chi_\lambda$ is the irreducible
$\operatorname{GL}_R$-character of highest weight $\lambda$.
For arbitrary $\kappa\in\mathbb Z^R$, it also gives
$\weyl_R(z^\kappa)=0$ or $\weyl_R(z^\kappa)=\pm\chi_\lambda$
for some dominant $\lambda$.
For a formal linear combination $f=\sum_\lambda a_\lambda\chi_\lambda$ of irreducible $\operatorname{GL}_R$-characters, 
its polynomial part is the symmetric function $(f)_{\pol}=\sum_{\lambda\in\mathbb N^R}a_\lambda s_\lambda[X]$.
The same definition is used for $\operatorname{GL}_l$ with any $l$.
The preceding characterization and \eqref{eq:generalized-schur} give
\[
\bigl(\weyl_R(z^\kappa)\bigr)_{\pol}=s_\kappa[X]
\text{ for } \kappa \in\mathbb{Z}^R.
\]

For a composition $\mathbf a=(a_1,\cdots,a_k)$ of $R$, let
$R^+(\operatorname{GL}_{\mathbf a})$ be the set of positive roots whose
two indices lie in the same consecutive $\mathbf a$-block, and let
$\mathfrak{S}_{\mathbf a}$ be the corresponding Levi Weyl group.
Write $\weyl_{\operatorname{GL}_{\mathbf a}}$ for Weyl symmetrization
with $R^+(\operatorname{GL}_{\mathbf a})$ in place of $\Delta_R^+$.
The coset decomposition of $\mathfrak S_R$ gives
\begin{equation}\label{eq:weyl-transitivity}
\weyl_R\bigl(\weyl_{\operatorname{GL}_{\mathbf a}}(f)\bigr)=\weyl_R(f).
\end{equation}
Following
Blasiak et al.~\cite[Equation~(124)]{dens}, define
\begin{equation}\label{defHtsss}
\mathbb H_t^{\mathbf a}(f)
=
\weyl_R
\left(
\frac{f}
{\displaystyle\prod_{\gamma\in
\Delta_R^+\setminus R^+(\operatorname{GL}_{\mathbf a})}
(1-tz^\gamma)}
\right).
\end{equation}
The reciprocal factors in \eqref{defHtsss} are expanded
geometrically before applying $\weyl_R$; hence
$\mathbb H_t^{\mathbf a}(f)$ is a formal $\operatorname{GL}_R$-character
series.  We write $\mathbb H_q^{\mathbf a}$ for the same operator with $t$ replaced by $q$.

Let $\lambda=(\lambda_1,\cdots,\lambda_R)\in\mathbb Z^R$ be blockwise dominant for $\operatorname{GL}_{\mathbf r}$, namely assume $\lambda_i\geq\lambda_{i+1}$.  We denote by $\chi_\lambda(\operatorname{GL}_{\mathbf r})$ the irreducible rational character of $\operatorname{GL}_{\mathbf r}$ whose blockwise highest weight is $\lambda$.  Since a weight $a$ of $\operatorname{GL}_1$ gives the one-dimensional character $z^a$, while the central $\operatorname{GL}_2$-block contributes the character $s_{(a,b)}(z_i,z_{i+1})$, we have
\begin{equation}\label{eqchilambda}
\chi_\lambda(\operatorname{GL}_{\mathbf r})
=
\left(
\prod_{k<i}z_k^{\lambda_k}
\right)
s_{(\lambda_i,\lambda_{i+1})}(z_i,z_{i+1})
\left(
\prod_{k>i+1}z_k^{\lambda_k}
\right).
\end{equation}

For any $\operatorname{GL}_{\mathbf r}$-representation $V$, we write $V^*$ for its dual representation and $\chi_V^*$ for the corresponding dual character. 
If $\lambda$ is blockwise dominant, then $\chi_\lambda(\operatorname{GL}_{\mathbf r})^*=\chi_{-w_0^{\mathbf r}(\lambda)}(\operatorname{GL}_{\mathbf r})$, where $w_0^{\mathbf r}$ is the longest element of the Weyl group of $\operatorname{GL}_{\mathbf r}$.

Now define
\[
B=
(\nu_1-1,\cdots,\nu_{i-1}-1
\mid
\nu_i,\nu_{i+1}
\mid
\nu_{i+2},\cdots,\nu_R)
\]
and
\[
C=
(-1,\cdots,-1
\mid
0,0
\mid
0,\cdots,0).
\]
Then $B-C=\nu$.

\begin{lemma}\label{lemmaBC}
For $\mathbf r=(1^{i-1},2,1^{R-i-1})$ and the weights $B,C$ above,
we have
\[
\chi_B(\operatorname{GL}_{\mathbf r})
\chi_C(\operatorname{GL}_{\mathbf r})^*
=\chi_\nu(\operatorname{GL}_{\mathbf r}).
\]
\end{lemma}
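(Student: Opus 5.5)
This is a direct computation with the explicit product formula \eqref{eqchilambda}. The plan is to write out both characters factor by factor, compute the dual of $\chi_C(\operatorname{GL}_{\mathbf r})$, and multiply, tracking the $\operatorname{GL}_1$ factors and the central $\operatorname{GL}_2$ factor separately.

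First, $C$ is blockwise dominant, since $C_i=C_{i+1}=0$. By \eqref{eqchilambda},
\[
\chi_C(\operatorname{GL}_{\mathbf r})=\Bigl(\prod_{k<i}z_k^{-1}\Bigr)\,s_{(0,0)}(z_i,z_{i+1})=\prod_{k<i}z_k^{-1},
\]
because $s_{(0,0)}=1$ and all later exponents are $0$. This character is one-dimensional, so its dual is obtained by inverting the variables:
\[
\chi_C(\operatorname{GL}_{\mathbf r})^*=\prod_{k<i}z_k .
\]
Equivalently, $-w_0^{\mathbf r}(C)=(1,\cdots,1\mid0,0\mid0,\cdots,0)$, since $w_0^{\mathbf r}$ only swaps positions $i$ and $i+1$, where $C$ has equal entries.

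Next, $B$ is blockwise dominant because $B_i=\nu_i\geq\nu_{i+1}=B_{i+1}$, as $\nu$ is a partition. Hence \eqref{eqchilambda} gives
\[
\chi_B(\operatorname{GL}_{\mathbf r})=\Bigl(\prod_{k<i}z_k^{\nu_k-1}\Bigr)s_{(\nu_i,\nu_{i+1})}(z_i,z_{i+1})\Bigl(\prod_{k>i+1}z_k^{\nu_k}\Bigr).
\]

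Multiplying by $\prod_{k<i}z_k$ raises each exponent $\nu_k-1$ to $\nu_k$ for $k<i$ and leaves the central and later factors unchanged. The product is exactly the right-hand side of \eqref{eqchilambda} for $\lambda=\nu$, which is $\chi_\nu(\operatorname{GL}_{\mathbf r})$; here $\nu$ is itself blockwise dominant since it is a partition.

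The only point needing care is the dual of $\chi_C(\operatorname{GL}_{\mathbf r})$. Because the central block of $C$ is $(0,0)$, this character is one-dimensional, so its dual is simply the inverse monomial, and there is no real obstacle.
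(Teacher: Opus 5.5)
Your proposal is correct and follows the paper's proof essentially verbatim: evaluate $\chi_B$ and $\chi_C$ via \eqref{eqchilambda}, note $\chi_C(\operatorname{GL}_{\mathbf r})^*=\prod_{k<i}z_k$, and multiply. Your extra checks of blockwise dominance and the cross-check via $-w_0^{\mathbf r}(C)$ are harmless additions.
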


\begin{proof}
Substituting $B$ into \eqref{eqchilambda}, we obtain
\[
\chi_B(\operatorname{GL}_{\mathbf r})
=
\left(
\prod_{k<i}z_k^{\nu_k-1}
\right)
s_{(\nu_i,\nu_{i+1})}(z_i,z_{i+1})
\left(
\prod_{k>i+1}z_k^{\nu_k}
\right).
\]
Substituting $C$ into \eqref{eqchilambda}, we obtain $\chi_C(\operatorname{GL}_{\mathbf r})=\prod_{k<i}z_k^{-1}$.  Hence its dual character is $\chi_C(\operatorname{GL}_{\mathbf r})^*=\prod_{k<i}z_k$.  Therefore
\[
\begin{aligned}
\chi_B(\operatorname{GL}_{\mathbf r})\chi_C(\operatorname{GL}_{\mathbf r})^*
&=
\left(
\prod_{k<i}z_k^{\nu_k-1}z_k
\right)
s_{(\nu_i,\nu_{i+1})}(z_i,z_{i+1})
\left(
\prod_{k>i+1}z_k^{\nu_k}
\right)
\\
&=
\left(
\prod_{k<i}z_k^{\nu_k}
\right)
s_{(\nu_i,\nu_{i+1})}(z_i,z_{i+1})
\left(
\prod_{k>i+1}z_k^{\nu_k}
\right)
\\
&=
\chi_\nu(\operatorname{GL}_{\mathbf r}).
\end{aligned}
\]
\end{proof}

Next, we express $H_{i,\nu}[X;t]$ using $\mathbb{H}_t^{\mathbf r}$.

\begin{proposition}\label{propHpnuLevi}
Let $\mathbf r=(1^{i-1},2,1^{R-i-1})$.  Assume that $\nu$ is
blockwise dominant for $\operatorname{GL}_{\mathbf r}$.  Then
\[
H_{i,\nu}[X;t]
=
\left(
\mathbb H_t^{\mathbf r}
\bigl(
\chi_\nu(\operatorname{GL}_{\mathbf r})
\bigr)
\right)_{\pol}.
\]
Consequently,
\[
H_{i,\nu}[X;t]
=
\left(
\mathbb H_t^{\mathbf r}
\bigl(
\chi_B(\operatorname{GL}_{\mathbf r})\chi_C(\operatorname{GL}_{\mathbf r})^*
\bigr)
\right)_{\pol}.
\]
\end{proposition}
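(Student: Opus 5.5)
The plan is to unwind $\chi_\nu(\operatorname{GL}_{\mathbf r})$ as a $\operatorname{GL}_{\mathbf r}$-Weyl symmetrization and then use the transitivity \eqref{eq:weyl-transitivity} to reduce everything to $\weyl_R$ of monomials, where the polynomial part is a generalized Schur function.

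First, I use the Weyl character formula for the Levi. Since $\nu_i\geq\nu_{i+1}$, $\nu$ is dominant for $\operatorname{GL}_{\mathbf r}$, so
\[
\chi_\nu(\operatorname{GL}_{\mathbf r})=\weyl_{\operatorname{GL}_{\mathbf r}}(z^\nu).
\]
Concretely, only the central $\operatorname{GL}_2$ block is nontrivial, and $s_{(\nu_i,\nu_{i+1})}(z_i,z_{i+1})$ is the two-variable Weyl symmetrization of $z_i^{\nu_i}z_{i+1}^{\nu_{i+1}}$.

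Next, the denominator in \eqref{defHtsss} is $\prod_{\gamma\in\Omega_i}(1-tz^\gamma)$, because $\Delta_R^+\setminus R^+(\operatorname{GL}_{\mathbf r})=\Omega_i$. The set $\Omega_i$ is stable under $s_i$, the only nontrivial generator of $\mathfrak S_{\mathbf r}$. Hence this product, expanded as a formal series, is $\mathfrak S_{\mathbf r}$-invariant and can be moved inside $\weyl_{\operatorname{GL}_{\mathbf r}}$. Doing so and then applying \eqref{eq:weyl-transitivity} gives
\[
\mathbb H_t^{\mathbf r}\bigl(\chi_\nu(\operatorname{GL}_{\mathbf r})\bigr)
=\weyl_R\Bigl(\frac{z^\nu}{\prod_{\gamma\in\Omega_i}(1-tz^\gamma)}\Bigr)
=\sum_{\eta}P_{\Omega_i}(\eta;t)\,\weyl_R(z^{\nu+\eta}).
\]

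Finally, I take polynomial parts termwise. Using $(\weyl_R(z^\kappa))_{\pol}=s_\kappa[X]$, this yields $\sum_\eta P_{\Omega_i}(\eta;t)s_{\nu+\eta}[X]=H_{i,\nu}[X;t]$. The second displayed formula then follows immediately from Lemma~\ref{lemmaBC}.

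The main obstacle is justifying the formal manipulations on infinite series: commuting $\weyl_R$ and $(\cdot)_{\pol}$ with the geometric expansion, and pulling the invariant series through $\weyl_{\operatorname{GL}_{\mathbf r}}$. I plan to handle this coefficientwise in $t$, as in Lemma~\ref{lemma1step2}. Since $\Omega_i$ is finite, each power $t^k$ involves only finitely many $\eta$. So for each fixed $k$ all identities are finite sums of Laurent polynomials, and both $\weyl_R$ and the polynomial part are linear on such sums.
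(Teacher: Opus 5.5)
Your proposal is correct and follows the paper's proof essentially step for step. Both arguments write $\chi_\nu(\GL_{\mathbf r})=\weyl_{\GL_{\mathbf r}}(z^\nu)$, pull the $\mathfrak S_{\mathbf r}$-invariant product $\prod_{\gamma\in\Omega_i}(1-tz^\gamma)$ through $\weyl_{\GL_{\mathbf r}}$, apply \eqref{eq:weyl-transitivity}, expand and take polynomial parts termwise to obtain $\sum_\eta P_{\Omega_i}(\eta;t)s_{\nu+\eta}[X]$, and finish with Lemma~\ref{lemmaBC}. Your coefficientwise-in-$t$ justification is a sensible addition; the paper leaves it implicit here and states the analogous finiteness remark only in Lemma~\ref{lemma1step2}.
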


\begin{proof}
Multiplying both sides of \eqref{defntkostant-general} by $z^\nu$, we obtain
\[
\frac{z^\nu}
{\displaystyle\prod_{\gamma\in\Omega_i}(1-tz^\gamma)}
=
\sum_{\eta\in\mathbb Z^R}
P_{\Omega_i}(\eta;t)z^{\nu+\eta}.
\]
Applying $\weyl_R$ to both sides and then taking the polynomial part gives
\[
\begin{aligned}
\left(
\weyl_R
\left(
\frac{z^\nu}
{\displaystyle\prod_{\gamma\in\Omega_i}(1-tz^\gamma)}
\right)
\right)_{\pol}
&=
\sum_{\eta\in\mathbb Z^R}
P_{\Omega_i}(\eta;t)
\bigl(
\weyl_R(z^{\nu+\eta})
\bigr)_{\pol}
\\
&=
\sum_{\eta\in\mathbb Z^R}
P_{\Omega_i}(\eta;t)s_{\nu+\eta}[X]
\\
&=
H_{i,\nu}[X;t].
\end{aligned}
\]
The set $\Omega_i$ is stable under the Levi Weyl group
$\mathfrak S_{\mathbf r}$, so $\prod_{\gamma\in\Omega_i}(1-tz^\gamma)$ is Levi-invariant.  Since $\nu$ is
$\operatorname{GL}_{\mathbf r}$-dominant, $\chi_\nu(\operatorname{GL}_{\mathbf r})=\weyl_{\operatorname{GL}_{\mathbf r}}(z^\nu)$, and hence
\[
\frac{\chi_\nu(\operatorname{GL}_{\mathbf r})}{\prod_{\gamma\in\Omega_i}(1-tz^\gamma)}
=
\weyl_{\operatorname{GL}_{\mathbf r}}\left(\frac{z^\nu}{\prod_{\gamma\in\Omega_i}(1-tz^\gamma)}\right).
\]
Applying \eqref{eq:weyl-transitivity} gives
\[
\mathbb H_t^{\mathbf r}\bigl(\chi_\nu(\operatorname{GL}_{\mathbf r})\bigr)
=
\weyl_R\left(\frac{\chi_\nu(\operatorname{GL}_{\mathbf r})}{\prod_{\gamma\in\Omega_i}(1-tz^\gamma)}\right)
=
\weyl_R\left(\frac{z^\nu}{\prod_{\gamma\in\Omega_i}(1-tz^\gamma)}\right).
\]
Taking polynomial parts and using the preceding computation proves the
first identity.  The second identity follows from Lemma~\ref{lemmaBC}.
\end{proof}

We shall recall the semi-symmetric Hall--Littlewood polynomials
introduced by Blasiak et al.~\cite[Section~5]{dens}. 
For \(\lambda=(\lambda_1,\cdots,\lambda_R)\in\mathbb Z^R\), write
$z^\lambda=z_1^{\lambda_1}\cdots z_R^{\lambda_R}$.
For \(1\leq i<R\), let \(s_i\) denote the operator interchanging
\(z_i\) and \(z_{i+1}\).  Define the operator \(T_i\) by
\[
T_i f
=
t\,s_i f
+
(t-1)
\frac{z_i}{z_i-z_{i+1}}
(f-s_i f).
\]
The operators \(T_i\) satisfy the braid relations and the Hecke
relations
$(T_i-t)(T_i+1)=0$.
Hence, for \(w\in \mathfrak{S}_R\), one may define
$T_w=T_{i_1}\cdots T_{i_\ell}$,
where $w=s_{i_1}\cdots s_{i_\ell}$
is a reduced expression.  This definition is independent of the
choice of reduced expression.

For \(\lambda\in\mathbb Z^R\), let \(\lambda^+\) denote the dominant
rearrangement of \(\lambda\), and choose \(w\in \mathfrak{S}_R\) such that $\lambda=w(\lambda^+)$.
The nonsymmetric Hall--Littlewood polynomial indexed by \(\lambda\)
is
\[
E_\lambda(z;t)
=
t^{-\ell(w)}T_wz^{\lambda^+}.
\]
If \(\lambda^+\) has a nontrivial stabilizer, the permutation \(w\)
need not be unique; nevertheless, the above definition is
independent of the choice of \(w\).  In particular, if \(\lambda\) is
dominant, then $E_\lambda(z;t)=z^\lambda$.

Write $w_0$ for the longest element of $\mathfrak{S}_R$.  The bar involution
inverts all variables and parameters, that is,
\(
\overline{f(z;t)}
=
f(z_1^{-1},\cdots,z_R^{-1};t^{-1}),
\)
and the same convention applies when the parameter is $q$.  Thus
$\overline{T_i}=T_i^{-1}$ and
$\overline{T_w}=T_{w^{-1}}^{-1}$.

For \(\xi\in \mathfrak{S}_R\), define
\[
\operatorname{Inv}(\xi^{-1})
=
\left\{
(a,b):a<b,\ \xi^{-1}(a)>\xi^{-1}(b)
\right\}.
\]
For a vector $v\in\mathbb R^R$, write $\operatorname{Inv}(v)=\{(a,b):a<b,\ v_a>v_b\}$; the set above is $\operatorname{Inv}(v)$ for $v=(\xi^{-1}(1),\cdots,\xi^{-1}(R))$.  Let $\rho=(R-1,R-2,\cdots,0)$,
and choose \(\varepsilon>0\) sufficiently small so that
\[
\operatorname{Inv}(\lambda+\varepsilon\rho)
=
\left\{
(a,b):a<b,\ \lambda_a\geq\lambda_b
\right\}.
\]
The twisted nonsymmetric Hall--Littlewood polynomials are defined by
\[
E_\lambda^\xi(z;t)
=
t^{
\left|
\operatorname{Inv}(\xi^{-1})
\cap
\operatorname{Inv}(\lambda+\varepsilon\rho)
\right|
}
\overline{T_\xi}\,E_{\xi^{-1}(\lambda)}(z;t),
\]
and
\[
F_\lambda^\xi(z;t)
=
\overline{E_{-\lambda}^{\xi w_0}(z;t)}.
\]

For a Levi composition
$\mathbf a=(a_1,\cdots,a_k)$ of $R$, let
$\operatorname{GL}_{\mathbf a}
=
\operatorname{GL}_{a_1}\times\cdots\times\operatorname{GL}_{a_k}$,
and let $\mathfrak{S}_{\mathbf a}$ be its Weyl group.  A weight is in
$X^+(\operatorname{GL}_{\mathbf a})$, respectively
$X^{++}(\operatorname{GL}_{\mathbf a})$, if it is weakly, respectively
strictly, decreasing within every $\mathbf a$-block.  Choose
$\rho_{\mathbf a}$ so that its consecutive entries differ by $1$
inside every block, that is, $(\rho_{\mathbf a})_p-(\rho_{\mathbf a})_{p+1}=1$ whenever $p$ and $p+1$ lie in the same block.  We write $w_0^{\mathbf a}$ for the longest
element of $\mathfrak{S}_{\mathbf a}$, and $w_0^k\in \mathfrak{S}_k$ for the longest block
permutation; thus
$w_0^k(\mathbf a)=(a_k,\cdots,a_1)$.

For a permutation \(\sigma\in \mathfrak{S}_k\), let
\(\widehat{\sigma}\in \mathfrak{S}_R\) denote the induced block permutation.  More
precisely,
\[
\widehat{\sigma}
\left(
a_{\sigma(1)}+\cdots+a_{\sigma(p-1)}+j
\right)
=
a_1+\cdots+a_{\sigma(p)-1}+j,
\]
for
$1\leq p\leq k,
1\leq j\leq a_{\sigma(p)}$.  The lift of $\sigma^{-1}$ is formed
relative to $\sigma^{-1}(\mathbf a)$; in particular,
$\widehat{\sigma}^{-1}$ is not the lift of $\sigma^{-1}$ formed
relative to the original composition $\mathbf a$.

The Levi semi-symmetrization operator is
\[
\delta_{\mathbf a}(f)
=
\frac{\displaystyle
\sum_{w\in \mathfrak{S}_{\mathbf a}}(-1)^{\ell(w)}w(f)}
{\displaystyle\prod_{\gamma\in R^+(\operatorname{GL}_{\mathbf a})}
(1-z^{-\gamma})}.
\]
For the fixed composition $\mathbf r=(1^{i-1},2,1^{R-i-1})$, the Levi Weyl group is
$\mathfrak{S}_{\mathbf r}=\langle s_i\rangle$, and the preceding formula becomes
\[
\delta_{\mathbf r}(f)
=
\frac{f-s_if}{1-z_{i+1}/z_i}.
\]
For $\lambda\in X^{++}(\operatorname{GL}_{\mathbf a})$, define
\[
E_{\mathbf a,\lambda}^{\sigma}(z;t)
=
\delta_{\mathbf a}E_\lambda^{\widehat{\sigma}}(z;t),
\qquad
F_{\mathbf a,\lambda}^{\sigma}(z;t)
=
\delta_{\mathbf a}F_\lambda^{\widehat{\sigma}}(z;t).
\]
If $\mathbf a$ is weak, delete its zero blocks.  More precisely, let
$i_1<\cdots<i_j$ be the indices of its nonzero blocks and put
$\mathbf a'=(a_{i_1},\cdots,a_{i_j})$.  Let $\tau\in \mathfrak{S}_j$ be the
unique permutation for which
$\tau^{-1}(1),\cdots,\tau^{-1}(j)$ has the same relative order as
$\sigma^{-1}(i_1),\cdots,\sigma^{-1}(i_j)$.  Then the preceding
notation means
\[
E_{\mathbf a,\lambda}^{\sigma}(z;t)
=
E_{\mathbf a',\lambda}^{\tau}(z;t),
\qquad
F_{\mathbf a,\lambda}^{\sigma}(z;t)
=
F_{\mathbf a',\lambda}^{\tau}(z;t).
\]
The following symmetrization identity relates the LLT series of Blasiak et al.~\cite{dens}, whose definition is recalled in the statement, to
semi-symmetric Hall--Littlewood polynomials.

\begin{proposition}[{\cite[Proposition~5.4.3]{dens}}]\label{prop:dens-symmetrization}
Let $\mathbf a=(a_1,\cdots,a_k)$
be a composition of \(R\), let \(\sigma\in \mathfrak{S}_k\), and let
$\alpha,\beta\in X^{++}(\operatorname{GL}_{\mathbf a})$.
Then
\[
\mathcal L_{\mathbf a,\beta/\alpha}^{\sigma}(z;t)
=
\mathbb H_t^{w_0^k(\mathbf a)}
\left(
w_0\left(
F_{\mathbf a,\beta}^{\sigma^{-1}}(z;t)
\,
\overline{
E_{\mathbf a,\alpha}^{\sigma^{-1}}(z;t)
}
\right)
\right).
\]
The LLT series on the left side is defined by
Blasiak et al.~\cite[Definition~5.4.1]{dens} through
\[
\left\langle\chi_\lambda\right\rangle
\mathcal L_{\mathbf a,\beta/\alpha}^{\sigma^{-1}}(z;t^{-1})
=
\left\langle
E_{\mathbf a,\beta}^{\sigma}(z;t)
\right\rangle
\chi_\lambda(z)
E_{\mathbf a,\alpha}^{\sigma}(z;t).
\]
Here \(\langle E_{\mathbf a,\beta}^{\sigma}(z;t)\rangle f\)
denotes the coefficient of \(E_{\mathbf a,\beta}^{\sigma}(z;t)\) in
the expansion of \(f\) in the semi-symmetric Hall--Littlewood basis, and \(\langle\chi_\lambda\rangle g\) denotes the coefficient of the irreducible character \(\chi_\lambda\) in the expansion of \(g\) in irreducible \(\operatorname{GL}_R\)-characters.
\end{proposition}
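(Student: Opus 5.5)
This proposition is quoted from Blasiak et al.\ \cite[Proposition~5.4.3]{dens}, and we use it as a black box. If we had to reprove it, we would use a duality argument in the ring of Laurent polynomials. The idea is to express the coefficient extraction $\langle E_{\mathbf a,\beta}^{\sigma}(z;t)\rangle$ as a pairing against the dual basis $F_{\mathbf a,\beta}^{\sigma}$. The sum over $\lambda$ of the resulting coefficients, weighted by $\chi_\lambda$, should then collapse, via a Cauchy-type identity, into a single Weyl symmetrization with Hall--Littlewood denominators. That symmetrization is exactly $\mathbb H_t^{w_0^k(\mathbf a)}$.

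\textbf{Step 1: a pairing with dual bases.} Introduce the sesquilinear form
\[
\langle f,g\rangle_t=\operatorname{CT}\Bigl(f\,\overline{g}\,\prod_{\gamma\in\Delta_R^+\setminus R^+(\operatorname{GL}_{\mathbf a})}\frac{1-z^{-\gamma}}{1-tz^{-\gamma}}\Bigr)
\]
(up to normalization), restricted to $\operatorname{GL}_{\mathbf a}$-semi-symmetric functions, i.e.\ the image of $\delta_{\mathbf a}$. The first claim to prove is the biorthogonality
\[
\langle E_{\mathbf a,\beta}^{\sigma},\,F_{\mathbf a,\gamma}^{\sigma}\rangle_t=\delta_{\beta\gamma}.
\]
For nonsymmetric Hall--Littlewood polynomials this follows from three facts: the $T_i$ are self-adjoint (up to the bar involution) for the Cherednik-type form; the polynomials $E^\xi_\lambda$ are triangular with respect to the twisted order; and the same holds for $F^\xi_\lambda=\overline{E^{\xi w_0}_{-\lambda}}$ with the opposite order. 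Applying $\delta_{\mathbf a}$ then passes biorthogonality to the semi-symmetric versions.

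\textbf{Step 2: rewrite the coefficient as a pairing.} Biorthogonality gives
\[
\langle E_{\mathbf a,\beta}^\sigma\rangle\,\chi_\lambda E_{\mathbf a,\alpha}^\sigma=\langle \chi_\lambda E_{\mathbf a,\alpha}^\sigma,\,F_{\mathbf a,\beta}^\sigma\rangle_t .
\]
Summing this against $\chi_\lambda$ produces a generating series. We would then use the $\operatorname{GL}_R$-orthogonality of characters, realized as $\operatorname{CT}$ against $\prod_{\gamma\in\Delta_R^+}(1-z^{\gamma})$, to trade the sum over $\lambda$ for the Weyl symmetrization $\weyl_R$ applied to the product $F\,\overline{E}$. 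The leftover factors $\prod(1-tz^{\gamma})^{-1}$ over roots outside the Levi are precisely the denominators appearing in \eqref{defHtsss}.

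\textbf{Step 3: match the conventions.} It remains to track the bookkeeping: replacing $\sigma$ by $\sigma^{-1}$ and $t$ by $t^{-1}$ in the definition of $\mathcal L$, applying $w_0$, and reversing the blocks from $\mathbf a$ to $w_0^k(\mathbf a)$. We expect this to be the main obstacle. It requires checking how the lifts $\widehat{\sigma}$ interact with $w_0$, namely $w_0\widehat{\sigma}$ versus the lift of $w_0^k\sigma$ relative to the reversed composition. It also requires controlling the powers of $t$ in the normalization of $E^\xi_\lambda$ under the bar involution. We would first verify the identity in the case $\mathbf a=(1^R)$, where $\delta_{\mathbf a}$ is trivial. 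We would then deduce the general case by the coset decomposition \eqref{eq:weyl-transitivity}.
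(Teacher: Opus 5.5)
This is exactly how the paper treats the statement: it is quoted from \cite[Proposition~5.4.3]{dens} and used without proof, so your black-box citation is all that is required. One caution about your optional sketch: the Step~1 form as written is not biorthogonal. For semi-symmetric functions it must also carry the Levi factor $\prod_{\gamma\in R^+(\operatorname{GL}_{\mathbf a})}(1-z^{-\gamma})$; for instance, for $R=2$ and $\mathbf a=(2)$ your form reduces to $\operatorname{CT}(f\,\overline g)$, which pairs $E^{1}_{\mathbf a,(3,0)}=z_1\chi_{(2,0)}$ with $F^{1}_{\mathbf a,(2,1)}=z_1\chi_{(1,1)}$ to $1$ rather than $0$. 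In addition, with the geometric expansion the bar must sit on the $E$-slot, and the correct statement is $\operatorname{CT}\bigl(\overline{E^{\sigma}_{\mathbf a,\alpha}}\,F^{\sigma}_{\mathbf a,\beta}\prod_{\gamma\in\Delta_R^+}(1-z^{-\gamma})\big/\prod_{\gamma\in\Delta_R^+\setminus R^+(\operatorname{GL}_{\mathbf a})}(1-tz^{-\gamma})\bigr)=\delta_{\alpha\beta}$, which is simply the $\chi_0$-coefficient of the proposition itself.
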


\begin{proposition}\label{prop:levi-character}
Let $1\leq i<R$, $\mathbf r=(1^{i-1},2,1^{R-i-1})$, and
$D=(D_1,\cdots,D_R)\in\mathbb Z^R$.
Assume that $D$ is blockwise dominant for $\operatorname{GL}_{\mathbf r}$;
that is, $D_i\geq D_{i+1}$.
Then
\[
z^{-\rho_{\mathbf r}}
\delta_{\mathbf r}
\left(
z^{D+\rho_{\mathbf r}}
\right)
=
\chi_D(\operatorname{GL}_{\mathbf r}).
\]
Here,
\(
\rho_{\mathbf r}
=
(-1,\cdots,-1\mid-1,-2\mid-1,\cdots,-1).
\)
\end{proposition}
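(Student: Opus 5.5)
The plan is a direct computation: expand $\delta_{\mathbf r}$ using its explicit form for $\mathbf r=(1^{i-1},2,1^{R-i-1})$, namely $\delta_{\mathbf r}(f)=(f-s_if)/(1-z_{i+1}/z_i)$. Then I would recognize the $(i,i+1)$-part as a two-variable Schur bialternant and compare with \eqref{eqchilambda}.

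\emph{Step 1: separate the variables.} Put $f=z^{D+\rho_{\mathbf r}}$. Its exponents are $D_k-1$ for $k\neq i,i+1$, $D_i-1$ at $z_i$, and $D_{i+1}-2$ at $z_{i+1}$. The conjugation cannot be pulled through $\delta_{\mathbf r}$, because $z^{\rho_{\mathbf r}}$ is not $s_i$-invariant: its $i$th and $(i+1)$st entries are $-1$ and $-2$. However, the factor $P=\prod_{k\neq i,i+1}z_k^{D_k-1}$ is $s_i$-invariant and so factors out of $\delta_{\mathbf r}$. The outer part of $z^{-\rho_{\mathbf r}}$ is $\prod_{k\neq i,i+1}z_k$, and it turns $P$ into $\prod_{k\neq i,i+1}z_k^{D_k}$. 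These are exactly the $\operatorname{GL}_1$-factors appearing in \eqref{eqchilambda}.

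\emph{Step 2: the central block.} It remains to show
\[
z_iz_{i+1}^{2}\,
\frac{z_i^{D_i-1}z_{i+1}^{D_{i+1}-2}-z_i^{D_{i+1}-2}z_{i+1}^{D_i-1}}{1-z_{i+1}/z_i}
=s_{(D_i,D_{i+1})}(z_i,z_{i+1}).
\]
Writing $1-z_{i+1}/z_i=(z_i-z_{i+1})/z_i$, the left side becomes
\[
\frac{z_i^{D_i+1}z_{i+1}^{D_{i+1}}-z_i^{D_{i+1}}z_{i+1}^{D_i+1}}{z_i-z_{i+1}}.
\]
This is the bialternant $a_{(D_i+1,D_{i+1})}/a_{(1,0)}$. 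Since $D_i\ge D_{i+1}$, it equals $s_{(D_i,D_{i+1})}(z_i,z_{i+1})$. If $D_{i+1}$ is negative, I would factor out $(z_iz_{i+1})^{D_{i+1}}$ and reduce to the polynomial case $(D_i-D_{i+1},0)$, where the quotient is $\sum_{a+b=D_i-D_{i+1}}z_i^az_{i+1}^b$.

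\emph{Step 3: conclude.} Combining Steps 1 and 2 gives
$\bigl(\prod_{k<i}z_k^{D_k}\bigr)s_{(D_i,D_{i+1})}(z_i,z_{i+1})\bigl(\prod_{k>i+1}z_k^{D_k}\bigr)$. By \eqref{eqchilambda}, this is $\chi_D(\operatorname{GL}_{\mathbf r})$.

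The only delicate point is the exponent bookkeeping. One must apply $s_i$ to the full monomial $z^{D+\rho_{\mathbf r}}$, including the unequal shifts $-1,-2$, and only afterwards multiply by $z^{-\rho_{\mathbf r}}$. That order is what produces the shifted exponent $D_i+1$ required for the bialternant.
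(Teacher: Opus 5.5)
Your proposal is correct and follows the paper's proof essentially verbatim. The paper also factors out the $\operatorname{GL}_1$-monomials and reduces the central block to the computation $z_iz_{i+1}^2\,(z_i^{a-1}z_{i+1}^{b-2}-z_i^{b-2}z_{i+1}^{a-1})/(1-z_{i+1}/z_i)=(z_i^{a+1}z_{i+1}^{b}-z_i^{b}z_{i+1}^{a+1})/(z_i-z_{i+1})=s_{(a,b)}(z_i,z_{i+1})$; your remark on negative $D_{i+1}$ is a harmless extra that the paper leaves implicit.
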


\begin{proof}
Write $
(D_i,D_{i+1})=(a,b),
a\geq b.
$
The central \(\operatorname{GL}_2\)-block contributes
$$z_i z_{i+1}^2
\frac{
z_i^{a-1}z_{i+1}^{b-2}
-
z_i^{b-2}z_{i+1}^{a-1}
}
{1-z_{i+1}/z_i}
=
\frac{
z_i^{a+1}z_{i+1}^{b}
-
z_i^{b}z_{i+1}^{a+1}
}
{z_i-z_{i+1}}
=
s_{(a,b)}(z_i,z_{i+1}).
$$
Each \(\operatorname{GL}_1\)-block contributes the monomial prescribed by its
weight.  Therefore,
\[
z^{-\rho_{\mathbf r}}
\delta_{\mathbf r}
\left(
z^{D+\rho_{\mathbf r}}
\right)
=
\left(
\prod_{k<i}z_k^{D_k}
\right)
s_{(D_i,D_{i+1})}(z_i,z_{i+1})
\left(
\prod_{k>i+1}z_k^{D_k}
\right)
=\chi_D(\operatorname{GL}_{\mathbf r}).
\]
\end{proof}

\begin{lemma}\label{Platform}
Let $\mu\gtrdot\nu$ be a dominance cover, and write
$\nu=\mu-\epsilon_i+\epsilon_j$ with $i<j$.
Let
\(
\mathbf r=(1^{i-1},2,1^{R-i-1}),
B=(\nu_1-1,\cdots,\nu_{i-1}-1
\mid \nu_i,\nu_{i+1}
\mid \nu_{i+2},\cdots,\nu_R),
\)
and
\(
\rho_{\mathbf r}
=
(-1,\cdots,-1\mid-1,-2\mid-1,\cdots,-1).
\)
Then \(B+\rho_{\mathbf r}\in X^{++}(\operatorname{GL}_{\mathbf r})\) and
\[
E_{\mathbf r,B+\rho_{\mathbf r}}^{1}(z;t)
=
\delta_{\mathbf r}E_{B+\rho_{\mathbf r}}(z;t)
=
\delta_{\mathbf r}\left(z^{B+\rho_{\mathbf r}}\right)
=
z^{\rho_{\mathbf r}}\chi_B(\operatorname{GL}_{\mathbf r}).
\]
Here the superscript \(1\) denotes the identity permutation.
\end{lemma}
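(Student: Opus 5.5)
The claim splits into three equalities and a membership statement, and I would check them in order.

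\textbf{Membership in $X^{++}(\operatorname{GL}_{\mathbf r})$.} The only nontrivial block of $\mathbf r$ is the central $\operatorname{GL}_2$-block in positions $i,i+1$. There
\[
(B+\rho_{\mathbf r})_i=\nu_i-1,\qquad (B+\rho_{\mathbf r})_{i+1}=\nu_{i+1}-2 .
\]
Since $\nu$ is a partition, $\nu_i\geq\nu_{i+1}$, so $\nu_i-1>\nu_{i+1}-2$. This gives strict decrease inside the block. The singleton blocks impose no condition.

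\textbf{First equality.} By definition, $E^{1}_{\mathbf r,\lambda}=\delta_{\mathbf r}E^{\widehat 1}_\lambda$, and $\widehat 1$ is the identity of $\mathfrak S_R$. Put $\xi=1$ in the definition of $E^\xi_\lambda$. Then $\operatorname{Inv}(\xi^{-1})=\emptyset$, so the power of $t$ is $t^0=1$. Also $\overline{T_1}$ is the identity operator. Hence $E^{1}_\lambda=E_\lambda$, which gives the first equality.

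\textbf{Second equality.} I must show $\delta_{\mathbf r}E_{B+\rho_{\mathbf r}}=\delta_{\mathbf r}(z^{B+\rho_{\mathbf r}})$. The weight $\lambda=B+\rho_{\mathbf r}$ need not be globally dominant, so $E_\lambda=t^{-\ell(w)}T_w z^{\lambda^+}$ need not be a monomial. This is the step I expect to be the main obstacle. I would try the following two approaches, in this order.

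\emph{First approach: check dominance directly.} Compute $\lambda$ explicitly:
\[
\lambda=(\nu_1-2,\cdots,\nu_{i-1}-2\mid\nu_i-1,\nu_{i+1}-2\mid\nu_{i+2}-1,\cdots,\nu_R-1).
\]
So $\lambda$ is $\nu$ shifted by $-2$ in positions $1,\ldots,i-1$ and at position $i+1$, and by $-1$ elsewhere. Dominance can only fail at two boundaries.
\begin{itemize}
\item At the boundary between positions $i-1$ and $i$: we need $\nu_{i-1}-2\geq\nu_i-1$, i.e.\ $\nu_{i-1}>\nu_i$.
\item At the boundary between positions $i+1$ and $i+2$: we need $\nu_{i+1}-2\geq\nu_{i+2}-1$, i.e.\ $\nu_{i+1}>\nu_{i+2}$.
\end{itemize}
Now use $\nu=\mu-\epsilon_i+\epsilon_j$ with $i$ the first index where $\mu$ and $\nu$ differ. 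Then $\nu_{i-1}=\mu_{i-1}\geq\mu_i=\nu_i+1>\nu_i$, so the first condition always holds. The second condition can fail, for example when $j>i+1$ and Lemma~\ref{platformlemma} gives a platform of equal parts $c+1$.

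\emph{Second approach: use the Hecke structure.} When dominance fails, $\lambda$ is obtained from $\lambda^+$ by moving entries only within a single weakly decreasing region. I would show that $E_\lambda$ and $z^\lambda$ differ by a combination of terms that $\delta_{\mathbf r}$ kills or that cancel. For adjacent entries $(a,a+1)$, the Demazure-type operator $T_p$ sends $z^{\cdots a+1,a\cdots}$ to $t\,z^{\cdots a,a+1\cdots}$ plus a correction. This correction lies in the span of monomials with the same entries rearranged, and could be handled using the straightening identities from Lemma~\ref{lemma1step2}. I expect to need the explicit Lemma~\ref{platformlemma} shape here.

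A cleaner alternative is to choose $R$ and the convention so that $\rho_{\mathbf r}$ makes $\lambda$ dominant. I would check whether the paper's choice of $\rho_{\mathbf r}$ (entries $-1$ outside the block) in fact yields dominance after using $\nu_{i+1}\geq\nu_{i+2}$ together with the platform structure.

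\textbf{Third equality.} This is immediate from Proposition~\ref{prop:levi-character} with $D=B$. The hypothesis $D_i\geq D_{i+1}$ holds because $B_i=\nu_i\geq\nu_{i+1}=B_{i+1}$. Multiplying both sides by $z^{\rho_{\mathbf r}}$ gives $\delta_{\mathbf r}(z^{B+\rho_{\mathbf r}})=z^{\rho_{\mathbf r}}\chi_B(\operatorname{GL}_{\mathbf r})$.
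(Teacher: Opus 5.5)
Your membership check, the first equality, and the third equality are correct and match the paper. For the first equality, $\xi=1$ makes the $t$-power and $\overline{T_\xi}$ trivial. For the third, apply Proposition~\ref{prop:levi-character} with $D=B$. For $j=i+1$ your first approach also works once you record the second boundary inequality, $\nu_{i+1}=\mu_{i+1}+1\geq\mu_{i+2}+1=\nu_{i+2}+1$. Then $\lambda=B+\rho_{\mathbf r}$ is dominant and $E_\lambda=z^\lambda$.

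\textbf{The gap.} The second equality is not proved when $j>i+1$. There dominance \emph{always} fails, because Lemma~\ref{platformlemma} gives $\nu_{i+1}=\nu_{i+2}=c+1$. So your ``cleaner alternative'' cannot succeed, and $\rho_{\mathbf r}$ is fixed by the statement anyway. Your second approach states the right goal but supplies no mechanism. The tool you cite, Lemma~\ref{lemma1step2}, is a straightening relation among the symmetric functions $Q_\gamma$ built from generalized Schur determinants. It says nothing about the nonsymmetric polynomial $E_\lambda$ and cannot be used here.

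\textbf{The missing step} is an explicit computation of $E_\lambda$. On positions $i,\ldots,j$ one has $\lambda=(c,c-1,c,\ldots,c)$. When these entries exist, $\lambda_{i-1}=\mu_{i-1}-2\geq c$ and $\lambda_{j+1}=\mu_{j+1}-1\leq c-1$. Hence $\lambda^+$ is obtained by moving the $c-1$ to position $j$, and $\lambda=w(\lambda^+)$ with $w=s_{i+1}\cdots s_{j-1}$ and $\ell(w)=j-i-1$.

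For $i+1\leq a\leq j$, let $m_a$ be the monomial obtained from $z^{\lambda^+}$ by placing that $c-1$ in position $a$. Directly from the definition of $T_a$:
\begin{itemize}
\item $t^{-1}T_am_{a+1}=m_a+(1-t^{-1})m_{a+1}$, because $\frac{z_a}{z_a-z_{a+1}}(m_{a+1}-s_am_{a+1})=m_{a+1}$;
\item $T_am_b=t\,m_b$ for $b\geq a+2$.
\end{itemize}
Downward induction then gives
\[
E_\lambda=m_{i+1}+(1-t^{-1})\sum_{a=i+2}^{j}m_a .
\]
The decisive observation is that for $a\geq i+2$ the exponents of $z_i$ and $z_{i+1}$ in $m_a$ are both $c$. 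So $s_im_a=m_a$ and $\delta_{\mathbf r}(m_a)=0$. Since $m_{i+1}=z^\lambda$, this yields $\delta_{\mathbf r}E_\lambda=\delta_{\mathbf r}(z^\lambda)$. This is the paper's argument. Without this expansion and the $s_i$-invariance of the correction monomials, your proposal does not establish the second equality in the non-adjacent case.
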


\begin{proof}
Since $\nu$ is a partition,
\[
(B+\rho_{\mathbf r})_i-(B+\rho_{\mathbf r})_{i+1}
=
\nu_i-\nu_{i+1}+1
\geq 1.
\]
Thus \(B+\rho_{\mathbf r}\in X^{++}(\operatorname{GL}_{\mathbf r})\).

First suppose that $j=i+1$.  In this case
$B+\rho_{\mathbf r}$ is globally dominant.  Indeed, when the indicated
parts exist, the inequalities \(\mu_{i-1}\geq\mu_i\) and \(\mu_{i+1}\geq\mu_{i+2}\) give
\(\nu_{i-1}\geq\nu_i+1\) and
\(\nu_{i+1}\geq\nu_{i+2}+1\); together with the monotonicity of
$\nu$, these verify the only boundary inequalities.  Hence
$E_{B+\rho_{\mathbf r}}(z;t)=z^{B+\rho_{\mathbf r}}$, so the assertion
follows from Proposition~\ref{prop:levi-character}.

Now suppose that $j>i+1$.  By Lemma~\ref{platformlemma}, there is a
$c\geq0$ such that
\(
\nu_i=\nu_{i+1}=\cdots=\nu_j=c+1.
\)
The inequalities \(\nu_{i-1}=\mu_{i-1}\geq\mu_i=c+2\) (if \(i>1\)) and \(\nu_{j+1}=\mu_{j+1}\leq\mu_j=c\) (if \(j<R\)) at the two ends of the string $\nu_i=\cdots=\nu_j$, together
with the monotonicity of $\nu$, show that $B+\rho_{\mathbf r}$ is weakly
decreasing except for the displayed string
\[
\bigl((B+\rho_{\mathbf r})_i,\cdots,(B+\rho_{\mathbf r})_j\bigr)
=
(c,c-1,c,\cdots,c).
\]
Its dominant rearrangement \(\bigl(B+\rho_{\mathbf r}\bigr)^+\) is
obtained by moving the displayed $c-1$ to position $j$.  Put
\(
w=s_{i+1}s_{i+2}\cdots s_{j-1}.
\)
Then $B+\rho_{\mathbf r}=w\bigl(\bigl(B+\rho_{\mathbf r}\bigr)^+\bigr)$ and $\ell(w)=j-i-1$.

For $i+1\leq a\leq j$, let $m_a$ be the monomial obtained from
$z^{(B+\rho_{\mathbf r})^+}$ by placing this distinguished $c-1$ in
position $a$.  Thus $m_{i+1}=z^{B+\rho_{\mathbf r}}$.  The
definition of $T_a$ above gives
\[
t^{-1}T_a m_{a+1}
=
m_a+(1-t^{-1})m_{a+1},
\qquad
T_a m_b=t\,m_b
\quad (b\geq a+2).
\]
An induction downwards on $a$ therefore yields
\[
E_{B+\rho_{\mathbf r}}(z;t)
=
t^{-\ell(w)}T_w z^{(B+\rho_{\mathbf r})^+}
=
m_{i+1}
+
(1-t^{-1})\sum_{a=i+2}^{j}m_a.
\]
For every $a\geq i+2$, the exponents of $z_i$ and $z_{i+1}$ in $m_a$
are both $c$.  Hence $s_i(m_a)=m_a$, and therefore
$\delta_{\mathbf r}(m_a)=0$.  It follows that
\[
\delta_{\mathbf r}E_{B+\rho_{\mathbf r}}(z;t)
=
\delta_{\mathbf r}m_{i+1}
=
\delta_{\mathbf r}\left(z^{B+\rho_{\mathbf r}}\right).
\]

Finally, Proposition~\ref{prop:levi-character}, applied to the
$\operatorname{GL}_{\mathbf r}$-dominant weight $B$, gives
\[
\delta_{\mathbf r}\left(z^{B+\rho_{\mathbf r}}\right)
=
z^{\rho_{\mathbf r}}\chi_B(\operatorname{GL}_{\mathbf r}).
\]
This proves the result.
\end{proof}

For brevity, set $\mathbf r^{\leftarrow}
=
w_0^{R-1}(\mathbf r)
=
(1^{R-i-1},2,1^{i-1}),
\sigma_0=w_0^{R-1}\in \mathfrak{S}_{R-1}$.
Let \(w_0^{\mathbf r^{\leftarrow}}\) denote the longest element of the Levi Weyl
group \(\mathfrak{S}_{\mathbf r^{\leftarrow}}\).
The permutation \(w_0w_0^{\mathbf r^{\leftarrow}}\) reverses the order of the Levi blocks while
preserving the order of the coordinates within each block.
Recall $
B=
(\nu_1-1,\cdots,\nu_{i-1}-1
\mid
\nu_i,\nu_{i+1}
\mid
\nu_{i+2},\cdots,\nu_R)
$
and
$
C=
(-1,\cdots,-1
\mid
0,0
\mid
0,\cdots,0).
$
Set
\(
B^{\leftarrow}=(w_0w_0^{\mathbf r^{\leftarrow}})^{-1}B,
C^{\leftarrow}=(w_0w_0^{\mathbf r^{\leftarrow}})^{-1}C,
\rho_{\mathbf r^{\leftarrow}}=(w_0w_0^{\mathbf r^{\leftarrow}})^{-1}\rho_{\mathbf r},
\)
and define
\(
\alpha=C^{\leftarrow}+\rho_{\mathbf r^{\leftarrow}},
\beta=B^{\leftarrow}+\rho_{\mathbf r^{\leftarrow}}.
\)
Both $B$ and $C$ lie in $X^+(\operatorname{GL}_{\mathbf r})$.  Consequently,
$B^{\leftarrow},C^{\leftarrow}\in X^+(\operatorname{GL}_{\mathbf r^{\leftarrow}})$.  Moreover,
the chosen staircase weight is
\(
\rho_{\mathbf r^{\leftarrow}}
=
(-1,\cdots,-1\mid-1,-2\mid-1,\cdots,-1),
\)
and is strictly decreasing within every \(\mathbf r^{\leftarrow}\)-block.  Hence
\[
\alpha,\beta\in X^{++}(\operatorname{GL}_{\mathbf r^{\leftarrow}}), \quad
\beta-\alpha = B^{\leftarrow}-C^{\leftarrow} = (w_0w_0^{\mathbf r^{\leftarrow}})^{-1}\nu \geq0.
\]
In particular, \(\alpha\leq\beta\) coordinatewise.

For a coordinate \(m\) lying in a block of \(\mathbf r^{\leftarrow}\), let \(j\) be
its position within that block.
The skew-tuple construction of Blasiak et al.~\cite[Equation~(133)]{dens} gives row
endpoints
\[
\alpha_m+j=C^{\leftarrow}_m,
\qquad
\beta_m+j=B^{\leftarrow}_m,
\]
since \((\rho_{\mathbf r^{\leftarrow}})_m=-j\).  Here, for weakly decreasing integer vectors $\mathbf b$ and $\mathbf c$ of the same length with $c_y\leq b_y$ for every $y$, we write $\mathbf b/\mathbf c$, or $\frac{\mathbf b}{\mathbf c}$, for the skew diagram whose $y$th row consists of the boxes $(x,y)$ with $c_y<x\leq b_y$; a vector with one entry is written without parentheses.  Applied to each block of $\mathbf r$, this gives the tuple $B/C$ of skew diagrams.  Thus the skew tuple associated
with \(\beta/\alpha\) is exactly the block-reversal of \(B/C\).  It follows that
\[
\sigma_0(\beta/\alpha)=\theta^{\mu,\nu},
\]
where
\[
\theta^{\mu,\nu}
=
\left(
\frac{\nu_1-1}{-1},
\cdots,
\frac{\nu_{i-1}-1}{-1},
\frac{(\nu_i,\nu_{i+1})}{(0,0)},
\frac{\nu_{i+2}}0,
\cdots,
\frac{\nu_R}0
\right).
\]
For an ordered skew tuple \(\boldsymbol{\nu}\) with $k$ components and a permutation \(\sigma\in\mathfrak S_k\),
let \(h_\sigma(\boldsymbol{\nu})\) denote the number of \(\sigma\)-triples in \(\boldsymbol{\nu}\) in the sense of
Blasiak et al.~\cite[Definition~5.5.1]{dens}, that is, triples $(a,b,c)$ of boxes such that $b$ lies in the $p$th component; for some $p'>p$, the box $a$ lies in or immediately left of a row of the $p'$th component, and $c$ is the box immediately right of $a$, lying in or immediately right of the same row (rows may be empty); and $b$ has the same content as $c$ if $\sigma(p)<\sigma(p')$, and the same content as $a$ if $\sigma(p)>\sigma(p')$.  We write $\sigma(\boldsymbol\nu)$ for the tuple whose $\sigma(p)$th component is the $p$th component of $\boldsymbol\nu$.  Whenever \(h_\sigma(\beta/\alpha)\) or
\(\sigma(\beta/\alpha)\) occurs below, the notation refers to the skew
tuple associated with the pair of weights \(\alpha\leq\beta\).

\begin{proposition}\label{prop:special-character-factor}
With $\mathbf r=(1^{i-1},2,1^{R-i-1})$ and
$\mathbf r^{\leftarrow}=(1^{R-i-1},2,1^{i-1})$ as above,
\[
w_0\left(
F_{\mathbf r^{\leftarrow},\beta}^{\sigma_0^{-1}}(z;t)
\,
\overline{
E_{\mathbf r^{\leftarrow},\alpha}^{\sigma_0^{-1}}(z;t)
}
\right)
=
\chi_B(\operatorname{GL}_{\mathbf r})\chi_C(\operatorname{GL}_{\mathbf r})^*.
\]
\end{proposition}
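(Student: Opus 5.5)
The plan is to unwind the definitions of the twisted and semi-symmetric Hall--Littlewood polynomials for the specific permutation $\sigma_0^{-1}=w_0^{R-1}$. The aim is to reduce each of the two factors to a monomial times a Levi character, and then to check that the monomials cancel after applying $w_0$.

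\emph{Step 1: identify the twist.} Lift $\sigma_0^{-1}$ to $\mathfrak S_R$ relative to $\mathbf r^{\leftarrow}$. Since $\sigma_0$ reverses the blocks, its lift $\xi:=\widehat{\sigma_0^{-1}}$ is the block-reversal permutation. This should coincide with $w_0w_0^{\mathbf r^{\leftarrow}}$ (or its inverse, after checking the convention for lifts of inverses stated in the paper). It follows that $\xi w_0$ is, up to the same block bookkeeping, the longest element $w_0^{\mathbf r}$ of the Levi Weyl group $\langle s_i\rangle$. Consequently:
\begin{itemize}
\item $F^{\xi}_{\beta}=\overline{E^{\xi w_0}_{-\beta}}$ involves only a twist by an element of length at most one inside the $\operatorname{GL}_2$ block;
\item $E^{\xi}_{\alpha}$ is the block-reversed twist.
\end{itemize}

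\emph{Step 2: reduce each factor.} Here $\xi^{-1}(\alpha)$ and $\xi^{-1}(\beta)$ are just $C+\rho_{\mathbf r}$ and $B+\rho_{\mathbf r}$, read back in the original block order.

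For the $\alpha$ factor, $C+\rho_{\mathbf r}$ has entries $-2$ in the first $i-1$ places and $-1,-2,-1,\dots$ afterward. I will show that $\overline{T_\xi}E_{\xi^{-1}\alpha}$, together with the normalizing power of $t$ from $|\operatorname{Inv}(\xi^{-1})\cap\operatorname{Inv}(\alpha+\varepsilon\rho)|$, equals $z^{\alpha}$ plus terms that are $s$-invariant in the relevant block. The tool is the same explicit $T_a$ recursion on monomials used in the proof of Lemma~\ref{Platform}, applied one adjacent exchange at a time.

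For the $\beta$ factor, I do the analogous computation using Lemma~\ref{Platform} directly. That lemma already shows that the non-dominant string $(c,c-1,c,\dots,c)$ contributes only terms killed by $\delta_{\mathbf r}$.

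Applying $\delta_{\mathbf r^{\leftarrow}}$ and Proposition~\ref{prop:levi-character} should then give
\[
F^{\sigma_0^{-1}}_{\mathbf r^{\leftarrow},\beta}=w_0\bigl(z^{\rho_{\mathbf r}}\chi_B(\operatorname{GL}_{\mathbf r})\bigr)\cdot(\text{monomial}),\qquad
E^{\sigma_0^{-1}}_{\mathbf r^{\leftarrow},\alpha}=w_0\bigl(z^{\rho_{\mathbf r}}\chi_C(\operatorname{GL}_{\mathbf r})\bigr)\cdot(\text{monomial}),
\]
with compatible monomials. In doing this I will use two facts:
\begin{itemize}
\item $w_0$ intertwines $\delta_{\mathbf r^{\leftarrow}}$ with $\delta_{\mathbf r}$, up to the factor $z^{\pm\rho}$ coming from the denominator $1-z^{-\gamma}$;
\item the bar involution sends $\chi_C$ to $\chi_C^*$, because Levi characters are self-conjugate up to inverting the variables.
\end{itemize}

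\emph{Step 3: assemble.} Multiply the two expressions and apply $w_0$. The staircase monomials $z^{\rho_{\mathbf r}}$ from $F$ and $z^{-\rho_{\mathbf r}}$ from $\overline{E}$ cancel, leaving $\chi_B(\operatorname{GL}_{\mathbf r})\chi_C(\operatorname{GL}_{\mathbf r})^*$. This is what feeds Proposition~\ref{propHpnuLevi} via Lemma~\ref{lemmaBC}.

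The main obstacle is Step 2: tracking the twisted polynomials $E^{\xi}_\lambda$ under a block-reversal twist, with the correct power of $t$. I will first try to show that $\xi^{-1}$ reorders each weight so that $\overline{T_\xi}$ acts only through the ``$T_a m_b=t\,m_b$'' case on all crossings between distinct blocks. The $t$-powers should then exactly cancel the $t^{|\operatorname{Inv}\cap\operatorname{Inv}|}$ normalization, leaving only the intra-block correction, which $\delta$ annihilates as in Lemma~\ref{Platform}. If this direct computation becomes unwieldy, the fallback is to cite the general fact from Blasiak et al.~\cite{dens} that, when $\sigma(\beta/\alpha)$ is a tuple of straight Levi blocks, the product in Proposition~\ref{prop:dens-symmetrization} reduces to $\chi_B\chi_C^*$.
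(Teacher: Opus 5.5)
Your outline has the same shape as the paper's proof: each factor becomes a monomial times a Levi character, and the monomials cancel after $w_0$. But Step~2, where all the work lies, is only asserted, and the tool you name for the $\beta$ factor does not apply as stated.

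\textbf{The $\beta$ factor.} By definition $F^{\sigma_0^{-1}}_{\mathbf r^{\leftarrow},\beta}(z;t)=\delta_{\mathbf r^{\leftarrow}}\bigl(\overline{E^{\xi w_0}_{-\beta}(z;t)}\bigr)$. The untwisted polynomial inside is $E_{(\xi w_0)^{-1}(-\beta)}(z;t)=E_{-w_0(B+\rho_{\mathbf r})}(z;t)$, and it is then barred. It is not $E_{\xi^{-1}\beta}(z;t)=E_{B+\rho_{\mathbf r}}(z;t)$, so Lemma~\ref{Platform} cannot be invoked ``directly''. The missing idea is converting $F$ at parameter $t$ into an untwisted $E$ at parameter $t^{-1}$. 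The paper gets this from \cite[Equation~(95)]{dens},
\[
F^{\sigma}_{\mathbf a,\kappa}(z;t)=z^{\rho_{\mathbf a}-w_0^{\mathbf a}(\rho_{\mathbf a})}\,w_0\bigl(E^{w_0^k\sigma}_{w_0^k(\mathbf a),\,w_0w_0^{\mathbf a}(\kappa)}(z;t^{-1})\bigr).
\]
For $\sigma=\sigma_0^{-1}$ this turns the twist into the identity, the composition into $\mathbf r$, and the weight into $B+\rho_{\mathbf r}$. Lemma~\ref{Platform} at $t^{-1}$ then gives $w_0\bigl(F^{\sigma_0^{-1}}_{\mathbf r^{\leftarrow},\beta}\bigr)=z^{w_0\rho_{\mathbf r^{\leftarrow}}}\chi_B(\operatorname{GL}_{\mathbf r})$.

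Your direct route can also be closed, but by a mirror image of the proof of Lemma~\ref{Platform}, not by the lemma itself. First, $\xi w_0=w_0^{\mathbf r^{\leftarrow}}=s_{R-i}$ (not $s_i$), and the normalizing power of $t$ is $1$ because $-\beta$ increases on that block. For $j>i+1$, the weight $-w_0(B+\rho_{\mathbf r})$ is dominant except for the string $(-c,\dots,-c,1-c,-c)$, with $1-c$ in position $R-i$. The $T_a$-recursion then gives $z^{-w_0(B+\rho_{\mathbf r})}$ plus $(1-t^{-1})$ times monomials symmetric in $z_{R-i},z_{R-i+1}$. Next, $T_{R-i}^{-1}$ sends the first monomial to $z^{-\beta}$ and multiplies the others by $t^{-1}$. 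After the bar, $\delta_{\mathbf r^{\leftarrow}}$ leaves $F^{\sigma_0^{-1}}_{\mathbf r^{\leftarrow},\beta}=\delta_{\mathbf r^{\leftarrow}}(z^{\beta})$. For $j=i+1$ the weight is dominant, and $T_{R-i}^{-1}$ gives $z^{-\beta}$ plus an $s_{R-i}$-symmetric remainder.

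\textbf{The $\alpha$ factor.} You missed the observation that makes it trivial. In the $\mathbf r^{\leftarrow}$-order, $\alpha=(-1,\dots,-1\mid-1,-2\mid-2,\dots,-2)$ is globally dominant, so $E^{\xi}_{\alpha}=z^{\alpha}$ for every $\xi$ (\cite[Section~5.2]{dens}). To see it directly, take $\xi^{-1}$ itself as the permutation in the definition of $E_{\xi^{-1}\alpha}$. Then $\overline{T_\xi}\,t^{-\ell(\xi)}T_{\xi^{-1}}z^{\alpha}=t^{-\ell(\xi)}z^{\alpha}$. The prefactor is $t^{\ell(\xi)}$, since $\operatorname{Inv}(\alpha+\varepsilon\rho)$ contains every pair.

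The mechanism you propose instead does not match what happens. You suggest that $\overline{T_\xi}$ acts through $T_am_b=t\,m_b$ on every cross-block crossing. In fact $E_{C+\rho_{\mathbf r}}$ is in general a sum of several monomials on which the individual $T_a^{-1}$ do not act by scalars. The simplification is the Hecke-algebra cancellation $\overline{T_\xi}\,T_{\xi^{-1}}=1$, not a termwise one.

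\textbf{The fallback.} The fallback citation you mention is too vague to serve. What the paper actually takes from \cite{dens} is exactly two facts: the dominance identity $E^{\xi}_{\alpha}=z^{\alpha}$ and Equation~(95). With these, the ``compatible monomials'' of your Step~3 are explicitly $z^{\pm w_0\rho_{\mathbf r^{\leftarrow}}}$.
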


\begin{proof}
The preceding construction gives the required blockwise regularity of
\(\alpha\) and \(\beta\).  Thus the hypotheses required by
Blasiak et al.~\cite[Lemma~5.2.4]{dens} are satisfied.  In the
\(\mathbf r^{\leftarrow}\)-coordinate order, the weight \(\alpha\) has the form
$(-1,\cdots,-1
\mid
-1,-2
\mid
-2,\cdots,-2)$.
Thus \(\alpha\) is globally dominant, so
$E_\alpha^\xi(z;t)=z^\alpha$ for every permutation \(\xi\) by
Blasiak et al.~\cite[Section~5.2]{dens}.  Hence
\[
E_{\mathbf r^{\leftarrow},\alpha}^{\sigma_0^{-1}}(z;t)
=
\delta_{\mathbf r^{\leftarrow}}(z^\alpha)
=
z^{\rho_{\mathbf r^{\leftarrow}}}
\chi_{C^{\leftarrow}}(\operatorname{GL}_{\mathbf r^{\leftarrow}}),
\]
where the last equality is the same blockwise Weyl-character
calculation as in Proposition~\ref{prop:levi-character}, after
reversing the Levi blocks.
After applying the bar involution and then \(w_0\), we obtain
\[
w_0\left(
\overline{
E_{\mathbf r^{\leftarrow},\alpha}^{\sigma_0^{-1}}(z;t)
}
\right)
=
z^{-w_0\rho_{\mathbf r^{\leftarrow}}}
\chi_C(\operatorname{GL}_{\mathbf r})^*.
\]

By Blasiak et al.~\cite[Equation~(95)]{dens}, for every composition
$\mathbf a$ with $k$ parts, every $\sigma\in\mathfrak S_k$ and every
$\kappa\in X^{++}(\operatorname{GL}_{\mathbf a})$, we have
\[
F_{\mathbf a,\kappa}^{\sigma}(z;t)
=
z^{\rho_{\mathbf a}-w_0^{\mathbf a}(\rho_{\mathbf a})}
w_0\left(
E_{w_0^k(\mathbf a),\,w_0w_0^{\mathbf a}(\kappa)}^{w_0^k\sigma}(z;t^{-1})
\right).
\]
We apply this with $\mathbf a=\mathbf r^{\leftarrow}$, $k=R-1$, $\sigma=\sigma_0^{-1}$ and
$\kappa=\beta$.  Since $\sigma_0=w_0^{R-1}$ is an involution and
$\beta=(w_0w_0^{\mathbf r^{\leftarrow}})^{-1}(B+\rho_{\mathbf r})$, we have
\[
w_0^{R-1}(\mathbf r^{\leftarrow})=\mathbf r, w_0w_0^{\mathbf r^{\leftarrow}}(\beta)=B+\rho_{\mathbf r}, w_0^{R-1}\sigma_0^{-1}=1.
\]
Hence
\[
F_{\mathbf r^{\leftarrow},\beta}^{\sigma_0^{-1}}(z;t)
=
z^{\rho_{\mathbf r^{\leftarrow}}-w_0^{\mathbf r^{\leftarrow}}(\rho_{\mathbf r^{\leftarrow}})}
w_0\left(
E_{\mathbf r,B+\rho_{\mathbf r}}^{1}(z;t^{-1})
\right).
\]
Since
$w_0w_0^{\mathbf r^{\leftarrow}}(\rho_{\mathbf r^{\leftarrow}})=\rho_{\mathbf r}$,
Lemma \ref{Platform}, applied with parameter $t^{-1}$,
gives
\[
w_0\left(
F_{\mathbf r^{\leftarrow},\beta}^{\sigma_0^{-1}}(z;t)
\right)
=
z^{w_0\rho_{\mathbf r^{\leftarrow}}-\rho_{\mathbf r}}
E_{\mathbf r,B+\rho_{\mathbf r}}^{1}(z;t^{-1})
=
z^{w_0\rho_{\mathbf r^{\leftarrow}}}\chi_B(\operatorname{GL}_{\mathbf r}).
\]
Combining the two displayed identities yields
\[
w_0\left(
F_{\mathbf r^{\leftarrow},\beta}^{\sigma_0^{-1}}(z;t)
\,
\overline{
E_{\mathbf r^{\leftarrow},\alpha}^{\sigma_0^{-1}}(z;t)
}
\right)=
\left(
z^{w_0\rho_{\mathbf r^{\leftarrow}}}
\chi_B(\operatorname{GL}_{\mathbf r})
\right)
\left(
z^{-w_0\rho_{\mathbf r^{\leftarrow}}}
\chi_C(\operatorname{GL}_{\mathbf r})^*
\right)=
\chi_B(\operatorname{GL}_{\mathbf r})\chi_C(\operatorname{GL}_{\mathbf r})^*.
\]
\end{proof}

\begin{theorem}\label{thmHpnuLLT}
Let $\mu,\nu\vdash n$ satisfy $\mu\gtrdot\nu$.  Write
$\nu=\mu-\epsilon_i+\epsilon_j$ with $i<j$.
Write $\mathbf r=(1^{i-1},2,1^{R-i-1})$ and
$\mathbf r^{\leftarrow}=(1^{R-i-1},2,1^{i-1})$, and retain
$\sigma_0$, $\alpha$, $\beta$, and $\theta^{\mu,\nu}$ from above.  Then
\begin{equation}\label{eqHpnuLLT}
H_{i,\nu}[X;t]
=
t^{h_{\sigma_0}(\beta/\alpha)}
G_{\theta^{\mu,\nu}}[X;t^{-1}].
\end{equation}
\end{theorem}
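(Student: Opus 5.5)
The plan is to chain together the identities already established in this section. By Proposition~\ref{propHpnuLevi},
\[
H_{i,\nu}[X;t]=\Bigl(\mathbb H_t^{\mathbf r}\bigl(\chi_B(\operatorname{GL}_{\mathbf r})\chi_C(\operatorname{GL}_{\mathbf r})^*\bigr)\Bigr)_{\pol}.
\]
Proposition~\ref{prop:special-character-factor} rewrites the argument of $\mathbb H_t^{\mathbf r}$ as
\[
w_0\bigl(F_{\mathbf r^{\leftarrow},\beta}^{\sigma_0^{-1}}(z;t)\,\overline{E_{\mathbf r^{\leftarrow},\alpha}^{\sigma_0^{-1}}(z;t)}\bigr).
\]
Since $w_0^{R-1}(\mathbf r^{\leftarrow})=\mathbf r$, the operator $\mathbb H_t^{\mathbf r}$ is exactly $\mathbb H_t^{w_0^k(\mathbf a)}$ with $\mathbf a=\mathbf r^{\leftarrow}$ and $k=R-1$. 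Proposition~\ref{prop:dens-symmetrization} then applies with $\sigma=\sigma_0$, since the proposition's $\sigma^{-1}$ is $\sigma_0^{-1}$. The hypotheses hold because $\alpha,\beta\in X^{++}(\operatorname{GL}_{\mathbf r^{\leftarrow}})$. This yields
\[
H_{i,\nu}[X;t]=\bigl(\mathcal L^{\sigma_0}_{\mathbf r^{\leftarrow},\beta/\alpha}(z;t)\bigr)_{\pol}.
\]

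The second step is to identify the polynomial part of the LLT series with an actual LLT polynomial. Here I would invoke the comparison theorem of Blasiak et al.~\cite{dens} from their Section~5.5, the one that introduces $\sigma$-triples. It states that the polynomial part of $\mathcal L^{\sigma}_{\mathbf a,\beta/\alpha}(z;t)$ equals $t^{h_\sigma(\beta/\alpha)}G_{\sigma(\beta/\alpha)}[X;t^{-1}]$. Applying it with $\sigma=\sigma_0$, and using the identity $\sigma_0(\beta/\alpha)=\theta^{\mu,\nu}$ established just before Proposition~\ref{prop:special-character-factor}, gives \eqref{eqHpnuLLT} directly.

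The main obstacle is matching conventions exactly. I must check three things:
\begin{enumerate}
\item The LLT series of Proposition~\ref{prop:dens-symmetrization} is indexed by $\sigma$ and not $\sigma^{-1}$; for $\sigma_0$ this is harmless because $\sigma_0=w_0^{R-1}$ is an involution.
\item Their skew-tuple construction \cite[Eq.~(133)]{dens} agrees with the row-endpoint computation $\alpha_m+j=C^{\leftarrow}_m$, $\beta_m+j=B^{\leftarrow}_m$, and the ordering $\sigma(\boldsymbol\nu)$ of components matches.
\item Their LLT polynomial convention (French notation, adjusted contents $c(a)+h\epsilon$, parameter $t$ versus $t^{-1}$) coincides with the $G_{\boldsymbol\mu}$ of Section~\ref{subLLT}.
\end{enumerate}
If the third point is off by the $\omega$-type symmetry, I would correct it using Proposition~\ref{propomegaLLT} or Proposition~\ref{propomegaLLTdagger}. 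I would also keep any power of $t$ that arises, since in this theorem the exponent $h_{\sigma_0}(\beta/\alpha)$ is left unevaluated.

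Finally, there are two mild technical points. The empty rows, from components $\frac{0}{0}$ or rows where $\nu_k=0$, and the $\frac{\nu_k-1}{-1}$ components, which are single rows of length $\nu_k$ shifted left by one, contribute nothing spurious: empty components add no boxes, and a shift of a whole component by one is absorbed into the content conventions of \cite{dens}. Taking the polynomial part also commutes with these identifications because $R\ge n$.
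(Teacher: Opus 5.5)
Your proposal is correct and is essentially the paper's proof: Propositions~\ref{propHpnuLevi}, \ref{prop:special-character-factor}, and~\ref{prop:dens-symmetrization} (with $\mathbf a=\mathbf r^{\leftarrow}$ and $\sigma=\sigma_0$) give $H_{i,\nu}[X;t]=\bigl(\mathcal L^{\sigma_0}_{\mathbf r^{\leftarrow},\beta/\alpha}(z;t)\bigr)_{\operatorname{pol}}$, and \cite[Theorem~5.5.4]{dens}, using $\alpha\leq\beta$ and $\sigma_0(\beta/\alpha)=\theta^{\mu,\nu}$, finishes the argument. Drop your closing aside that the shift in the components $\frac{\nu_k-1}{-1}$ is ``absorbed'': shifting a component does change an LLT polynomial, but this shift is already built into $\theta^{\mu,\nu}$, so nothing needs to be absorbed.
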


\begin{proof}
The preceding construction gives
\(\alpha,\beta\in X^{++}(\operatorname{GL}_{\mathbf r^{\leftarrow}})\),
\(\alpha\leq\beta\), and
\(\sigma_0(\beta/\alpha)=\theta^{\mu,\nu}\).
Thus Blasiak et al.~\cite[Theorem~5.5.4]{dens} give
\[
\left(
\mathcal L_{\mathbf r^{\leftarrow},\beta/\alpha}^{\sigma_0}(z;t)
\right)_{\operatorname{pol}}
=
t^{h_{\sigma_0}(\beta/\alpha)}
G_{\theta^{\mu,\nu}}[X;t^{-1}].
\]

On the other hand, since
\(w_0^{R-1}(\mathbf r^{\leftarrow})=\mathbf r\),
Propositions~\ref{propHpnuLevi},
\ref{prop:special-character-factor}, and~\ref{prop:dens-symmetrization} give
\[
\begin{aligned}
H_{i,\nu}[X;t]
&=
\left(
\mathbb H_t^{\mathbf r}
\bigl(
\chi_B(\operatorname{GL}_{\mathbf r})
\chi_C(\operatorname{GL}_{\mathbf r})^*
\bigr)
\right)_{\operatorname{pol}}\\
&=
\left(
\mathbb H_t^{\mathbf r}
\left(
w_0\left(
F_{\mathbf r^{\leftarrow},\beta}^{\sigma_0^{-1}}(z;t)
\,\overline{
E_{\mathbf r^{\leftarrow},\alpha}^{\sigma_0^{-1}}(z;t)
}
\right)
\right)
\right)_{\operatorname{pol}}\\
&=
\left(
\mathcal L_{\mathbf r^{\leftarrow},\beta/\alpha}^{\sigma_0}(z;t)
\right)_{\operatorname{pol}}.
\end{aligned}
\]
Comparing the two displayed identities proves \eqref{eqHpnuLLT}.
\end{proof}

Combining Theorem~\ref{thmstep1} with Theorem~\ref{thmHpnuLLT},
we obtain the following result.
\begin{theorem}\label{thmHallLLT}
    Let \(\mu,\nu\vdash n\) satisfy $\mu\gtrdot\nu$.  Write
    $\nu=\mu-\epsilon_i+\epsilon_j$ with $i<j$.
    Then
\[
\omega\wtH_\mu[X;0,t^{-1}]
-
\omega\wtH_\nu[X;0,t^{-1}]
=
-
t^{
h_{\sigma_0}(\beta/\alpha)
-
A(\theta^{\mu,\nu})
-
n(\nu)
}
G_{(\theta^{\mu,\nu})^R}[X;t].
\]
\end{theorem}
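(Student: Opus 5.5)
The plan is to chain Theorem~\ref{thmstep1} (with $p=i$), Theorem~\ref{thmHpnuLLT}, and Proposition~\ref{propomegaLLT}, keeping track of $\omega$ and of the substitution $t\mapsto t^{-1}$. No new ideas are needed; the only delicate point is bookkeeping of signs and exponents.

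First, take $p=i$ in \eqref{eqstep1}. Combining it with \eqref{eqHpnuLLT} gives
\[
\wtH_\nu[X;0,1/t]-\wtH_\mu[X;0,1/t]
=t^{-n(\nu)}H_{i,\nu}[X;t]
=t^{h_{\sigma_0}(\beta/\alpha)-n(\nu)}G_{\theta^{\mu,\nu}}[X;t^{-1}].
\]
Here $\wtH_\mu[X;0,1/t]$ is literally $\wtH_\mu[X;0,t^{-1}]$, and all exponents are integers. This identity holds in $\Lambda$ with coefficients in $\mathbb Q(t)$, so no inversion of $t$ is required: we keep $t$ as the variable throughout.

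Second, apply $\omega$ to both sides. Since $\omega$ acts only on $X$, it commutes with the substitution $t\mapsto t^{-1}$. By Proposition~\ref{propomegaLLT}, applied with parameter $t^{-1}$ (the identity is valid for any value of the parameter), we have
\[
\omega G_{\theta^{\mu,\nu}}[X;t^{-1}]=t^{-A(\theta^{\mu,\nu})}G_{(\theta^{\mu,\nu})^R}[X;t].
\]
This is because replacing $t$ by $t^{-1}$ in $\omega G_{\boldsymbol\mu}[X;t]=t^{A}G_{\boldsymbol\mu^R}[X;t^{-1}]$ turns the power $t^{A}$ into $t^{-A}$ and the argument $t^{-1}$ into $t$.

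Combining the two steps gives
\[
\omega\wtH_\nu[X;0,t^{-1}]-\omega\wtH_\mu[X;0,t^{-1}]
=t^{h_{\sigma_0}(\beta/\alpha)-A(\theta^{\mu,\nu})-n(\nu)}G_{(\theta^{\mu,\nu})^R}[X;t].
\]
Negating both sides yields the stated formula.

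The only point needing care is whether Proposition~\ref{propomegaLLT} applies to $\theta^{\mu,\nu}$. Its components such as $(\nu_k-1)/(-1)$ are single rows lying partly in negative columns, and $G$ and $A$ are translation-invariant along diagonals only up to the shift being uniform. I will note that the components of $\theta^{\mu,\nu}$ are skew diagrams in the general (translated) sense used in \cite{dens}, for which \cite[Proposition~4.1.6]{shuffleline} is stated. Also, the rotation $R$ is defined so that contents are preserved, so the formula applies verbatim. I expect this bookkeeping check, together with confirming the sign convention that $\omega$ commutes with inverting $t$, to be the only obstacle.
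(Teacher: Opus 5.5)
Your proposal is correct and matches the paper's proof: take $p=i$ in \eqref{eqstep1}, substitute Theorem~\ref{thmHpnuLLT}, apply $\omega$, and use Proposition~\ref{propomegaLLT} with $t$ replaced by $t^{-1}$, which gives the exponent $h_{\sigma_0}(\beta/\alpha)-A(\theta^{\mu,\nu})-n(\nu)$ and the overall sign $-1$. The paper does not comment on translated components such as $(\nu_k-1)/(-1)$, and your remark that Proposition~\ref{propomegaLLT} is stated for skew tuples in this general sense is the right justification.
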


\begin{proof}
Applying \(\omega\) to \eqref{eqstep1} and rearranging yields
\[
\omega\wtH_\mu[X;0,t^{-1}]
-
\omega\wtH_\nu[X;0,t^{-1}]
=
-t^{-n(\nu)}\omega H_{i,\nu}[X;t].
\]

Applying \(\omega\) to Theorem~\ref{thmHpnuLLT} and then using
Proposition~\ref{propomegaLLT} with \(t\) replaced by \(t^{-1}\), we obtain
\[
\omega H_{i,\nu}[X;t]
=
t^{h_{\sigma_0}(\beta/\alpha)}
\omega G_{\theta^{\mu,\nu}}[X;t^{-1}]
=
t^{h_{\sigma_0}(\beta/\alpha)-A(\theta^{\mu,\nu})}
G_{(\theta^{\mu,\nu})^R}[X;t].
\]
Substituting this identity into the first displayed equality proves
the theorem.
\end{proof}

For later use, we denote the exponent in Theorem~\ref{thmHallLLT} by
\begin{equation}\label{eq:emunu}
e^{\mu,\nu}:=h_{\sigma_0}(\beta/\alpha)-A(\theta^{\mu,\nu})-n(\nu).
\end{equation}

\begin{example}\label{ex:hall-llt-cover}
Let $\mu=(3,2,1)\gtrdot\nu=(2,2,2)$.  Here $i=1$, $j=3$,
and $\theta^{\mu,\nu}=((2,2),(2))$.  Its transpose--reversal is
$(\theta^{\mu,\nu})^\dagger=((1^2),(2,2))$.  The skew tuple and
its monomial expansion are displayed side by side.
\begin{center}
\begin{minipage}[b]{.40\linewidth}\centering
\begin{tikzpicture}[x=.58cm,y=.58cm]
\draw[blue,densely dashed] (-.4,-.4)--(5.5,5.5);
\draw (0,0) rectangle (1,1);
\draw (0,1) rectangle (1,2);
\begin{scope}[shift={(3,3)}]
\foreach \x in {0,1}{
  \foreach \y in {0,1}{
    \draw (\x,\y) rectangle ++(1,1);
  }
}
\end{scope}
\node[font=\scriptsize] at (.5,-.7) {$(1^2)$};
\node[font=\scriptsize] at (4,2.3) {$(2,2)$};
\end{tikzpicture}\par\smallskip
$(a)$
\end{minipage}\hfill
\begin{minipage}[b]{.58\linewidth}\centering
{\small
\setlength{\jot}{3pt}
\[
\begin{aligned}
G_{((1^2),(2,2))}[X;t] & = t m_{(3^2)} +t(t+1)m_{(3,2,1)}\\
&+t(2t+1)m_{(3,1^3)} +t(2t+1)m_{(2^3)}\\
&+t(t^2+4t+2)m_{(2^2,1^2)}\\
&+t(3t^2+8t+3)m_{(2,1^4)}\\
&+t(9t^2+16t+5)m_{(1^6)}.
\end{aligned}
\]
}\par\smallskip
$(b)$
\end{minipage}
\captionof{figure}{A cover tuple and its monomial expansion.}
\label{fig:hall-cover-tuple}
\end{center}
A direct count gives $h_{\sigma_0}(\beta/\alpha)=3$ and
$A(\theta^{\mu,\nu})=4$, while $n(\nu)=6$.
Theorem~\ref{thmHallLLT} and Proposition~\ref{propomegaLLTdagger}
therefore give
\[
\omega\wtH_{(3,2,1)}[X;0,t^{-1}]
-\omega\wtH_{(2,2,2)}[X;0,t^{-1}]
=-t^{-7}G_{((1^2),(2,2))}[X;t].
\]
To see the subtraction explicitly, we multiply the difference by
$t^6$ and collect its monomial coefficients:
\[
\begin{aligned}
&t^6\bigl(\omega\wtH_{(3,2,1)}[X;0,t^{-1}]
-\omega\wtH_{(2,2,2)}[X;0,t^{-1}]\bigr)\\
={}&-m_{(3^2)}-\bigl[(t^2+t+1)-t^2\bigr]m_{(3,2,1)}\\
&-\bigl[(t^3+2t^2+2t+1)-t^2(t+2)\bigr]
  (m_{(3,1^3)}+m_{(2^3)})\\
&-\bigl[(t^4+3t^3+5t^2+4t+2)-t^2(t^2+3t+4)\bigr]
  m_{(2^2,1^2)}\\
&-\bigl[(t^5+4t^4+9t^3+11t^2+8t+3)
  -t^2(t^3+4t^2+9t+8)\bigr]m_{(2,1^4)}\\
&-\bigl[(t^6+5t^5+14t^4+24t^3+25t^2+16t+5)\\
&\qquad-t^2(t^4+5t^3+14t^2+24t+16)\bigr]m_{(1^6)}\\
={}&-\bigl(m_{(3^2)}+(t+1)m_{(3,2,1)}
 +(2t+1)(m_{(3,1^3)}+m_{(2^3)})\\
&+(t^2+4t+2)m_{(2^2,1^2)}
 +(3t^2+8t+3)m_{(2,1^4)}\\
&+(9t^2+16t+5)m_{(1^6)}\bigr)\\
={}&-t^{-1}G_{((1^2),(2,2))}[X;t].
\end{aligned}
\]
\end{example}

\section{LLT Catalanimals and Cauchy identity}\label{secCat}
In this section, we use the Catalanimal formula of LLT polynomials
and a Cauchy identity to obtain an
LLT-series expansion of the \((r,1)\)-Catalanimal
\(H_{\boldsymbol\nu_r}^{r,1}(z;q,t)\).  Taking polynomial parts of this
expansion and combining the result with Theorem \ref{thmHallLLT}, we prove
Theorem \ref{thmintroNablaHallLLT}, which in turn proves
Theorem \ref{thmmain3} and Corollary \ref{coromain4}.  We conclude
with a worked example of Theorem~\ref{thmNablaHallLLT}.
We use the following results of Blasiak et al.: \cite[Corollary~8.4.2]{LLTcat} and
\cite[Proposition~5.4.3, Theorems~5.5.4 and~6.1.3, and
Lemma~6.2.2]{dens}.
The new input is the verification of their hypotheses for the cover tuple, mainly in Lemmas~\ref{lemalmostmonotonicity}, \ref{lemrootkernel}, \ref{lemresidualweight},
\ref{lemtransport}, and~\ref{lemwinding}.  Table \ref{tabnotation} lists the notation of this section.
\begin{table}[ht]
\centering
\renewcommand{\arraystretch}{1.2}
\begin{tabular}{p{0.3\textwidth}p{0.62\textwidth}}
\hline
Notation & Meaning\\
\hline
$\boldsymbol\nu^{\mu,\nu}$ & cover tuple $(A_1,\cdots,A_u,E,B_1,\cdots,B_v)$; the $A_h$ and $B_k$ are columns, and $E=(2^{\nu_{i+1}},1^{\nu_i-\nu_{i+1}})$\\
\lighthline
$\boldsymbol\nu_r=(C_1,\cdots,C_k)$ & constant-offset $r$-stretch of $\boldsymbol\nu^{\mu,\nu}$; it has $\ell$ boxes\\
\lighthline
$D_{h,c}$, $g_{h,c}$ & set of boxes of $C_h$ of ordinary content $c$, and its size\\
\lighthline
$\operatorname{st}_{h,c}$, $\operatorname{en}_{h,c}$ & $1$ if $D_{h,c}$ contains the initial, respectively terminal, box of a row, and $0$ otherwise\\
\lighthline
$\boldsymbol\kappa^{\mathrm{cat}}$ & Catalanimal weight; it equals $\operatorname{st}_{h,c}-\operatorname{en}_{h,c}+\widetilde b_{h,c}$ on $D_{h,c}$\\
\lighthline
$\gamma$, $\mathbf d$ & lengths of the nonempty diagonals, listed by increasing, respectively decreasing, adjusted content\\
\lighthline
$\mathcal B$, $N$ & boundary blocks $(h,d)$ with $L_h-1\leq d\leq U_h$, and their number\\
\lighthline
$a_B$, $b_B$ & $g_{h,d+1}$ and $g_{h,d}$, for $B=(h,d)$\\
\lighthline
$m_B$, $n_B$, $M_B$ & block minima $-\operatorname{st}_{h,d+1}$ and $-\operatorname{en}_{h,d}$, and the common block maximum\\
\lighthline
$\sigma_r$, $\prec_r$ & permutation and order given by the $r$-order on $\mathcal B$\\
\lighthline
$\theta_r$, $\tau_r$ & standardized restrictions of $\sigma_r^{-1}$ to the blocks with $a_B>0$, respectively $b_B>0$\\
\lighthline
$\boldsymbol\lambda=(\lambda_B)_{B\in\mathcal B}$ & Cauchy index\\
\lighthline
$\alpha(\boldsymbol\lambda)$, $\beta^0(\boldsymbol\lambda)$, $\beta(\boldsymbol\lambda)$ & weights for $\operatorname{GL}_{\mathbf d}$ built from $\boldsymbol\lambda$\\
\lighthline
$\eta_D$, $\widehat\eta$ & $\eta_D=1$ if $c(D)\equiv1\pmod r$, and $\eta_D=0$ otherwise; $\widehat\eta=(\eta_D^{g_D})_D$\\
\hline
\end{tabular}
\caption{Notation of Section \ref{secCat}.}
\label{tabnotation}
\end{table}

\subsection{LLT Catalanimal}\label{subLLTcat}
Blasiak et al.~\cite{LLTcat} gave a Catalanimal formula for LLT polynomials.  Here we use only the special case needed in our setting.

Assume that $\mu\gtrdot\nu$.  Write
$\nu=\mu-\epsilon_i+\epsilon_j$ with $i<j$, and set $a:=\nu_i, b:=\nu_{i+1}$.
Recall
$$\theta^{\mu,\nu}
=
\left(
\frac{\nu_1-1}{-1},
\cdots,
\frac{\nu_{i-1}-1}{-1},
\frac{(a,b)}{(0,0)},
\frac{\nu_{i+2}}0,
\cdots,
\frac{\nu_R}0
\right)$$
and define the cover tuple
$$\boldsymbol\nu^{\mu,\nu} := (\theta^{\mu,\nu})^\dagger =
\Bigg(
\left(\frac{\nu_R}{0}\right)',
\cdots,
\left(\frac{\nu_{i+2}}0\right)',
\left(\frac{(a,b)}{(0,0)}\right)',
\left(\frac{\nu_{i-1}-1}{-1}\right)',
\cdots,
\left(\frac{\nu_1-1}{-1}\right)'
\Bigg).
$$
Empty components in this displayed tuple are omitted.  More precisely,
$$\boldsymbol\nu^{\mu,\nu} = 
(A_1,\cdots,A_u,E,B_1,\cdots,B_v),
$$
where each \(A_h\) and \(B_k\) is a column, $E=(2^b,1^{a-b})$,
$u=|\left\{k: i+2\leq k\leq R, \nu_k>0\right\}|$ and
$v=|\left\{ k: 1\leq k\leq i-1, \nu_k>0 \right\}|$.  Proposition~\ref{propomegaLLTdagger} gives
\begin{equation}\label{eq:cover-dagger}
G_{\boldsymbol\nu^{\mu,\nu}}[X;t]
=
G_{(\theta^{\mu,\nu})^R}[X;t].
\end{equation}

Apply the constant-offset $r$-stretch defined in Section \ref{subLLT} to every component of $\boldsymbol\nu^{\mu,\nu}$, and denote the resulting skew tuple by
$\boldsymbol\nu_r$.  Suppose that \(\boldsymbol\nu_r\) has \(\ell\) boxes, numbered by
$1,2,\cdots,\ell$ in LLT reading order.  We identify each box with its position in this reading order.  All roots and Weyl symmetrizations in this construction are for
\(\operatorname{GL}_\ell\); the symbol \(N\) used below denotes
the number of boundary blocks.  If \(a<b\), write
$\alpha_{ab}=\epsilon_a-\epsilon_b$.

The Catalanimal involves three types of roots:
\[
R_q
=
\left\{
\alpha_{ab}:
c_e(a)<c_e(b)
\right\},
R_t
=
\left\{
\alpha_{ab}:
c_e(a)+1\leq c_e(b)
\right\},
R_{qt}
=
\left\{
\alpha_{ab}:
c_e(a)+1<c_e(b)
\right\}.
\]
Let $D_{h,c}= \left\{x\in \boldsymbol\nu_r^{(h)}: c(x)=c \right\}, g_{h,c}=|D_{h,c}|$.
In other words, $D_{h,c}$ is the set of boxes in the \(h\)th component of
$\boldsymbol\nu_r$ with ordinary content $c$.
Read each row in the direction of increasing ordinary content.  Define
\[
\operatorname{st}_{h,c}
=
\begin{cases}
1,&D_{h,c}\text{ contains the initial box of a row},\\
0,&\text{otherwise},
\end{cases}
\]
and
\[
\operatorname{en}_{h,c}
=
\begin{cases}
1,&D_{h,c}\text{ contains the terminal box of a row},\\
0,&\text{otherwise}.
\end{cases}
\]
Let
\[
\widetilde b_{h,c}
=
\begin{cases}
1,&c\equiv1\pmod r,\\
0,&c\not\equiv1\pmod r.
\end{cases}
\]
Define the Catalanimal weight on the diagonal \(D_{h,c}\) by
$$\kappa_{h,c}^{\mathrm{cat}}
=
\operatorname{st}_{h,c}
-
\operatorname{en}_{h,c}
+
\widetilde b_{h,c}.
$$
Repeating this number \(g_{h,c}\) times gives a weight vector
\(\boldsymbol\kappa^{\mathrm{cat}}\) of length $|\boldsymbol\nu_r|$.  For a diagonal $D=D_{h,c}$, we also write $c(D)=c$, $g_D=g_{h,c}$, $\operatorname{st}_D=\operatorname{st}_{h,c}$, $\operatorname{en}_D=\operatorname{en}_{h,c}$ and $\widetilde b_D=\widetilde b_{h,c}$.  For an integer $v$ and $g\geq0$, we write $v^g$ for the vector $(v,\cdots,v)$ with $g$ entries.

The
\((r,1)\)-Catalanimal is defined by
\[
H_{\boldsymbol\nu_r}^{r,1}(z;q,t)
=
\boldsymbol{\sigma}_\ell
\left(
z^{\boldsymbol\kappa^{\mathrm{cat}}}
\frac{
\displaystyle\prod_{\alpha\in R_{qt}}
(1-qtz^\alpha)
}{
\displaystyle
\prod_{\alpha\in R_q}
(1-qz^\alpha)
\prod_{\alpha\in R_t}
(1-tz^\alpha)
}
\right).
\]

Delete all diagonals of length zero.  Order the remaining diagonals by increasing adjusted content, and write
$\gamma = \left(g_{D_1}, \cdots, g_{D_{\ell(\gamma)}} \right)$, where $D_i$ denotes the \(i\)th diagonal.
For a tuple \(\boldsymbol\mu\) of skew diagrams, let \(D\) range over
its nonempty adjusted diagonals and set
\[
\operatorname{mag}(\boldsymbol\mu)
:=
\sum_D |D|\,\mathbf 1_{\{D\text{ contains no initial box of a row}\}},
\qquad
n_0(\gamma):=\sum_{a=1}^{\ell(\gamma)}\binom{\gamma_a}{2}.
\]
Boxes in the same adjusted diagonal form a block of the Levi subgroup
$\GL_\gamma
=
\GL_{g_{D_1}}
\times\cdots\times
\GL_{g_{D_{\ell(\gamma)}}}$.
Thus \(R_q
=
R^+\setminus R^+(\GL_\gamma)\), where \(R^+=\{\epsilon_u-\epsilon_v:1\leq u<v\leq\ell\}\) is the set of positive roots of \(\operatorname{GL}_\ell\).
Consequently, \(H_{\boldsymbol\nu_r}^{r,1}(z;q,t)\) may be written as
\[H_{\boldsymbol\nu_r}^{r,1}(z;q,t)
=
\mathbb H_q^\gamma
\left(
z^{\boldsymbol\kappa^{\mathrm{cat}}}
\frac{
\displaystyle\prod_{\alpha\in R_{qt}}
(1-qtz^\alpha)
}{
\displaystyle\prod_{\alpha\in R_t}
(1-tz^\alpha)
}
\right).\]

We now specialize the Catalanimal identity to the cover tuple
$\boldsymbol\nu^{\mu,\nu}$.

\begin{theorem}\label{thmLLTcat}
Let $\mu,\nu\vdash n$ satisfy $\mu\gtrdot\nu$.  Write
$\nu=\mu-\epsilon_i+\epsilon_j$ with $i<j$.
For every positive integer \(r\), we have
\begin{equation}\label{eqLLTcat}
\nabla^r
G_{\boldsymbol\nu^{\mu,\nu}}[X;t]
=
-
(qt)^{
1+r(\nu_{i+1}-1)
}
t^{A_r(\boldsymbol\nu^{\mu,\nu})}
\cdot
\omega
\left(
\left(
H_{\boldsymbol\nu_r}^{r,1}(z;t,q)
\right)_{\operatorname{pol}}
\right).
\end{equation}
Here $\nu_{i+1}-1$ is determined by the central component of the
cover tuple, and \(A_r(\boldsymbol\nu^{\mu,\nu})\) denotes the weighted
attacking-pair statistic of Section \ref{subLLT}, computed on
\(\boldsymbol\nu^{\mu,\nu}\) before stretching.
\end{theorem}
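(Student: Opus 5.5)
The plan is to specialize the general Catalanimal formula of Blasiak et al.~\cite[Corollary~8.4.2]{LLTcat} to the cover tuple $\boldsymbol\nu^{\mu,\nu}$ with constant offsets $\mathbf o=(0,\cdots,0)$. In their notation that corollary expresses $\nabla^r G_{\boldsymbol\nu}[X;t]$, up to a sign, a power of $qt$ and a power of $t$, as $\omega$ of the polynomial part of the $(r,1)$-Catalanimal $H^{r,1}_{\boldsymbol\nu_r}(z;t,q)$ attached to the $r$-stretch. (Here $q$ and $t$ are swapped relative to their conventions, exactly as in Section~\ref{subProofMain1}, since $\nabla$ commutes with the interchange.) The formula requires hypotheses on the tuple, essentially that the stretched tuple has the right row-endpoint structure along diagonals. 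The proof therefore splits into three parts: check that the hypotheses hold for $\boldsymbol\nu^{\mu,\nu}=(A_1,\cdots,A_u,E,B_1,\cdots,B_v)$, identify the weight $\boldsymbol\kappa^{\mathrm{cat}}$ with the one in their corollary, and compute the scalar prefactor.

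For the first part, I would use the explicit shape. Each $A_h$ is a column of height $\nu_k$ whose bottom box has content $0$. Each $B_k$ is the transpose of the row $(\nu_k-1)/(-1)$, a column whose bottom box has content $0$. The central component is $E=(2^b,1^{a-b})$. After the constant-offset stretch, each column becomes a column of height $r$ times as large, with contents decreasing from $0$. Every row of a column component is a single box, so $\operatorname{st}_{h,c}=\operatorname{en}_{h,c}=1$ on every nonempty diagonal of a column. Thus $\kappa^{\mathrm{cat}}$ reduces to $\widetilde b_{h,c}$ there, and only $E$ contributes genuine start/end discrepancies. With these facts, I would match the definitions of $R_q,R_t,R_{qt}$ and $\boldsymbol\kappa^{\mathrm{cat}}$ against the general construction of~\cite{LLTcat} term by term.

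For the prefactor, the sign and the $t^{A_r}$ factor come directly from the general formula: the attacking pairs of equal content get weight $r$ under stretching, and this is exactly what $A_r$ records. The $(qt)$-exponent in \cite{LLTcat} is a count involving $\operatorname{mag}$ and $n_0(\gamma)$, or equivalently a count of the rows of $\boldsymbol\nu_r$ not beginning on the diagonal of content $0$. I would evaluate it component by component. Columns contribute nothing beyond their base row, while $E$ has $b$ rows of length two. After stretching, those rows contribute $r(b-1)+1$, which gives the exponent $1+r(\nu_{i+1}-1)$. The sign $-1$ should come from the single extra row structure of $E$, namely an odd count. I expect this bookkeeping of the prefactor to be the main obstacle, because the conventions of \cite{LLTcat} (French notation, adjusted contents, the $\omega$ and $q\leftrightarrow t$ swaps) must be aligned precisely. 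I would test the result on Example~\ref{ex:hall-llt-cover} with $r=1$ to fix signs and exponents before writing the general count.
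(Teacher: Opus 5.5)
Your route is the paper's: the identity is \cite[Corollary~8.4.2]{LLTcat} with $q$ and $t$ interchanged, applied directly to $\boldsymbol\nu^{\mu,\nu}$. The paper does no separate hypothesis check or weight matching, because the Catalanimal and $\boldsymbol\kappa^{\mathrm{cat}}$ in the statement are defined as in \cite{LLTcat}. The corollary's prefactor is $(-1)^{\operatorname{mag}(\boldsymbol\nu^{\mu,\nu})}(qt)^{\operatorname{mag}(\boldsymbol\nu^{\mu,\nu})+n_0(\gamma)}t^{A_r(\boldsymbol\nu^{\mu,\nu})}$. So the only real work is evaluating $\operatorname{mag}$ and $n_0(\gamma)$, and your sketch does not actually carry out that step.

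Your ``equivalent'' description of the $qt$-exponent, as the number of rows of $\boldsymbol\nu_r$ not beginning on the content-$0$ diagonal, is wrong. Each box of a stretched column is its own row, so a stretched column with $r\nu_k$ boxes already contributes $r\nu_k-1$ such rows. Likewise, ``those rows contribute $r(b-1)+1$'' and ``an odd count'' state the answer rather than derive it.

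The correct bookkeeping keeps the two statistics separate.
\begin{itemize}
\item Every diagonal of a column component is a single box that begins its row. So columns contribute to neither $\operatorname{mag}$ nor $n_0$.
\item In $E=(2^b,1^{a-b})$, row $y$ contains the box of content $1-y$, and for $y\leq b$ also the box of content $2-y$. Hence the only diagonal containing no initial box is the content-$1$ diagonal, which is a single box. Thus $\operatorname{mag}=1$, and this one quantity supplies both the sign $-1$ and one factor of $qt$.
\item The diagonals of $E$ with contents $0,-1,\dots,2-b$ have length $2$, and there are $b-1$ of them. The constant-offset stretch turns each into $r$ diagonals of length $2$, while every other diagonal of $\boldsymbol\nu_r$ has length $1$. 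Hence $n_0(\gamma)=r(b-1)=r(\nu_{i+1}-1)$.
\end{itemize}

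A minor slip: before stretching, $B_k$ has contents $2-\nu_k,\dots,1$, so its bottom box has content $1$, not $0$. This does not affect either count.
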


\begin{proof}
After interchanging the parameters \(q\) and \(t\),
Blasiak et al.~\cite[Corollary~8.4.2]{LLTcat} give
$$
\omega
\nabla^r
G_{\boldsymbol\nu^{\mu,\nu}}[X;t]
=
(-1)^{
\operatorname{mag}(\boldsymbol\nu^{\mu,\nu})
}
(qt)^{
\operatorname{mag}(\boldsymbol\nu^{\mu,\nu})
+
n_0(\gamma)
}
\cdot
t^{A_r(\boldsymbol\nu^{\mu,\nu})}
\left(
H_{\boldsymbol\nu_r}^{r,1}(z;t,q)
\right)_{\operatorname{pol}}.
$$

We now evaluate the two statistics appearing in this formula.
All components of \(\boldsymbol\nu^{\mu,\nu}\) other than the central
component are columns, and hence every one of their diagonals contains
the initial box of a row.  Recall that the central component is
$E=\left(
2^{\nu_{i+1}},
1^{\nu_i-\nu_{i+1}}
\right)$.
Among the diagonals of \(E\), precisely one diagonal does not contain
the initial box of a row, and this diagonal has length \(1\).  Therefore,
\(\operatorname{mag}(\boldsymbol\nu^{\mu,\nu}) = 1.\)

Moreover, all diagonals in the column components have length \(1\).
The central component \(E\) has precisely $\nu_{i+1}-1$
diagonals of length \(2\), while all of its remaining diagonals have
length \(1\).  Hence, before stretching, the statistic \(n_0\) is equal
to $\nu_{i+1}-1$.  Under the constant-offset \(r\)-stretch, each such diagonal occurs
\(r\) times.  Consequently, $n_0(\gamma) = r\bigl(\nu_{i+1}-1\bigr)$.

Substituting these two evaluations into the preceding formula and then
applying \(\omega\) to both sides proves \eqref{eqLLTcat}.
\end{proof}

\subsection{Cauchy identity}
We next use a Cauchy identity to expand \eqref{eqLLTcat}.  We first
state the conventions needed for it.

For a weak composition $\mathbf r=(r_1,\cdots,r_k)$, a staircase
weight $\rho_{\mathbf r}$ has, in every nonempty $i$th block, the
form $(M_i,M_i-1,\cdots,m_i)$.  If $r_i=0$, the $i$th block is empty, and we still choose integers
$M_i<m_i$.  We call $M_i$ and $m_i$ the block maximum and
minimum, also for an empty block.

For $\sigma\in \mathfrak{S}_k$, a sequence $(m_1,\cdots,m_k)$ is
$\sigma$-almost decreasing if
\[
m_i
\geq
m_j-\mathbf 1_{\{\sigma^{-1}(i)>\sigma^{-1}(j)\}}
\qquad\text{for all }i<j,
\]
and it is $\sigma$-almost increasing if
\[
m_i
\leq
m_j+\mathbf 1_{\{\sigma^{-1}(i)<\sigma^{-1}(j)\}}
\qquad\text{for all }i<j.
\]
With these conventions, the identity takes the following form.

\begin{theorem}[{\cite[Theorem~6.1.3]{dens}}]\label{thmdenscauchy}
    Let $\mathbf r = (r_1,\cdots,r_k)$ and $\mathbf s = (s_1,\cdots,s_k)$ be weak compositions, let
    $\sigma\in \mathfrak{S}_k$, and let $\rho_{\mathbf r}, \rho_{\mathbf s}$ be staircase weights satisfying \cite[Convention~6.1.2]{dens}.
    For each \(i\), let \(M_i\) denote the block maximum of the \(i\)th block of both \(\rho_{\mathbf r}\) and \(\rho_{\mathbf s}\), and let \(m_i\) and \(n_i\) denote their respective block minima.
    Assume that $\rho_{\mathbf r}$ and $\rho_{\mathbf s}$ have the same block maxima \(M_i\), 
    that $(m_1,\cdots,m_k)$ is \(\sigma\)-almost decreasing, and that $(n_1,\cdots,n_k)$ is \(\sigma\)-almost increasing.  Then
    \begin{equation}\label{eqdenscauchy}
    \frac{ \displaystyle\prod_{i<j} \Omega[-qtX_iY_j]}{\displaystyle\prod_{i\leq j}\Omega[-tX_iY_j]}
    =
    \sum_{\boldsymbol\lambda} t^{|\boldsymbol\lambda|}
x^{-\rho_{\mathbf r}}
E_{\mathbf r,\,
\boldsymbol\lambda+\rho_{\mathbf r}}^\sigma
(x;q^{-1})
\,
y^{-\rho_{\mathbf s}}
F_{\mathbf s,\,
\boldsymbol\lambda+\rho_{\mathbf s}}^\sigma
(y;q),
    \end{equation}
    where $\boldsymbol\lambda = \left(
\lambda^{(1)},\cdots,\lambda^{(k)}
\right)$ ranges over all tuples of partitions satisfying
$
\ell\left(\lambda^{(i)}\right)
\leq
\min(r_i,s_i).
$
Here $x=(x_1,\cdots,x_{|\mathbf r|})$ and $y=(y_1,\cdots,y_{|\mathbf s|})$, where $|\mathbf r|=\sum_ir_i$ and $|\mathbf s|=\sum_is_i$.  The alphabet $X_i$ is the sum of the $r_i$ variables $x_a$ with $r_1+\cdots+r_{i-1}<a\leq r_1+\cdots+r_i$, and $Y_j$ is defined in the same way from $y$ and $\mathbf s$.  In $\boldsymbol\lambda+\rho_{\mathbf r}$, each $\lambda^{(i)}$ is padded with zeros to length $r_i$, these vectors are concatenated, and $\rho_{\mathbf r}$ is added; $\boldsymbol\lambda+\rho_{\mathbf s}$ is formed in the same way.  Finally, $|\boldsymbol\lambda|=\sum_i|\lambda^{(i)}|$.
\end{theorem}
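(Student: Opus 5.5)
Since this statement is quoted from Blasiak et al.~\cite[Theorem~6.1.3]{dens}, the plan is not to reprove it from scratch. Instead we would check that our conventions match theirs, and recall the structure of their argument so that the hypotheses we must verify later for the cover tuple (the almost-monotonicity of the block minima, equal block maxima, and \cite[Convention~6.1.2]{dens}) are visibly the ones that matter.

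First we would fix $\sigma$ and reduce to the case where $\mathbf r$ and $\mathbf s$ have no zero blocks. The deletion convention for weak compositions introduced before Proposition~\ref{prop:dens-symmetrization} replaces $(\mathbf a,\sigma)$ by $(\mathbf a',\tau)$. Empty blocks contribute trivial alphabets $X_i=0$ or $Y_j=0$ to the kernel. The only remaining role of their block maxima and minima is in the almost-monotonicity hypotheses, and those hypotheses are compatible with deletion.

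The core step is a nonsymmetric Cauchy identity at the level of individual variables. Write $\prod\Omega[-qtX_iY_j]/\prod\Omega[-tX_iY_j]$ as a product over pairs $(x_a,y_b)$ of factors $(1-qtx_ay_b)/(1-tx_ay_b)$ or $1/(1-tx_ay_b)$, according to whether block$(a)<$ block$(b)$ or block$(a)\leq$ block$(b)$. Such a ``triangular'' kernel has a nonsymmetric Hall--Littlewood expansion $\sum_\lambda t^{|\lambda|}E^{\widehat\sigma}_\lambda(x;q^{-1})F^{\widehat\sigma}_\lambda(y;q)$. This follows from the Mimachi--Noumi type Cauchy formula for nonsymmetric Macdonald polynomials specialized to Hall--Littlewood, twisted by $\overline{T_{\widehat\sigma}}$; the twist is where the factors $t^{|\operatorname{Inv}\cap\operatorname{Inv}|}$ in $E^\xi_\lambda$ come from.

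Next we would apply the Levi antisymmetrizers. Multiply by $x^{\rho_{\mathbf r}}y^{\rho_{\mathbf s}}$, apply $\delta_{\mathbf r}$ in $x$ and $\delta_{\mathbf s}$ in $y$, and divide back. The blockwise kernel is $\mathfrak S_{\mathbf r}\times\mathfrak S_{\mathbf s}$-symmetric, so the left side is essentially unchanged. On the right, terms with non-blockwise-regular $\lambda+\rho$ are killed, and the survivors are $E_{\mathbf r,\lambda+\rho_{\mathbf r}}^\sigma F_{\mathbf s,\lambda+\rho_{\mathbf s}}^\sigma$. Here $\lambda$ must be dominant in each block, which forces $\ell(\lambda^{(i)})\leq\min(r_i,s_i)$.

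The main obstacle is making the single-index sum over $\lambda$ collapse correctly when $\mathbf r\neq\mathbf s$. One must show that the $x$-antisymmetrization and the $y$-antisymmetrization pick out the \emph{same} $\boldsymbol\lambda$ in both factors, despite the different staircases. This is exactly where the equal block maxima $M_i$ and the $\sigma$-almost decreasing/increasing conditions on $(m_i)$ and $(n_i)$ enter: they guarantee that the truncated staircases do not create extra straightening terms across blocks. We would follow \cite{dens} at this point, and then in our application verify these two monotonicity conditions for the boundary blocks $\mathcal B$ directly.
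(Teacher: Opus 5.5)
The paper gives no proof of this statement. It is quoted from \cite[Theorem~6.1.3]{dens}, and the paper's own work is only to verify its hypotheses for the cover tuple (Lemmas~\ref{lemcommonmaxima} and~\ref{lemalmostmonotonicity}). So citing it is exactly what the paper does, and as a proof by citation your proposal is fine. The outline you recall, however, contains two steps that would fail if carried out.

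First, you cannot reduce to the case where $\mathbf r$ and $\mathbf s$ have no zero blocks. Their zero blocks generally sit in different positions. A block with $r_i=0<s_i$ cannot be removed: $Y_i$ still meets every $X_{i'}$ with $i'<i$ through $\Omega[-qtX_{i'}Y_i]/\Omega[-tX_{i'}Y_i]$, and it meets no later $X_{i'}$, so its position matters. At most, blocks empty in both compositions can be dropped.

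This is the typical situation in the application. The blocks $(h,U_h)$ have $a_B=0$ and the blocks $(h,L_h-1)$ have $b_B=0$; compare the pairs $(0,1)$ and $(1,0)$ in Section~\ref{subsec:cover-llt-example}. The deletion convention is applied to $\mathbf r$ and $\mathbf s$ separately, and only after the identity is in hand (Lemma~\ref{lemtransport}). That is why two different permutations $\theta_r$ and $\tau_r$ appear and Lemma~\ref{lemwinding} is needed.

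Nor are the data of empty blocks inert. In Lemma~\ref{lemalmostmonotonicity}, the blocks $(A_h,0)$, $(E,r)$, $(B_k,r)$ with $m_P=0$, and $(A_h,-r\nu(A_h))$, $(E,-ra)$, $(B_k,r(1-\nu(B_k)))$ with $n_P=0$, are empty on one side. They are precisely the candidates for violations. So your claim that the hypotheses are ``compatible with deletion'' would discard exactly what has to be checked.

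Second, once a block has two or more variables, the blockwise kernel is not triangular in the individual variables. Within a block every pair contributes $(1-tx_ay_b)^{-1}$ with no numerator, whatever the order of $a$ and $b$. So the kernel does not have the expansion $\sum_\lambda t^{|\lambda|}E^{\widehat\sigma}_\lambda(x;q^{-1})F^{\widehat\sigma}_\lambda(y;q)$. For $k=1$ and $\mathbf r=\mathbf s=(2)$, the left side is $\prod_{a,b\leq 2}(1-tx_ay_b)^{-1}$. By contrast, the case $\mathbf r=\mathbf s=(1,1)$, $\sigma=\mathrm{id}$, $\rho_{\mathbf r}=\rho_{\mathbf s}=0$ of the statement itself gives
\[
\sum_{\lambda\in\mathbb N^2}t^{|\lambda|}E^{\mathrm{id}}_\lambda(x;q^{-1})F^{\mathrm{id}}_\lambda(y;q)
=\frac{1-qtx_1y_2}{(1-tx_1y_1)(1-tx_1y_2)(1-tx_2y_2)},
\]
which is not symmetric and depends on $q$. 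Getting from a variable-level identity to the blockwise one, with unequal block sizes $r_i\neq s_i$, is exactly the part your outline does not supply. Since you defer it to \cite{dens} anyway, the citation stands. But the outline should not be used to decide which hypotheses need verifying in the application.
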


We now construct the required data from the stretched cover tuple $\boldsymbol\nu_r$.

Let \(\boldsymbol\nu_r = (C_1,\cdots,C_k)\).  For each box \(x\) in a skew component $C_h$, we consider only its ordinary content $c(x)$.
Let \(L_h=\min_{x\in C_h}c(x), U_h=\max_{x\in C_h}c(x)\).  For every integer $d$, recall
$$D_{h,d} =
\left\{
x\in C_h:c(x)=d
\right\},
g_{h,d}
=
|D_{h,d}|.$$
Define $$\mathcal B_h = \left\{(h,d): L_h-1\leq d\leq U_h\right\}\text{ and }
\mathcal B = \bigcup_h\mathcal B_h.$$ We call the elements of $\mathcal B$ boundary blocks; the block $(h,d)$ lies between the diagonals $D_{h,d}$ and $D_{h,d+1}$.  Let $N=|\mathcal B|$.  The natural order on $\mathcal{B}$ is the order by decreasing adjusted content $c_e(B)=d+h\epsilon$ of $B=(h,d)$, and we number the blocks $1,\cdots,N$ in this order.

For $B=(h,d)\in\mathcal B$, let its source and target be
$D_{h,d}, D_{h,d+1}$, respectively.
Every nonempty diagonal occurs exactly once as a source and once as a
target; the two outer boundary diagonals of each component are empty.
Consequently, both weak compositions below have total size
$|\boldsymbol\nu_r|$.
Set $a_B = g_{h,d+1}, b_B = g_{h,d}, m_B = -\operatorname{st}_{h,d+1}, n_B = -\operatorname{en}_{h,d}, M_B = a_B+m_B-1$.
With respect to the order on $\mathcal{B}$, define the weak compositions
$\mathbf r
=
(a_B)_{B\in\mathcal B},
\mathbf s
=
(b_B)_{B\in\mathcal B}$,
and take the \(B\)-blocks of \(\rho_{\mathbf r}\) and \(\rho_{\mathbf s}\) to be
$$(M_B,M_B-1,\cdots,m_B),\qquad (M_B,M_B-1,\cdots,n_B).$$

\begin{lemma}\label{lemcommonmaxima}
    For every \(B\in\mathcal B\), we have $ a_B+m_B = b_B+n_B$. 
    Consequently, $M_B = b_B+n_B-1$.
\end{lemma}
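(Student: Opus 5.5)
The identity $a_B+m_B=b_B+n_B$ reads, for $B=(h,d)$,
\[
g_{h,d+1}-\operatorname{st}_{h,d+1}=g_{h,d}-\operatorname{en}_{h,d}.
\]
I will prove it by a row-by-row count in the single component $C_h$. Each row of a skew diagram is a horizontal segment of consecutive boxes, with contents increasing by one from left to right. Every box of $C_h$ has its own content, so each row contributes to $D_{h,c}$ at most one box.

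First I will count the left side. A box of content $d+1$ in some row is either the initial box of that row, or it has a left neighbour of content $d$ in the same row. Hence $g_{h,d+1}$ equals the number of rows whose initial box has content $d+1$, plus the number of rows containing both contents $d$ and $d+1$. Likewise, a box of content $d$ is either terminal in its row or has a right neighbour of content $d+1$. So $g_{h,d}$ equals the number of rows whose terminal box has content $d$, plus the same number of rows containing both $d$ and $d+1$.

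This leaves the identity
\[
\#\{\text{rows starting at content }d+1\}-\operatorname{st}_{h,d+1}
=\#\{\text{rows ending at content }d\}-\operatorname{en}_{h,d},
\]
and here lies the only real content. Since $\operatorname{st}$ and $\operatorname{en}$ are indicators, I must show two things: at most one row of $C_h$ starts on any given diagonal, and at most one row ends on any given diagonal. Then both sides are $0$.

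I will check this for the specific components of $\boldsymbol\nu_r$, the constant-offset $r$-stretches of columns and of $E=(2^{b},1^{a-b})$.
\begin{itemize}
\item For a column stretched by $r$, the result is again a column, with box contents $rc,rc-1,\dots,rc-r+1$ for each original content $c$. Each row has one box, and the row contents are all distinct. So each diagonal contains at most one row start and at most one row end.
\item For $E$, the skew diagram is a ribbon-free straight shape of two columns, and its stretch keeps two columns with the second column's contents shifted by one. Rows of length two start on distinct diagonals because consecutive rows have contents differing by one. Rows of length one start in the first column on distinct contents, and start contents of different rows are distinct, since the start content is the content of the first-column box in that row. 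Symmetrically, end contents are distinct, since each row's last box sits in a distinct row, and contents within a column are distinct.
\end{itemize}

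More generally, this argument applies to any skew diagram: left endpoints of rows have strictly decreasing contents as one moves up, because the left boundary is weakly left-shifting. This is the cleanest route.

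Finally, the boundary blocks $d=L_h-1$ and $d=U_h$ will be checked directly. There one of $g_{h,d},g_{h,d+1}$ vanishes, and the corresponding indicator is handled by the same count. The consequence $M_B=b_B+n_B-1$ is then immediate from $M_B=a_B+m_B-1$.

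The main obstacle is the distinctness claim, that at most one row starts and at most one row ends on each diagonal. I expect to handle it via the monotonicity of the skew boundaries, which holds for every skew shape.
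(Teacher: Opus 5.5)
Your argument is correct and is essentially the paper's proof. In a skew diagram the row-initial boxes have distinct contents, and so do the row-terminal boxes; hence $\operatorname{st}_{h,d+1}$ and $\operatorname{en}_{h,d}$ count the rows entering and leaving between contents $d$ and $d+1$, and the rows containing both contents cancel.

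Rely on the general skew-shape monotonicity argument rather than your case check for $E$, because that check misdescribes the stretched shape. For $r\geq2$ the stretched $E$ is no longer a straight shape, and its bottom $r-1$ rows each consist of a single second-column box, with contents $2,\dots,r$. These start contents happen to be distinct from the others, so the conclusion survives, but the case check as written does not establish it.
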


\begin{proof}
    Since $C_h$ is a skew diagram, the initial boxes of its rows have distinct contents, and so do the terminal boxes.  Thus $\operatorname{st}_{h,c}$ and $\operatorname{en}_{h,c}$ are the numbers of rows of $C_h$ whose initial, respectively terminal, box has content $c$.  When the ordinary content changes from \(d\) to \(d+1\), the rows which enter are precisely those whose initial boxes have content \(d+1\), while the rows which leave are precisely those whose terminal boxes have content \(d\).
    Therefore, $g_{h,d+1}-g_{h,d} = \operatorname{st}_{h,d+1} - \operatorname{en}_{h,d}$, and hence
    $$a_B+m_B = g_{h,d+1} - \operatorname{st}_{h,d+1} = g_{h,d} - \operatorname{en}_{h,d} = b_B+n_B. $$
    In particular, $M_B=m_B-1<m_B$ if $a_B=0$, and $M_B=n_B-1<n_B$ if $b_B=0$, as required by Blasiak et al.~\cite[Convention~6.1.2]{dens}.  This proves the result.
\end{proof}

We define another order on $\mathcal{B}$ for $\boldsymbol\nu_r$, called the \(r\)-order.  For $B = (h,c) \in \mathcal{B}_h$, its ordinary content is defined by $c(B)=c$.

If $r=1$, first order by decreasing ordinary content.  For equal ordinary contents, order by the component containing the block, in the order \(E,\ A_u,\cdots,A_1,\ B_v,\cdots,B_1\).
If $r>1$, let $s\in\{0,\cdots,r-1\}$ be the residue of $c$ modulo $r$.  First order by $s$, in the order $0,\ r-1,\ r-2,\cdots,1$.  Next order by decreasing $\lfloor c/r\rfloor$, and finally order by the component containing the block, in the order \(E,\ A_u,\cdots,A_1,\ B_v,\cdots,B_1\).
Define \(\sigma_r\in \mathfrak{S}_N\) by requiring that, for $1\leq p\leq N$, the $p$th block in the \(r\)-order is the $\sigma_r(p)$th block in the natural order.
For \(P,Q\in\mathcal B\), write $P\prec_r Q$ if \(P\) precedes \(Q\) in the \(r\)-order; equivalently,
$\sigma_r^{-1}(P)<\sigma_r^{-1}(Q)$, where we identify a block with its number in the natural order.

\begin{lemma}\label{lemalmostmonotonicity}
    The block minima of \(\rho_{\mathbf r}\) are \(\sigma_r\)-almost decreasing, and the block minima of \(\rho_{\mathbf s}\) are \(\sigma_r\)-almost increasing.
\end{lemma}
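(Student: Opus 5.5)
Since every block minimum lies in $\{0,-1\}$, I would first reduce both conditions to statements about a few exceptional blocks. Indeed $m_B=-\operatorname{st}_{h,d+1}$ and $n_B=-\operatorname{en}_{h,d}$ take only the values $0$ and $-1$. Hence, for blocks $P$ preceding $Q$ in the natural order, the inequality $m_P\geq m_Q-\mathbf 1_{\{Q\prec_r P\}}$ can fail only when $m_P=-1$, $m_Q=0$ and $P\prec_r Q$. Similarly, $n_P\leq n_Q+\mathbf 1_{\{P\prec_r Q\}}$ can fail only when $n_P=0$, $n_Q=-1$ and $Q\prec_r P$. So the lemma is equivalent to two claims:
\begin{enumerate}
\item every block with $m=0$ precedes, in the $r$-order, every block with $m=-1$ of larger adjusted content;
\item every block with $n=0$ follows, in the $r$-order, every block with $n=-1$ of smaller adjusted content.
\end{enumerate}

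The second step is to list the blocks with $m_B=0$ and with $n_B=0$ explicitly, using the shape of $\boldsymbol\nu_r$. After the constant-offset $r$-stretch, every column component $A_h$ or $B_k$ is again a single column, so each of its rows is a single box. Hence $\operatorname{st}_{h,c}=\operatorname{en}_{h,c}=1$ for every content $c$ of the component. Consequently $m_B=0$ only for the top block $d=U_h$, where $D_{h,d+1}$ is empty, and $n_B=0$ only for the bottom block $d=L_h-1$. The stretch sends a box of content $c$ to contents $rc,\dots,rc-r+1$. So $U_h\equiv0$ and $L_h\equiv1\pmod r$ for every column, and the values of $U_h$ and $L_h$ are read off from $\nu_k$ and the shift by $-1$ for the $B$-components.

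For the central component $E=(2^{b},1^{a-b})$, the stretch replaces each row by $r$ rows of the same length, stacked vertically. I would compute $\operatorname{st}$ and $\operatorname{en}$ diagonal by diagonal. There should be exactly one content strictly inside $[L_h,U_h]$ with $\operatorname{st}=0$; this is the unstretched non-initial diagonal recorded in the proof of Theorem~\ref{thmLLTcat}, now shifted by the stretch. Likewise there should be exactly one interior content with $\operatorname{en}=0$. Their residues mod $r$ and their $\lfloor c/r\rfloor$ values must be tracked exactly.

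The last step, and the main obstacle, is to check claims (1) and (2) against the definition of $\prec_r$. The residue classes are ordered $0,r-1,\dots,1$, then by decreasing $\lfloor c/r\rfloor$, then by the component order $E,A_u,\dots,A_1,B_v,\dots,B_1$. For column components the blocks with $m=0$ have residue $0$, so they precede every block of nonzero residue. Among residue-$0$ blocks, a violation could occur only if a block with $m=-1$ had strictly larger $\lfloor c/r\rfloor$, or equal $\lfloor c/r\rfloor$ and an earlier component in the tie-break, while also having larger adjusted content. I would rule this out using two facts:
\begin{itemize}
\item the top contents $U_h$ are essentially maximal among the residue-$0$ contents present, because the $A$-columns all have top content $0$ and the $B$-columns are shifted;
\item the tie-break order is exactly opposite to the $\epsilon$-order of adjusted contents $c+h\epsilon$.
\end{itemize}
The exceptional interior blocks of $E$ need separate casework, split according to $r=1$ versus $r>1$ and $b$ versus $a-b$, and I expect this to be the delicate part. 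If the inequalities fail for some configuration, I would revisit the choice of which end of $E$'s stretched diagonals is the exceptional one, since the component order placing $E$ first is presumably designed exactly to absorb that case.
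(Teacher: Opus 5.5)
There is a genuine gap: your opening reduction matches the paper's, but the central-component data are wrong, claim (2) is reversed, and the decisive verification is never carried out. The reduction itself is fine: since every block minimum lies in $\{0,-1\}$, a violation needs $P$ before $Q$ in the natural order with either $(m_P,m_Q)=(-1,0)$ and $P\prec_r Q$, or $(n_P,n_Q)=(0,-1)$ and $Q\prec_r P$.

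Your claim (2), however, is stated backwards. What must hold is that each block with $n=0$ \emph{precedes}, in the $r$-order, every block with $n=-1$ of smaller adjusted content. The ``follows'' version already fails for $r=1$ and $\mu=(2,1)\gtrdot\nu=(1,1,1)$: there $n_{(E,0)}=0$, $n_{(A_1,0)}=-1$, and $(E,0)\prec_1(A_1,0)$.

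The more serious problem is your description of the stretched central component. The constant-offset stretch sends a box of content $c$ to contents $rc,\dots,rc-r+1$ in the same column. So the stretched $E$ has one column with contents $1-ra,\dots,0$ and one with contents $r(1-b)+1,\dots,r$; it is not $(2^{rb},1^{r(a-b)})$. In the correct shape:
\begin{itemize}
\item the length-two rows are the content pairs $(c-1,c)$ with $r(1-b)+1\le c\le 1$;
\item the boxes of contents $2,\dots,r$ are alone in their rows;
\item the only diagonal of $E$ without an initial box has content $1$, and the only one without a terminal box has content $r(1-b)$.
\end{itemize}
Hence the zero-minimum blocks are exactly $(A_h,0),(E,0),(E,r),(B_k,r)$ for $\rho_{\mathbf r}$, and $(A_h,-r\nu(A_h)),(E,-ra),(E,r(1-b)),(B_k,r(1-\nu(B_k)))$ for $\rho_{\mathbf s}$. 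The fact the proof rests on is that all of these have content divisible by $r$, so they lie in the first residue class of $\prec_r$.

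With your shape, $E$ would instead contribute $(E,1)$ and $(E,1-rb)$, both of residue $1$. The almost-decreasing property would then genuinely fail. For $r\ge 3$ and any $B_k$ present, $m_{(B_k,2)}=-1$ and $(B_k,2)$ precedes $(E,1)$ naturally, yet $(B_k,2)\prec_r(E,1)$. So this cannot be repaired by ``revisiting which end is exceptional''; the data must be recomputed from the correct stretch.

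Your final step, which is the actual content of the lemma, is left undone, and two ingredients it needs are missing or misstated.

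First, the tie-break $E,A_u,\dots,A_1,B_v,\dots,B_1$ is not the reverse of the natural order $B_v,\dots,B_1,E,A_u,\dots,A_1$ on equal contents. The two orders agree except on pairs with one block in some $B_k$ and the other in $E$ or an $A_h$, where they disagree. For (1), this settles the only delicate case. Every content-$r$ block has $m=0$, so a violating $P$ has $c(P)<r$, which forces $c(Q)=0$ and $Q\in\{(A_h,0),(E,0)\}$. Then either $1\le c(P)\le r-1$ has nonzero residue, or $c(P)=0$ and $P=(B_k,0)$. In both cases $Q\prec_r P$.

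Second, (2) uses the cover structure, which your sketch never invokes. Since $r\mid c(P)$, the relation $Q\prec_r P$ forces $r\mid c(Q)$ and $c(Q)\ge c(P)$, hence $c(Q)=c(P)$. So $P=(B_k,r(1-\nu(B_k)))$ and $Q$ lies in $E$ or some $A_h$. One then needs $\nu(B_k)\ge a+1$ (from $\nu_k=\mu_k\ge\mu_i=a+1$ for $k<i$) and $\nu(A_h)\le b\le a$ to get $c(Q)\le -ra$. This forces $Q=(E,-ra)$ or $Q=(A_h,-r\nu(A_h))$ with $\nu(A_h)=a$, and both have $n_Q=0$.

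These part inequalities are indispensable. Since $\prec_r$ reverses every equal-content pair consisting of a $B_k$-block and an $E$- or $A_h$-block, a bottom block of some $B_k$ at a content where $E$ has a terminal box would be a violation.
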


\begin{proof}
    For $P=(h,d)\in\mathcal B$ we have $m_P=-\operatorname{st}_{h,d+1}$ and $n_P=-\operatorname{en}_{h,d}$, so all block minima belong to $\{-1,0\}$.  Thus, \(\sigma_r\)-almost decreasing can fail only for a pair $P,Q$ such that $P$ precedes $Q$ in the natural order, $P\prec_r Q$, and $(m_P,m_Q)=(-1,0)$.  Similarly, \(\sigma_r\)-almost increasing can fail only for a pair $P,Q$ such that $P$ precedes $Q$ in the natural order, $Q\prec_r P$, and $(n_P,n_Q)=(0,-1)$.

    We first find the blocks with a zero minimum.  For a component $Z$ of $\boldsymbol\nu^{\mu,\nu}$, we write $(Z,d)$ for the block $(h,d)$, where $C_h$ is the stretch of $Z$; if $Z$ is a column, let $\nu(Z)$ be the part of $\nu$ that gives $Z$.  Recall that $a=\nu_i\geq b=\nu_{i+1}\geq\nu_j\geq1$.  Since $\nu_k=\mu_k\geq\mu_i=a+1$ for $k\leq i-1$, and $\nu_k\leq b$ for $k\geq i+2$, we have
    \[
    \nu(B_k)\geq a+1,
    \qquad
    \nu(A_h)\leq b\leq a.
    \]
    After stretching, $A_h$ is a column with contents $1-r\nu(A_h),\cdots,0$, $B_k$ is a column with contents $r(1-\nu(B_k))+1,\cdots,r$, and $E$ consists of a column with contents $1-ra,\cdots,0$ and a column with contents $r(1-b)+1,\cdots,r$.  Two boxes of $E$ lie in the same row exactly when their contents are $c-1$ and $c$ with $r(1-b)+1\leq c\leq1$.  Every box of a column is both the initial and the terminal box of its row.  In $E$, the only nonempty diagonal without an initial box is the one of content $1$, and the only nonempty diagonal without a terminal box is the one of content $r(1-b)$.  Therefore
    \begin{align*}
    m_P=0
    &\Longleftrightarrow
    P\in\{(A_h,0),\ (E,0),\ (E,r),\ (B_k,r)\},\\
    n_P=0
    &\Longleftrightarrow
    P\in\{(A_h,-r\nu(A_h)),\ (E,-ra),\ (E,r(1-b)),\ (B_k,r(1-\nu(B_k)))\},
    \end{align*}
    where $h$ and $k$ range over all column components.  In particular, every block with a zero minimum has content divisible by $r$.

    If $P$ precedes $Q$ in the natural order, then $c(P)\geq c(Q)$.  For blocks of equal content, the natural order lists the components as $B_v,\cdots,B_1,E,A_u,\cdots,A_1$, while the $r$-order lists them as $E,A_u,\cdots,A_1,B_v,\cdots,B_1$.  Hence the two orders differ on a pair of blocks of equal content exactly when one block lies in some $B_k$ and the other lies in $E$ or in some $A_h$.  For $r=1$, the $r$-order is the rule for $r>1$ with $r=1$, since every content has residue $0$.

    We now prove that $(m_P)_{P\in\mathcal B}$ is $\sigma_r$-almost decreasing.  Suppose that $P$ precedes $Q$ in the natural order and $(m_P,m_Q)=(-1,0)$.  Every block has content at most $r$, and every block of content $r$ lies in $E$ or in some $B_k$ and has zero minimum.  Since $m_P=-1$ and $c(Q)\leq c(P)\leq r$, we get $c(P)<r$ and hence $c(Q)=0$; so $Q$ lies in $E$ or in some $A_h$.  If $c(P)=0$, then $P=(B_k,0)$ for some $k$, because the other blocks of content $0$ have zero minimum; hence $Q\prec_r P$.  If $c(P)>0$, then $1\leq c(P)\leq r-1$.  In particular $r\geq2$, the residue of $c(P)$ is nonzero, and the residue of $c(Q)$ is $0$; hence again $Q\prec_r P$.  So no pair violates the condition.

    Next we prove that $(n_P)_{P\in\mathcal B}$ is $\sigma_r$-almost increasing.  Suppose that $P$ precedes $Q$ in the natural order, $Q\prec_r P$, and $(n_P,n_Q)=(0,-1)$.  Since $r$ divides $c(P)$, the block $P$ lies in the first residue class of the $r$-order.  Since $Q\prec_r P$, also $r$ divides $c(Q)$, and $\lfloor c(Q)/r\rfloor\geq\lfloor c(P)/r\rfloor$.  Together with $c(P)\geq c(Q)$, this gives $c(P)=c(Q)$.  Hence $P$ lies in some $B_k$ and $Q$ lies in $E$ or in some $A_h$.  As $n_P=0$, we get $P=(B_k,r(1-\nu(B_k)))$, and so $c(Q)=r(1-\nu(B_k))\leq-ra$.  The blocks of $E$ have contents at least $-ra$, and the blocks of $A_h$ have contents at least $-r\nu(A_h)\geq-ra$.  Therefore $c(Q)=-ra$, and $Q$ is either $(E,-ra)$ or $(A_h,-r\nu(A_h))$ with $\nu(A_h)=a$.  In both cases $n_Q=0$, a contradiction.  This completes the proof.
\end{proof}

For a list of $m$ distinct integers, \(\operatorname{std}\) denotes its
standardization, that is, the list of $1,\cdots,m$ in the same relative order; restrictions of inverse lists below are read in the induced
natural order on the indicated blocks.

Define
$\mathcal B_x = \{B\in\mathcal B:a_B>0\}, \mathcal B_y = \{B\in\mathcal B:b_B>0\},$
and
$
\theta_r
=
\operatorname{std}
\left(
\sigma_r^{-1}\big|_{\mathcal B_x}
\right),
\tau_r
=
\operatorname{std}
\left(
\sigma_r^{-1}\big|_{\mathcal B_y}
\right).
$
Use the notation \(\gamma
=
\gamma(\boldsymbol\nu_r)
=
(g_{D_1},\cdots,g_{D_{\ell(\gamma)}}),\) where \(D_1,\cdots,D_{\ell(\gamma)}\) are the nonempty diagonals of \(\boldsymbol\nu_r\), ordered by increasing adjusted content.
Let
$
\mathbf d
=
w_0^{\ell(\gamma)}(\gamma)$, where $w_0^{\ell(\gamma)}$ reverses only the \(\ell(\gamma)\) diagonal blocks.

For every nonempty diagonal $D=D_{h,d}$,
set
\(
\lambda_D^- = \lambda_{(h,d-1)}, \lambda_D^+ = \lambda_{(h,d)},
\)
where \(\boldsymbol\lambda=(\lambda_B)_{B\in\mathcal B}\) is a tuple of partitions with $\ell(\lambda_B)\leq\min(a_B,b_B)$ for every $B\in\mathcal B$, as in Theorem \ref{thmdenscauchy}; we call such a tuple a Cauchy index.  Pad both partitions with zeros to length \(g_D\).

Define
\[
\alpha_D(\boldsymbol\lambda)
=
\lambda_D^-
+
\left(
g_D-\operatorname{st}_D-1,
\cdots,
-\operatorname{st}_D
\right),
\beta_D^0(\boldsymbol\lambda)
=
\lambda_D^+
+
\left(
g_D-\operatorname{en}_D-1,
\cdots,
-\operatorname{en}_D
\right).
\]
Concatenating the blocks in decreasing adjusted-content order gives
\[
\alpha(\boldsymbol\lambda),\
\beta^0(\boldsymbol\lambda)
\in
X^{++}(\GL_{\mathbf d}).
\]

Define further
\[
\beta(\boldsymbol\lambda) = \beta^0(\boldsymbol\lambda)+\widehat\eta,
\text{ where }
\eta=(\eta_D)_D,
\eta_D=\mathbf 1_{\{c(D)\equiv1\pmod r\}},
\widehat\eta=(\eta_D^{g_D})_D.
\]

We shall apply Theorem \ref{thmdenscauchy} with $N$ blocks, indexed by $\mathcal B$ in the natural order, and with $\mathbf r$, $\mathbf s$, $\rho_{\mathbf r}$ and $\rho_{\mathbf s}$ as above.  Since $\sum_Ba_B=\sum_Bb_B=\ell$, the variables of that theorem are $x=(x_1,\cdots,x_\ell)$ and $y=(y_1,\cdots,y_\ell)$.  For $B\in\mathcal B$, we write $X_B$ and $Y_B$ for the alphabets of Theorem \ref{thmdenscauchy} attached to the block $B$; they have $a_B$ and $b_B$ variables, respectively.  In products such as $\prod_{B<C}$, the order is the natural order on $\mathcal B$.  Under the substitution $x=w_0(z^{-1})$, $y=w_0(z)$, the variables of $X_B$ become the $z_v^{-1}$ with $v$ in the target of $B$, and the variables of $Y_B$ become the $z_u$ with $u$ in the source of $B$.  To apply Theorem \ref{thmdenscauchy}, we make the following reductions.

\begin{lemma}\label{lemrootkernel}
    Under the substitution $ x=w_0(z^{-1}), y=w_0(z)$, we have
\[
\frac{
\displaystyle\prod_{B<C}
\Omega[-qtX_BY_C]
}{
\displaystyle\prod_{B\leq C}
\Omega[-tX_BY_C]
}
=
\frac{
\displaystyle\prod_{\alpha\in R_{qt}}
(1-qtz^\alpha)
}{
\displaystyle\prod_{\alpha\in R_t}
(1-tz^\alpha)
}.
\]
\end{lemma}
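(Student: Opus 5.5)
The plan is to reduce both sides to finite products of linear factors and then match the factors through a bijection on pairs of boxes. For a single variable, $\Omega[-cw]=1-cw$. Hence, for finite alphabets,
\[
\Omega[-cX_BY_C]=\prod_{x\in X_B}\prod_{y\in Y_C}(1-c\,xy).
\]
As recorded just before the statement, the substitution $x=w_0(z^{-1})$, $y=w_0(z)$ sends the variables of $X_B$ to the $z_v^{-1}$ with $v$ in the target $D_{h,d+1}$ of $B=(h,d)$. It sends the variables of $Y_C$ to the $z_u$ with $u$ in the source $D_{h',d'}$ of $C=(h',d')$. A factor attached to the pair $(v,u)$ therefore becomes $1-c\,z_u/z_v=1-c\,z^{\alpha_{uv}}$, with $\alpha_{uv}=\epsilon_u-\epsilon_v$. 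Blocks with $a_B=0$ or $b_C=0$ contribute empty products, so they can be ignored.

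I will then rewrite the index sets. The left numerator is a product over triples $(B,C,v,u)$ with $B<C$ in the natural order, $v\in\operatorname{target}(B)$ and $u\in\operatorname{source}(C)$. Every box lies in exactly one target diagonal and exactly one source diagonal; this is the observation before Lemma~\ref{lemcommonmaxima} that each nonempty diagonal occurs once as a source and once as a target. Hence the pair of boxes $(u,v)$ determines $B$ and $C$ uniquely, and the product can be taken over pairs of boxes $(u,v)$ subject to a condition on their blocks.

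The condition is checked on adjusted contents. If $B=(h,d)$ and $v\in D_{h,d+1}$, then $c_e(B)=d+h\epsilon=c_e(v)-1$. If $C=(h',d')$ and $u\in D_{h',d'}$, then $c_e(C)=c_e(u)$. The natural order lists blocks by decreasing adjusted content, and distinct blocks have distinct adjusted contents because $k\epsilon<1$. So $B<C$ is equivalent to $c_e(v)-1>c_e(u)$, that is, $c_e(u)+1<c_e(v)$. Likewise $B\leq C$ is equivalent to $c_e(u)+1\leq c_e(v)$.

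In both cases $c_e(u)<c_e(v)$, so $u$ precedes $v$ in the LLT reading order, since that order lists boxes by increasing adjusted content. Thus $\alpha_{uv}$ is a positive root, and the conditions are exactly the defining conditions of $R_{qt}$ and $R_t$. Conversely, each root $\alpha_{uv}$ in $R_{qt}$ or $R_t$ arises from exactly one such pair of blocks. Hence the numerator becomes $\prod_{\alpha\in R_{qt}}(1-qtz^\alpha)$ and the denominator becomes $\prod_{\alpha\in R_t}(1-tz^\alpha)$.

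The main point needing care is bookkeeping rather than substance. I must confirm the direction of the substitution, namely that $X_B$ really corresponds to target boxes inverted and $Y_C$ to source boxes, so that the monomial is $z_u/z_v$ and not its inverse. I must also make sure that equal adjusted contents cannot occur between a block and a box in a way that breaks the strict versus weak distinction. The offsets $h\epsilon$ with $k\epsilon<1$ guarantee this.
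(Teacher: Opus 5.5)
Your proposal is correct and follows essentially the same route as the paper: both expand each Cauchy factor into linear factors $1-c\,z_u/z_v$, with $u$ in the source of $C$ and $v$ in the target of $B$, and then translate $B<C$ (respectively $B\leq C$) in decreasing adjusted-content order into $c_e(u)+1<c_e(v)$ (respectively $c_e(u)+1\leq c_e(v)$). You additionally spell out the bijection between factors and pairs of boxes, and check that $\alpha_{uv}$ is a positive root; the paper leaves both of these implicit.
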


\begin{proof}
    Let
\(u\in D_{h,d},
v\in D_{k,e+1}.
\)
Under the stated substitution, the corresponding Cauchy denominator factor is
\(
1-tz_u/z_v.
\)
It occurs precisely when
\(
(k,e)\leq(h,d)
\)
in decreasing adjusted boundary-content order.  This is equivalent to
\(
e+k\epsilon
\geq
d+h\epsilon,
\)
or, equivalently,
\(
c_e(u)+1
\leq
c_e(v).
\)
Thus the denominator roots are exactly \(R_t\).

Replacing the weak inequality by the strict inequality gives
\(
c_e(u)+1<c_e(v),
\)
which is exactly the defining condition for \(R_{qt}\).  Hence the numerator roots are precisely \(R_{qt}\).
\end{proof}

\begin{lemma}\label{lemresidualweight}
Let $\zeta\in\mathbb Z^\ell$ be the weight, with blocks given by $\mathbf d$, whose block indexed by a
nonempty diagonal $D$ is $(\operatorname{en}_D-\operatorname{st}_D)^{g_D}$,
where the blocks are read in decreasing adjusted-content order.  Under
$x=w_0(z^{-1})$ and $y=w_0(z)$, we have
\[
x^{-\rho_{\mathbf r}}y^{-\rho_{\mathbf s}}
=
w_0\left(z^{\zeta}\right).
\]
Consequently,
\[
z^{\boldsymbol\kappa^{\mathrm{cat}}}
x^{-\rho_{\mathbf r}}y^{-\rho_{\mathbf s}}
=
w_0\left(z^{\widehat\eta}\right).
\]
\end{lemma}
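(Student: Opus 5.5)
The plan is a direct bookkeeping computation of exponents, one diagonal at a time. Everything reduces to tracking where each coordinate of $x$ and $y$ goes under the substitution.

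\textbf{Step 1: the substitution on each block.} Under $x=w_0(z^{-1})$ and $y=w_0(z)$ the variables of $X_B$ are the $z_v^{-1}$ with $v$ in the target $D_{h,d+1}$ of $B=(h,d)$. Likewise the variables of $Y_B$ are the $z_u$ with $u$ in the source $D_{h,d}$. The ordering conventions must line up: blocks of $\mathcal B$ are numbered by decreasing adjusted content, and boxes are numbered by increasing adjusted content. The reversal $w_0$ makes these consistent with the order $\mathbf d$ of diagonals by decreasing adjusted content.

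\textbf{Step 2: exponent contributed by $x$.} For a nonempty diagonal $D=D_{h,c}$, there is exactly one block with target $D$, namely $B=(h,c-1)$. Its block of $\rho_{\mathbf r}$ is $(M_B,\cdots,m_B)$ with $m_B=-\operatorname{st}_D$ and $a_B=g_D$, so it equals $(g_D-\operatorname{st}_D-1,\cdots,-\operatorname{st}_D)$. Since $x=z^{-1}$ on these coordinates, the factor $x^{-\rho_{\mathbf r}}$ contributes exponent $+(g_D-\operatorname{st}_D-1,\cdots,-\operatorname{st}_D)$ on the $z$-variables of $D$. These variables appear in some order determined by $w_0$.

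\textbf{Step 3: exponent contributed by $y$.} There is exactly one block with source $D$, namely $B=(h,c)$. Its block of $\rho_{\mathbf s}$ is $(M_B,\cdots,n_B)$ with $n_B=-\operatorname{en}_D$ and $b_B=g_D$. By Lemma~\ref{lemcommonmaxima}, $M_B=g_D-\operatorname{en}_D-1$. Since $y=z$ there, $y^{-\rho_{\mathbf s}}$ contributes exponent $-(g_D-\operatorname{en}_D-1,\cdots,-\operatorname{en}_D)$.

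\textbf{Step 4: combining.} I would check that both staircases are placed on the $z$-coordinates of $D$ in the same positional order, since both come from the same $w_0$ reversal applied within the block. Granting that, the sum is coordinatewise
\[
(g_D-\operatorname{st}_D-1-k)-(g_D-\operatorname{en}_D-1-k)=\operatorname{en}_D-\operatorname{st}_D
\]
for every $k$. Blocks with $a_B=0$ or $b_B=0$ carry no variables, so empty diagonals contribute nothing. Assembling over all $D$ in the decreasing order $\mathbf d$ gives $w_0(z^{\zeta})$.

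\textbf{Main obstacle.} The delicate point is confirming that the positional matching in Step 4 holds, that is, that the $x$- and $y$-staircases are not reversed relative to each other. If they were, the difference would not be constant. Since both $x$ and $y$ use the same $w_0$ and the same within-block indexing, I expect them to align.

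\textbf{Consequence.} On $D$ the weight $\boldsymbol\kappa^{\mathrm{cat}}$ is the constant $\operatorname{st}_D-\operatorname{en}_D+\widetilde b_D$, and it is therefore invariant under within-block reordering. So $z^{\boldsymbol\kappa^{\mathrm{cat}}}\,w_0(z^{\zeta})$ has constant exponent $\widetilde b_D=\eta_D$ on each diagonal. This product equals $w_0(z^{\widehat\eta})$, since $w_0$ sends each diagonal's coordinates to the corresponding block of the $\mathbf d$-order.
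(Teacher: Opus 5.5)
Your proposal is correct and is essentially the paper's proof. Put $u=w_0(z)$. The target staircase $(g_D-\operatorname{st}_D-1,\ldots,-\operatorname{st}_D)$ coming from $x^{-\rho_{\mathbf r}}$ and the negated source staircase coming from $y^{-\rho_{\mathbf s}}$ add to the constant $\operatorname{en}_D-\operatorname{st}_D$. Adding the constant Catalanimal weight $\operatorname{st}_D-\operatorname{en}_D+\widetilde b_D$ then leaves $\widetilde b_D$.

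The alignment you flag in Step 4 does hold. The maps $D\mapsto(h,c-1)$ and $D\mapsto(h,c)$ both carry the decreasing adjusted-content order of the nonempty diagonals onto the natural order of the nonzero blocks of $\mathbf r$ and $\mathbf s$. So the $k$th variables of $X_{(h,c-1)}$ and $Y_{(h,c)}$ are $u_p^{-1}$ and $u_p$ for the same index $p$.
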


\begin{proof}
Put $u=w_0(z)$, so that $x=u^{-1}$ and $y=u$.  On the block
corresponding to $D$, the target and source staircases are respectively
\[
(g_D-\operatorname{st}_D-1,\cdots,-\operatorname{st}_D), (g_D-\operatorname{en}_D-1,\cdots,-\operatorname{en}_D).
\]
Their difference is
$(\operatorname{en}_D-\operatorname{st}_D)^{g_D}$, which proves the
first identity.  When the Catalanimal weight is read in the same
$\mathbf d$-order, its $D$-block is
$(\operatorname{st}_D-\operatorname{en}_D+\widetilde b_D)^{g_D}$.
Since the original $z$-coordinates are in increasing adjusted-content
order, the monomial $z^{\boldsymbol\kappa^{\mathrm{cat}}}$ is the $w_0$-image of the monomial whose exponent is this $\mathbf d$-ordered
weight.  Adding the two blockwise weights leaves
$(\widetilde b_D)^{g_D}$, namely $\widehat\eta$.  This proves the
second identity.
\end{proof}

\begin{lemma}\label{lemtransport}
For every Cauchy index $\boldsymbol\lambda$, the substitution
$x=w_0(z^{-1})$, $y=w_0(z)$ gives
\begin{align*}
E_{\mathbf r,\,\boldsymbol\lambda+\rho_{\mathbf r}}^{\sigma_r}
\left(w_0(z^{-1});q^{-1}\right)
&=
w_0\left(
\overline{E_{\mathbf d,\,\alpha(\boldsymbol\lambda)}^{\theta_r^{-1}}(z;q)}
\right),\\
F_{\mathbf s,\,\boldsymbol\lambda+\rho_{\mathbf s}}^{\sigma_r}
\left(w_0(z);q\right)
&=
w_0\left(
F_{\mathbf d,\,\beta^0(\boldsymbol\lambda)}^{\tau_r^{-1}}(z;q)
\right).
\end{align*}
\end{lemma}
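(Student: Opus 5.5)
The plan is to reduce both identities to the definitions of the semi-symmetric polynomials $E^\sigma_{\mathbf a,\lambda}=\delta_{\mathbf a}E^{\widehat\sigma}_\lambda$ and $F^\sigma_{\mathbf a,\lambda}=\delta_{\mathbf a}F^{\widehat\sigma}_\lambda$, together with two reindexing steps. The first step removes the zero blocks. By the weak-composition convention, $E^{\sigma_r}_{\mathbf r,\boldsymbol\lambda+\rho_{\mathbf r}}$ equals $E^{\theta'}_{\mathbf r',\cdot}$. Here $\mathbf r'$ is $\mathbf r$ with the blocks $a_B=0$ deleted, and $\theta'$ is the permutation whose inverse has the same relative order as $\sigma_r^{-1}$ restricted to $\mathcal B_x$. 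By definition this is $\theta_r$, read in the natural order. The same argument gives $\tau_r$ and $\mathbf s'$ for the $F$ side. The nonzero blocks of $\mathbf r'$ are the targets $D_{h,d+1}$, and those of $\mathbf s'$ are the sources $D_{h,d}$. Listed in the natural (decreasing adjusted content) order on $\mathcal B$, both become the list of nonempty diagonals in decreasing adjusted content, so $\mathbf r'=\mathbf s'=\mathbf d$.

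The second step matches the weights. On the block of a nonempty diagonal $D=D_{h,d}$ viewed as a target (block $(h,d-1)$), the entry of $\boldsymbol\lambda+\rho_{\mathbf r}$ is $\lambda_{(h,d-1)}$ padded to length $g_D$, plus the staircase $(M_B,\dots,m_B)$ with $m_B=-\operatorname{st}_D$. This is exactly $\alpha_D(\boldsymbol\lambda)$. In the same way, the source block $(h,d)$ gives $\lambda^+_D+(g_D-\operatorname{en}_D-1,\dots,-\operatorname{en}_D)=\beta^0_D(\boldsymbol\lambda)$. Hence $\boldsymbol\lambda+\rho_{\mathbf r}=\alpha(\boldsymbol\lambda)$ and $\boldsymbol\lambda+\rho_{\mathbf s}=\beta^0(\boldsymbol\lambda)$ as elements of $X^{++}(\GL_{\mathbf d})$. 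Lemma~\ref{lemcommonmaxima} guarantees that the staircase lengths agree.

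It remains to handle the substitutions. For $F$, the substitution $y=w_0(z)$ is just a relabeling of variables, so $F(w_0 z;q)=w_0(F(z;q))$, which is the second identity. For $E$, we have $E(w_0(z^{-1});q^{-1})=w_0\bigl(E(z^{-1};q^{-1})\bigr)=w_0\bigl(\overline{E(z;q)}\bigr)$, because the bar involution inverts both the variables and the parameter. The only remaining issue is the superscript: the statement has $\theta_r^{-1}$ (respectively $\tau_r^{-1}$), whereas the reduction above produced $\theta_r$ read on $\mathcal B_x$. I would check this against the convention in Proposition~\ref{prop:dens-symmetrization}, where the LLT series with superscript $\sigma$ comes from $E,F$ with superscript $\sigma^{-1}$. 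I would also check it against the definition of $\sigma_r$, which sends $r$-order positions to natural positions. The standardization of $\sigma_r^{-1}|_{\mathcal B_x}$ then records the $r$-rank of each natural block, and this is the inverse of the permutation $\tau$ of the weak-composition convention. So the superscript in the convention is $\theta_r^{-1}$, as claimed.

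I expect this last bookkeeping to be the main obstacle: confirming that the inverse and standardization conventions line up, including the lift $\widehat\sigma$ relative to $\sigma^{-1}(\mathbf a)$. I would test it on a small case, such as two diagonals of a single column, before writing the general argument.
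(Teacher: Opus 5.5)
Your proposal is correct and is essentially the paper's proof. You delete the zero target and source blocks to reach the Levi composition $\mathbf d$, identify the restricted weights with $\alpha(\boldsymbol\lambda)$ and $\beta^0(\boldsymbol\lambda)$ (with Lemma~\ref{lemcommonmaxima} supplying the source staircase), use the zero-block convention for the superscripts, and finish with the definitions of $w_0$ and the bar involution. Your bookkeeping worry resolves as you concluded: the convention makes the \emph{inverse} of the induced permutation equal to $\operatorname{std}(\sigma_r^{-1}|_{\mathcal B_x})=\theta_r$, so the induced permutation is $\theta_r^{-1}$ (likewise $\tau_r^{-1}$); your earlier phrase ``this is $\theta_r$'' should be read as a statement about that inverse, and no appeal to Proposition~\ref{prop:dens-symmetrization} is needed.
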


\begin{proof}
After deleting the zero target blocks, respectively zero source
blocks, the resulting Levi compositions are both $\mathbf d$.  The
restrictions of $\boldsymbol\lambda+\rho_{\mathbf r}$ and
$\boldsymbol\lambda+\rho_{\mathbf s}$ are respectively
$\alpha(\boldsymbol\lambda)$ and $\beta^0(\boldsymbol\lambda)$.
Moreover, by the definition of standardization,
$\theta_r^{-1}$ and $\tau_r^{-1}$ are precisely the permutations
induced by $\sigma_r$ after these two deletions.  The zero-block
convention for Levi semi-symmetric Hall--Littlewood polynomials
therefore identifies the two factors before specialization with
$E_{\mathbf d,\alpha(\boldsymbol\lambda)}^{\theta_r^{-1}}$ and
$F_{\mathbf d,\beta^0(\boldsymbol\lambda)}^{\tau_r^{-1}}$.
The displayed formulas now follow from the definitions of the bar
involution and of $w_0$.
\end{proof}

\begin{lemma}\label{lemwinding}
    For every Cauchy index $\boldsymbol\lambda$, we have
\[
z^{\widehat\eta}
F_{
\mathbf d,
\beta^0(\boldsymbol\lambda)
}^{\tau_r^{-1}}(z;q)
=
F_{
\mathbf d,
\beta(\boldsymbol\lambda)
}^{\theta_r^{-1}}(z;q).
\]
\end{lemma}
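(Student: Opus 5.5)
This identity should be an instance of a \emph{winding} or \emph{twisting} rule for Levi semi-symmetric Hall--Littlewood polynomials. Multiplying $F$ by $z^{\widehat\eta}$, where $\widehat\eta$ is constant on each Levi block, shifts the weight $\beta^0$ by $\widehat\eta$. It also changes the twisting permutation from $\tau_r^{-1}$ to $\theta_r^{-1}$. I would use the lemma of Blasiak et al.\ listed among the inputs, \cite[Lemma~6.2.2]{dens}. My reading is that it states the following. Let $\eta$ be a vector with one entry per block, constant on blocks. Let $\sigma'$ be obtained from $\sigma$ by moving certain blocks according to $\eta$, namely cyclically moving the blocks with $\eta_D=1$ past those with $\eta_D=0$. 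Then $z^{\widehat\eta}F^{\sigma}_{\mathbf d,\kappa}=F^{\sigma'}_{\mathbf d,\kappa+\widehat\eta}$, provided a compatibility condition between $\eta$ and $\sigma$ holds. So the plan is to check that condition and that the resulting $\sigma'$ equals $\theta_r^{-1}$.

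First I would note that multiplying by $z^{\widehat\eta}$ commutes with $\delta_{\mathbf d}$, because $z^{\widehat\eta}$ is invariant under the Levi Weyl group $\mathfrak S_{\mathbf d}$. This reduces the claim to the twisted nonsymmetric polynomials $F^{\widehat\tau}_{\beta^0}$, where the lemma from \cite{dens} applies. The weight side is immediate: $\beta(\boldsymbol\lambda)=\beta^0(\boldsymbol\lambda)+\widehat\eta$ by definition, and it stays in $X^{++}(\GL_{\mathbf d})$.

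The real content is combinatorial. I would identify, for each nonempty diagonal $D=D_{h,c}$, its source block $(h,c)\in\mathcal B_y$ and its target block $(h,c-1)\in\mathcal B_x$. The permutation $\tau_r$ orders diagonals by the $r$-order position of their source blocks, and $\theta_r$ orders them by the $r$-order position of their target blocks. Passing from $(h,c)$ to $(h,c-1)$ lowers the content by one, and this changes the $r$-order position in a controlled way. If $c\not\equiv1\pmod r$, the residue class moves from $s$ to $s-1$, which is the next class in the order $0,r-1,\dots,1$; the floor $\lfloor c/r\rfloor$ and the component ranking are unchanged. If $c\equiv 1\pmod r$, the residue class jumps from $1$ (the last class) to $0$ (the first class). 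So the diagonals with $\eta_D=0$ keep their relative order, those with $\eta_D=1$ keep theirs, and the only change is that every diagonal with $\eta_D=1$ moves from last to first. This is exactly the block rotation that the winding rule predicts for $z^{\widehat\eta}$. For $r=1$ every content is $\equiv1$, so $\widehat\eta$ is all ones. Multiplication by a power of the determinant should then leave the twist unchanged, and I would check that $\theta_1=\tau_1$ holds directly.

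I expect the main obstacle to be matching conventions exactly: the direction of the rotation, whether the rule is stated for $\sigma$ or $\sigma^{-1}$, and standardization over $\mathcal B_x$ versus $\mathcal B_y$ when the boundary blocks at the ends of components are empty. I would first verify the ordering claim carefully, including ties in content resolved by the component order $E,A_u,\dots,B_1$. I would then test the identity on the small case $\mu=(3,2,1)$, $\nu=(2,2,2)$ with $r=2$ to fix the signs of the conventions before writing the general argument.
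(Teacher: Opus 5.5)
Your proposal is correct and matches the paper's proof. The paper also invokes \cite[Lemma~6.2.2]{dens}, applied directly at the Levi level, so your detour through $\delta_{\mathbf d}$ is not needed. It verifies the lemma's hypotheses by exactly your comparison of $D^+=(h,c)$ and $D^-=(h,c-1)$ in the $r$-order: diagonals with equal $\eta_D$ are ordered alike by $\tau_r$ and $\theta_r$, while those with $\eta_D=1$ come last under $\tau_r$ and first under $\theta_r$; the case $r=1$ is handled by $\theta_1=\tau_1$.

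One small slip: for $c\equiv0\pmod r$ the quotient $\lfloor c/r\rfloor$ drops by one on passing to $c-1$, so it is not unchanged. This does no harm because every block of that residue class shifts by the same amount, as the paper notes.
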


\begin{proof}
For a nonempty diagonal $D=D_{h,c}$, write $D^-=(h,c-1)$ and $D^+=(h,c)$.  Then $D\mapsto D^-$ and $D\mapsto D^+$ are bijections from the set of nonempty diagonals onto $\mathcal B_x$ and $\mathcal B_y$, respectively, and they carry the order by decreasing adjusted content to the natural order.  We index the blocks of $\mathbf d$ by the nonempty diagonals.  By the definition of $\theta_r$ and $\tau_r$, for nonempty diagonals $D_1\neq D_2$ we have
\[
\theta_r(D_1)<\theta_r(D_2)\Longleftrightarrow D_1^-\prec_r D_2^-,
\qquad
\tau_r(D_1)<\tau_r(D_2)\Longleftrightarrow D_1^+\prec_r D_2^+.
\]
Since $\eta$ takes values in $\{0,1\}$,
the lemma of Blasiak et al.~\cite[Lemma~6.2.2]{dens} applies once the following two conditions are checked:
\begin{enumerate}
\item[(a)] if $\eta_{D_1}=\eta_{D_2}$, then $\tau_r(D_1)<\tau_r(D_2)$ if and only if $\theta_r(D_1)<\theta_r(D_2)$;
\item[(b)] if $\eta_{D_1}-\eta_{D_2}=1$, then $\tau_r(D_1)>\tau_r(D_2)$ and $\theta_r(D_1)<\theta_r(D_2)$.
\end{enumerate}

If $r=1$, then $\eta_D=1$ for every $D$, and (b) is empty.  Moreover, $D^+$ is obtained from $D^-$ by increasing the content by $1$ in the same component, which does not change the $1$-order.  Hence $\theta_1=\tau_1$, and (a) holds.

Now let $r\geq2$.  For a block $P$, let $\operatorname{res}(P)\in\{0,\cdots,r-1\}$ be the residue of $c(P)$ modulo $r$, and put $\operatorname{rk}(0)=0$ and $\operatorname{rk}(s)=r-s$ for $1\leq s\leq r-1$.  Thus the $r$-order compares first $\operatorname{rk}(\operatorname{res}(P))$ in increasing order, then $\lfloor c(P)/r\rfloor$ in decreasing order, and then the components.  Note that $c(D^+)=c(D^-)+1$, and that $D^-$ and $D^+$ lie in the same component.

Suppose that $\eta_{D_1}=\eta_{D_2}=1$.  Then $\operatorname{res}(D_1^-)=\operatorname{res}(D_2^-)=0$ and $\operatorname{res}(D_1^+)=\operatorname{res}(D_2^+)=1$, while
\[
\lfloor c(D_1^+)/r\rfloor=\lfloor c(D_1^-)/r\rfloor,
\qquad
\lfloor c(D_2^+)/r\rfloor=\lfloor c(D_2^-)/r\rfloor.
\]
So the comparison of $D_1^+$ and $D_2^+$ uses the same data as that of $D_1^-$ and $D_2^-$, and (a) holds.

Suppose that $\eta_{D_1}=\eta_{D_2}=0$.  Then $\operatorname{res}(D_1^-)$ and $\operatorname{res}(D_2^-)$ lie in $\{1,\cdots,r-1\}$.  For $1\leq s\leq r-1$, the residue of $s+1$ modulo $r$ has rank $\operatorname{rk}(s)-1$.  Hence
\[
\operatorname{rk}(\operatorname{res}(D_1^+))=\operatorname{rk}(\operatorname{res}(D_1^-))-1,
\qquad
\operatorname{rk}(\operatorname{res}(D_2^+))=\operatorname{rk}(\operatorname{res}(D_2^-))-1.
\]
Moreover, for $D=D_1,D_2$, the difference $\lfloor c(D^+)/r\rfloor-\lfloor c(D^-)/r\rfloor$ equals $1$ if $\operatorname{res}(D^-)=r-1$ and $0$ otherwise.  So if $\operatorname{res}(D_1^-)=\operatorname{res}(D_2^-)$, then the two quotients are shifted by the same amount.  In all cases the comparison of $D_1^+$ and $D_2^+$ agrees with that of $D_1^-$ and $D_2^-$, and (a) holds.

Suppose that $\eta_{D_1}=1$ and $\eta_{D_2}=0$.  Then $\operatorname{res}(D_1^-)=0\neq\operatorname{res}(D_2^-)$, so $D_1^-\prec_r D_2^-$ and $\theta_r(D_1)<\theta_r(D_2)$.  Also $\operatorname{res}(D_1^+)=1\neq\operatorname{res}(D_2^+)$, and residue $1$ is the last residue class of the $r$-order; so $D_2^+\prec_r D_1^+$ and $\tau_r(D_1)>\tau_r(D_2)$.  This proves (b).

Finally, $\widehat\eta$ is constant on each block of $\mathbf d$ and $\beta^0(\boldsymbol\lambda)\in X^{++}(\GL_{\mathbf d})$, so $\beta(\boldsymbol\lambda)=\beta^0(\boldsymbol\lambda)+\widehat\eta\in X^{++}(\GL_{\mathbf d})$.  Now the lemma of Blasiak et al.~\cite[Lemma~6.2.2]{dens}, applied with $\theta=\theta_r$, $\tau=\tau_r$ and the weight $\beta(\boldsymbol\lambda)$, gives
\[
F_{
\mathbf d,
\beta(\boldsymbol\lambda)
}^{\theta_r^{-1}}(z;q)
=
z^{\widehat\eta}
F_{
\mathbf d,
\beta(\boldsymbol\lambda)-\widehat\eta
}^{\tau_r^{-1}}(z;q)
=
z^{\widehat\eta}
F_{
\mathbf d,
\beta^0(\boldsymbol\lambda)
}^{\tau_r^{-1}}(z;q).
\qedhere
\]
\end{proof}

\begin{theorem}\label{thmmains}
    For every dominance cover $\mu\gtrdot\nu$ and every positive integer $r$, we have
\begin{equation}\label{eqmains}
H_{\boldsymbol\nu_r}^{r,1}(z;q,t)
=
\sum_{\boldsymbol\lambda}
t^{|\boldsymbol\lambda|}
\mathcal L_{
w_0^{\ell(\gamma)}(\gamma),
\beta(\boldsymbol\lambda)/
\alpha(\boldsymbol\lambda)
}^{\theta_r}(z;q),
\end{equation}
where the sum ranges over all Cauchy indices
$\boldsymbol\lambda = (\lambda_B)_{B\in\mathcal B}$, that is, all tuples of partitions such that $\ell(\lambda_B) \leq \min(a_B,b_B)$ for every \(B\in\mathcal B\).
\end{theorem}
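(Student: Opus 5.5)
The proof assembles the lemmas just established into a single application of the Cauchy identity, Theorem~\ref{thmdenscauchy}, followed by the symmetrization identity of Proposition~\ref{prop:dens-symmetrization}.

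\textbf{Step 1: apply the Cauchy identity.} Apply Theorem~\ref{thmdenscauchy} with $N$ blocks indexed by $\mathcal B$ in the natural order. Take the weak compositions $\mathbf r=(a_B)$ and $\mathbf s=(b_B)$, the staircases $\rho_{\mathbf r},\rho_{\mathbf s}$ constructed above, and the permutation $\sigma=\sigma_r$, with parameters swapped ($q$ as Hecke parameter, $t$ as the Cauchy weight). The hypotheses are checked as follows.
\begin{itemize}
\item The common block maxima come from Lemma~\ref{lemcommonmaxima}.
\item The convention for empty blocks is also covered at the end of Lemma~\ref{lemcommonmaxima}.
\item The almost-monotonicity of the minima is Lemma~\ref{lemalmostmonotonicity}.
\end{itemize}
Now substitute $x=w_0(z^{-1})$ and $y=w_0(z)$. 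By Lemma~\ref{lemrootkernel}, the left side of \eqref{eqdenscauchy} becomes the kernel $\prod_{R_{qt}}(1-qtz^\alpha)/\prod_{R_t}(1-tz^\alpha)$. Multiply both sides by $z^{\boldsymbol\kappa^{\mathrm{cat}}}$. Lemma~\ref{lemresidualweight} absorbs the staircase monomials $x^{-\rho_{\mathbf r}}y^{-\rho_{\mathbf s}}$ into $w_0(z^{\widehat\eta})$. Lemma~\ref{lemtransport} rewrites the two Hall--Littlewood factors as $w_0$ of $\overline{E^{\theta_r^{-1}}_{\mathbf d,\alpha(\boldsymbol\lambda)}(z;q)}$ and $F^{\tau_r^{-1}}_{\mathbf d,\beta^0(\boldsymbol\lambda)}(z;q)$. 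Lemma~\ref{lemwinding} then combines $z^{\widehat\eta}$ with the $F$-factor, turning it into $F^{\theta_r^{-1}}_{\mathbf d,\beta(\boldsymbol\lambda)}$. The result is the identity
\[
z^{\boldsymbol\kappa^{\mathrm{cat}}}\frac{\prod_{R_{qt}}(1-qtz^\alpha)}{\prod_{R_t}(1-tz^\alpha)}
=\sum_{\boldsymbol\lambda}t^{|\boldsymbol\lambda|}\,
w_0\Bigl(F^{\theta_r^{-1}}_{\mathbf d,\beta(\boldsymbol\lambda)}(z;q)\,\overline{E^{\theta_r^{-1}}_{\mathbf d,\alpha(\boldsymbol\lambda)}(z;q)}\Bigr).
\]

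\textbf{Step 2: symmetrize.} Apply $\mathbb H_q^\gamma$ to both sides. On the left this yields $H^{r,1}_{\boldsymbol\nu_r}(z;q,t)$, by the rewriting of the Catalanimal just before Theorem~\ref{thmLLTcat}. On the right, note that $\gamma=w_0^{\ell(\gamma)}(\mathbf d)$. Proposition~\ref{prop:dens-symmetrization}, applied with composition $\mathbf a=\mathbf d$, permutation $\sigma=\theta_r$, parameter $q$, and weights $\alpha(\boldsymbol\lambda),\beta(\boldsymbol\lambda)\in X^{++}(\GL_{\mathbf d})$, identifies each summand with $\mathcal L^{\theta_r}_{\mathbf d,\beta(\boldsymbol\lambda)/\alpha(\boldsymbol\lambda)}(z;q)$. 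This is the term in \eqref{eqmains}, with the Levi composition labelled as in the statement.

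\textbf{Remaining checks and the main obstacle.} Two further points need verification.
\begin{itemize}
\item \emph{Commuting the sum with $\mathbb H_q^\gamma$.} The sum over $\boldsymbol\lambda$ must commute with $\mathbb H_q^\gamma$. This holds coefficientwise in $t$: for each power of $t$ only finitely many $\boldsymbol\lambda$ contribute, and $\mathbb H_q^\gamma$ is applied termwise to formal series.
\item \emph{Hypotheses of Proposition~\ref{prop:dens-symmetrization}.} We need $\alpha(\boldsymbol\lambda)\le\beta(\boldsymbol\lambda)$ as a skew pair, or at least that both weights lie in $X^{++}(\GL_{\mathbf d})$. The latter was noted in the proof of Lemma~\ref{lemwinding} and in the definition of $\alpha(\boldsymbol\lambda)$ and $\beta^0(\boldsymbol\lambda)$.
\end{itemize}
The main obstacle is bookkeeping. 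The labels of the Levi composition (whether $\mathbf d$ or its reversal $w_0^{\ell(\gamma)}(\gamma)$) and of the permutation ($\theta_r$ versus $\theta_r^{-1}$) must match the conventions of Proposition~\ref{prop:dens-symmetrization} exactly. In particular, its left side carries $\sigma$ and its right side carries $\sigma^{-1}$, and the operator there is $\mathbb H^{w_0^k(\mathbf a)}$. With $\mathbf a=\mathbf d$, this operator is exactly $\mathbb H_q^\gamma$.
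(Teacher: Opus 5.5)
Your proposal is correct and follows the paper's proof essentially step for step. The paper likewise applies the Cauchy identity with $\mathbf r=(a_B)$, $\mathbf s=(b_B)$ and $\sigma=\sigma_r$, using Lemmas~\ref{lemcommonmaxima} and~\ref{lemalmostmonotonicity} for the hypotheses, then substitutes $x=w_0(z^{-1})$, $y=w_0(z)$ via Lemmas~\ref{lemrootkernel}, \ref{lemresidualweight}, \ref{lemtransport} and~\ref{lemwinding}, and finally applies $\mathbb H_q^\gamma$ together with Proposition~\ref{prop:dens-symmetrization} for $\mathbf a=\mathbf d$ and $\sigma=\theta_r$. Two small remarks: Proposition~\ref{prop:dens-symmetrization} needs only $\alpha(\boldsymbol\lambda),\beta(\boldsymbol\lambda)\in X^{++}(\GL_{\mathbf d})$, not $\alpha(\boldsymbol\lambda)\leq\beta(\boldsymbol\lambda)$; that inequality fails for many Cauchy indices and matters only when polynomial parts are taken in Theorem~\ref{thmmainpol}. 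Your coefficientwise-in-$t$ justification for applying $\mathbb H_q^\gamma$ termwise is the one the paper gives only in that later proof.
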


\begin{proof}
We apply Theorem~\ref{thmdenscauchy} with $k=N=|\mathcal B|$, whose $i$th block
is indexed by the $i$th boundary block in the natural order.  Recall that, for $B=(h,d)\in\mathcal B$, the
alphabet $X_B$ is attached to the target diagonal $D_{h,d+1}$, and
the alphabet $Y_B$ is attached to the source diagonal
$D_{h,d}$.  Thus $X_B$ has $a_B=g_{h,d+1}$ variables and $Y_B$ has
$b_B=g_{h,d}$ variables.  We allow either alphabet to be empty when its
corresponding block size is zero.  The relations $B<C$ and $B\leq C$
refer to the natural order on $\mathcal B$.

The two staircase weights are those constructed above: the $B$-block
of $\rho_{\mathbf r}$ is $(M_B,M_B-1,\cdots,m_B)$, while the $B$-block
of $\rho_{\mathbf s}$ is $(M_B,M_B-1,\cdots,n_B)$.  By Lemma
\ref{lemcommonmaxima},
$M_B=a_B+m_B-1=b_B+n_B-1$, so these two blocks have the same maximum.
Lemma \ref{lemalmostmonotonicity} gives the required
$\sigma_r$-almost decreasing condition for the minima $(m_B)_{B\in
\mathcal B}$ and the $\sigma_r$-almost increasing condition for the
minima $(n_B)_{B\in\mathcal B}$.  Hence Theorem
\ref{thmdenscauchy} applies with
$\mathbf r=(a_B)_{B\in\mathcal B}$,
$\mathbf s=(b_B)_{B\in\mathcal B}$, and
$\sigma=\sigma_r$.  It gives
$$
\frac{
\displaystyle\prod_{B<C}
\Omega[-qtX_BY_C]
}{
\displaystyle\prod_{B\leq C}
\Omega[-tX_BY_C]
}
=
\sum_{\boldsymbol\lambda}
t^{|\boldsymbol\lambda|}
x^{-\rho_{\mathbf r}}
E_{\mathbf r,\,
\boldsymbol\lambda+\rho_{\mathbf r}}^{\sigma_r}
(x;q^{-1})
\,
y^{-\rho_{\mathbf s}}
F_{\mathbf s,\,
\boldsymbol\lambda+\rho_{\mathbf s}}^{\sigma_r}
(y;q).
$$

Make the substitution $x=w_0(z^{-1}), y=w_0(z)$,
by Lemmas~\ref{lemrootkernel}, \ref{lemresidualweight}, and~\ref{lemtransport}, we obtain
$$
z^{\boldsymbol\kappa^{\mathrm{cat}}}
\frac{
\displaystyle\prod_{\alpha\in R_{qt}}
(1-qtz^\alpha)
}{
\displaystyle\prod_{\alpha\in R_t}
(1-tz^\alpha)
}
=
\sum_{\boldsymbol\lambda}
t^{|\boldsymbol\lambda|}
w_0
\left(
z^{\widehat\eta}
F_{
\mathbf d,
\beta^0(\boldsymbol\lambda)
}^{\tau_r^{-1}}(z;q)
\,
\overline{
E_{
\mathbf d,
\alpha(\boldsymbol\lambda)
}^{\theta_r^{-1}}(z;q)
}
\right).
$$
By Lemma \ref{lemwinding}, the right-hand side is equal to
$$
\begin{aligned}
\sum_{\boldsymbol\lambda}
t^{|\boldsymbol\lambda|}
w_0
\left(
F_{
\mathbf d,
\beta(\boldsymbol\lambda)
}^{\theta_r^{-1}}(z;q)
\,
\overline{
E_{
\mathbf d,
\alpha(\boldsymbol\lambda)
}^{\theta_r^{-1}}(z;q)
}
\right).
\end{aligned}
$$
On the other hand, $R_q=R^+\setminus R^+(\GL_\gamma)$,
so the definition of the Catalanimal gives
$$
\begin{aligned}
H_{\boldsymbol\nu_r}^{r,1}(z;q,t)
=
\mathbb H_q^\gamma
\left(
z^{\boldsymbol\kappa^{\mathrm{cat}}}
\frac{
\displaystyle\prod_{\alpha\in R_{qt}}
(1-qtz^\alpha)
}{
\displaystyle\prod_{\alpha\in R_t}
(1-tz^\alpha)
}
\right).
\end{aligned}
$$
Therefore,
$$
H_{\boldsymbol\nu_r}^{r,1}(z;q,t)
=
\sum_{\boldsymbol\lambda}
t^{|\boldsymbol\lambda|}
\mathbb H_q^\gamma
\left(
w_0
\left(
F_{
\mathbf d,
\beta(\boldsymbol\lambda)
}^{\theta_r^{-1}}(z;q)
\,
\overline{
E_{
\mathbf d,
\alpha(\boldsymbol\lambda)
}^{\theta_r^{-1}}(z;q)
}
\right)
\right).
$$
Finally,
$
\mathbf d
=
w_0^{\ell(\gamma)}(\gamma)
$
satisfies
$
w_0^{\ell(\gamma)}(\mathbf d)
=
\gamma.
$
By Proposition~\ref{prop:dens-symmetrization}, each summand is equal to
$
\mathcal L_{
w_0^{\ell(\gamma)}(\gamma),
\beta(\boldsymbol\lambda)/
\alpha(\boldsymbol\lambda)
}^{\theta_r}(z;q).
$
Hence
$$
H_{\boldsymbol\nu_r}^{r,1}(z;q,t)
=
\sum_{\boldsymbol\lambda}
t^{|\boldsymbol\lambda|}
\mathcal L_{
w_0^{\ell(\gamma)}(\gamma),
\beta(\boldsymbol\lambda)/
\alpha(\boldsymbol\lambda)
}^{\theta_r}(z;q).
$$
That completes the proof.
\end{proof}

Taking polynomial parts on both sides of Theorem \ref{thmmains} and
applying the theorem of Blasiak et al.~\cite[Theorem~5.5.4]{dens}, we obtain the following positive
LLT-polynomial expansion.

\begin{theorem}\label{thmmainpol}
Let $\mu\gtrdot\nu$ be a dominance cover, and let $r\geq1$ be a
positive integer.  Then
\[
\left(
H_{\boldsymbol\nu_r}^{r,1}(z;q,t)
\right)_{\operatorname{pol}}
=
\sum_{
\substack{
\boldsymbol\lambda\\
\alpha(\boldsymbol\lambda)
\leq
\beta(\boldsymbol\lambda)
}
}
t^{|\boldsymbol\lambda|}
q^{h_{\theta_r}(\beta(\boldsymbol\lambda)/\alpha(\boldsymbol\lambda))}
G_{\theta_r(\beta(\boldsymbol\lambda)/\alpha(\boldsymbol\lambda))}
[X;q^{-1}].
\]
Here $\boldsymbol\lambda=(\lambda_B)_{B\in\mathcal B}$ ranges over the
Cauchy indices satisfying
$\ell(\lambda_B)\leq\min(a_B,b_B)$ for every $B\in\mathcal B$, and the
inequality
$\alpha(\boldsymbol\lambda)\leq\beta(\boldsymbol\lambda)$ is understood
coordinatewise.  The sum is finite.
\end{theorem}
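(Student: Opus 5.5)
The plan is to take polynomial parts termwise in Theorem~\ref{thmmains} and then evaluate each LLT series by \cite[Theorem~5.5.4]{dens}, exactly as in the proof of Theorem~\ref{thmHpnuLLT}. The operation $f\mapsto(f)_{\pol}$ is linear. We will arrange for it to commute with the infinite sum over Cauchy indices by working coefficientwise in $t$. Only finitely many $\boldsymbol\lambda$ have $|\boldsymbol\lambda|=m$ for each $m$, and each $\mathcal L$ is a formal $\operatorname{GL}_\ell$-character series whose coefficients of fixed degree are finite. So the identity \eqref{eqmains} gives
\[
\bigl(H_{\boldsymbol\nu_r}^{r,1}(z;q,t)\bigr)_{\pol}
=\sum_{\boldsymbol\lambda}t^{|\boldsymbol\lambda|}
\Bigl(\mathcal L_{w_0^{\ell(\gamma)}(\gamma),\beta(\boldsymbol\lambda)/\alpha(\boldsymbol\lambda)}^{\theta_r}(z;q)\Bigr)_{\pol}.
\]

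For each summand, we know $\alpha(\boldsymbol\lambda),\beta(\boldsymbol\lambda)\in X^{++}(\GL_{\mathbf d})$ by construction and by the last paragraph of the proof of Lemma~\ref{lemwinding}. If $\alpha(\boldsymbol\lambda)\leq\beta(\boldsymbol\lambda)$ coordinatewise, then \cite[Theorem~5.5.4]{dens}, with $t$ replaced by $q$ and the Levi composition $w_0^{\ell(\gamma)}(\gamma)$, gives the polynomial part as $q^{h_{\theta_r}(\beta/\alpha)}G_{\theta_r(\beta/\alpha)}[X;q^{-1}]$. When the inequality fails, I would invoke the same theorem, or the companion vanishing statement in \cite[Section~5.5]{dens}, to show that the polynomial part is $0$. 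The reason is that the skew-shape data degenerates: some row would have a negative length, and the LLT series then has no polynomial part. I expect this vanishing to follow from the convention that the LLT polynomial of a non-skew pair is zero. Still, one must check that the indexing convention for $\beta/\alpha$, namely row endpoints $\alpha_m+j$ and $\beta_m+j$, matches \cite{dens} in the block-reversed order $\mathbf d$, as was done before Theorem~\ref{thmHpnuLLT}.

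Finiteness is the remaining point and, I expect, the main obstacle. The plan is a degree argument. Everything here is homogeneous of degree $|\boldsymbol\nu_r|$ in $X$, since $|\beta-\alpha|$ equals a fixed constant plus $\sum_D(|\lambda_D^+|-|\lambda_D^-|)$. Each boundary block $B$ contributes $\lambda_B$ once as $\lambda^+$ of its source and once as $\lambda^-$ of its target, so this telescopes and $|\beta-\alpha|$ is independent of $\boldsymbol\lambda$. Hence a size argument alone does not bound the sum.

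Instead I would use the coordinatewise condition. Along each component, $\alpha_D\leq\beta_D$ forces $\lambda_{(h,d-1)}\leq\lambda_{(h,d)}+(\text{bounded shift})$ entrywise. Chaining these inequalities from the outer block, where $g=0$ forces $\lambda_B=\emptyset$, gives a uniform bound on every part of every $\lambda_B$. Indeed, the outer boundary blocks have $\min(a_B,b_B)=0$, so they start the chain at zero, and the chain moves in the direction of increasing content. This leaves finitely many admissible $\boldsymbol\lambda$, which proves the finiteness claim of the statement.
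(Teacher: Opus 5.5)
Your route is the paper's. You take polynomial parts termwise in Theorem~\ref{thmmains}, working coefficientwise in $t$. You apply \cite[Theorem~5.5.4]{dens} to each summand; as in the paper, the vanishing case $\alpha(\boldsymbol\lambda)\not\leq\beta(\boldsymbol\lambda)$ comes from that same theorem, so no separate convention is needed. Finiteness then follows by chaining the blockwise inequality
\[
\lambda_{(h,d-1),k}\leq\lambda_{(h,d),k}+\kappa^{\mathrm{cat}}_{h,d}\qquad(1\leq k\leq g_{h,d}),
\]
which is exactly $\alpha_D(\boldsymbol\lambda)\leq\beta_D(\boldsymbol\lambda)$ once you use $\eta_D=\widetilde b_{h,d}$. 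Your telescoping remark, explaining why a degree count cannot bound the sum, is correct.

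One step is stated backwards and fails as written: the chain must run in the direction of \emph{decreasing} content, not increasing. The inequality bounds the lower-content partition $\lambda_{(h,d-1)}$ by the higher-content one. So the induction must start at the top block $(h,U_h)$, where $a_{(h,U_h)}=g_{h,U_h+1}=0$ forces $\lambda_{(h,U_h)}=\emptyset$, and then descend in $d$. More generally, an empty $D_{h,d}$ forces $\lambda_{(h,d-1)}=\emptyset$.

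If you start at the other outer block $(h,L_h-1)$, where $b=0$, and move up, you only get lower bounds such as $\lambda_{(h,L_h),k}\geq-\kappa^{\mathrm{cat}}_{h,L_h}$. These bound nothing.

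Note also that these inequalities constrain only the first $g_{h,d}$ parts of $\lambda_{(h,d-1)}$. This suffices because $\ell(\lambda_{(h,d-1)})\leq\min(g_{h,d},g_{h,d-1})$. With the direction corrected, your argument coincides with the paper's.
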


\begin{proof}
By Theorem \ref{thmmains},
\[
H_{\boldsymbol\nu_r}^{r,1}(z;q,t)
=
\sum_{\boldsymbol\lambda}
t^{|\boldsymbol\lambda|}
\mathcal L_{
w_0^{\ell(\gamma)}(\gamma),
\beta(\boldsymbol\lambda)/
\alpha(\boldsymbol\lambda)
}^{\theta_r}(z;q).
\]

The displayed series is interpreted coefficientwise in the \(t\)-adic
completion.  For a fixed power \(t^d\), only the Cauchy indices with
\(\lvert\boldsymbol\lambda\rvert=d\) occur, and there are finitely many
of them because \(\mathcal B\) is finite and every
\(\lambda_B\) has bounded length.  Thus Weyl symmetrization and the
polynomial-part operation may be applied term by term.

For every Cauchy index $\boldsymbol\lambda$, the weights
$\alpha(\boldsymbol\lambda)$ and $\beta(\boldsymbol\lambda)$ are
blockwise regular dominant by construction.  Hence
the theorem of Blasiak et al.~\cite[Theorem~5.5.4]{dens} applies to every summand.  If
$\alpha(\boldsymbol\lambda)
\not\leq
\beta(\boldsymbol\lambda)$,
then the polynomial part of the corresponding summand is zero.  If
$\alpha(\boldsymbol\lambda)
\leq
\beta(\boldsymbol\lambda)$,
then
\[
\left(
\mathcal L_{
\mathbf d,
\beta(\boldsymbol\lambda)/
\alpha(\boldsymbol\lambda)
}^{\theta_r}(z;q)
\right)_{\operatorname{pol}}
=
q^{h_{\theta_r}(\beta(\boldsymbol\lambda)/\alpha(\boldsymbol\lambda))}
G_{\theta_r(\beta(\boldsymbol\lambda)/\alpha(\boldsymbol\lambda))}
[X;q^{-1}].
\]
Taking polynomial parts term by term in the preceding identity yields
the asserted formula.

Finally, only finitely many Cauchy indices satisfy $\alpha(\boldsymbol\lambda)\leq\beta(\boldsymbol\lambda)$, so the sum is finite.  Indeed, let $D=D_{h,d}$ be a nonempty diagonal.  Since $\eta_D=\widetilde b_{h,d}$, the $D$-blocks of $\alpha(\boldsymbol\lambda)$ and $\beta(\boldsymbol\lambda)$ show that $\alpha(\boldsymbol\lambda)\leq\beta(\boldsymbol\lambda)$ holds if and only if
\[
\lambda_{(h,d-1),k}\leq\lambda_{(h,d),k}+\kappa^{\mathrm{cat}}_{h,d}
\qquad(1\leq k\leq g_{h,d})
\]
for every such $D$.  If $D_{h,d}$ is empty, then $a_{(h,d-1)}=0$, and hence $\lambda_{(h,d-1)}=\emptyset$; in particular $\lambda_{(h,U_h)}=\emptyset$.  By descending induction on $d$, every part of every $\lambda_B$ is bounded.
\end{proof}

Combining Theorems~\ref{thmHallLLT}, \ref{thmLLTcat}, and~\ref{thmmainpol}, together with Propositions~\ref{propomegaLLT} and~\ref{propomegaLLTdagger}, we obtain the following formula.
\begin{theorem}\label{thmNablaHallLLT}
Let $\mu,\nu\vdash n$ satisfy $\mu\gtrdot\nu$.  Write
$\nu=\mu-\epsilon_i+\epsilon_j$ with $i<j$.
For every positive integer $r$, we have
\begin{equation}\label{eqNablaHallLLT}
\begin{aligned}
&\nabla^r
\left(
\omega\wtH_\mu[X;0,t^{-1}]
-
\omega\wtH_\nu[X;0,t^{-1}]
\right)\\
&\quad=
(qt)^{
1+r(\nu_{i+1}-1)
}
t^{e^{\mu,\nu}+A_r(\boldsymbol\nu^{\mu,\nu})}
\sum_{
\substack{
\boldsymbol\lambda\\
\alpha(\boldsymbol\lambda)
\leq
\beta(\boldsymbol\lambda)
}
}
q^{|\boldsymbol\lambda|}
t^{\varphi(\boldsymbol\lambda)}
G_{
\left(
\theta_r
\left(
\beta(\boldsymbol\lambda)/
\alpha(\boldsymbol\lambda)
\right)
\right)^R
}
[X;t],
\end{aligned}
\end{equation}
where $e^{\mu,\nu}$ is defined in \eqref{eq:emunu} and
\[
\varphi(\boldsymbol\lambda)
:=
h_{\theta_r}
\left(
\beta(\boldsymbol\lambda)/
\alpha(\boldsymbol\lambda)
\right)
-
A\left(
\theta_r
\left(
\beta(\boldsymbol\lambda)/
\alpha(\boldsymbol\lambda)
\right)
\right).
\]
Here $A\left(
\theta_r
\left(
\beta(\boldsymbol\lambda)/
\alpha(\boldsymbol\lambda)
\right)
\right)$
denotes the total number of attacking pairs of the indicated LLT tuple.
\end{theorem}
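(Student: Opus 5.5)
The plan is to chain the three established identities and track the powers of $q$ and $t$ and the signs. First I apply $\nabla^r$ to Theorem~\ref{thmHallLLT}. By \eqref{eq:cover-dagger}, $G_{(\theta^{\mu,\nu})^R}[X;t]=G_{\boldsymbol\nu^{\mu,\nu}}[X;t]$. Hence
\[
\nabla^r\bigl(\omega\wtH_\mu[X;0,t^{-1}]-\omega\wtH_\nu[X;0,t^{-1}]\bigr)
=-t^{e^{\mu,\nu}}\nabla^r G_{\boldsymbol\nu^{\mu,\nu}}[X;t],
\]
using that $\nabla^r$ is $\mathbb Q(q,t)$-linear. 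Substituting Theorem~\ref{thmLLTcat}, the two minus signs cancel. The right-hand side becomes
\[
(qt)^{1+r(\nu_{i+1}-1)}t^{e^{\mu,\nu}+A_r(\boldsymbol\nu^{\mu,\nu})}\,\omega\bigl((H^{r,1}_{\boldsymbol\nu_r}(z;t,q))_{\pol}\bigr).
\]

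Next I apply Theorem~\ref{thmmainpol} with the parameters interchanged, i.e.\ replacing $(q,t)$ by $(t,q)$. This is legitimate because that theorem is an identity in independent parameters. It gives the finite sum
\[
(H^{r,1}_{\boldsymbol\nu_r}(z;t,q))_{\pol}=\sum_{\boldsymbol\lambda,\ \alpha(\boldsymbol\lambda)\leq\beta(\boldsymbol\lambda)} q^{|\boldsymbol\lambda|}t^{h_{\theta_r}(\beta(\boldsymbol\lambda)/\alpha(\boldsymbol\lambda))}G_{\theta_r(\beta(\boldsymbol\lambda)/\alpha(\boldsymbol\lambda))}[X;t^{-1}].
\]
The index set, the weights $\alpha(\boldsymbol\lambda),\beta(\boldsymbol\lambda)$, and the permutation $\theta_r$ do not involve $q$ or $t$, so only the exponents move. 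Some care is needed on one point: the Catalanimal in Theorem~\ref{thmLLTcat} is $H(z;t,q)$, so Theorem~\ref{thmmains} must be read with its Hall--Littlewood parameter now being $t$. This is consistent because the nonsymmetric objects are defined for an arbitrary parameter.

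Finally I apply $\omega$ termwise. Proposition~\ref{propomegaLLT} with $t$ replaced by $t^{-1}$ gives $\omega G_{\boldsymbol\tau}[X;t^{-1}]=t^{-A(\boldsymbol\tau)}G_{\boldsymbol\tau^R}[X;t]$ for $\boldsymbol\tau=\theta_r(\beta(\boldsymbol\lambda)/\alpha(\boldsymbol\lambda))$. Each summand therefore becomes $q^{|\boldsymbol\lambda|}t^{\varphi(\boldsymbol\lambda)}G_{\boldsymbol\tau^R}[X;t]$, and collecting the prefactor yields \eqref{eqNablaHallLLT}.

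The main obstacle is bookkeeping rather than new ideas. I must make sure that the parameter swap $q\leftrightarrow t$ is applied consistently in Theorem~\ref{thmmainpol}, since the bar involution and the $q^{-1}$ in the LLT series must turn into $t^{-1}$. I must also check that Proposition~\ref{propomegaLLT} is used in the inverted-parameter form, with exponent $-A$ rather than $+A$. A sanity check against Example~\ref{ex:hall-llt-cover} in the case $r=1$ would confirm the signs.
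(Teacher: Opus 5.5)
Your proposal is correct and follows essentially the same route as the paper. Like the paper, you apply Theorem~\ref{thmHallLLT} together with \eqref{eq:cover-dagger}, substitute Theorem~\ref{thmLLTcat} so the two signs cancel, and then use Theorem~\ref{thmmainpol} with $q$ and $t$ interchanged together with Proposition~\ref{propomegaLLT} at $t^{-1}$ termwise on the finite sum, with the same exponent bookkeeping (in particular the $-A$ coming from the inverted-parameter form of $\omega$).
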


\begin{proof}
By Theorem~\ref{thmHallLLT}, we have
\[
\nabla^r
\left(
\omega\wtH_\mu[X;0,t^{-1}]
-
\omega\wtH_\nu[X;0,t^{-1}]
\right)
=
-t^{h_{\sigma_0}(\beta/\alpha)-A(\theta^{\mu,\nu})-n(\nu)}
\nabla^r G_{(\theta^{\mu,\nu})^R}[X;t].
\]
By \eqref{eq:cover-dagger}, this becomes
\[
\nabla^r
\left(
\omega\wtH_\mu[X;0,t^{-1}]
-
\omega\wtH_\nu[X;0,t^{-1}]
\right)
=
-t^{h_{\sigma_0}(\beta/\alpha)-A(\theta^{\mu,\nu})-n(\nu)}
\nabla^r G_{\boldsymbol\nu^{\mu,\nu}}[X;t].
\]
Applying Theorem \ref{thmLLTcat}, the two minus signs cancel, and we
obtain
\begin{multline*}
\nabla^r\!\Bigl(
  \omega\wtH_\mu[X;0,t^{-1}]
  - \omega\wtH_\nu[X;0,t^{-1}]
\Bigr)
= (qt)^{1+r(\nu_{i+1}-1)}
t^{h_{\sigma_0}(\beta/\alpha)-A(\theta^{\mu,\nu})-n(\nu)}
\\
t^{A_r(\boldsymbol\nu^{\mu,\nu})}
\omega\!\Bigl(
  \bigl(H_{\boldsymbol\nu_r}^{r,1}(z;t,q)\bigr)_{\operatorname{pol}}
\Bigr).
\end{multline*}
We now apply Theorem \ref{thmmainpol} after interchanging the
parameters $q$ and $t$.  It gives
\[
\left(
H_{\boldsymbol\nu_r}^{r,1}(z;t,q)
\right)_{\operatorname{pol}}
=
\sum_{
\substack{
\boldsymbol\lambda\\
\alpha(\boldsymbol\lambda)
\leq
\beta(\boldsymbol\lambda)
}
}
q^{|\boldsymbol\lambda|}
t^{
h_{\theta_r}
\left(
\beta(\boldsymbol\lambda)/
\alpha(\boldsymbol\lambda)
\right)
}
G_{
\theta_r
\left(
\beta(\boldsymbol\lambda)/
\alpha(\boldsymbol\lambda)
\right)
}
[X;t^{-1}].
\]

Finally, Proposition~\ref{propomegaLLT}, with $t$ replaced by $t^{-1}$,
gives
$$
\omega
G_{
\theta_r
\left(
\beta(\boldsymbol\lambda)/
\alpha(\boldsymbol\lambda)
\right)
}
[X;t^{-1}]
=
t^{
-
A\left(
\theta_r
\left(
\beta(\boldsymbol\lambda)/
\alpha(\boldsymbol\lambda)
\right)
\right)
}
G_{
\left(
\theta_r
\left(
\beta(\boldsymbol\lambda)/
\alpha(\boldsymbol\lambda)
\right)
\right)^R
}
[X;t].
$$
Substituting the preceding two identities proves
\eqref{eqNablaHallLLT}.
\end{proof}

\begin{proof}[Proof of Theorem~\ref{thmintroNablaHallLLT}]
Part~(1) follows from Theorem~\ref{thmHallLLT} and
\eqref{eq:cover-dagger}.  Part~(2) follows from
Theorem~\ref{thmNablaHallLLT}: the sum there is finite by
Theorem~\ref{thmmainpol}, and every exponent of $q$ is at least
$1+r(\nu_{i+1}-1)\geq1$.
\end{proof}

\begin{corollary}[$=$ Theorem~\ref{thmmain3}]\label{corohalldominanceschur}
For any partitions $\mu,\nu,\lambda\vdash n$ satisfying
$\mu\unrhd\nu$, and every positive integer $r$, we have
\begin{equation}
\begin{aligned}
\left\langle
\nabla^r\omega\wtH_\mu[X;0,t^{-1}]
-
\nabla^r\omega\wtH_\nu[X;0,t^{-1}],
s_\lambda
\right\rangle
&\in
\mathbb{N}[q,t].
\end{aligned}
\end{equation}
\end{corollary}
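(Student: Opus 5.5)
The plan is to reduce to dominance covers, use Theorem~\ref{thmNablaHallLLT} to get positivity over $\mathbb N[q,t,t^{-1}]$, and then use integrality to remove negative powers of $t$.

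\emph{Reduction to covers.} Since $\unrhd$ is a finite partial order, any $\mu\unrhd\nu$ is joined by a saturated chain $\mu=\mu^{(0)}\gtrdot\mu^{(1)}\gtrdot\cdots\gtrdot\mu^{(m)}=\nu$. The difference telescopes:
\[
\nabla^r\omega\wtH_\mu[X;0,t^{-1}]-\nabla^r\omega\wtH_\nu[X;0,t^{-1}]=\sum_{k=0}^{m-1}\Bigl(\nabla^r\omega\wtH_{\mu^{(k)}}[X;0,t^{-1}]-\nabla^r\omega\wtH_{\mu^{(k+1)}}[X;0,t^{-1}]\Bigr).
\]
Since $\mathbb N[q,t]$ is closed under addition, it suffices to treat a single cover $\mu\gtrdot\nu$.

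\emph{Positivity up to powers of $t$.} For a cover, Theorem~\ref{thmNablaHallLLT} (finite by Theorem~\ref{thmmainpol}) writes the difference as a finite sum of terms $q^at^bG_{\boldsymbol\tau}[X;t]$ with $a\ge1$ and $b\in\mathbb Z$. Grojnowski--Haiman gives $\langle G_{\boldsymbol\tau}[X;t],s_\lambda\rangle\in\mathbb N[t]$. Hence every Schur coefficient of the difference lies in $\mathbb N[q,t,t^{-1}]$.

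\emph{Integrality and conclusion.} We still need the Schur coefficients to lie in $\mathbb Z[q,t]$. By \eqref{eqhallomegaC} and Theorem~\ref{kmknshuffle}, applied with general $r$ in place of $r=1$,
\[
\nabla^r\omega\wtH_\mu[X;0,t^{-1}]=\sum_{\beta\le\overleftarrow\mu}t^{-\delta_\mu(\beta)}\nabla^rC_\beta[X;t].
\]
The only issue is whether $t^{-\delta_\mu(\beta)}\nabla^rC_\beta$ has nonnegative $t$-exponents. In the $(nr,n)$ parking-function expansion of Theorem~\ref{kmknshuffle} we need $\dinv(\pi)\ge\delta_\mu(\beta)$ for $\pi\in\LD_{nr,n}$.

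I expect this to be the main obstacle. The paper proved Lemma~\ref{lemmadoff} only for square paths, and that proof used $\pdinv=\maxtdinv$ there. There are two possible routes around it:
\begin{itemize}
\item extend the block-pair counting argument of Lemma~\ref{lemmadoff} to rank windows of width $nr$;
\item more simply, use the polynomiality of $\nabla^r$ on $\mathbb Z[q,t]$-integral symmetric functions, from Bergeron et al.~\cite{BGHT99}.
\end{itemize}
The second route is the one the introduction announces, so I would take it. By Theorem~\ref{thmmain2} for $r=1$ (or directly from the definition), $\omega\wtH_\mu[X;0,t^{-1}]$ is a combination of Schur functions with coefficients in $\mathbb Z[t,t^{-1}]$. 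So first multiply by a suitable power of $t$, namely $t^{n(\mu)}$, to make the coefficients lie in $\mathbb Z[t]$. Then apply integrality of $\nabla^r$ on $\mathbb Z[q,t]$-combinations, which follows by iterating the $r=1$ statement. This shows $t^{n(\mu)}\nabla^r\omega\wtH_\mu$ has Schur coefficients in $\mathbb Z[q,t]$.

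That alone still leaves a possible negative power $t^{-n(\mu)}$. To remove it, induct down the dominance order starting from $\nu=(1^n)$, where $\omega\wtH_{(1^n)}[X;0,t^{-1}]=C_{(1^n)}[X;t]$. Proposition~\ref{QZ26schur} gives Schur coefficients in $\mathbb N[q,t]$ for every $r$. Each cover difference has coefficients in $\mathbb N[q,t,t^{-1}]$ and is a difference of two terms, each with coefficients in $\mathbb Z[q,t]\cdot t^{-N}$. One then checks that a positive Laurent coefficient which is also integral in $\mathbb Z[q,t]$ lies in $\mathbb N[q,t]$.

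I will make sure the integrality input really yields $\mathbb Z[q,t]$ and not merely $\mathbb Z[q,t,t^{-1}]$. If it does not, I fall back on the rational-slope extension of Lemma~\ref{lemmadoff}: pair north steps of different $\overleftarrow\mu$-blocks by equal or adjacent heights, exactly as in Cases 1 and 2 of that proof, with $\tdinv-\maxtdinv+\pdinv$ controlled by \cite{nonsymshuffle}.
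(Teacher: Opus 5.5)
Your reduction to covers by a saturated chain matches the paper. So does your use of Theorem~\ref{thmNablaHallLLT} with Grojnowski--Haiman positivity to put the cover difference in $\mathbb N[q,t,t^{-1}]$. The gap is in the integrality step.

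Applying the $\mathbb Z[q,t]$-integrality of $\nabla^r$ to $t^{n(\mu)}\omega\wtH_\mu[X;0,t^{-1}]$ only shows that the Schur coefficients of $\nabla^r\omega\wtH_\mu[X;0,t^{-1}]$ lie in $t^{-n(\mu)}\mathbb Z[q,t]$. Your induction from $(1^n)$ cannot remove this denominator. Suppose $\nabla^r\omega\wtH_\nu[X;0,t^{-1}]$ has coefficients in $\mathbb N[q,t]$ and the cover difference has coefficients in $\mathbb N[q,t,t^{-1}]$. Then the difference is polynomial in $t$ if and only if $\nabla^r\omega\wtH_\mu[X;0,t^{-1}]$ is, which is exactly what you have not shown, so the step is circular. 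Your final check (Laurent-positive and in $\mathbb Z[q,t]$ implies $\mathbb N[q,t]$) is correct, but you only ever establish membership in $t^{-N}\mathbb Z[q,t]$, not $\mathbb Z[q,t]$. This input cannot be skipped: the individual LLT terms really do carry negative powers of $t$. In the example of Section~\ref{subsec:cover-llt-example}, all four coefficients do.

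Your fallback is not a proof either. Lemma~\ref{lemmadoff} uses $\pdinv(D)=\maxtdinv(D)$ from \cite{nonsymshuffle}, and that identity is special to square paths. For instance, the $(4,2)$-path $NE^2NE^2$ (so $n=r=2$) has $\pdinv=2$ but $\maxtdinv=1$. So Cases~1 and~2 do not transfer as stated. The paper does prove a rational-slope bound of this kind, but only in Appendix~\ref{app:coprime-pointwise-bound}, by a different crossing argument based on Hicks--Leven.

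The missing idea is to apply the integrality of \cite{BGHT99} after one application of $\nabla$, not before. Write $\nabla^r=\nabla^{r-1}\nabla$.
\begin{enumerate}
\item By Theorem~\ref{thmmain2} (Corollary~\ref{corohallschur}), $\nabla\omega\wtH_\mu[X;0,t^{-1}]$ and $\nabla\omega\wtH_\nu[X;0,t^{-1}]$ already have Schur coefficients in $\mathbb N[q,t]$. This is where Lemma~\ref{lemmadoff} does its work, in the square case.
\item Each $\nabla^{r-1}s_\kappa$ has Schur coefficients in $\mathbb Z[q,t]$, so $\nabla^r\omega\wtH_\mu[X;0,t^{-1}]$ and $\nabla^r\omega\wtH_\nu[X;0,t^{-1}]$ do as well. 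For $r=1$ this is Theorem~\ref{thmmain2} itself.
\item Hence the cover difference lies in $\mathbb N[q,t,t^{-1}]\cap\mathbb Z[q,t]=\mathbb N[q,t]$, and your telescoping sum finishes the proof.
\end{enumerate}
You did cite Theorem~\ref{thmmain2}, but as a statement about $\omega\wtH_\mu[X;0,t^{-1}]$. What makes the integrality go through is that it asserts polynomiality of the $\nabla$-image.
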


\begin{proof}
We first consider the case $\mu\gtrdot\nu$.  Theorem~\ref{thmNablaHallLLT},
together with the Schur positivity of LLT polynomials
established by Grojnowski and Haiman~\cite{GrojnowskiHaiman07}, gives
\begin{equation}\label{eqnablahalllaurent}
\begin{aligned}
\left\langle
\nabla^r\omega\wtH_\mu[X;0,t^{-1}]
-
\nabla^r\omega\wtH_\nu[X;0,t^{-1}],
s_\lambda
\right\rangle
&\in
\mathbb{N}[q,t,t^{-1}].
\end{aligned}
\end{equation}

To prove the required integrality, first suppose that \(r=1\).
Then the difference has coefficients in \(\mathbb Z[q,t]\) directly by
Corollary~\ref{corohallschur}.  Now assume \(r\geq2\).  By Bergeron et al.~\cite{BGHT99},
for every partition $\kappa\vdash n$, we have
\(
\left\langle
\nabla^{r-1}s_\kappa,
s_\lambda
\right\rangle
\in
\mathbb{Z}[q,t].\)

Moreover, Corollary \ref{corohallschur} gives, for every
partition $\kappa\vdash n$,
\begin{equation}
\left\langle
\nabla\omega\wtH_\mu[X;0,t^{-1}],
s_\kappa
\right\rangle
\in
\mathbb{N}[q,t],
\left\langle
\nabla\omega\wtH_\nu[X;0,t^{-1}],
s_\kappa
\right\rangle
\in
\mathbb{N}[q,t].
\end{equation}
Hence
\begin{equation}
\left\langle
\nabla\omega\wtH_\mu[X;0,t^{-1}]
-
\nabla\omega\wtH_\nu[X;0,t^{-1}], s_\kappa\right\rangle \in \mathbb{Z}[q,t].
\end{equation}
Since
$\nabla^{r-1}s_\kappa$ has Schur coefficients in
$\mathbb{Z}[q,t]$, it follows that
\begin{equation}\label{eqnablahallintegral}
\left\langle
\nabla^r\omega\wtH_\mu[X;0,t^{-1}]
-
\nabla^r\omega\wtH_\nu[X;0,t^{-1}],
s_\lambda
\right\rangle
\in
\mathbb{Z}[q,t].
\end{equation}

Combining \eqref{eqnablahalllaurent} with
\eqref{eqnablahallintegral}, we obtain
\begin{equation}
\left\langle
\nabla^r\omega\wtH_\mu[X;0,t^{-1}]
-
\nabla^r\omega\wtH_\nu[X;0,t^{-1}],
s_\lambda
\right\rangle
\in
\mathbb{N}[q,t,t^{-1}]
\cap
\mathbb{Z}[q,t]
=
\mathbb{N}[q,t].
\end{equation}

Now let $\mu\unrhd\nu$.  If $\mu=\nu$, the result is immediate.
Otherwise, choose a saturated chain
$
\mu
=
\mu^{(0)}
\gtrdot
\mu^{(1)}
\gtrdot
\cdots
\gtrdot
\mu^{(m)}
=
\nu.
$
Applying the cover case to every consecutive pair in this chain, we
obtain
\begin{equation}
\begin{aligned}
&
\left\langle
\nabla^r\omega\wtH_\mu[X;0,t^{-1}]
-
\nabla^r\omega\wtH_\nu[X;0,t^{-1}],
s_\lambda
\right\rangle
\\
= &
\sum_{k=1}^{m}
\left\langle
\nabla^r\omega\wtH_{\mu^{(k-1)}}[X;0,t^{-1}]
-
\nabla^r\omega\wtH_{\mu^{(k)}}[X;0,t^{-1}],
s_\lambda
\right\rangle
\in
\mathbb{N}[q,t].
\end{aligned}
\end{equation}
This proves the corollary.
\end{proof}

We finally prove Corollary \ref{coromain4}.

\begin{corollary}[$=$ Corollary~\ref{coromain4}]\label{corohallallpowers}
For any partitions $\mu,\lambda\vdash n$ and every positive integer $r$, we have
\begin{equation}
\left\langle
\nabla^r\omega\wtH_\mu[X;0,t^{-1}],
s_\lambda
\right\rangle
\in
\mathbb{N}[q,t].
\end{equation}
\end{corollary}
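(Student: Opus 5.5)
The plan is to write $\nabla^r\omega\wtH_\mu[X;0,t^{-1}]$ as a telescoping difference ending at the bottom of the dominance order, as the introduction suggests. Every partition $\mu\vdash n$ satisfies $\mu\unrhd(1^n)$. So I will split
\[
\nabla^r\omega\wtH_\mu[X;0,t^{-1}]
=
\Bigl(\nabla^r\omega\wtH_\mu[X;0,t^{-1}]-\nabla^r\omega\wtH_{(1^n)}[X;0,t^{-1}]\Bigr)
+\nabla^r\omega\wtH_{(1^n)}[X;0,t^{-1}],
\]
and show that each of the two summands has Schur coefficients in $\mathbb N[q,t]$.

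The first summand is handled directly by Corollary~\ref{corohalldominanceschur} with $\nu=(1^n)$. Its Schur coefficients lie in $\mathbb N[q,t]$ for every $r\geq1$.

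For the second summand, I will identify $\omega\wtH_{(1^n)}[X;0,t^{-1}]$ with $C_{(1^n)}[X;t]$ using \eqref{eqhallomegaC}. For $\mu=(1^n)$ we have $\overleftarrow{\mu}=(1^n)$, and the only composition $\beta\leq(1^n)$ is $\beta=(1^n)$ itself. So the sum has a single term, $t^{\doff_{(1^n)}(\DP(1^n))-n(1^n)}C_{(1^n)}[X;t]$. In $\DP(1^n)$, every $\alpha$-block has length one and area entry zero, so $z_k=1$ for all $k$. This gives $\doff_{(1^n)}(\DP(1^n))=\sum_{k=1}^n(n-k)=\binom n2=n(1^n)$, so the exponent vanishes. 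Hence $\nabla^r\omega\wtH_{(1^n)}[X;0,t^{-1}]=\nabla^rC_{(1^n)}[X;t]$, and Proposition~\ref{QZ26schur} places its Schur coefficients in $\mathbb N[q,t]$.

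Adding the two summands gives the claim, since $\mathbb N[q,t]$ is closed under addition. The only step needing care is the exponent computation in \eqref{eqhallomegaC}, and that is just the short count above; the substantive input is already contained in Corollary~\ref{corohalldominanceschur}.
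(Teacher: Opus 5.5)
Your proposal is correct and follows essentially the same route as the paper: it splits off $\nabla^r\omega\wtH_{(1^n)}[X;0,t^{-1}]=\nabla^rC_{(1^n)}[X;t]$ via \eqref{eqhallomegaC} with the same $\doff$ count, handles it by Proposition~\ref{QZ26schur}, and applies Corollary~\ref{corohalldominanceschur} with $\nu=(1^n)$ to the difference. Adding the two pieces then gives the claim, exactly as in the paper.
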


\begin{proof}
The only composition $\beta$ with $\beta\leq(1^n)$ is $(1^n)$, and
\[
\doff_{(1^n)}(\DP(1^n))=\sum_{k=1}^{n}(n-k)=\binom n2=n(1^n).
\]
Hence \eqref{eqhallomegaC} gives $\omega\wtH_{(1^n)}[X;0,t^{-1}]=C_{(1^n)}[X;t]$, and Proposition \ref{QZ26schur} gives
\[
\left\langle
\nabla^r\omega\wtH_{(1^n)}[X;0,t^{-1}],
s_\lambda
\right\rangle
\in
\mathbb{N}[q,t].
\]
Since $\mu\unrhd(1^n)$, Corollary \ref{corohalldominanceschur} with $\nu=(1^n)$ shows that
\[
\left\langle
\nabla^r\omega\wtH_\mu[X;0,t^{-1}]
-
\nabla^r\omega\wtH_{(1^n)}[X;0,t^{-1}],
s_\lambda
\right\rangle
\in
\mathbb{N}[q,t].
\]
Adding the two displays proves the corollary.
\end{proof}

\subsection{Example}\label{subsec:cover-llt-example}
We continue Example~\ref{ex:hall-llt-cover} with $r=1$ and apply
Theorem~\ref{thmNablaHallLLT}.  In the notation of
Section~\ref{subLLTcat}, the tuple in
Figure~\ref{fig:hall-cover-tuple} is
$\boldsymbol\nu^{\mu,\nu}=(A_1,E)$, where $A_1=(1^2)$ and
$E=(2,2)$.  Its constant-offset stretch is unchanged.  The boxes
of $A_1$ have contents $-1,0$; those of $E$ have contents
$-1,0,0,1$.  Figure~\ref{fig:four-llt-tuples} shows the four LLT
tuples in the expansion.
Ordering the nonempty diagonals by increasing adjusted content gives
$\gamma=(1,1,1,2,1)$, hence $\mathbf d=(1,2,1,1,1)$.
The boundary blocks in natural order are
\[
(E,1),(E,0),(A_1,0),(E,-1),(A_1,-1),(E,-2),(A_1,-2),
\]
with corresponding $(a_B,b_B)$ equal to
$(0,1),(1,2),(0,1),(2,1),(1,1),(1,0),(1,0)$.
The $1$-order agrees with this order, so
$\sigma_1=\theta_1=\mathrm{id}$ and $\eta_D=1$ on every
nonempty diagonal.

In the Cauchy sum, the coordinatewise inequality
$\alpha(\boldsymbol\lambda)\leq\beta(\boldsymbol\lambda)$
forces every boundary partition to be empty except
$\lambda_{(E,-1)}=(x)$ and $\lambda_{(A_1,-1)}=(y)$, where
$x,y\in\{0,1\}$.  Indeed, the block $(E,1)$ forces
$\lambda_{(E,0)}=\emptyset$, while the blocks $(E,0)$ and
$(A_1,0)$ give $x\leq1$ and $y\leq1$; all other boundary
partitions have length bound zero.  Reading the nonempty diagonals
in decreasing adjusted-content order, namely
$(E,1),(E,0),(A_1,0),(E,-1),(A_1,-1)$, gives
\[
\alpha(\boldsymbol\lambda)=(0,x,-1,y-1,-1,-1),
\qquad
\beta(\boldsymbol\lambda)=(0,1,0,0,x+1,y).
\]
For each block of length $g_D$, the $j$th row of the associated
skew diagram has outer length $\beta_{D,j}+j$ and inner length
$\alpha_{D,j}+j$, by the row-endpoint rule in
Section~\ref{subLLTconn}.  Thus the tuple before applying
$R$ is
\[
\theta_1\bigl(\beta(\boldsymbol\lambda)/
\alpha(\boldsymbol\lambda)\bigr)
=
\bigl((1)/(1),\ (2,2)/(x+1,1),\ (1)/(y),\
(x+2),\ (y+1)\bigr).
\]
We retain the empty component to display all five components
produced by the construction.
\begin{center}
\begin{tikzpicture}[x=.27cm,y=.27cm]
\foreach \xx/\yy/\base/\panel in
  {0/0/0/a,0/1/14/b,1/0/28/c,1/1/42/d}{
\begin{scope}[shift={(\base,0)}]
\draw[blue,densely dashed] (-.35,-.35)--(12.4,12.4);
\draw[black,densely dashed,semithick] (.5,.5)--(.5,1.5);
\begin{scope}[shift={(2,2)}]
\draw (0,0) rectangle ++(1,1);
\ifnum\xx=0 \draw (1,0) rectangle ++(1,1);\fi
\end{scope}
\begin{scope}[shift={(4,4)}]
\ifnum\yy=0
  \draw (0,0) rectangle ++(1,1);
\else
  \draw[black,densely dashed,semithick] (.5,.5)--(.5,1.5);
\fi
\end{scope}
\begin{scope}[shift={(6,6)}]
\ifnum\xx=0
  \foreach \row in {0,1}{
    \draw[black,densely dashed] (0,\row) rectangle ++(1,1);
    \draw (1,\row) rectangle ++(1,1);}
\else
  \foreach \row in {0,1,2}{
    \foreach \col in {0,1}{
      \draw[black,densely dashed] (\col,\row) rectangle ++(1,1);}
    \draw (2,\row) rectangle ++(1,1);}
\fi
\end{scope}
\begin{scope}[shift={(10,10)}]
\ifnum\yy=0
  \draw (0,0) rectangle ++(1,1);
\else
  \foreach \row in {0,1}{
    \draw[black,densely dashed] (0,\row) rectangle ++(1,1);
    \draw (1,\row) rectangle ++(1,1);}
\fi
\end{scope}
\node[font=\scriptsize] at (6,-1.8)
  {$(\panel)\ (x,y)=(\xx,\yy)$};
\end{scope}}
\end{tikzpicture}
\captionof{figure}{Four LLT skew tuples.}\label{fig:four-llt-tuples}
\end{center}

The remaining weights are small enough to calculate directly.
By Example~\ref{ex:hall-llt-cover} and
$A_1(\boldsymbol\nu^{\mu,\nu})=4$, the exponent
$e^{\mu,\nu}+A_1(\boldsymbol\nu^{\mu,\nu})$ in Theorem~\ref{thmNablaHallLLT} equals $-3$, so the prefactor is
$(qt)^2t^{-3}=q^2t^{-1}$.  For $x=y=0$, the six component pairs
$2<3,2<4,2<5,3<4,3<5,4<5$ contribute
$2,2,2,1,1,2$ to $h_{\theta_1}$, giving $10$; their attacking-pair
contributions are $2,3,2,1,1,2$, giving $11$.
The coefficient for each $(x,y)$ is
$q^{2+|\boldsymbol\lambda|}
t^{-1+h_{\theta_1}(\beta(\boldsymbol\lambda)/\alpha(\boldsymbol\lambda))
-A(\theta_1(\beta(\boldsymbol\lambda)/\alpha(\boldsymbol\lambda)))}$.
The other values, obtained in the same way, are
\[
\begin{array}{cc|c|c|c|c}
x&y&|\boldsymbol\lambda|&h_{\theta_1}&\#\text{attacking pairs}&
\text{coefficient}\\ \hline
0&0&0&10&11&q^2t^{-2}\\
0&1&1&7&9&q^3t^{-3}\\
1&0&1&7&7&q^3t^{-1}\\
1&1&2&5&6&q^4t^{-2}
\end{array}
\]
We therefore obtain the four-term
LLT expansion
\begin{equation}\label{eq:worked-cover-llt}
\begin{aligned}
&\nabla\bigl(
\omega\wtH_{(3,2,1)}[X;0,t^{-1}]
-\omega\wtH_{(2,2,2)}[X;0,t^{-1}]
\bigr)=q^2t^{-2}
G_{(\emptyset,(2),(1),(2,2)/(1,1),(1))}[X;t]\\
&\quad+q^3t^{-3}
G_{(\emptyset,(2),\emptyset,(2,2)/(1,1),(2,2)/(1,1))}[X;t]+q^3t^{-1}
G_{(\emptyset,(1),(1),(3,3,3)/(2,2,2),(1))}[X;t]\\
&\quad+q^4t^{-2}G_{(\emptyset,(1),\emptyset,(3,3,3)/(2,2,2),(2,2)/(1,1))}[X;t].
\end{aligned}
\end{equation}
The four coefficients $q^at^b$ in this LLT expansion all have $b<0$, as
Theorem~\ref{thmNablaHallLLT} permits; nevertheless, the Schur coefficients of the total
sum lie in $\mathbb N[q,t]$ by Corollary~\ref{corohalldominanceschur}.

\section{Generalization to the Schiffmann algebra setting}\label{sec:schiffmann-coprime}
This section extends Corollary~\ref{thmmain1} and Theorem~\ref{thmmain1.5}
to the Schiffmann algebra setting.  It also extends
Theorems~\ref{thmHallLLT} and~\ref{thmNablaHallLLT}.
Lemma~\ref{lem:coprime-pointwise-bound} supplies
the integrality needed for the generalizations of
Theorem~\ref{thmmain3} and Corollary~\ref{coromain4}.

\subsection{Generalization of Theorem~\ref{thmmain1.5}}

Let $r,s$ be coprime positive integers.  We use the alphabet
$X^{r,s}$ of Blasiak et al.~\cite[Section~3.2]{LLTcat}: a symmetric function $f[X]$
determines an element $f[-M X^{r,s}]$ of the Schiffmann algebra (the
elliptic Hall algebra of Burban and Schiffmann~\cite{BS12}), which
acts on $1\in\Lambda(X)$.  This construction is linear in $f$, and
$f[-M X^{r,s}]\cdot1$ has symmetric-function degree $s\deg(f)$.
Blasiak et al.~\cite[Proposition~2.3.3]{dens} give
\[
f[-M X^{r,1}]\cdot1=\nabla^r f[X].
\]
Thus every identity in Section~\ref{subLLTconn} may first be read in
the Schiffmann algebra.

Bergeron, Garsia, Leven, and Xin~\cite[Conjecture~3.3]{BGLX16}
formulated the compositional $(kr,ks)$-shuffle conjecture, and Mellit~\cite{Mel21}
proved it.  As in Theorem~\ref{kmknshuffle}, we state it with $q$ and $t$ interchanged.
\begin{proposition}[{\cite{BGLX16,Mel21}}]\label{prop:coprime-compositional-shuffle}
For every composition $\alpha\models k$,
\begin{equation}\label{eq:coprime-compositional-shuffle}
C_\alpha[-M X^{r,s};t]\cdot1
=
\sum_{\substack{\pi\in\LD_{kr,ks}\\
\comp(\pi)=(s\alpha_1,\ldots,s\alpha_{\ell(\alpha)})}}
q^{\area(\pi)}t^{\dinv(\pi)}F_{\ides(\pi)}[X].
\end{equation}
For every partition $\lambda\vdash ks$, the coefficient of $s_\lambda[X]$
in \eqref{eq:coprime-compositional-shuffle} belongs to $\mathbb N[q,t]$.
\end{proposition}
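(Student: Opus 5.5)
This proposition is essentially a citation, so the plan is to match conventions and then repeat the positivity argument of Proposition~\ref{QZ26schur}.

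\textbf{The identity.} I would take the compositional $(kr,ks)$-shuffle theorem of Bergeron, Garsia, Leven, and Xin~\cite[Conjecture~3.3]{BGLX16}, proved by Mellit~\cite{Mel21}. In their normalization, the left side is the action on $1$ of the Schiffmann-algebra element attached to $C_\alpha$ along the slope $(r,s)$. This element is $C_\alpha[-MX^{r,s};q]\cdot 1$ in the plethystic form of Blasiak et al.~\cite[Section~3.2]{LLTcat}. The first step is to check that the two constructions agree. Both are built from the operators $\mathcal Q_{m,n}$ attached to the coprime direction, so it suffices to compare their action on $1$ in degree $s$ and to use linearity in $f$.

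Next I would interchange $q$ and $t$, exactly as in Section~\ref{subProofMain1}. This interchange is legitimate because $-M=-(1-q)(1-t)$ is symmetric in $q,t$, and the Schiffmann algebra has the corresponding symmetry. After the interchange, area is weighted by $q$ and $\dinv$ by $t$.

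It remains to match the combinatorial side. In the $(kr,ks)$ rectangle, the diagonal contacts occur at the lattice points $(dr,ds)$. A composition $\alpha\models k$ therefore records contacts after $s\alpha_1, s(\alpha_1+\alpha_2),\dots$ north steps. This gives the condition $\comp(\pi)=(s\alpha_1,\dots,s\alpha_{\ell(\alpha)})$ in our definition of $\comp$. As a sanity check, for $s=1$ the formula specializes to Theorem~\ref{kmknshuffle} via $f[-MX^{r,1}]\cdot1=\nabla^rf$.

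\textbf{Schur positivity.} I would follow the proof of Proposition~\ref{QZ26schur} (\cite[Proposition~2.3]{QZ26}). Fix the underlying path $D$ and group the labelings $\pi$ of $D$. The generating function $\sum_{\pi} t^{\tdinv(\pi)}F_{\ides(\pi)}$ over labelings of $D$ is an LLT polynomial $G_{\boldsymbol\nu(D)}[X;t]$ for a tuple of columns determined by the rank structure. This is the standard step used in the rational shuffle literature: the attacking condition $\rank_i<\rank_j<\rank_i+kr$ plays the role of adjusted-content difference less than $1$.

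The remaining statistics depend only on $D$. Since $\dinv=\tdinv+\pdinv(D)-\maxtdinv(D)$ and $\area$ depend on $\pi$ only through $D$, each path contributes
\[
q^{\area(D)}t^{\pdinv(D)-\maxtdinv(D)}G_{\boldsymbol\nu(D)}[X;t].
\]
By Grojnowski--Haiman~\cite{GrojnowskiHaiman07}, each such term is Schur positive with coefficients in $\mathbb N[q,t,t^{-1}]$.

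To remove the negative powers of $t$, I would argue exactly as in Corollary~\ref{corohallschur}. The fundamental expansion has exponents $\dinv(\pi)\ge0$, since $\dinv$ counts actual diagonal inversions in Mellit's theorem. Integrality of the inverse Kostka matrix then gives Schur coefficients in $\mathbb Z[q,t]$. Intersecting the two statements gives coefficients in $\mathbb N[q,t]$.

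\textbf{Main obstacle.} The main obstacle I expect is the bookkeeping in the first step: matching Mellit's normalization of the $(kr,ks)$ operators, including possible sign and $(qt)$-power factors, with the plethystic $f[-MX^{r,s}]$ of \cite{LLTcat}. My first attempt would be to test both sides at $\alpha=(1)$, $k=1$, where each reduces to the known $(r,s)$ shuffle for $e_1$-type elements.
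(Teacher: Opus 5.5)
Your architecture is the paper's: cite the identity from \cite{BGLX16,Mel21} with $q$ and $t$ interchanged, group labelings by path to get $q^{\area(D)}t^{\pdinv(D)-\maxtdinv(D)}$ times an LLT polynomial (hence coefficients in $\mathbb N[q,t,t^{-1}]$), use the fundamental expansion and the integral inverse Kostka matrix to get $\mathbb Z[q,t]$, and intersect.

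The gap is in the one step with real content: $\dinv(\pi)\geq 0$ for every $\pi\in\LD_{kr,ks}$. You justify it by saying ``$\dinv$ counts actual diagonal inversions in Mellit's theorem.'' That does not follow from the definitions used here. The statistic $\dinv(\pi)=\tdinv(\pi)+\pdinv(D)-\maxtdinv(D)$ is a difference, and away from the square case $\maxtdinv(D)$ can exceed $\pdinv(D)$. For example, the $(6,8)$-path of Example~\ref{ex:parking-statistics-doff} has $\pdinv(D)=6<7=\maxtdinv(D)$. So a labeling with small $\tdinv$ could a priori give a negative value.

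This step cannot be waved through. Each $F_S$ is monomial-positive, so a single $\pi$ with $\dinv(\pi)<0$ would leave a negative power of $t$ in the sum. Algebraic integrality of the action does not obviously help either, because $C_\alpha[X;t]$ itself carries negative powers of $t$; for instance $C_{(a)}[X;t]=(-1/t)^{a-1}h_a[X]$.

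The paper supplies exactly this input through Lemma~\ref{lem:coprime-pointwise-bound} with $\mu=(k)$. Every $\pi\in\LD_{kr,ks}$ has $\comp(\pi)=(s\beta_1,\ldots,s\beta_{\ell(\beta)})$ with $\beta\leq(k)$, and $n((k))=0=\doff_{(k)}(\DP(\beta))$, so the lemma reads $\dinv(\pi)\geq0$. Its proof in Appendix~\ref{app:coprime-pointwise-bound} is a genuine combinatorial argument, with three ingredients:
\begin{enumerate}
\item The three terms of $\dinv$ are decomposed over pairs of runs.
\item Primary and secondary attacks between two runs alternate while labels increase within a run. This gives $\tdinv\geq\sum\min(P_{ij},S_{ij})$, whereas $\maxtdinv=\sum(P_{ij}+S_{ij})$.
\item The Hicks--Leven description of the $\pdinv$ cells \cite{HL15}, together with a crossing-balance count, shows that $\pdinv-\sum\max(P_{ij},S_{ij})$ is a nonnegative count.
\end{enumerate}
To complete your proof you need this lemma, or a precise reference proving nonnegativity of this $\dinv$ for all $(kr,ks)$-parking functions, including the non-coprime case $k>1$.

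Separately, your proposed normalization check (compare actions on $1$ in degree $s$ and use linearity in $f$) would not by itself pin down $f[-MX^{r,s}]$ in higher degrees. This does not matter here, since the paper, like you, simply cites the identity in the stated form.
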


The positivity assertion follows as in the proof of Corollary~\ref{corohallschur}.  For a fixed path $D$, the sum over its labelings in \eqref{eq:coprime-compositional-shuffle} is $q^{\area(D)}t^{\pdinv(D)-\maxtdinv(D)}$ times an LLT polynomial, so the Schur positivity of LLT polynomials established by Grojnowski and Haiman~\cite{GrojnowskiHaiman07} puts the coefficient of $s_\lambda[X]$ in $\mathbb N[q,t,t^{-1}]$.  Every $\pi\in\LD_{kr,ks}$ satisfies $\comp(\pi)=(s\beta_1,\ldots,s\beta_{\ell(\beta)})$ for some $\beta\models k$, and $\beta\leq(k)$.  Since $n((k))=0$ and $\doff_{(k)}(\DP(\beta))=0$, Lemma~\ref{lem:coprime-pointwise-bound} below, applied with $\mu=(k)$, gives $\dinv(\pi)\geq0$.  Hence the fundamental quasisymmetric expansion has monomial coefficients in $\mathbb N[q,t]$, and the integrality of the inverse Kostka matrix puts the coefficient of $s_\lambda[X]$ in $\mathbb Z[q,t]$.  Since $\mathbb N[q,t,t^{-1}]\cap\mathbb Z[q,t]=\mathbb N[q,t]$, the assertion follows.

Applying Proposition~\ref{prop:coprime-compositional-shuffle} to \eqref{eqhallC} and \eqref{eqhallomegaC}
is shorthand for
applying $f\mapsto f[-M X^{r,s}]\cdot1$ to each side; the resulting formulas are
recorded explicitly below.

\begin{theorem}\label{thm:coprime-hall-parking}
For every partition $\mu\vdash k$,
\begin{equation}
(-1)^{k-\ell(\mu)}
\wtH_\mu[-M X^{r,s};0,t]\cdot1
=t^{n(\mu)+k-\ell(\mu)}
\sum_{\substack{\pi\in\LD_{kr,ks}\\
\comp(\pi)=(s\mu_1,\ldots,s\mu_{\ell(\mu)})}}
q^{\area(\pi)}t^{\dinv(\pi)}F_{\ides(\pi)}[X].
\end{equation}
\begin{equation}\label{eq:coprime-signed-hall-parking}
\bigl(\omega\wtH_\mu[X;0,t^{-1}]\bigr)[-M X^{r,s}]\cdot1
=
\sum_{\mathclap{\substack{\beta\leq\overleftarrow\mu \\ \pi\in\LD_{kr,ks}\\
\comp(\pi)=(s\beta_1,\ldots,s\beta_{\ell(\beta)})}}}
q^{\area(\pi)}
t^{\doff_{\overleftarrow\mu}(\DP(\beta))-n(\mu)+\dinv(\pi)}
F_{\ides(\pi)}[X].
\end{equation}
In particular,
\[
\left\langle
(-1)^{k-\ell(\mu)}
\wtH_\mu[-M X^{r,s};0,t]\cdot1,s_\lambda
\right\rangle\in\mathbb N[q,t]. \]
\end{theorem}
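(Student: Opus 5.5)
The plan is to apply the linear map $f\mapsto f[-MX^{r,s}]\cdot 1$ to the two identities \eqref{eqhallC} and \eqref{eqhallomegaC}, and then substitute Proposition~\ref{prop:coprime-compositional-shuffle} term by term. The map is $\mathbb Q(q,t)$-linear in $f$, and scalars in $t$ pass through it. So the first identity follows at once. Multiply \eqref{eqhallC} by nothing further, apply the map, and read off
\[
(-1)^{k-\ell(\mu)}\wtH_\mu[-MX^{r,s};0,t]\cdot1=t^{n(\mu)+k-\ell(\mu)}\,C_\mu[-MX^{r,s};t]\cdot1 .
\]
Then replace the right side using \eqref{eq:coprime-compositional-shuffle} with $\alpha=\mu$; this gives the sum over $\pi\in\LD_{kr,ks}$ with $\comp(\pi)=(s\mu_1,\ldots,s\mu_{\ell(\mu)})$. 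The same procedure applied to \eqref{eqhallomegaC} gives \eqref{eq:coprime-signed-hall-parking}. One applies the map to each $C_\beta[X;t]$ for $\beta\le\overleftarrow\mu$, keeps the scalar $t^{\doff_{\overleftarrow\mu}(\DP(\beta))-n(\mu)}$, and merges it with $t^{\dinv(\pi)}$.

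There is one point to check: that $C_\alpha[-MX^{r,s};t]\cdot1$ in the proposition means exactly the image of the symmetric function $C_\alpha[X;t]$ under the map. This holds because the proposition is stated for that image, and \eqref{eqhallC} and \eqref{eqhallomegaC} are identities of symmetric functions in $X$ with coefficients in $\mathbb Q(t)$. I would remark that this interpretation is what the notation means, so no further Schiffmann-algebra structure is needed.

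For the positivity statement, take the Hall inner product of the first identity with $s_\lambda$, where $\lambda\vdash ks$. The result is
\[
\left\langle (-1)^{k-\ell(\mu)}\wtH_\mu[-MX^{r,s};0,t]\cdot1,s_\lambda\right\rangle
= t^{n(\mu)+k-\ell(\mu)}\,\left\langle C_\mu[-MX^{r,s};t]\cdot1,s_\lambda\right\rangle .
\]
The second factor lies in $\mathbb N[q,t]$ by the positivity part of Proposition~\ref{prop:coprime-compositional-shuffle}. The exponent $n(\mu)+k-\ell(\mu)$ is nonnegative, as noted in the proof of \eqref{eqhallC}. This mirrors the proof of Corollary~\ref{cor:hall-positivity}.

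I expect no serious obstacle here. The only delicate point is the positivity of the Schur coefficients of $C_\alpha[-MX^{r,s};t]\cdot1$, which needs $\dinv(\pi)\ge0$ in the rational setting. That is supplied by Lemma~\ref{lem:coprime-pointwise-bound}, which the paper invokes via Proposition~\ref{prop:coprime-compositional-shuffle}, and the present statement takes it as given.
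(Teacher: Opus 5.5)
Your proposal is correct and follows the paper's proof: apply the linear map $f\mapsto f[-MX^{r,s}]\cdot1$ to \eqref{eqhallC} and \eqref{eqhallomegaC}, and substitute Proposition~\ref{prop:coprime-compositional-shuffle}. The positivity claim then follows from that proposition together with $n(\mu)+k-\ell(\mu)\geq0$, and, as you note, the needed input $\dinv(\pi)\geq0$ is already contained in the positivity part of the proposition.
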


\begin{proof}
Apply $f\mapsto f[-M X^{r,s}]\cdot1$ to \eqref{eqhallC} and
\eqref{eqhallomegaC}, and then substitute
\eqref{eq:coprime-compositional-shuffle}.  The exponent
$n(\mu)+k-\ell(\mu)$ is nonnegative.  The last assertion therefore
follows from Proposition~\ref{prop:coprime-compositional-shuffle}.
\end{proof}

\subsection{Generalization of Theorem~\ref{thmNablaHallLLT}}
\label{subsec:general-schiffmann-slope}

Applying $f\mapsto f[-M X^{r,s}]\cdot1$ to Theorem~\ref{thmHallLLT} gives the
following identity, which we use below.

\begin{proposition}\label{prop:coprime-cover-transport}
If $\mu\gtrdot\nu$ and $\boldsymbol\nu^{\mu,\nu}$ is the cover tuple
from Section~\ref{subLLTcat}, then for every coprime $(r,s)$,
\begin{equation}\label{eq:coprime-cover-transport}
\bigl(
\omega\wtH_\mu[X;0,t^{-1}]
-\omega\wtH_\nu[X;0,t^{-1}]
\bigr)[-M X^{r,s}]\cdot1
=-t^{e^{\mu,\nu}}
G_{\boldsymbol\nu^{\mu,\nu}}[-M X^{r,s};t]\cdot1,
\end{equation}
where $e^{\mu,\nu}$ is defined in \eqref{eq:emunu}.
\end{proposition}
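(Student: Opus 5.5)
The plan is to derive the identity by transporting Theorem~\ref{thmHallLLT} through the linear map $f\mapsto f[-MX^{r,s}]\cdot1$. The work is in checking that both sides of Theorem~\ref{thmHallLLT} are symmetric functions over the same coefficient field, and that the power of $t$ can be pulled out of the map.

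First, rewrite Theorem~\ref{thmHallLLT} as an identity in $\Lambda$:
\[
\omega\wtH_\mu[X;0,t^{-1}]-\omega\wtH_\nu[X;0,t^{-1}]
=-t^{e^{\mu,\nu}}G_{(\theta^{\mu,\nu})^R}[X;t],
\]
using the definition \eqref{eq:emunu} of $e^{\mu,\nu}$. Then replace $G_{(\theta^{\mu,\nu})^R}[X;t]$ by $G_{\boldsymbol\nu^{\mu,\nu}}[X;t]$ via \eqref{eq:cover-dagger}, which comes from Proposition~\ref{propomegaLLTdagger}. Both sides are then elements of $\Lambda^n$ with coefficients in $\mathbb Q(q,t)$. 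In fact they lie in $\mathbb Z[t,t^{-1}]$, since no $q$ appears.

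Next, apply the operator $f\mapsto f[-MX^{r,s}]\cdot1$ to both sides. The construction recalled from Blasiak et al.~\cite[Section~3.2]{LLTcat} is $\mathbb Q(q,t)$-linear in $f$. This is the one point to check with some care. The plethystic substitution $f\mapsto f[-MX^{r,s}]$ replaces each $p_k$ by an element of the Schiffmann algebra, and scalars in $\mathbb Q(q,t)$ are not plethystically transformed. So the scalar $t^{e^{\mu,\nu}}$ passes through unchanged.

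This is the same convention under which Theorem~\ref{thm:coprime-hall-parking} pulls out the factor $t^{n(\mu)+k-\ell(\mu)}$ and the factors $t^{\doff-n(\mu)}$. It is also consistent with the case $s=1$, where the operator is $\nabla^r$ and is $\mathbb Q(q,t)$-linear. I would state explicitly that $f[-MX^{r,s}]$ means substituting into the $p_k$-expansion of $f$ with coefficients fixed. This ensures that $G_{\boldsymbol\nu^{\mu,\nu}}[-MX^{r,s};t]$ is read with $t$ as a coefficient, which matches the right-hand side of \eqref{eq:coprime-cover-transport}.

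The only possible obstacle is this bookkeeping convention: whether $t$ inside the coefficients is also plethystically affected. Once the convention is fixed as above, linearity gives \eqref{eq:coprime-cover-transport} immediately. No further combinatorics is required.
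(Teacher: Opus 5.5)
Your proposal is correct and is exactly the paper's argument: apply the $\mathbb Q(q,t)$-linear map $f\mapsto f[-MX^{r,s}]\cdot1$ to Theorem~\ref{thmHallLLT} and use \eqref{eq:cover-dagger} to replace $G_{(\theta^{\mu,\nu})^R}$ by $G_{\boldsymbol\nu^{\mu,\nu}}$. Your remark on the convention for scalars is accurate: coefficients in $\mathbb Q(q,t)$ are not plethystically transformed, so the factor $t^{e^{\mu,\nu}}$ passes through the map unchanged.
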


\begin{proof}
Apply the linear map $f\mapsto f[-M X^{r,s}]\cdot1$ to
Theorem~\ref{thmHallLLT}, using
\eqref{eq:cover-dagger} to identify the LLT tuple.
\end{proof}

Fix a dominance cover $\mu\gtrdot\nu$, with
$\nu=\mu-\epsilon_i+\epsilon_j$ and $i<j$, and let $r,s$ be
coprime positive integers.  We retain the cover tuple
$\boldsymbol\nu^{\mu,\nu}$, its constant-offset $r$-stretch
$\boldsymbol\nu_r$, and all the boundary-block and Cauchy data of
Section~\ref{secCat}.  The parameter $s$ enters the following two
conventions.  We set
\begin{equation}\label{eq:coprime-slope-weight}
\widetilde b_{h,c}
=
\left\lceil\frac{cs}{r}\right\rceil
-
\left\lceil\frac{(c-1)s}{r}\right\rceil,
\end{equation}
and order the boundary blocks first by increasing residue of
$-s\,c(B)$ modulo $r$, taken in $\{0,\ldots,r-1\}$, then by
decreasing $\lfloor c(B)/r\rfloor$, and finally by the same
component order as before.  The formulas defining
$\boldsymbol\kappa^{\mathrm{cat}}$, $\eta_D=\widetilde b_D$,
$\widehat\eta$, $\prec_r$, $\sigma_r$, $\theta_r$, $\tau_r$,
and $\beta(\boldsymbol\lambda)$ are read with these
conventions.  At $s=1$ they agree with the preceding definitions;
when $r=1$, the order is the original $1$-order and
$\eta_D=s$ on every diagonal.  The LLT Catalanimal
$H_{\boldsymbol\nu_r}^{r,s}(z;q,t)$ of Blasiak et al.~\cite[Definition~8.2.1]{LLTcat}
has the same root sets $R_q,R_t,R_{qt}$ as
$H_{\boldsymbol\nu_r}^{r,1}$ and the weight
$\boldsymbol\kappa^{\mathrm{cat}}$ specified by
\eqref{eq:coprime-slope-weight}.

\begin{lemma}\label{lem:coprime-slope-order}
With these conventions, the block minima of
$\rho_{\mathbf r}$ and $\rho_{\mathbf s}$ are, respectively,
$\sigma_r$-almost decreasing and $\sigma_r$-almost increasing.
The diagonal weights $\eta_D$ and the permutations
$\theta_r,\tau_r$ satisfy the hypotheses of
Blasiak et al.~\cite[Lemma~6.2.2]{dens}, after the same constant is subtracted from every $\eta_D$.
\end{lemma}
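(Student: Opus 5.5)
I will follow the two halves of the $s=1$ arguments, Lemma~\ref{lemalmostmonotonicity} and the residue analysis inside the proof of Lemma~\ref{lemwinding}, and isolate the only place where $s$ enters. The block minima $m_P=-\operatorname{st}$ and $n_P=-\operatorname{en}$, the contents of the blocks, and the natural order are independent of $s$, so the list of blocks with zero minimum is the same as before. In particular, every such block has content divisible by $r$.

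\textbf{Almost monotonicity.} The new $r$-order starts with the residue class of $-s\,c(B)\equiv 0\pmod r$. Because $\gcd(r,s)=1$, this class is exactly $r\mid c(B)$. Within that class, blocks are ordered by decreasing $\lfloor c/r\rfloor$ and then by components, just as before. The two arguments of Lemma~\ref{lemalmostmonotonicity} used only these facts:
\begin{enumerate}
\item[(i)] blocks of content divisible by $r$ come first;
\item[(ii)] among them, the comparison is by content and then by the component order $E,A_u,\dots,A_1,B_v,\dots,B_1$.
\end{enumerate}
In the case $1\le c(P)\le r-1$ we again get $Q\prec_r P$, since $Q$ lies in the first class and $P$ does not. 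So I will rerun that proof verbatim with ``residue $0$'' replaced by ``$-sc\equiv 0$''.

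\textbf{Hypotheses of \cite[Lemma~6.2.2]{dens}.} By \eqref{eq:coprime-slope-weight}, $\eta_D=\lceil cs/r\rceil-\lceil (c-1)s/r\rceil$ takes the values $\lfloor s/r\rfloor$ and $\lfloor s/r\rfloor+1$. After subtracting the constant $\lfloor s/r\rfloor$, we get a $\{0,1\}$-valued $\eta'$; when $r=1$ it is identically $0$ (or we take $\eta\equiv s$ and subtract $s$). Write $D^-=(h,c-1)$ and $D^+=(h,c)$. Set $x=-s(c-1)\bmod r$ for the residue key of $D^-$. Then $D^+$ has key $x-s\bmod r$. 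Also $\eta'_D=1$ exactly when $\lceil cs/r\rceil$ jumps by one extra unit, which happens exactly when $x<s\bmod r$ (i.e.\ subtracting $s$ wraps around modulo $r$), with the convention $x\in\{0,\dots,r-1\}$; I will verify this identity carefully.

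I will then check conditions (a) and (b) from the proof of Lemma~\ref{lemwinding}.
\begin{enumerate}
\item[(a)] If $\eta'_{D_1}=\eta'_{D_2}$, the map $x\mapsto x-s\bmod r$ is order-preserving on each of the two ranges $\{x<s'\}$ and $\{x\ge s'\}$, where $s'=s\bmod r$. Equal residues shift the quotient $\lfloor c/r\rfloor$ by the same amount; note that $\lfloor c/r\rfloor$ rises by one exactly when $c\equiv 0\pmod r$, which depends only on the residue. So the comparison between $D_1^+$ and $D_2^+$ matches that between $D_1^-$ and $D_2^-$.
\item[(b)] If $\eta'_{D_1}=1$ and $\eta'_{D_2}=0$, then $x_1<s'\le x_2$, so $D_1^-\prec_r D_2^-$. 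After the shift, $x_1-s'+r>x_2-s'$, so $D_2^+\prec_r D_1^+$.
\end{enumerate}
Finally, $\beta(\boldsymbol\lambda)$ stays blockwise regular, because the constant shift is blockwise constant.

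\textbf{Main obstacle.} The delicate step is the tie-breaking in (a) when the residues are equal but the quotients $\lfloor c/r\rfloor$ differ and lie in different components. I will handle it by noting that the quotient increment is a function of the residue alone, so equal residues give equal increments.
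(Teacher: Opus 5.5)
Your proposal is correct and follows the paper's proof essentially step for step: the zero-minimum blocks are unchanged and lie in the key-$0$ class by coprimality, so the argument of Lemma~\ref{lemalmostmonotonicity} reruns verbatim, and your key $x=-s(c-1)\bmod r$ with the wrap-around criterion $x<s\bmod r$ gives (a) and (b) exactly as the paper does. The one point the paper adds is why subtracting $c_0=\lfloor s/r\rfloor$ is harmless when Lemma~6.2.2 is actually applied. This rests on $F^{\xi}_{\mathbf d,\kappa+(c_0,\ldots,c_0)}=(z_1\cdots z_\ell)^{c_0}F^{\xi}_{\mathbf d,\kappa}$, which holds because the symmetric monomial commutes with the $T_i$ and with $\delta_{\mathbf d}$, is compatible with the bar involution, and does not change $\operatorname{Inv}(\kappa+\varepsilon\rho)$; you should include this line to recover $z^{\widehat\eta}F^{\tau_r^{-1}}_{\mathbf d,\beta^0(\boldsymbol\lambda)}=F^{\theta_r^{-1}}_{\mathbf d,\beta(\boldsymbol\lambda)}$.
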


\begin{proof}
The proof of Lemma~\ref{lemalmostmonotonicity} identifies every
block with a zero minimum; its content is divisible by $r$.
As $\gcd(r,s)=1$, these blocks lie in residue class $0$,
which consists precisely of the contents divisible by $r$.
For an almost-decreasing
violation, the argument of that lemma forces the zero-minimum
block to have content $0$ and the other block to have content
between $0$ and $r-1$.  A positive content has nonzero
residue and follows the zero-minimum block; when both contents
are $0$, the comparison of components gives the same conclusion.  For an
almost-increasing violation, the first block has content
divisible by $r$.  The order can reverse the two blocks only
when the second content is also divisible by $r$ and is at
least as large.  The natural order gives the opposite
inequality, so the contents agree; the final component
comparison is the same contradiction as in
Lemma~\ref{lemalmostmonotonicity}.

For $r>1$, the entries in \eqref{eq:coprime-slope-weight}
are $\lfloor s/r\rfloor$ or $\lfloor s/r\rfloor+1$.
Writing residues in $\{0,\ldots,r-1\}$, the larger value
occurs exactly when the residue of $-s(c(D)-1)$ is smaller
than the residue of $s$.  On passing from the boundary
block before $D$ to the one after $D$, its residue changes
by minus the residue of $s$ modulo $r$.  Thus the
larger-weight diagonals precede the smaller-weight
diagonals in $\theta_r$ and follow them in $\tau_r$.
Within either weight class this translation preserves
residue order; for equal residues, both contents have
the same quotient change, and the comparison of components is
preserved.  Thus conditions (a) and (b) in the proof of Lemma~\ref{lemwinding}
hold.  Moreover, $\widehat\eta$
is constant on each Levi block, so
$\beta(\boldsymbol\lambda)=
\beta^0(\boldsymbol\lambda)+\widehat\eta$
remains blockwise regular dominant.  If $r=1$, all diagonal
weights equal $s$ and $\theta_1=\tau_1$, so the same
conditions hold.  For the weak-composition form needed here, see
Blasiak et al.~\cite[Remark~6.2.3]{dens}.

Finally, the lemma of Blasiak et al.~\cite[Lemma~6.2.2]{dens} is stated for weights with values in $\{0,1\}$.  Put $c_0=\lfloor s/r\rfloor$.  By the preceding paragraph, every $\eta_D$ lies in $\{c_0,c_0+1\}$ (and $\eta_D=c_0=s$ if $r=1$).  Thus the weights $\eta_D-c_0$ take values in $\{0,1\}$, and they satisfy (a) and (b), because these conditions involve only differences of weights.  Since $(z_1\cdots z_\ell)^{c_0}$ is symmetric, multiplication by it commutes with every $T_i$, every $T_i^{-1}$, and $\delta_{\mathbf d}$.  Adding $(c_0,\ldots,c_0)$ to a weight $\lambda$ does not change $\operatorname{Inv}(\lambda+\varepsilon\rho)$, and the bar involution sends $(z_1\cdots z_\ell)^{-c_0}$ to $(z_1\cdots z_\ell)^{c_0}$.  Hence the definitions in Section~\ref{subLLTconn} give
\[
F_{\mathbf d,\kappa+(c_0,\ldots,c_0)}^{\xi}(z;q)=(z_1\cdots z_\ell)^{c_0}F_{\mathbf d,\kappa}^{\xi}(z;q)
\]
for every permutation $\xi$ and every $\kappa\in X^{++}(\GL_{\mathbf d})$.  Applying the lemma to the weights $\eta_D-c_0$ and the weight $\beta(\boldsymbol\lambda)-(c_0,\ldots,c_0)$, and then multiplying by $(z_1\cdots z_\ell)^{c_0}$, we obtain
\[
z^{\widehat\eta}
F_{\mathbf d,\beta^0(\boldsymbol\lambda)}^{\tau_r^{-1}}(z;q)
=
F_{\mathbf d,\beta(\boldsymbol\lambda)}^{\theta_r^{-1}}(z;q),
\]
which is the identity used in the proof of Theorem~\ref{thm:coprime-slope-llt}.
\end{proof}

\begin{theorem}\label{thm:coprime-slope-llt}
For every dominance cover $\mu\gtrdot\nu$ and coprime
positive pair $(r,s)$, we have 
\begin{equation}\label{eq:coprime-slope-llt}
\begin{aligned}
G_{\boldsymbol\nu^{\mu,\nu}}[-M X^{r,s};q]\cdot1
=
-(qt)^{1+r(\nu_{i+1}-1)}
q^{A_r(\boldsymbol\nu^{\mu,\nu})}
\sum_{\substack{\boldsymbol\lambda\\
  \alpha(\boldsymbol\lambda)\leq\beta(\boldsymbol\lambda)}}
t^{|\boldsymbol\lambda|}
q^{\varphi(\boldsymbol\lambda)}
G_{\left(\theta_r(\beta(\boldsymbol\lambda)/
                     \alpha(\boldsymbol\lambda))\right)^R}[X;q],
\end{aligned}
\end{equation}
where $\varphi(\boldsymbol\lambda)$ is defined as in Theorem~\ref{thmNablaHallLLT}, now with the conventions of this section.
Here $\boldsymbol\lambda$ ranges over the Cauchy indices
defined above, and the inequality is coordinatewise.
\end{theorem}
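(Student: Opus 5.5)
The plan is to repeat the slope-$(r,1)$ argument of Section~\ref{secCat} with the Schiffmann-algebra Catalanimal formula replacing \cite[Corollary~8.4.2]{LLTcat}. Note that the statement is written with $q$ and $t$ in their original roles (LLT parameter $q$), so no interchange of parameters is needed. First I would invoke the general $(r,s)$ LLT Catalanimal identity of Blasiak et al.~\cite{LLTcat}, which expresses $G_{\boldsymbol\nu^{\mu,\nu}}[-MX^{r,s};q]\cdot1$ as
\[
(-1)^{\operatorname{mag}(\boldsymbol\nu^{\mu,\nu})}(qt)^{\operatorname{mag}(\boldsymbol\nu^{\mu,\nu})+n_0(\gamma)}q^{A_r(\boldsymbol\nu^{\mu,\nu})}\,\omega\bigl(H^{r,s}_{\boldsymbol\nu_r}(z;q,t)\bigr)_{\operatorname{pol}},
\]
possibly up to the precise placement of $\omega$; this has to be matched against their normalization. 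The evaluations $\operatorname{mag}=1$ and $n_0(\gamma)=r(\nu_{i+1}-1)$ from the proof of Theorem~\ref{thmLLTcat} depend only on the shape of $\boldsymbol\nu_r$, not on $s$, so they carry over verbatim. This gives the sign and the prefactor $(qt)^{1+r(\nu_{i+1}-1)}q^{A_r}$.

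Second, I would expand $H^{r,s}_{\boldsymbol\nu_r}$ by the Cauchy identity, following the proof of Theorem~\ref{thmmains}. Lemmas~\ref{lemcommonmaxima}, \ref{lemrootkernel}, \ref{lemresidualweight} and~\ref{lemtransport} involve only $\boldsymbol\nu_r$, the root sets, and the staircases, and the root sets are unchanged. In Lemma~\ref{lemresidualweight} the combined weight becomes $\widehat\eta$ with the new $\eta_D=\widetilde b_D$ from \eqref{eq:coprime-slope-weight}. The two places where the new order and new weights enter are exactly the hypotheses of Theorem~\ref{thmdenscauchy} and the winding step, Lemma~\ref{lemwinding}. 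Both are supplied by Lemma~\ref{lem:coprime-slope-order}, including the reduction by the constant $c_0=\lfloor s/r\rfloor$. This yields
\[
H^{r,s}_{\boldsymbol\nu_r}=\sum_{\boldsymbol\lambda}t^{|\boldsymbol\lambda|}\mathcal L^{\theta_r}_{w_0^{\ell(\gamma)}(\gamma),\beta(\boldsymbol\lambda)/\alpha(\boldsymbol\lambda)}(z;q)
\]
via Proposition~\ref{prop:dens-symmetrization}.

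Third, I would take polynomial parts as in Theorem~\ref{thmmainpol}. Applying \cite[Theorem~5.5.4]{dens} gives $q^{h_{\theta_r}}G_{\theta_r(\beta/\alpha)}[X;q^{-1}]$, and the summands with $\alpha\not\leq\beta$ vanish. Finiteness follows from the same descending induction, since the inequality still reads $\lambda_{(h,d-1),k}\le\lambda_{(h,d),k}+\kappa^{\mathrm{cat}}_{h,d}$. Finally, I would apply $\omega$ and Proposition~\ref{propomegaLLT} with $q^{-1}$, which converts each term to $q^{-A}G_{(\cdot)^R}[X;q]$ and produces $q^{\varphi(\boldsymbol\lambda)}$.

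The main obstacle is the first step: pinning down the exact form of the $(r,s)$ Catalanimal theorem in \cite{LLTcat}. One must confirm that its prefactors ($\operatorname{mag}$, $n_0$, the weighted attacking-pair count $A_r$) are independent of $s$, and that its weight really is \eqref{eq:coprime-slope-weight} with the same constant-offset stretch. If the cited result is stated for $\nabla$-type normalization with $q,t$ swapped, I would check the swap carefully, since here the LLT parameter is $q$ and the Cauchy variable is $t$, opposite to Theorem~\ref{thmNablaHallLLT}.
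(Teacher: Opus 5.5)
Your proposal is correct and follows essentially the same route as the paper. The paper expands $H^{r,s}_{\boldsymbol\nu_r}$ by the Cauchy identity, with Lemma~\ref{lem:coprime-slope-order} supplying the ordering and winding hypotheses, takes polynomial parts via \cite[Theorem~5.5.4]{dens}, and then applies $\omega$ and Proposition~\ref{propomegaLLT} to produce $q^{\varphi(\boldsymbol\lambda)}$. The normalization you flag as the main obstacle is settled there by two citations: \cite[Theorem~8.3.1 and Remark~8.3.2]{LLTcat}, which shows that $H^{r,s}_{\boldsymbol\nu_r}$ represents $-(qt)^{-1-r(\nu_{i+1}-1)}q^{-A_r(\boldsymbol\nu^{\mu,\nu})}G_{\boldsymbol\nu^{\mu,\nu}}[-MX^{r,s};q]$ (with $\operatorname{mag}=1$ and $n_0(\gamma)=r(\nu_{i+1}-1)$ as in the $s=1$ case), and \cite[Proposition~2.3.2]{dens}, which identifies the polynomial part with the $\omega$-image of the action on $1$, so $\omega$ sits exactly where you placed it.
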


\begin{proof}
We first prove
\begin{equation}\label{eq:coprime-slope-series}
H_{\boldsymbol\nu_r}^{r,s}(z;q,t)
=
\sum_{\boldsymbol\lambda}
t^{|\boldsymbol\lambda|}
\mathcal L_{\mathbf d,\,
  \beta(\boldsymbol\lambda)/\alpha(\boldsymbol\lambda)}
^{\theta_r}(z;q),
\end{equation}
coefficientwise in the $t$-adic completion.  The
block sizes and staircase weights in the Cauchy construction
depend only on $\boldsymbol\nu_r$.  Lemma~\ref{lem:coprime-slope-order}
supplies the remaining ordering conditions in
Theorem~\ref{thmdenscauchy}.  The calculation in
Lemma~\ref{lemrootkernel} is unchanged.
Under $x=w_0(z^{-1})$ and $y=w_0(z)$, the staircase
weight cancels the $\operatorname{st}_D-\operatorname{en}_D$
part of $\boldsymbol\kappa^{\mathrm{cat}}$, leaving
$w_0(z^{\widehat\eta})$ as in
Lemma~\ref{lemresidualweight}.  Deleting the zero
source and target blocks gives the two Levi factors
with $\tau_r$ and $\theta_r$, by the argument of
Lemma~\ref{lemtransport}.  Finally, by Blasiak et al.~\cite[Lemma~6.2.2]{dens} and
Lemma~\ref{lem:coprime-slope-order},
\[
z^{\widehat\eta}
F_{\mathbf d,\beta^0(\boldsymbol\lambda)}^{\tau_r^{-1}}(z;q)
=
F_{\mathbf d,\beta(\boldsymbol\lambda)}^{\theta_r^{-1}}(z;q).
\]
Weyl symmetrization, together with
Proposition~\ref{prop:dens-symmetrization}, proves
\eqref{eq:coprime-slope-series}.  For each fixed
$t$-degree, only finitely many Cauchy indices occur.

Taking polynomial parts term by term and applying
the theorem of Blasiak et al.~\cite[Theorem~5.5.4]{dens}, we obtain
\begin{equation}\label{eq:coprime-slope-polynomial-part}
\bigl(H_{\boldsymbol\nu_r}^{r,s}(z;q,t)\bigr)_{\operatorname{pol}}
=
\sum_{\substack{\boldsymbol\lambda\\
  \alpha(\boldsymbol\lambda)\leq\beta(\boldsymbol\lambda)}}
t^{|\boldsymbol\lambda|}
q^{h_{\theta_r}(\beta(\boldsymbol\lambda)/
               \alpha(\boldsymbol\lambda))}
G_{\theta_r(\beta(\boldsymbol\lambda)/
             \alpha(\boldsymbol\lambda))}[X;q^{-1}].
\end{equation}
The sum is finite: for every nonempty diagonal $D_{h,c}$,
the displayed inequality implies
\[
\lambda_{(h,c-1),j}
\leq
\lambda_{(h,c),j}
+\kappa^{\mathrm{cat}}_{h,c}
\qquad(1\leq j\leq g_{h,c}).
\]
The terminal boundary partition is empty, so descending
induction on $c$ bounds every part of every
$\lambda_B$.

By Blasiak et al.~\cite[Theorem~8.3.1 and Remark~8.3.2]{LLTcat},
$H_{\boldsymbol\nu_r}^{r,s}(z;q,t)$ represents a
scalar multiple of
$G_{\boldsymbol\nu^{\mu,\nu}}[-M X^{r,s};q]$.
Its magic number is $1$ and
$n_0(\gamma)=r(\nu_{i+1}-1)$, as computed in the
proof of Theorem~\ref{thmLLTcat}.  Thus the scalar multiplying
$G_{\boldsymbol\nu^{\mu,\nu}}[-M X^{r,s};q]$ in this
representative is
$-(qt)^{-1-r(\nu_{i+1}-1)}q^{-A_r(\boldsymbol\nu^{\mu,\nu})}$.
By Blasiak et al.~\cite[Proposition~2.3.2]{dens}, the polynomial part in
\eqref{eq:coprime-slope-polynomial-part} is the $\omega$-image of
this action; thus
\[
G_{\boldsymbol\nu^{\mu,\nu}}[-M X^{r,s};q]\cdot1
=
-(qt)^{1+r(\nu_{i+1}-1)}
q^{A_r(\boldsymbol\nu^{\mu,\nu})}
\omega\bigl(
  (H_{\boldsymbol\nu_r}^{r,s}(z;q,t))_{\operatorname{pol}}
\bigr).
\]
Applying Proposition~\ref{propomegaLLT} then gives
\eqref{eq:coprime-slope-llt}.
\end{proof}

\begin{theorem}\label{thm:coprime-slope-hall-llt}
Let $\mu,\nu\vdash k$ satisfy $\mu\gtrdot\nu$, and
write $\nu=\mu-\epsilon_i+\epsilon_j$ with $i<j$.
For every coprime positive pair $(r,s)$, the
Hall--Littlewood cover difference has the finite
LLT expansion
\begin{equation}\label{eq:coprime-slope-hall-llt}
\begin{aligned}
&\bigl(
  \omega\wtH_\mu[X;0,t^{-1}]
 -\omega\wtH_\nu[X;0,t^{-1}]
 \bigr)[-M X^{r,s}]\cdot1\\
&\quad=
(qt)^{1+r(\nu_{i+1}-1)}
\sum_{\substack{\boldsymbol\lambda\\
  \alpha(\boldsymbol\lambda)\leq\beta(\boldsymbol\lambda)}}
q^{|\boldsymbol\lambda|}
t^{e^{\mu,\nu}+A_r(\boldsymbol\nu^{\mu,\nu})+\varphi(\boldsymbol\lambda)}
G_{\left(\theta_r(\beta(\boldsymbol\lambda)/
                \alpha(\boldsymbol\lambda))\right)^R}[X;t].
\end{aligned}
\end{equation}
Here $e^{\mu,\nu}$ is defined in \eqref{eq:emunu}, and
$\varphi(\boldsymbol\lambda)$ is as in Theorem~\ref{thm:coprime-slope-llt}.
At $s=1$,
the left-hand side is
$\nabla^r\bigl(
\omega\wtH_\mu[X;0,t^{-1}]
-\omega\wtH_\nu[X;0,t^{-1}]
\bigr)$, and the formula is
Theorem~\ref{thmNablaHallLLT}.
\end{theorem}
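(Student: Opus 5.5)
The plan is to combine Proposition~\ref{prop:coprime-cover-transport} with Theorem~\ref{thm:coprime-slope-llt}, after swapping $q$ and $t$ in the latter. This is the same way Theorem~\ref{thmNablaHallLLT} was obtained from Theorems~\ref{thmHallLLT} and~\ref{thmLLTcat}.

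By Proposition~\ref{prop:coprime-cover-transport}, the left side of \eqref{eq:coprime-slope-hall-llt} equals $-t^{e^{\mu,\nu}}G_{\boldsymbol\nu^{\mu,\nu}}[-MX^{r,s};t]\cdot1$. It therefore suffices to show that $G_{\boldsymbol\nu^{\mu,\nu}}[-MX^{r,s};t]\cdot1$ equals
\[
-(qt)^{1+r(\nu_{i+1}-1)}t^{A_r(\boldsymbol\nu^{\mu,\nu})}\sum_{\boldsymbol\lambda}q^{|\boldsymbol\lambda|}t^{\varphi(\boldsymbol\lambda)}G_{(\theta_r(\beta(\boldsymbol\lambda)/\alpha(\boldsymbol\lambda)))^R}[X;t],
\]
with the sum over Cauchy indices satisfying $\alpha(\boldsymbol\lambda)\leq\beta(\boldsymbol\lambda)$. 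Once this holds, the two minus signs cancel and the powers of $t$ combine to give exactly \eqref{eq:coprime-slope-hall-llt}.

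This identity is Theorem~\ref{thm:coprime-slope-llt} with $q$ and $t$ interchanged. To justify the interchange, I will use that $M=(1-q)(1-t)$ is symmetric in $q,t$, and that the Schiffmann-algebra action $f\mapsto f[-MX^{r,s}]\cdot1$ is compatible with the swap $q\leftrightarrow t$. At $s=1$ this compatibility is exactly the fact, recorded in Section~\ref{subProofMain1}, that $\nabla$ commutes with the interchange, via $\wtH_\mu[X;q,t]=\wtH_{\mu'}[X;t,q]$. For general coprime $(r,s)$, the generators of the elliptic Hall algebra of \cite{BS12} depend on $q,t$ only through symmetric expressions, so the $(r,s)$-action is symmetric as well. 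Alternatively, one can avoid the swap altogether by rerunning the proof of Theorem~\ref{thm:coprime-slope-llt} with the Catalanimal $H^{r,s}_{\boldsymbol\nu_r}(z;t,q)$, exactly as Theorem~\ref{thmLLTcat} used \cite[Corollary~8.4.2]{LLTcat} after interchanging parameters. Under the swap, $(qt)^{1+r(\nu_{i+1}-1)}$ is unchanged, $q^{A_r}$ becomes $t^{A_r}$, $t^{|\boldsymbol\lambda|}$ becomes $q^{|\boldsymbol\lambda|}$, $q^{\varphi}$ becomes $t^{\varphi}$, and $G[X;q]$ becomes $G[X;t]$. Since the combinatorial data $\alpha,\beta,\theta_r$ do not involve $q$ or $t$, the index set of the sum is unaffected.

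For the final sentence of the theorem, I will specialize to $s=1$. There $f[-MX^{r,1}]\cdot1=\nabla^rf$ by \cite[Proposition~2.3.3]{dens}. The conventions of this section reduce to those of Section~\ref{secCat}: at $s=1$, $\widetilde b_{h,c}=\lceil c/r\rceil-\lceil (c-1)/r\rceil$ equals $1$ exactly when $c\equiv1\pmod r$, and the residue of $-c$ orders the classes as $0,r-1,\dots,1$. With these reductions the formula coincides termwise with \eqref{eqNablaHallLLT}.

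The only delicate step is justifying the $q\leftrightarrow t$ symmetry of the $(r,s)$-action. If that symmetry is not available in citable form, I would instead redo the proof of Theorem~\ref{thm:coprime-slope-llt} verbatim with the parameters swapped in the Cauchy identity and the Catalanimal, which is routine.
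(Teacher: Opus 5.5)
Your proposal is correct and matches the paper's proof. Both rewrite the left side via Proposition~\ref{prop:coprime-cover-transport}, interchange $q$ and $t$ in Theorem~\ref{thm:coprime-slope-llt}, multiply by $-t^{e^{\mu,\nu}}$ so the signs cancel, and invoke \cite[Proposition~2.3.3]{dens} at $s=1$. Your extra argument that the $(r,s)$-action is symmetric under $q\leftrightarrow t$ spells out a step the paper uses without comment.
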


\begin{proof}
By Proposition~\ref{prop:coprime-cover-transport}, the left-hand side of
\eqref{eq:coprime-slope-hall-llt} equals
\[
-t^{e^{\mu,\nu}}
G_{\boldsymbol\nu^{\mu,\nu}}[-M X^{r,s};t]\cdot1.
\]
Interchange $q$ and $t$ in
Theorem~\ref{thm:coprime-slope-llt} and multiply by
$-t^{e^{\mu,\nu}}$.  The signs cancel and yield
\eqref{eq:coprime-slope-hall-llt}.  For $s=1$, apply
the result of Blasiak et al.~\cite[Proposition~2.3.3]{dens}.
\end{proof}

To generalize Theorem~\ref{thmmain3} and
Corollary~\ref{coromain4} to the Schiffmann algebra setting,
we need the following lemma.

\begin{lemma}\label{lem:coprime-pointwise-bound}
Let $r,s$ be coprime positive integers, let $\mu\vdash k$, and let
$\beta\leq\overleftarrow\mu$.  If $\pi\in\LD_{kr,ks}$ satisfies
$\comp(\pi)=(s\beta_1,\ldots,s\beta_{\ell(\beta)})$, then
\[
\dinv(\pi)\geq
n(\mu)-\doff_{\overleftarrow\mu}(\DP(\beta)).
\]
\end{lemma}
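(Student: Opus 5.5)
We will mimic the proof of Lemma~\ref{lemmadoff}, replacing the square-path analysis by the rational $(kr,ks)$ one. As in that proof, write $\alpha=\overleftarrow\mu=(\alpha_1,\ldots,\alpha_m)$, $\beta=(\beta^{(1)}\mid\cdots\mid\beta^{(m)})$ with $\ell_k=\ell(\beta^{(k)})$. Then $n(\mu)-\doff_{\alpha}(\DP(\beta))=\sum_{k}(m-k)(\alpha_k-\ell_k)=\sum_{k<k'}(\alpha_k-\ell_k)$, and it suffices to find, for each pair of $\alpha$-blocks $k<k'$, at least $\alpha_k-\ell_k$ temporary diagonal inversions between them. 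We also need a lower bound on $\dinv$ in terms of $\tdinv$. For this we use $\dinv(\pi)=\tdinv(\pi)+\pdinv(D)-\maxtdinv(D)$, and the reduction we aim for is
\[
\dinv(\pi)\geq \tdinv_{\mathrm{between}}(\pi)-\bigl(\maxtdinv_{\mathrm{between}}(D)-\pdinv(D)\bigr)_{+},
\]
or better, to show directly that $\pdinv(D)\geq\maxtdinv(D)-(\text{pairs we do not claim})$. The cleanest route is to invoke the known fact (Blasiak et al.~\cite{nonsymshuffle}, as used in the proof of Lemma~\ref{lemmadoff}) in its rational form: $\dinv(\pi)\geq 0$ and $\dinv$ is $\tdinv$ minus a path constant that counts only pairs within the same ``contact-free'' stretch. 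Concretely, we plan to show that $\maxtdinv(D)-\pdinv(D)$ is at most the number of rank-admissible pairs lying inside a single $\beta$-segment, so that pairs across different $\alpha$-blocks contribute to $\dinv$ exactly as they do to $\tdinv$.

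The block decomposition is the next ingredient. The path touches the diagonal at the points $(dr,ds)$ at the ends of the segments of $\comp(\pi)$. Each $\alpha_k$-block therefore corresponds to a subpath from $(d r,d s)$ to $((d+\alpha_k)r,(d+\alpha_k)s)$ with $s\alpha_k$ north steps. Define the ``level'' of a north step by its rank $\rank_i$ reduced relative to the starting contact point of its block. Since contact points have rank $0$ shift (ranks are translation invariant along the diagonal up to the $u_i/(km'+1)$ tiebreak), two north steps in different blocks with comparable levels satisfy the window condition $\rank_i<\rank_j<\rank_i+kr$ in the same directional pattern as in the square case. The pattern is: equal ``height'' goes from the earlier block to the later one, and a height differing by the unit step goes the other way. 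For each block we take the sequence of levels of its north steps. Within a block these steps can only climb by the minimal rank increment when they lie in the same column, which forces increasing labels.

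Then we repeat Cases~1 and~2 verbatim. For blocks $A$ (earlier, $s\alpha_k$ steps) and $B$ (later, $s\alpha_{k'}\geq s\alpha_k$ steps), each step of $A$ not at a diagonal-contact level yields an inversion with $B$ via the ``first occurrence'' argument. The number of contact-level steps in $A$ is at most the number of returns, namely $\ell_k$ in the appropriate count. The count of such steps is then at least $s\alpha_k-\ell_k\geq\alpha_k-\ell_k$, giving the bound.

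The main obstacle is the rational rank geometry. Consecutive north steps in one column differ in rank by $kr$, not by $1$. The ``first occurrence of a level'' in block $B$ must therefore be defined on ranks modulo the window width $kr$, and one must check that a step of $A$ at rank $\rho$ always meets a step of $B$ at rank in $(\rho-kr,\rho]$ whose predecessor in its column lies in the opposite window. We would first try to handle this by projecting each block's ranks onto the cyclic classes modulo $ks$, using coprimality of $r,s$. If that fails, we would fall back on the LLT interpretation: for fixed $D$ the labelings give an LLT polynomial, and the bound becomes a minimal-inversion statement for a tuple of column diagrams, which is checked by the same two-case argument on adjusted contents.
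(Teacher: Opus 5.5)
Your reduction to counting temporary inversions between blocks does not work for rational paths. The proof of Lemma~\ref{lemmadoff} rests on $\pdinv(D)=\maxtdinv(D)$, i.e.\ $\dinv=\tdinv$. For $(kr,ks)$-paths this fails. Your concrete plan is that $\maxtdinv(D)-\pdinv(D)$ is at most the number of rank-admissible pairs inside a single segment, so that cross pairs contribute to $\dinv$ exactly as to $\tdinv$. That claim is false.

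Example~\ref{ex:parking-statistics-doff}(a) already refutes it. Take $D=N^4E^3N^4E^3$, so $k=2$, $r=3$, $s=4$, $\beta=(1,1)$.
\begin{itemize}
\item The four north steps of each primitive segment lie in one column, with ranks differing by exactly $kr=6$. So no admissible pair lies inside a segment.
\item Yet $\maxtdinv(D)-\pdinv(D)=7-6=1$.
\item The displayed labeling has three temporary inversions, all between the two segments, but $\dinv(\pi)=2$.
\end{itemize}
With $\mu=(1,1)$, your chain $\dinv\geq\tdinv_{\mathrm{between}}\geq s\alpha_1-\ell_1=3$ would therefore prove a false inequality.

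The missing idea is a quantitative account of the cross part of $\maxtdinv-\pdinv$. The paper gets it from Hicks--Leven: a $\pdinv$ cell is a pair consisting of an east step of an earlier segment at height $e$ and a north step of a later segment at height $h$, with $0<e-h\leq r+s$. Hence all three terms of $\dinv$ split into internal and cross parts. Comparing primary and secondary attacks $P_{ij},S_{ij}$ between runs, and cancelling interior partners, yields
\[
\dinv_{\mathrm{cross}}\geq\#\{(i,j):e_{i-1}>f_{j-1},\ e_i\leq f_j\}\geq ra-1
\]
for a primitive segment of size $(ra,sa)$ against a later block. A separate argument shows that internal contributions are nonnegative. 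Only $ra-1$ survives the correction (in the example this is exactly the value $2$), not the $sa-1$ your count suggests.

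Independently, your $\tdinv$ count is not established either. Cases~1 and~2 of Lemma~\ref{lemmadoff} use the fact that a square-path north step attacks only steps of equal height or height one less. On a rational path, a north step of the earlier segment at height $h$ (measured by $ry-sx$) attacks every north step of the later segment with height in $[h,h+r)$ or in $[h-r,h)$. So ``the first occurrence of a height'' has no direct analogue. You flag this as the main obstacle, but neither suggested remedy is carried out: projecting ranks modulo $ks$, or an unspecified minimal-inversion argument for LLT tuples.

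The paper handles this step differently. It groups north steps into the runs preceding each east step, and uses two facts: attacks between two runs alternate, and labels increase within a run. This gives $\tdinv_{\mathrm{cross}}\geq\sum_{i,j}\min(P_{ij},S_{ij})$, which is then compared with $\maxtdinv_{\mathrm{cross}}=\sum_{i,j}(P_{ij}+S_{ij})$.
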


This is the $(kr,ks)$-version of Lemma~\ref{lemmadoff}.
Its technical proof is given in
Appendix~\ref{app:coprime-pointwise-bound}.

\begin{proposition}\label{prop:coprime-hall-integrality}
For every coprime positive pair $(r,s)$, every $\mu\vdash k$,
and every $\lambda\vdash ks$,
\[
\left\langle
\bigl(\omega\wtH_\mu[X;0,t^{-1}]\bigr)[-M X^{r,s}]\cdot1,
s_\lambda
\right\rangle\in\mathbb Z[q,t].
\]
\end{proposition}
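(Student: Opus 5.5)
The plan is to mirror the proof of Corollary~\ref{corohallschur}, using the explicit fundamental expansion \eqref{eq:coprime-signed-hall-parking} of Theorem~\ref{thm:coprime-hall-parking}. By that theorem,
\[
\bigl(\omega\wtH_\mu[X;0,t^{-1}]\bigr)[-M X^{r,s}]\cdot1
=\sum_{\beta\leq\overleftarrow\mu}
\sum_{\substack{\pi\in\LD_{kr,ks}\\ \comp(\pi)=(s\beta_1,\ldots,s\beta_{\ell(\beta)})}}
q^{\area(\pi)}t^{\doff_{\overleftarrow\mu}(\DP(\beta))-n(\mu)+\dinv(\pi)}F_{\ides(\pi)}[X].
\]
This sum is finite, and every $\area(\pi)$ is a nonnegative integer.

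The key input is Lemma~\ref{lem:coprime-pointwise-bound}. For every $\beta\leq\overleftarrow\mu$ and every $\pi\in\LD_{kr,ks}$ with $\comp(\pi)=(s\beta_1,\ldots,s\beta_{\ell(\beta)})$, it gives $\dinv(\pi)\geq n(\mu)-\doff_{\overleftarrow\mu}(\DP(\beta))$. Hence each displayed exponent of $t$ is a nonnegative integer. Integrality of the exponent needs a word: $\dinv(\pi)=\tdinv(\pi)+\pdinv(D)-\maxtdinv(D)$ is an integer by definition, and $\doff$ and $n(\mu)$ are integers. So every coefficient in the displayed sum lies in $\mathbb N[q,t]$.

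Next I pass from fundamental quasisymmetric functions to Schur functions. Each $F_S[X]$ is a nonnegative integer combination of monomials, so the left side $f$ is a symmetric function of degree $ks$. Its monomial coefficients, equivalently the pairings $\langle f,h_\lambda\rangle$, lie in $\mathbb N[q,t]\subseteq\mathbb Z[q,t]$. Writing $f=\sum_\lambda c_\lambda s_\lambda$, we have $\langle f,h_\nu\rangle=\sum_\lambda c_\lambda K_{\lambda\nu}$. The Kostka matrix is unitriangular with integer entries, so its inverse has integer entries, and therefore $c_\lambda\in\mathbb Z[q,t]$. This is the argument already used in Corollary~\ref{corohallschur}.

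All the difficulty is concentrated in Lemma~\ref{lem:coprime-pointwise-bound}, whose proof is deferred to the appendix. Given that lemma, the proposition is a short bookkeeping argument. The only point to check is that the shuffle formula in Proposition~\ref{prop:coprime-compositional-shuffle} is applied linearly to \eqref{eqhallomegaC} with the same exponents $\doff_{\overleftarrow\mu}(\DP(\beta))-n(\mu)$. This is immediate, because $f\mapsto f[-MX^{r,s}]\cdot1$ is $\mathbb Q(q,t)$-linear.
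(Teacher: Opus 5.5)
Your proposal is correct and follows the paper's proof. Both arguments use Lemma~\ref{lem:coprime-pointwise-bound} to make every $t$-exponent in \eqref{eq:coprime-signed-hall-parking} nonnegative, then use the monomial positivity of the fundamental expansion and the integrality of the inverse Kostka matrix, as in Corollary~\ref{corohallschur}.
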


\begin{proof}
Lemma~\ref{lem:coprime-pointwise-bound} shows that every power of
$t$ in \eqref{eq:coprime-signed-hall-parking} is nonnegative.
As in the proof of Corollary~\ref{corohallschur}, the
fundamental-quasisymmetric expansion has monomial coefficients in
$\mathbb N[q,t]$, and the integral inverse Kostka matrix puts its
Schur coefficients in $\mathbb Z[q,t]$.
\end{proof}

\begin{corollary}\label{cor:coprime-hall-polynomial-schur}
For every coprime positive pair $(r,s)$, every
$\mu,\nu\vdash k$ with $\mu\unrhd\nu$, and every
$\lambda\vdash ks$,
\[
\left\langle
\bigl(
\omega\wtH_\mu[X;0,t^{-1}]
-\omega\wtH_\nu[X;0,t^{-1}]
\bigr)[-M X^{r,s}]\cdot1,\,
s_\lambda
\right\rangle\in\mathbb N[q,t].
\]
\end{corollary}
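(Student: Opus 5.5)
The plan is to mirror the proof of Corollary~\ref{corohalldominanceschur}: first settle the cover case $\mu\gtrdot\nu$ by combining a Laurent positivity statement with an integrality statement, then pass to arbitrary $\mu\unrhd\nu$ by telescoping along a saturated chain in dominance order.

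\textbf{Cover case, Laurent positivity.} Fix a cover $\mu\gtrdot\nu$ with $\nu=\mu-\epsilon_i+\epsilon_j$. Theorem~\ref{thm:coprime-slope-hall-llt} writes the left-hand side as a finite sum of terms $q^at^bG_{\boldsymbol\tau}[X;t]$. Here $a\geq 1+r(\nu_{i+1}-1)+|\boldsymbol\lambda|$, and $b\in\mathbb Z$ may be negative. Finiteness was established in the proof of Theorem~\ref{thm:coprime-slope-llt}, using the descending induction on the Cauchy indices. By the Schur positivity of LLT polynomials of Grojnowski and Haiman, every Schur coefficient of the left-hand side then lies in $\mathbb N[q,t,t^{-1}]$.

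\textbf{Cover case, integrality.} Apply Proposition~\ref{prop:coprime-hall-integrality} to $\mu$ and to $\nu$ separately. The map $f\mapsto f[-MX^{r,s}]\cdot1$ is linear, so the difference has Schur coefficients in $\mathbb Z[q,t]$. Unlike in Corollary~\ref{corohalldominanceschur}, there is no need to factor through $\nabla^{r-1}$ and the integrality of $\nabla$ on Schur functions; Lemma~\ref{lem:coprime-pointwise-bound} handles all $(r,s)$ at once through Proposition~\ref{prop:coprime-hall-integrality}. Intersecting, each coefficient lies in $\mathbb N[q,t,t^{-1}]\cap\mathbb Z[q,t]=\mathbb N[q,t]$.

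\textbf{General case.} If $\mu=\nu$, the difference is zero and there is nothing to prove. Otherwise choose a saturated chain $\mu=\mu^{(0)}\gtrdot\cdots\gtrdot\mu^{(m)}=\nu$. By linearity, the difference is the sum of the consecutive cover differences, each of which has Schur coefficients in $\mathbb N[q,t]$ by the cover case.

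\textbf{Main obstacle.} All the substantive input is in results already stated earlier: the LLT expansion of Theorem~\ref{thm:coprime-slope-hall-llt}, which relies on Lemma~\ref{lem:coprime-slope-order} to verify the Cauchy-identity hypotheses for general $s$, and the pointwise dinv bound of Lemma~\ref{lem:coprime-pointwise-bound}. Within this proof, the one point to watch is that Theorem~\ref{thm:coprime-slope-hall-llt} really gives a finite sum. Only then is each Schur coefficient a Laurent polynomial rather than a $t$-adic series, which is what the intersection argument requires.
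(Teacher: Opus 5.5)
Your proposal is correct and matches the paper's proof step for step. For a cover, you get Laurent positivity from Theorem~\ref{thm:coprime-slope-hall-llt} together with Grojnowski--Haiman, and integrality from Proposition~\ref{prop:coprime-hall-integrality} applied to $\mu$ and $\nu$ separately; intersecting gives $\mathbb N[q,t]$, and the general case follows by telescoping along a saturated dominance chain.
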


\begin{proof}
We first consider the case $\mu\gtrdot\nu$.
Theorem~\ref{thm:coprime-slope-hall-llt}, together with the
Schur positivity of LLT polynomials established by Grojnowski and Haiman~\cite{GrojnowskiHaiman07}, gives
\begin{equation}\label{eq:coprime-hall-laurent}
\left\langle
\bigl(\omega\wtH_\mu[X;0,t^{-1}]
-\omega\wtH_\nu[X;0,t^{-1}]\bigr)[-M X^{r,s}]\cdot1,
s_\lambda
\right\rangle\in\mathbb N[q,t,t^{-1}].
\end{equation}
Proposition~\ref{prop:coprime-hall-integrality} gives
\[
\begin{aligned}
\left\langle
\bigl(\omega\wtH_\mu[X;0,t^{-1}]\bigr)[-M X^{r,s}]\cdot1,
s_\lambda
\right\rangle&\in\mathbb Z[q,t],
\left\langle
\bigl(\omega\wtH_\nu[X;0,t^{-1}]\bigr)[-M X^{r,s}]\cdot1,
s_\lambda
\right\rangle&\in\mathbb Z[q,t].
\end{aligned}
\]
By linearity, therefore,
\begin{equation}\label{eq:coprime-hall-integral}
\left\langle
\bigl(\omega\wtH_\mu[X;0,t^{-1}]
-\omega\wtH_\nu[X;0,t^{-1}]\bigr)[-M X^{r,s}]\cdot1,
s_\lambda
\right\rangle\in\mathbb Z[q,t].
\end{equation}
Combining \eqref{eq:coprime-hall-laurent} with
\eqref{eq:coprime-hall-integral}, we obtain
\[
\left\langle
\bigl(\omega\wtH_\mu[X;0,t^{-1}]
-\omega\wtH_\nu[X;0,t^{-1}]\bigr)[-M X^{r,s}]\cdot1,
s_\lambda
\right\rangle
\in\mathbb N[q,t,t^{-1}]\cap\mathbb Z[q,t]
=\mathbb N[q,t].
\]
Now let $\mu\unrhd\nu$.  If $\mu=\nu$, the result is immediate.
Otherwise, choose a saturated dominance chain, as in the proof of
Corollary~\ref{corohalldominanceschur}:
\(
\mu=\mu^{(0)}\gtrdot\mu^{(1)}\gtrdot\cdots
\gtrdot\mu^{(m)}=\nu.
\)
Applying the cover case to every consecutive pair gives
\[
\begin{aligned}
&\left\langle
\bigl(\omega\wtH_\mu[X;0,t^{-1}]
-\omega\wtH_\nu[X;0,t^{-1}]\bigr)[-M X^{r,s}]\cdot1,
s_\lambda\right\rangle\\
&\quad=\sum_{j=1}^{m}
\left\langle
\bigl(\omega\wtH_{\mu^{(j-1)}}[X;0,t^{-1}]
-\omega\wtH_{\mu^{(j)}}[X;0,t^{-1}]\bigr)
[-M X^{r,s}]\cdot1,
s_\lambda\right\rangle\in\mathbb N[q,t].
\end{aligned}
\]
This proves the corollary.
\end{proof}

\begin{corollary}\label{cor:coprime-individual-hall-schur}
For every coprime positive pair $(r,s)$, every $\mu\vdash k$,
and every $\lambda\vdash ks$,
\[
\left\langle
\bigl(\omega\wtH_\mu[X;0,t^{-1}]\bigr)[-M X^{r,s}]\cdot1,
s_\lambda
\right\rangle\in\mathbb N[q,t].
\]
\end{corollary}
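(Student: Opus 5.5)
The plan mirrors the proof of Corollary~\ref{corohallallpowers}: split $\omega\wtH_\mu$ as the value at the dominance-minimal partition $(1^k)$ plus a nonnegative difference.

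For the base term, note that $(1^k)$ is the unique composition $\beta$ with $\beta\leq(1^k)$, and that $\doff_{(1^k)}(\DP(1^k))=\binom k2=n(1^k)$. Hence \eqref{eqhallomegaC} gives $\omega\wtH_{(1^k)}[X;0,t^{-1}]=C_{(1^k)}[X;t]$ as symmetric functions. Applying the linear map $f\mapsto f[-MX^{r,s}]\cdot1$ to this identity yields
\[
\bigl(\omega\wtH_{(1^k)}[X;0,t^{-1}]\bigr)[-MX^{r,s}]\cdot1=C_{(1^k)}[-MX^{r,s};t]\cdot1.
\]
The positivity part of Proposition~\ref{prop:coprime-compositional-shuffle} then puts its Schur coefficients in $\mathbb N[q,t]$ for every $\lambda\vdash ks$. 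Alternatively, one can read this off \eqref{eq:coprime-signed-hall-parking} with $\mu=(1^k)$: there every power of $t$ is just $\dinv(\pi)$.

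For the difference term, $\mu\unrhd(1^k)$ holds for every $\mu\vdash k$. So Corollary~\ref{cor:coprime-hall-polynomial-schur}, applied with $\nu=(1^k)$, shows that
\[
\bigl(\omega\wtH_\mu[X;0,t^{-1}]-\omega\wtH_{(1^k)}[X;0,t^{-1}]\bigr)[-MX^{r,s}]\cdot1
\]
has Schur coefficients in $\mathbb N[q,t]$. By linearity of $f\mapsto f[-MX^{r,s}]\cdot1$, adding the two displays gives the claim.

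There is no real obstacle here. The only point to check is that the identity \eqref{eqhallomegaC} at $\mu=(1^k)$ is an identity of symmetric functions, so it transports through the linear map. Everything substantive has already been done: the integrality input (Lemma~\ref{lem:coprime-pointwise-bound}) and the LLT positivity are packaged into Corollary~\ref{cor:coprime-hall-polynomial-schur}.
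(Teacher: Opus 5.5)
Your proposal is correct and is exactly the paper's argument: compare with $\nu=(1^k)$ via Corollary~\ref{cor:coprime-hall-polynomial-schur}, and handle the base term through $\omega\wtH_{(1^k)}[X;0,t^{-1}]=C_{(1^k)}[X;t]$ together with the positivity assertion of Proposition~\ref{prop:coprime-compositional-shuffle}. One caution about your ``alternative'' for the base term: reading it off \eqref{eq:coprime-signed-hall-parking} gives only nonnegative fundamental (hence monomial) coefficients, so Schur positivity still needs the LLT/Grojnowski--Haiman step that is packaged into that proposition.
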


\begin{proof}
As in the proof of Corollary~\ref{corohallallpowers},
\eqref{eqhallomegaC} gives
$\omega\wtH_{(1^k)}[X;0,t^{-1}]=C_{(1^k)}[X;t]$.
Proposition~\ref{prop:coprime-compositional-shuffle} therefore gives
\[
\left\langle
C_{(1^k)}[-M X^{r,s};t]\cdot1,s_\lambda
\right\rangle\in\mathbb N[q,t].
\]
Since $\mu\unrhd(1^k)$, Corollary~\ref{cor:coprime-hall-polynomial-schur}
with $\nu=(1^k)$ gives
\[
\left\langle
\bigl(\omega\wtH_\mu[X;0,t^{-1}]
-\omega\wtH_{(1^k)}[X;0,t^{-1}]\bigr)[-M X^{r,s}]\cdot1,
s_\lambda
\right\rangle\in\mathbb N[q,t].
\]
Adding the two displays proves the corollary.
\end{proof}

\appendix
\section{Proof of Lemma~\ref{lem:coprime-pointwise-bound}}
\label{app:coprime-pointwise-bound}

In this appendix, we prove Lemma~\ref{lem:coprime-pointwise-bound}
by bounding the contributions within and between complete Dyck segments.

\begin{proof}
A complete Dyck segment is a part of the path that starts and ends on the diagonal; it is primitive if it meets the diagonal only at its two ends.  For a segment with east steps numbered $1,2,\ldots$ in order, its $i$th run is the (possibly empty) sequence of north steps immediately before the $i$th east step.
Let $A$ be an earlier primitive Dyck segment of size $(ra,sa)$ and
$B$ a later complete Dyck segment of size $(rb,sb)$, with $a\le b$.
Measure height by $ry-sx$ relative to each segment's start.  If $e_i$
and $f_j$ are their east-step starting heights, set
$e_0=f_0=s$, $p_i=e_{i-1}-s$, and $q_j=f_{j-1}-s$.  The north steps
in the corresponding runs start at heights
$p_i,p_i+r,\ldots,e_i-r$ and $q_j,q_j+r,\ldots,f_j-r$.

Write $P_{ij},S_{ij}$ for primary and secondary attacks between these
runs.  For heights $x$ in $A$ and $y$ in $B$, these mean respectively
$0\le y-x<r$ and $0<x-y\le r$.  The attacks alternate; since labels
increase within each run, pairing consecutive attacks yields at least
$\min(P_{ij},S_{ij})$ temporary inversions.  By Hicks and Leven~\cite[Lemma~1]{HL15}, the labeling
whose labels increase with the rank satisfies the column condition and realizes every attack.
Thus
\[
\tdinv_{\rm cross}\ge\sum_{i,j}\min(P_{ij},S_{ij}),
\qquad
\maxtdinv_{\rm cross}=\sum_{i,j}(P_{ij}+S_{ij}).
\]

By Hicks and Leven~\cite[Theorem~1]{HL15}, an east step of $A$ at height $e$ and
a north step of $B$ at height $h$ contribute to $\pdinv$ precisely
when $0<e-h\le r+s$: for their cell $c$,
$s a(c)-r\ell(c)=e-h-s$.  Because $A$ starts and ends at height
zero, its upward and downward crossings of $h+r+\tfrac12$ balance.
Hence its north starts $h<x\le h+r$ (the secondary attacks) are
equinumerous with east starts $h+r<e\le h+r+s$.  Summing the
remaining intervals $(h,h+r]$ over the $j$th run gives
\[
\pdinv_{\rm cross}-\sum_{i,j}S_{ij}
=\sum_{i,j}\mathbf1_{\{q_j<e_i\le f_j\}}.
\]
For fixed $y$ in that run, the possible primary and secondary
partners in the $i$th run have heights $x$ and $x+r$, where
$x\equiv p_i\pmod r$ and $y-r<x\le y$.  Interior partners cancel,
leaving the endpoints:
\[
P_{ij}-S_{ij}
=\mathbf1_{\{q_j<e_i\le f_j\}}
-\mathbf1_{\{q_j<p_i\le f_j\}}.
\]
Since $e_i\ge p_i$ and $p_i-q_j=e_{i-1}-f_{j-1}$, these identities give
\[
\begin{aligned}
\dinv_{\rm cross}
&\ge\pdinv_{\rm cross}-\sum_{i,j}\max(P_{ij},S_{ij})\\
&=\#\{(i,j):q_j<p_i\le e_i\le f_j\}\\
&=\#\{(i,j):e_{i-1}>f_{j-1},\ e_i\le f_j\}.
\end{aligned}
\]
Primitivity gives $e_i>s$ for $i<ra$ and $e_{ra}=s$, whereas
$f_j\ge s$.  For each $1\le i<ra$, the diagonal list
\(
(e_i,f_0),(e_{i+1},f_1),\ldots,(e_{ra},f_{ra-i})
\)
exists since $rb\ge ra$, and changes from $>$ to $\le$.  These
disjoint lists supply at least $ra-1\ge a-1$ pairs to the count.

The contribution internal to any complete Dyck segment is
nonnegative.  Indeed, apply the preceding crossing balance to the
prefix before each north step at height $h$; its initial and final
heights, $0$ and $h$, are below $h+r+\tfrac12$.  With $e_0=s$ and
$p_i=e_{i-1}-s$ for this segment, the same endpoint cancellation gives
\[
\pdinv_{\rm int}-\sum_{i<j}S_{ij}
=\sum_{i<j}\mathbf1_{\{p_j<e_i\le e_j\}},
\qquad
P_{ij}-S_{ij}
=\mathbf1_{\{p_j<e_i\le e_j\}}
-\mathbf1_{\{p_j<p_i\le e_j\}}.
\]
The same bounds on $\tdinv$ and $\maxtdinv$ therefore give
\[
\dinv_{\rm int}
\ge\#\{(i,j):i<j,\ p_i>p_j,\ e_i\le e_j\}\ge0.
\]

Now put $\alpha=\overleftarrow\mu=(\alpha_1,\ldots,\alpha_m)$ and
let $\ell_i$ be the number of primitive segments in its $i$th
block.  For blocks $i<j$, apply the cross bound to each primitive
segment of the earlier block against the whole later block: its
size $a$ satisfies $a\le\alpha_i\le\alpha_j$.  Their total
contribution is at least $\alpha_i-\ell_i$.  The three defining
terms of $\dinv$ split over internal and cross step pairs, and
all internal contributions are nonnegative.  Therefore, using the
block calculation in the proof of Lemma~\ref{lemmadoff},
\[
\dinv(\pi)\ge\sum_{i<j}(\alpha_i-\ell_i)
=\sum_{i=1}^m(m-i)(\alpha_i-\ell_i)
=n(\mu)-\doff_\alpha(\DP(\beta)).
\]
\end{proof}

\section*{Use of AI tools}
The theoretical derivations and the proofs in this paper were developed with the help of ChatGPT 5.6 Sol.  The authors studied and checked all of this material, and wrote the paper themselves.  We also checked the main identities by computer for small $n$.  The authors take full responsibility for the content of the paper.

\providecommand{\bysame}{\leavevmode\hbox to3em{\hrulefill}\thinspace}


\begin{thebibliography}{99}

\bibitem{BG99}
F. Bergeron and A. M. Garsia,
Science fiction and Macdonald's polynomials,
in Algebraic methods and \(q\)-special functions (Montr\'eal, QC, 1996),
CRM Proc. Lecture Notes, vol. 22,
Amer. Math. Soc., Providence, RI, 1999, 1--52.

\bibitem{BGHT99}
F. Bergeron, A. M. Garsia, M. Haiman, and G. Tesler,
Identities and positivity conjectures for some remarkable operators in the
theory of symmetric functions,
Methods Appl. Anal. 6 (1999), no. 3, 363--420.

\bibitem{BGLX16}
F. Bergeron, A. M. Garsia, E. Leven, and G. Xin,
Compositional \((km,kn)\)-shuffle conjectures,
Int. Math. Res. Not. IMRN 2016, no. 14, 4229--4270.

\bibitem{shuffleline}
J. Blasiak, M. Haiman, J. Morse, A. Pun, and G. H. Seelinger,
A shuffle theorem for paths under any line,
Forum Math. Pi 11 (2023), e5.

\bibitem{LLTcat}
J. Blasiak, M. Haiman, J. Morse, A. Pun, and G. H. Seelinger,
LLT polynomials in the Schiffmann algebra,
J. Reine Angew. Math. 811 (2024), 93--133.

\bibitem{dens}
J. Blasiak, M. Haiman, J. Morse, A. Pun, and G. H. Seelinger,
Dens, nests and the Loehr--Warrington conjecture,
J. Amer. Math. Soc. 38 (2025), no. 4, 1049--1106.

\bibitem{nonsymshuffle}
J. Blasiak, M. Haiman, J. Morse, A. Pun, and G. H. Seelinger,
The nonsymmetric shuffle theorem,
arXiv:2509.24040, 2025.

\bibitem{BMPS19}
J. Blasiak, J. Morse, A. Pun, and D. Summers,
Catalan functions and $k$-Schur positivity,
J. Amer. Math. Soc. 32 (2019), no. 4, 921--963.

\bibitem{Brylawski1973}
T. H. Brylawski,
The lattice of integer partitions,
Discrete Math. 6 (1973), no. 3, 201--219.

\bibitem{BS12}
I. Burban and O. Schiffmann,
On the Hall algebra of an elliptic curve, I,
Duke Math. J. 161 (2012), no. 7, 1171--1231.

\bibitem{CM18}
E. Carlsson and A. Mellit,
A proof of the shuffle conjecture,
J. Amer. Math. Soc. 31 (2018), no. 3, 661--697.

\bibitem{Chen10}
L.-C. Chen,
Skew-linked partitions and a representation-theoretic model for $k$-Schur functions,
Ph.D. thesis, University of California, Berkeley, 2010.

\bibitem{GH96}
A. M. Garsia and M. Haiman,
A remarkable \(q,t\)-Catalan sequence and \(q\)-Lagrange inversion,
J. Algebraic Combin. 5 (1996), no. 3, 191--244.

\bibitem{GN15}
E. Gorsky and A. Negu\c{t},
Refined knot invariants and Hilbert schemes,
J. Math. Pures Appl. (9) 104 (2015), no. 3, 403--435.

\bibitem{GrojnowskiHaiman07}
I. Grojnowski and M. Haiman,
Affine Hecke algebras and positivity of LLT and Macdonald polynomials,
unpublished manuscript, 2007.

\bibitem{haglund-book}
J. Haglund,
The \(q,t\)-Catalan numbers and the space of diagonal harmonics,
University Lecture Series, vol. 41,
American Mathematical Society, Providence, RI, 2008.
With an appendix on the combinatorics of Macdonald polynomials.

\bibitem{HHL05}
J. Haglund, M. Haiman, and N. Loehr,
A combinatorial formula for Macdonald polynomials,
J. Amer. Math. Soc. 18 (2005), no. 3, 735--761.

\bibitem{HHLRU05}
J. Haglund, M. Haiman, N. Loehr, J. B. Remmel, and A. Ulyanov,
A combinatorial formula for the character of the diagonal coinvariants,
Duke Math. J. 126 (2005), no. 2, 195--232.

\bibitem{HMZ12}
J. Haglund, J. Morse, and M. Zabrocki,
A compositional shuffle conjecture specifying touch points of the Dyck path,
Canad. J. Math. 64 (2012), no. 4, 822--844.

\bibitem{Hai02}
M. Haiman,
Vanishing theorems and character formulas for the Hilbert scheme of points
in the plane,
Invent. Math. 149 (2002), no. 2, 371--407.

\bibitem{HL15}
A. Hicks and E. Leven,
A simpler formula for the number of diagonal inversions of an
$(m,n)$-parking function and a returning fermionic formula,
Discrete Math. 338 (2015), no. 3, 48--65.

\bibitem{Jin91}
N. Jing,
Vertex operators and Hall--Littlewood symmetric functions,
Adv. Math. 87 (1991), no. 2, 226--248.

\bibitem{LLT97}
A. Lascoux, B. Leclerc, and J.-Y. Thibon,
Ribbon tableaux, Hall--Littlewood functions, quantum affine algebras, and
unipotent varieties,
J. Math. Phys. 38 (1997), no. 2, 1041--1068.

\bibitem{LW08}
N. A. Loehr and G. S. Warrington,
Nested quantum Dyck paths and \(\nabla(s_\lambda)\),
Int. Math. Res. Not. IMRN 2008, Art. ID rnm157.

\bibitem{Mac95}
I. G. Macdonald,
Symmetric functions and Hall polynomials,
2nd ed., Oxford Mathematical Monographs,
Oxford University Press, Oxford, 1995.

\bibitem{Mel21}
A. Mellit,
Toric braids and \((m,n)\)-parking functions,
Duke Math. J. 170 (2021), no. 18, 4123--4169.

\bibitem{QZ26}
D. Qiu and M. Zhang,
The Schur positivity of \(\nabla m_\mu\),
arXiv:2607.00940, 2026.

\bibitem{Qu26}
M. Qu,
Schur positivity of the nabla operator on two-column modified
Hall--Littlewood polynomials,
arXiv:2605.20954, 2026.


\end{thebibliography}
\end{document}